\DeclareMathOperator{\Id}{Id}
\DeclareMathOperator{\colim}{colim}
\DeclareMathOperator{\map}{map}
\DeclareMathOperator{\Hom}{Hom}
\DeclareMathOperator{\Ext}{Ext}
\DeclareMathOperator{\Cone}{Cone}
\DeclareMathOperator{\ord}{-ord}
\DeclareMathOperator{\Mod}{-mod}
\newcommand{\mF}{{\mathbb F}}
\newcommand{\mQ}{{\mathbb Q}}
\newcommand{\mR}{{\mathbb R}}
\newcommand{\mS}{{\mathbb S}}
\newcommand{\mZ}{{\mathbb Z}}
\newcommand{\cA}{{\mathcal A}}
\newcommand{\cB}{{\mathcal B}}
\newcommand{\cC}{{\mathcal C}}
\newcommand{\cD}{{\mathcal D}}
\newcommand{\cF}{{\mathcal F}}
\newcommand{\cO}{{\mathcal O}}
\newcommand{\cS}{{\mathcal S}}
\newcommand{\cT}{{\mathcal T}}
\newcommand{\cZ}{{\mathcal Z}}
\newcommand{\bK}{{\mathbf K}}
\newcommand{\bD}{{\mathbf D}}
\newcommand{\bH}{{\mathbf H}}
\newcommand{\bHo}{\mathbf{Ho}}
\newcommand{\bSH}{\mathbf {SH}}
\newcommand{\iso}{\cong}
\newcommand{\xra}{\xrightarrow}
\newcommand{\xla}{\xleftarrow}
\newcommand{\sm}{\wedge}
\newcommand{\tensor}{\otimes}
\newcommand{\un}{\underline}
\newcommand{\td}[1]{\langle #1\rangle}
\renewcommand{\to}{\longrightarrow}
\numberwithin{equation}{section}
\newtheorem{theorem}[equation]{Theorem}
\newtheorem{lemma}[equation]{Lemma}
\newtheorem{prop}[equation]{Proposition}
\newtheorem{cor}[equation]{Corollary}
\theoremstyle{definition}
\newtheorem{defn}[equation]{Definition}
\newtheorem{construction}[equation]{Construction}
\newtheorem{rk}[equation]{Remark}
\newtheorem{eg}[equation]{Example}
\begin{document}

\title{Topological triangulated categories}

\date{\today; 2000 AMS Math.\,Subj.\,Class.: 18E30, 55P42}
\author{Stefan Schwede}
\address{Mathematisches Institut, Universit\"at Bonn, Germany}
\email{schwede@math.uni-bonn.de}
\maketitle

Many triangulated categories arise from chain complexes in an 
additive or abelian category by passing to chain homotopy classes 
or inverting quasi-isomorphisms. Such examples are called `algebraic' 
because they have underlying additive categories.
Stable homotopy theory produces examples of triangulated categories
by quite different means, and in this context the underlying categories
are usually very `non-additive' before passing to homotopy classes of 
morphisms. We call such triangulated categories {\em topological},
and formalize this in Definition~\ref{def-topological}
via homotopy categories of stable cofibration categories.

The purpose of this paper is to explain some systematic differences 
between algebraic and topological triangulated categories. 
There are certain properties 
-- defined entirely in terms of the triangulated structure --
which hold in all algebraic examples, but which can fail in general. 
The precise statements use the {\em $n$-order}
of a triangulated category, for $n$ a natural number
(see Definition~\ref{def-order}).
The $n$-order is a non-negative integer (or infinity), 
and it measures, roughly speaking, `how strongly'
the relation $n\cdot Y/n=0$ holds for the objects $Y$ 
in a given triangulated category
(where $Y/n$ denotes a cone of multiplication by~$n$ on~$Y$).
Our main results are:
\begin{itemize}
\item The $n$-order of every algebraic triangulated category is infinite
(Theorem~\ref{thm-general algebraic}).
\item For every prime $p$, the $p$-order of every topological 
triangulated category is at least $p-1$ (Theorem~\ref{thm-general topological}).
\item For every prime $p$, the $p$-order of the $p$-local stable homotopy
category is exactly~$p-1$ (Corollary~\ref{cor-exact order of Moore}).
\end{itemize}

In particular, the $p$-local stable homotopy category is not algebraic
for any prime $p$; this is folklore for $p=2$, but seems to be
a new result for odd primes~$p$.

One of the technical innovations in this paper is the use of
$\Delta$-sets for studying cofibration categories.
We develop certain foundations about
enrichments of cofibration categories by $\Delta$-sets that seem
to be new  -- and hopefully of independent interest.
In particular we generalize the theory
of `framings' (or `cosimplicial resolutions') from model categories
to cofibration categories.
Theorem~\ref{theorem-framing} shows that the category of {\em frames} 
(certain homotopically constant co-$\Delta$-objects)
in any cofibration category is again a cofibration category, 
and that the homotopy category does not change when passing to frames.
Theorem~\ref{thm-frames are Delta-enriched}
shows that the category of frames in a saturated cofibration category 
is always a $\Delta$-cofibration category
(compare Definition~\ref{def-Delta cofibration category}),
the analog of simplicial model category in this context.
In summary, one could say that $\Delta$-sets are to cofibration categories 
what simplicial sets are to model categories.

Here is a summary of the contents of this paper by sections. 
In Section~\ref{sec-topological} we introduce
topological triangulated categories as the homotopy categories of
stable cofibration categories and discuss some basic properties.
Section~\ref{sec-order} is devoted to the concept of $n$-order of
triangulated categories; we show that the existence of a `mod-$n$ reduction'
for a triangulated category implies infinite $n$-order and
apply this to the derived categories of certain structured ring spectra.
Section~\ref{sec-algebraic} discusses algebraic triangulated categories;
we show that every algebraic triangulated category is 
in particular topological and has infinite $n$-order.
Section~\ref{sec-Moore} contains the proof that for every prime~$p$
the $p$-order of the stable homotopy category is at most $p-1$.
In Section~\ref{sec-Delta review} we review basic properties of
$\Delta$-sets. Section~\ref{sec-framings}
develops the theory of {\em framings} in cofibration categories; 
this is needed later to make sense 
of an action of the category of finite $\Delta$-sets on a cofibration category. 
Section~\ref{sec-pointed Delta} introduces and studies coherent
actions of mod-$n$ Moore spaces (i.e., $\Delta$-sets) on
objects in pointed cofibration categories.
In Section~\ref{sec-order in topological} we prove that the $p$-order 
of every topological triangulated category is at least~$p-1$.
Appendix~\ref{app-triangulation}
recalls the basic facts about the homotopy category of a cofibration category
and gives a self-contained proof that the homotopy category
of a stable cofibration category is triangulated. 

Some results of this paper were announced in
the survey article~\cite{sch-leeds}; that paper contains more motivation 
and background, and can serve as a complement for the present article.
In~\cite{sch-leeds}, we based the definition of topological
triangulated categories on Quillen model categories.
Here, however, we use the more general concept of cofibration categories,
so the results here are somewhat more general 
than announced in~\cite{sch-leeds}.

\section{Topological triangulated categories}
\label{sec-topological}

The way we implement the notion of `topological enhancement'
of triangulated categories is to use {\em cofibration categories}.
This notion was first introduced and studied (in the dual formulation) 
by Brown~\cite{brown} under the name `categories of fibrant objects'.
Closely related sets of axioms have been explored by 
various authors, compare Remark~\ref{rk-other cofibration categories}.

\begin{defn}\label{def-cofibration category} 
  A {\em cofibration category} is a category $\cC$ equipped with
  two classes of morphisms, called {\em cofibrations}
  respectively {\em weak equivalences}, that satisfy the following axioms
  (C1)--(C4).
  \begin{itemize}
  \item[(C1)] All isomorphisms are cofibrations and weak equivalences.
    Cofibrations are stable under composition. 
    The category $\cC$ has an initial object and every morphism
    from an initial object is a cofibration.
  \item[(C2)] Given two composable morphisms $f$ and $g$ in $\cC$, 
    then if two of the three morphisms $f,g$ and~$gf$ are weak equivalences, 
    so is the third.
  \item[(C3)] Given a cofibration $i:A\to B$ and any morphism $f:A\to C$,
    there exists a pushout square
    \begin{equation}\begin{aligned}\label{eq-C3pushout}
        \xymatrix{ A \ar[r]^f \ar[d]_i & C \ar[d]^j \\ B\ar[r] & P}
      \end{aligned}\end{equation}
    in $\cC$ and the morphism $j$ is a cofibration. 
    If additionally $i$ is a weak equivalence, then so is $j$.
  \item[(C4)] Every morphism in $\cC$ can be factored as
    the composite of a cofibration followed by a weak equivalence.
  \end{itemize}
\end{defn}

An {\em acyclic cofibration} is a morphism that is both
a cofibration and a weak equivalence.
We note that in a cofibration category a coproduct $B\vee C$ 
of any two objects in $\cC$ exists by (C3) 
with $A$ an initial object, 
and the canonical morphisms from $B$ and $C$ to $B\vee C$ are cofibrations.

A property that we will frequently use is the {\em gluing lemma}.
This starts with a commutative diagram
$$\xymatrix{ A \ar[d]_\sim & B\ar[l]_-i\ar[r]\ar[d]_\sim & C\ar[d]^-\sim\\
 A' & B'\ar[l]^-{i'}\ar[r] & C'}$$
in a cofibration category $\cC$ such that $i$ and $i'$ are cofibrations
and all three vertical morphisms are weak equivalences.
The gluing lemma says that then the induced morphism 
on pushouts $A\cup_BC\to A'\cup_{B'}C'$ is a weak equivalence.  
A proof of the gluing lemma can be found in Lemma~1.4.1~(1) 
of~\cite{radulescu-ABC}.

The {\em homotopy category} of a cofibration category
is a localization at the class of weak equivalences, i.e., 
a functor $\gamma:\cC\to\bHo(\cC)$ that takes all weak
equivalences to isomorphisms and is initial among such functors.
The homotopy category always exists if one is willing to pass to
a larger universe. To get a locally small homotopy category 
(i.e., have `small hom-sets'),
additional assumptions are necessary;
one possibility is to assume that $\cC$ has `enough fibrant objects',
compare Remark~\ref{rk-enough fibrant objects}.
We recall some basic facts about the homotopy category of
a cofibration category in Theorem~\ref{thm-HoC a la Brown}

\begin{rk}\label{rk-other cofibration categories}
The above notion of cofibration category is due to K.\,S.\,Brown~\cite{brown}.
More precisely, Brown introduced `categories of fibrant objects', 
and the axioms (C1)--(C4) are equivalent to
the dual of the axioms (A)--(E) of Part~I.1 in~\cite{brown}. 
The concept of a cofibration category is a substantial
generalization of Quillen's notion of a `closed model category'~\cite{Q}: 
from a Quillen model category one obtains a cofibration category by
restricting to the full subcategory of cofibrant objects and 
forgetting the class of fibrations.

Cofibration categories are closely related to
`categories with cofibrations and weak equivalences'
in the sense of Waldhausen~\cite{waldhausen}.
In fact, a category with cofibrations and weak equivalences
that also satisfies the {\em saturation axiom}~\cite[1.2]{waldhausen}
and the {\em cylinder axiom}~\cite[1.6]{waldhausen} is in particular
a cofibration category as in Definition~\ref{def-cofibration category}.
Further relevant references on closely related axiomatic
frameworks are  Baues' monograph~\cite{baues-algebraic homotopy} and
Cisinski's article~\cite{cisinski-categories derivables}.
Radulescu-Banu's extensive paper~\cite{radulescu-ABC} 
is the most comprehensive source for basic results on cofibration
categories and, among other things, contains a survey of the different kinds of 
cofibration categories and their relationships.
\end{rk}

A cofibration category is {\em pointed} 
if every initial object is also terminal, hence a  zero object, 
which we denote by $\ast$.
In a pointed cofibration category, the factorization axiom~(C4)
provides a {\em cone} for every object $A$,
i.e., a cofibration $i_A:A\to CA$ whose target is weakly equivalent
to the zero object.
The {\em suspension} $\Sigma A$ of $A$ is the quotient
of the `cone inclusion', i.e., a pushout:
$$\xymatrix{ A \ar[r]^-{i_A} \ar[d]& CA \ar[d] \\ \ast \ar[r] & \Sigma A }$$
We recall in Proposition~\ref{prop-suspension} that there is a preferred way 
to make the suspension construction into functor
$\Sigma:\bHo(\cC)\to\bHo(\cC)$ on the level homotopy categories.
In other words: on the level of cofibrations categories
the cone, and hence the suspension, constitute a choice, 
but any set of choices becomes functorial upon passage to the homotopy category.
Moreover, different choices of cones lead to canonically isomorphic
suspension functors, compare Remark~\ref{rk-uniqueness of triangulation}.

Every cofibration $j:A\to B$ in a pointed cofibration category $\cC$
gives rise to a preferred and natural
{\em connecting morphism} $\delta(j):B/A\to \Sigma A$
in $\bHo(\cC)$, see~\eqref{eq:connecting_morphism}.
The {\em elementary distinguished triangle}
 associated to the cofibration $j$ is the triangle
$$ A \ \xra{\ \gamma(j)\ }\ B \ \xra{\ \gamma(q)\ }\ B/A 
\ \xra{\ \delta(j) \ }\ \Sigma A \ ,$$
where $q:B\to B/A$ is a quotient morphism.
A {\em distinguished triangle} is any triangle in the homotopy category
that is isomorphic to the elementary distinguished triangle of a cofibration.  

A pointed cofibration category is {\em stable} if the suspension functor
$\Sigma:\bHo(\cC)\to\bHo(\cC)$ is an autoequivalence.
We recall in Theorem~\ref{thm-HoC is triangulated}
that the suspension functor and the class of distinguished triangles make
the homotopy category $\bHo(\cC)$ of a stable cofibration category
into a triangulated category.
We call the class of triangulated categories arising in this way
the `topological triangulated categories'.
The adjective `topological' does {\em not} mean that the
category or its hom-sets are topologized; rather,
`topological' is supposed to indicate that these examples
are constructed by topological methods, or that they have models
in the spirit of abstract homotopy theory.

\begin{defn}\label{def-topological}
A triangulated category is {\em topological}
if it is equivalent, as a triangulated category,  
to the homotopy category of a stable cofibration category.
\end{defn}

Now we prove a basic closure property of topological triangulated categories.

\begin{prop}\label{prop-topological closed} 
Every full triangulated subcategory of a 
topological triangulated category is topological.
\end{prop}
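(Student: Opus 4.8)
The plan is to realize $\cS$ as the homotopy category of a full sub-cofibration-category of any model for the ambient category. Let $\cS$ be a full triangulated subcategory of a topological triangulated category $\cT$, choose a stable cofibration category $\cC$ together with a triangulated equivalence $\Phi\colon\bHo(\cC)\to\cT$, and write $\gamma\colon\cC\to\bHo(\cC)$ for the localization and $G=\Phi\circ\gamma$. I would let $\cD\subseteq\cC$ be the full subcategory of all objects $X$ for which $G(X)$ is isomorphic to an object of $\cS$, with the cofibrations and weak equivalences inherited from $\cC$. Since $G$ inverts weak equivalences, $\cD$ is closed under weak equivalences in $\cC$. The aim is to show that $\cD$ is a stable cofibration category and that the inclusion $\iota\colon\cD\to\cC$ induces a triangulated equivalence $\bHo(\cD)\simeq\cS$; this exhibits $\cS$ as topological.

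First I would verify the axioms (C1)--(C4). A zero object of $\cC$ lies in $\cD$, since its image is a zero object of $\cT$ and hence lies in the triangulated subcategory $\cS$; with this, (C1) and (C2) are inherited from $\cC$, and $\cD$ is pointed. For (C3), given a cofibration $i\colon A\to B$ and a morphism $f\colon A\to C$ in $\cD$, I form the pushout $P=B\cup_A C$ in $\cC$; because $\cD$ is full, it suffices to check that $P\in\cD$, and then the square is automatically a pushout in $\cD$. Now $C\to P$ has cofiber $B/A$, so there is an elementary distinguished triangle $C\to P\to B/A\to\Sigma C$. As $G(A)$ and $G(B)$ lie in $\cS$ and $\cS$ is closed under cones, $G(B/A)\in\cS$; together with $G(C)\in\cS$ this forces $G(P)\in\cS$, so $P\in\cD$, and the acyclicity clause is inherited. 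For (C4) I factor a morphism of $\cD$ in $\cC$ as a cofibration followed by a weak equivalence $W\xra{\sim}Y$; closure of $\cD$ under weak equivalences gives $W\in\cD$. Hence $\cD$ is a pointed cofibration category.

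Finally I would identify the homotopy category. The inclusion $\iota$ is exact and preserves zero objects, cones and suspensions---all built from (C3) and (C4), which $\iota$ preserves---so it induces $\bHo(\iota)\colon\bHo(\cD)\to\bHo(\cC)$ commuting with suspension and carrying elementary distinguished triangles to distinguished triangles. Since $\cC$ and $\bHo(\cC)$ have the same objects, the essential image of $\Phi\circ\bHo(\iota)$ is precisely $\cS$. The main obstacle is to show that $\bHo(\iota)$ is full and faithful; for this I would use the concrete description of the morphism sets of $\bHo(\cC)$ recalled in Theorem~\ref{thm-HoC a la Brown}. Those sets are computed from $\cC$ using only weak equivalences, factorizations, and cylinder objects, and for $X,Y\in\cD$ every object occurring in such a computation is either weakly equivalent to $Y$ (hence in $\cD$) or assembled from objects of $\cD$ by coproducts and factorizations (hence again in $\cD$, by the arguments for (C3) and (C4)); so each morphism and each identification between morphisms already takes place in $\cD$, giving a bijection $\bHo(\cD)(X,Y)\iso\bHo(\cC)(X,Y)$. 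Granting this, $\bHo(\iota)$ is a fully faithful functor with essential image $\cS$, hence an equivalence $\bHo(\cD)\simeq\cS$. Transporting the triangulation, the suspension of $\bHo(\cD)$ corresponds to the restriction of $\Sigma$ to $\cS$, which is an autoequivalence because $\cS$ is a triangulated subcategory; thus $\cD$ is stable. Because $\bHo(\iota)$ matches elementary distinguished triangles with the distinguished triangles of $\cS$, the equivalence is one of triangulated categories, and $\cS$ is topological.
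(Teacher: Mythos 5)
Your proposal follows essentially the same route as the paper's proof: you form the same full subcategory of the model (objects whose image in the homotopy category lands in $\cS$ up to isomorphism), verify (C3) via the same comparison of the elementary distinguished triangles of $i$ and its cobase change, inherit (C4) from closure under weak equivalences, and identify $\bHo(\cD)\simeq\cS$ by the same appeal to the fraction calculus and homotopy criterion of Theorem~\ref{thm-HoC a la Brown}. The only cosmetic differences are that you carry the equivalence $\Phi$ explicitly and spell out the stability of $\cD$, which the paper leaves implicit.
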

\begin{proof}
Let $\cC$ be a stable cofibration category and $\cT$ a
full triangulated subcategory of $\bHo(\cC)$. We let $\bar\cC$
denote the full subcategory of $\cC$ consisting of those objects
that are isomorphic in $\bHo(\cC)$ to an object in $\cT$.
We claim that $\bar\cC$ becomes a stable cofibration category
when we restrict the classes of cofibrations and weak equivalences
from $\cC$ to $\bar\cC$. 

Indeed, axioms (C1) and (C2) are directly inherited from $\cC$. 
Concerning axiom (C3) we observe that 
a pushout square~\eqref{eq-C3pushout} in $\cC$
in which $i:A\to B$ is a cofibration
gives rise to two elementary distinguished triangles 
$$ A \xra{\ \gamma(i)\ } B \xra{\qquad} B/A \xra{\ \delta(i)\ } \Sigma A 
\text{\qquad and\qquad}
C \xra{\ \gamma(j)\ } P \xra{\qquad} P/C \xra{\ \delta(j)\ } \Sigma C$$
in $\bHo(\cC)$. So if $A$ and $B$ belong to $\bar\cC$,
then so does the quotient $B/A$. If moreover $C$ belongs to $\bar\cC$,
then so does the pushout $P$ (since $B/A$ and $P/C$ are isomorphic).
Hence the square~\eqref{eq-C3pushout} is in fact a pushout in
the subcategory $\bar\cC$, so (C3) holds in $\bar\cC$.
Since $\bar\cC$ is closed under weak equivalences, a factorization
as in axiom (C4) in the ambient category $\cC$
is also a factorization  in $\bar\cC$,
so $\bar\cC$ inherits property~(C4).

As we just saw, the inclusion $\bar\cC\to\cC$ preserves 
the particular pushouts required by (C3), so it is an exact functor.
Moreover, the functor 
$\bHo(\bar\cC)\to\bHo(\cC)$ induced by the inclusion is fully faithful
(use parts~(i) and~(ii) of Theorem~\ref{thm-HoC a la Brown}).
Since the inclusion is exact, this induced functor is also exact
(by Proposition~\ref{prop-left derived is exact})
and hence an embedding of triangulated categories 
whose image is the original triangulated subcategory~$\cT$ of $\bHo(\cC)$.
\end{proof}

Examples of topological triangulated categories abound.
An important example is the 
{\em stable homotopy category} of algebraic topology which, to my knowledge,
was first published in the form it is used today
by Kan~\cite{kan-semisimplicial}. 
When Kan's paper appeared~1962, 
neither model categories nor cofibration categories had been formalized, 
but it is straightforward to deduce from the results therein
that Kan's `semisimplicial spectra' form a stable cofibration category.
Later Brown showed in~\cite[II, Theorem~5]{brown} that the semisimplicial
spectra even support a model category structure.
By now there is an abundance of models for the stable homotopy category, 
see for example~\cite{BF, EKMM, HSS, MMSS}.
The {\em Spanier-Whitehead category}~\cite{spanier-whitehead}, 
which predates the stable homotopy category, can be obtained from finite
based CW-complexes by formally inverting the suspension functor;
it is equivalent to the full subcategory of compact objects 
in the stable homotopy category,
so it is a topological triangulated category in our sense.

Further examples of topological triangulated categories 
are the homotopy categories of stable model categories,
including the `derived' (i.e., homotopy) categories of structured ring spectra
or spectral categories, 
equivariant and motivic stable homotopy categories,
sheaves of spectra on a Grothendieck site or (Bousfield-)localizations
of the above; a more detailed list of
specific references can be found in~\cite[Sec.\,2.3]{ss-modules}.
In Section~\ref{sec-algebraic} we will discuss the class of {\em algebraic}
triangulated categories and observe that every algebraic
triangulated category is in particular topological.

Triangulated categories that are not topological 
do not come up as frequently; in fact the author's point of view is precisely 
that topological triangulated categories are the ones that come up 
`in nature', and other examples have to be `manufactured by hand'.
The simplest example of a triangulated category that is not
topological (and hence also not algebraic) is the following, 
due to Muro.
The category $\mathcal F(\mZ/4)$ of finitely generated free modules over
the ring $\mZ/4$ has a unique triangulation with
the identity shift functor and such that the triangle
$$ \mZ/4\ \xra{\ 2\ }\ \mZ/4\ \xra{\ 2\ }\ \mZ/4\ \xra{\ 2\ }\ \mZ/4$$
is distinguished.
For details we refer to~\cite{nomodel}, where Muro, Strickland 
and the author discuss an entire family
of `exotic' (i.e., non-topological) triangulated categories
that includes $\mathcal F(\mZ/4)$ as the simplest case. 
I do not know any non-topological triangulated category 
in which~2 is invertible.

\section{Order}\label{sec-order}

As we explain in the next section,
every algebraic triangulated category is topological.
The aim of this paper is to measure to what extent the converse fails.
A prominent example of a non-algebraic, 
but topological triangulated category is the
stable homotopy category; the simple argument to see that, based 
on the mod-2 Moore spectrum, breaks down as soon as~2 is inverted,
for example for the localization
of the stable homotopy category at an odd prime.
So for a deeper understanding of this phenomenon we need different tools.
We now discuss our main concept, the notion of {\em $n$-order}
of an object $X$ in a triangulated category, 
which first appeared in the expository paper~\cite{sch-leeds}.
As a slogan, the $n$-order measures `how strongly'
the relation $n\cdot X=0$ holds. 

For an object $K$ of a triangulated category $\cT$ and a 
natural number $n$ we write $n\cdot K$ for the $n$-fold
multiple of the identity morphism in the abelian group of endomorphisms 
in $\cT$.
We let~$K/n$ denote any cone of $n\cdot K$,
i.e., an object which is part of a distinguished triangle
$$ K \xra{\ n\cdot\ } K \xra{\ \pi\ } K/n \xra{\quad} \Sigma K \ . $$
In the following definition, an {\em extension} of a morphism $f:K\to X$ 
is any morphism $\bar f:K/n\to X$ satisfying $\bar f\pi=f$.

\begin{defn}\label{def-order}
 We consider a triangulated category $\cT$, an object $X$ of $\cT$ 
 and a natural number $n\geq 1$.
 We define the {\em $n$-order} of $X$ inductively.
\begin{itemize}
\item Every object has $n$-order greater or equal to~0.
\item For $k\geq 1$, the object $X$ has $n$-order greater or equal to $k$
if and only if for every object $K$ of $\cT$ and every 
morphism $f:K\to X$ there exists
an extension $\bar f:K/n\to X$ such that some (hence any) cone 
of $\bar f$ has $n$-order greater or equal to $k-1$.
\end{itemize}
The {\em $n$-order} of the triangulated category $\cT$
is the $n$-order of some (hence any) zero object.
\end{defn}

Some comments about the definition are in order.
Since a cone is only well-defined up to
non-canonical isomorphism, we should justify that 
in the inductive definition it makes no difference
whether we ask for the condition for some or for any
cone of the extension~$\bar f$. 
This follows from the observation (which is most easily proved by induction
on $k$) that the property `having $n$-order greater or equal to $k$'
is invariant under isomorphism.

We write $n\ord(X)$ for the $n$-order of $X$, i.e., the largest $k$
(possibly infinite) such that $X$ has $n$-order greater or equal to $k$.
If we need to specify the ambient triangulated category, we
use the notation $n\ord^{\cT}(X)$.
We denote by $n\ord(\cT)$ the $n$-order of the triangulated category~$\cT$.

We record some direct consequences of the definition.

\begin{itemize}
\item The $n$-order for objects is invariant under isomorphism 
and shift.
\item An object $X$ has positive $n$-order if and only if every morphism
$f:K\to X$ has an extension to $K/n$, which is equivalent to
$n\cdot f=0$. So $n\ord(X)\geq 1$ is equivalent to the 
condition $n\cdot X=0$.
\item  
The $n$-order of a triangulated category is one
larger than the minimum of the $n$-orders of all objects of the form~$K/n$. 
\item Let $\cS\subseteq\cT$ be a full triangulated subcategory and
$X$ an object of $\cS$. 
Then we have $n\ord^{\cS}(X)\geq n\ord^{\cT}(X)$.
In the special case of a zero object we get $n\ord(\cS)\geq n\ord(\cT)$.
\item Suppose that $\cT$ is a $\mZ[1/n]$-linear triangulated category,
i.e., multiplication by $n$ is an isomorphism for every object of $\cT$.
Then $K/n$ is trivial for every object~$K$ and thus
$\cT$ has infinite $n$-order. If on the other hand $X$ is non-trivial, then
$n\ord(X)=0$. 
\item 
If every object of $\cT$ has positive $n$-order, then $n\cdot X=0$
for all objects $X$ and so $\cT$ is a $\mZ/n$-linear triangulated category.
Suppose conversely that $\cT$ is a $\mZ/n$-linear triangulated category.
Then induction on $k$ shows that $n\ord(X)\geq k$ for all objects
$X$, and thus every object has infinite $n$-order.
\end{itemize}

As an example of the typical kind of reasoning, 
we give the details for the fourth item.
We let $X$ be an object of the full triangulated subcategory $\cS$ of $\cT$
and show by induction on $k$ that if 
$n\ord^{\cT}(X)\geq k$, then also $n\ord^{\cS}(X)\geq k$.
There is nothing to show for $k=0$, so we may assume $k\geq 1$.
We consider any object $K$ of $\cS$ and morphism $f:K\to X$.
We choose a distinguished triangle for $n\cdot K$ in $\cS$, 
which then serves the same purpose in the larger category $\cT$.
For any extension $\bar f:K/n\to X$ (in $\cS$ or, what is the same, in $\cT$)
any mapping cone $C(\bar f)$ in $\cS$ is also a mapping cone in $\cT$, and
so $n\ord^\cT(C(\bar f))\geq k-1$ since $n\ord^{\cT}(X)\geq k$.
By the inductive hypothesis, $n\ord^\cS(C(\bar f))\geq k-1$,
which shows that the $n$-order of $X$ in the subcategory $\cS$ is at least~$k$.

The last two items above show that the concept of $n$-order
is not interesting for triangulated categories which are
linear over some field. Indeed, if $k$ is a field
and $\cT$ a $k$-linear triangulated category, then the number
$n$ is either invertible or zero in $k$, and hence 
$\cT$ is either $\mZ[1/n]$-linear or $\mZ/n$-linear.
In either case, the $n$-order of $\cT$ is infinite.

\begin{rk}
We can define the co-order of $X$ in a way dual to the order. 
An object $X$ has {\em $n$-co-order} at least $k$ if for every
morphism $g:X\to \Sigma K$ there is a lifting $\bar g:X\to K/n$
such that some (hence any) cone of $\bar g$
has $n$-co-order at least $k-1$. Then $X$ has positive $n$-co-order if and
only $n\cdot X=0$, so the $n$-co-order is another way to measure 
`to what extent' an object $X$ is annihilated by $n$.

The notion of co-order is, however, a special case of order. In fact,
for any triangulated category $\cT$, the opposite category $\cT^\text{op}$
is triangulated by exchanging the role of the shift and its inverse, and
by keeping the exact triangles, but now viewed in the opposite category. 
Then the $n$-co-order of $X$ in $\cT$
is nothing but the $n$-order of $X$ in the opposite category.

If the triangulated category $\cT$ admits a `duality', i.e.,
an exact equivalence $D:\cT\to\cT^\text{op}$ to its opposite category,
then we can compare order and co-order as 
$$  n\ord^{\cT}(X) \ = \ n\ord^{\cT^\text{op}} (DX) \ = \
n\text{-co-ord}^{\cT}(DX) \ .$$
If moreover $X$ is self-dual, i.e., isomorphic to some shift of $DX$, then
the $n$-order and $n$-co-order of~$X$ coincide.
\end{rk}

Now we prove a sufficient criterion, the existence of a `mod-$n$ reduction',
so that a triangulated category has infinite $n$-order. 
This criterion will be used twice below,
namely in Proposition~\ref{prop-reduction of ring spectra} to show 
that the homotopy categories of certain structured ring spectra
have infinite $n$-order, and in the next section  to show 
that every algebraic triangulated category has infinite $n$-order.

\begin{defn}\label{defn-mod n reduction} 
Let $\cT$ be a triangulated category and $n$ a natural number.
A {\em mod-$n$ reduction} of $\cT$ 
consists of a triangulated category $\cT/n$
and an exact functor $\rho_*:\cT\to\cT/n$
that has a right adjoint  $\rho^*$ and
such that for every object $X$ of $\cT$ there exists a distinguished triangle
\begin{equation}\label{eq-reduction triangle}
 X \xra{\ n\cdot } X \xra{\ \eta_X\ } \rho^*(\rho_* X) \xra{\quad} \Sigma X 
\end{equation}
where $\eta_X$ is the unit of the adjunction.
\end{defn}

Note that in the definition of a mod-$n$ reduction,
we do {\em not} require that the category $\cT/n$ is $\mZ/n$-linear.
However, we always have $n\cdot (\rho^*Z)=0$ for all objects
$Z$ of $\cT/n$. Indeed, for $X=\rho^*Z$ the adjunction unit 
$\eta_{\rho^*Z}:\rho^*Z\to\rho^*(\rho_*(\rho^*Z))$ has a left inverse
$\rho^*(\epsilon_Z)$, where $\epsilon_Z:\rho_*(\rho^*Z)\to Z$ is the
adjunction counit. The distinguished triangle~\eqref{eq-reduction triangle} 
thus splits for $X=\rho^*Z$, and so we have $n\cdot(\rho^*Z)=0$ in $\cT$.

\begin{prop}\label{prop-reduction implies infinite order} 
Let $\cT$ be a triangulated category and $n\geq 1$.
If $\cT$ has a mod-$n$ reduction, then for every object $X$ of $\cT$
the object $X/n$ has infinite $n$-order.
Thus the triangulated category~$\cT$ has infinite $n$-order.
\end{prop}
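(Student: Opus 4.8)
The plan is to exploit the identification of $X/n$ with $\rho^*(\rho_* X)$ that is built into the definition of a mod-$n$ reduction, and then to prove that the entire class of objects in the image of the right adjoint $\rho^*$ has infinite $n$-order. Indeed, the distinguished triangle~\eqref{eq-reduction triangle} exhibits $\rho^*(\rho_* X)$ as a cone of $n\cdot X$, so $X/n\iso\rho^*(\rho_* X)$; since $n$-order is invariant under isomorphism, it suffices to show that $\rho^*Z$ has infinite $n$-order for every object $Z$ of $\cT/n$.

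First I would record that the right adjoint $\rho^*$ is itself exact, because a right adjoint of an exact functor between triangulated categories is automatically exact. This is the structural input that lets me transport distinguished triangles from $\cT/n$ into the image of $\rho^*$, and it is what makes the induction close up.

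The core is an induction on $k$ showing $n\ord(\rho^*Z)\geq k$ for every $Z$. The case $k=0$ is vacuous. For the inductive step I start with an arbitrary morphism $f\colon K\to\rho^*Z$. The crucial move is to use the reduction triangle for the \emph{source} $K$: it lets me take $K/n=\rho^*(\rho_* K)$ with the quotient map $\pi$ equal to the unit $\eta_K$. Writing $\tilde f\colon\rho_* K\to Z$ for the adjoint of $f$, the standard adjunction identity $f=\rho^*(\tilde f)\circ\eta_K$ shows that the morphism $\bar f=\rho^*(\tilde f)$ is an extension of $f$ along $\pi$. Next I would choose a distinguished triangle on $\tilde f$ in $\cT/n$ with cone $W$, and apply the exact functor $\rho^*$ to obtain a distinguished triangle in $\cT$ whose first morphism is $\bar f$ and whose cone is $\rho^*W$. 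By the inductive hypothesis $\rho^*W$ has $n$-order at least $k-1$, and since $n$-order is isomorphism-invariant the same holds for any cone of $\bar f$. Hence $\rho^*Z$ has $n$-order at least $k$, completing the induction.

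Applying this to $X/n\iso\rho^*(\rho_* X)$ gives the first assertion. For the statement about the whole category I would invoke the recorded fact that the $n$-order of $\cT$ is one more than the minimum of the $n$-orders of all objects of the form $K/n$; since every such $K/n\iso\rho^*(\rho_* K)$ has infinite $n$-order, this minimum is infinite and so is $n\ord(\cT)$. The one genuine idea is the choice of extension $\bar f=\rho^*(\tilde f)$: because $K/n$ itself lies in the image of $\rho^*$, this choice keeps the whole configuration---extension and cone alike---inside the image of $\rho^*$, which is exactly the class the induction is about. Without this observation an arbitrary cone of an arbitrary extension need not be of the form $\rho^*(-)$, and the induction would have nothing to feed on.
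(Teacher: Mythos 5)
Your proof is correct and follows essentially the same route as the paper's: reduce to showing $\rho^*Z$ has infinite $n$-order, induct on $k$, take the adjoint $\tilde f\colon\rho_*K\to Z$ of $f$, use $\bar f=\rho^*(\tilde f)$ as the extension, and identify its cone as $\rho^*$ of a cone of $\tilde f$ via exactness of $\rho^*$. Your explicit remark that $\rho^*$ is exact because right adjoints of exact functors are exact is a point the paper uses but leaves implicit; otherwise the two arguments coincide.
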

\begin{proof}
We let $(\cT/n,\rho_*)$ be a mod-$n$ reduction and $\rho^*$
a right adjoint of $\rho_*$.
Since $X/n$ is isomorphic to $\rho^*(\rho_*X)$, it is enough to show that
for every object $Z$ of $\cT/n$ and all~$k\geq 0$, the $\cT$-object $\rho^*Z$ 
has $n$-order greater or equal to~$k$.

We proceed by induction on~$k$; for $k=0$ there is nothing to prove.
Suppose we have already shown that every $\rho^*Z$ has $n$-order 
greater or equal to~$k-1$ for some positive~$k$. 
Given a morphism $f:K\to \rho^*Z$ in $\cT$ we form
its adjoint $\hat f:\rho_*K\to Z$ in $\cT/n$; if we apply 
$\rho^*$ we obtain an extension 
$\rho^*(\hat f):\rho^*(\rho_*K)\to\rho^*Z$ of $f$.
We choose a cone of $\hat f$, i.e., a distinguished triangle
$$ \rho_*K \ \xra{\ \hat f\ } \ Z \ \to \ C(\hat f)\ \to\ \Sigma(\rho_*K)$$
in $\cT/n$. Since $\rho^*$ is exact, $\rho^*C(\hat f)$ is a cone
of the extension $\rho^*(\hat f)$ in $\cT$. By induction, $\rho^*C(\hat f)$ 
has $n$-order greater or equal to $k-1$, which proves that $\rho^*Z$
has $n$-order greater or equal to~$k$.
\end{proof}

Now we give examples of topological triangulated categories 
that have mod-$n$ reductions, and thus infinite $n$-order.
The examples which follow are derived categories (or homotopy categories)
of structured ring spectra; for some of these examples I do not know 
whether they are algebraic or not.

For definiteness, we work with symmetric ring spectra~\cite{HSS},
but the following arguments would work just as well for structured ring spectra
in any one of the modern model categories of spectra 
with compatible smash product. A symmetric ring spectrum $R$ 
has a stable model category of left $R$-module spectra~\cite[Cor.\,5.4.2]{HSS}.
We denote by $\bD(R)$ the homotopy category of $R$-module
spectra and refer to it as the {\em derived category} of $R$; 
this is a topological triangulated category. 
For example, for the Eilenberg-Mac\,Lane ring
spectrum~$HA$ of an ordinary ring~$A$,
the derived category $\bD(HA)$ is triangulated equivalent 
to the unbounded derived category of complexes of
$A$-modules (see~\cite[App.\,B.1]{ss-modules}). 
If $R=\mS$ is the sphere spectrum, 
then $\bD(\mS)$ is the homotopy category of symmetric spectra,
hence equivalent to the stable homotopy category.

I owe the following proposition to Tyler Lawson.

\begin{prop}\label{prop-reduction of ring spectra} 
Let $R$ be a commutative symmetric ring spectrum and $n\geq 1$.
Suppose that there exists an $R$-algebra spectrum $B$ that is
a cone of $n\cdot R$ in the following sense:
there is a distinguished triangle 
\begin{equation}\label{eq-algebra triangle}
 R \xra{\ n\cdot } R \xra{\ \eta\ } B \xra{\quad} \Sigma R
\end{equation}
in the derived category $\bD(R)$ of $R$-module spectra,
where $\eta:R\to B$ is the unit morphism of the 
$R$-algebra structure. 
Then for every $R$-algebra spectrum~$A$ the derived category
$\bD(A)$ has a mod-$n$ reduction and thus infinite $n$-order.
\end{prop}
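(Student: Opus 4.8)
The plan is to produce an explicit mod-$n$ reduction of $\bD(A)$ by base change along the algebra $B$ and then invoke Proposition~\ref{prop-reduction implies infinite order}. The candidate reduction is $\cT/n=\bD(C)$, where $C=B\sm_R A$. First I would check that $C$ is an $R$-algebra: since $R$ is commutative, the category of $R$-modules is symmetric monoidal under $\sm_R$, and the smash product of two associative $R$-algebras is again an associative $R$-algebra, with multiplication built from the symmetry isomorphism interchanging the inner factors, exactly as for the tensor product of two associative algebras over a commutative ring (no commutativity of $A$ or $B$ is needed). The unit map $\eta\sm\Id_A:A\cong R\sm_R A\to B\sm_R A=C$ is then a homomorphism of ring spectra, so $C$ is an $A$-algebra. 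At the derived level I would take $\rho_*:\bD(A)\to\bD(C)$ to be extension of scalars $C\sm^{\mathbb{L}}_A(-)$ and $\rho^*$ to be restriction of scalars along $A\to C$; these form a derived adjunction $\rho_*\dashv\rho^*$, and $\rho_*$ is exact because it is the total left derived functor of a left Quillen functor.

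The heart of the argument is to identify $\rho^*\rho_* M$ for an $A$-module $M$. Using the derived base-change (associativity) isomorphism I would establish a natural weak equivalence $\rho^*\rho_* M=C\sm_A M=(B\sm_R A)\sm_A M\simeq B\sm_R M$, check that under this identification the $A$-module structure on $B\sm_R M$ is the one coming from $M$ (the factor $B$ playing no role), and check that the adjunction unit $\eta_M$ corresponds to $\eta\sm\Id_M:M\cong R\sm_R M\to B\sm_R M$. Granting this, the distinguished triangle \eqref{eq-reduction triangle} required of a mod-$n$ reduction comes for free by smashing the defining triangle $R\xra{\,n\cdot\,}R\xra{\,\eta\,}B\to\Sigma R$ of $B$ with $M$ over $R$: since $(-)\sm_R M$ is an exact functor into $\bD(A)$, this yields a distinguished triangle $M\xra{\,n\cdot\,}M\xra{\,\eta\sm\Id_M\,}B\sm_R M\to\Sigma M$, which is exactly \eqref{eq-reduction triangle} after the identification above. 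Having exhibited $(\bD(C),\rho_*)$ as a mod-$n$ reduction of $\bD(A)$, Proposition~\ref{prop-reduction implies infinite order} immediately gives that $\bD(A)$ has infinite $n$-order.

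I expect the main obstacle to be the bookkeeping behind the second paragraph: translating these clean homotopy-category statements into the underlying model category of module spectra, so that the derived smash products, the base-change isomorphism, and especially the matching of the adjunction unit with $\eta\sm\Id_M$ are all verified compatibly (tracking cofibrant replacements and the symmetry isomorphisms) rather than only up to an unspecified isomorphism. Once the natural identification $\rho^*\rho_* M\simeq B\sm_R M$ carrying $\eta_M$ to $\eta\sm\Id_M$ is pinned down, the rest of the proof is formal.
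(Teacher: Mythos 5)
Your proposal matches the paper's proof essentially step for step: the paper likewise takes the reduction to be $\bD(A\sm_R B)$ with $\rho_*$ the derived extension of scalars along the $R$-algebra map $A\to A\sm_R B$ and $\rho^*$ the derived restriction, identifies $\rho^*(\rho_*M)$ with $M\sm^L_R B$ so that the adjunction unit corresponds to $M\sm^L_R\eta$, and produces the triangle~\eqref{eq-reduction triangle} by smashing~\eqref{eq-algebra triangle} over $R$ with $M$. The bookkeeping you flag at the end is resolved there exactly as you anticipate: one replaces $B$ by a stably equivalent $R$-algebra that is cofibrant as an $R$-module, takes $M$ cofibrant, and notes that $M\sm_R-$ is a left Quillen functor, so the point-set smash product $M\sm_R B\cong f^*(f_*M)$ represents the derived one compatibly with the unit.
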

\begin{proof}
We can replace $B$ by a stably equivalent $R$-algebra that is cofibrant 
in the stable model structure of $R$-algebras of~\cite[Cor.\,5.4.3]{HSS}; 
this way we can arrange that $B$ is cofibrant as an $R$-module
(again by~\cite[Cor.\,5.4.3]{HSS}).

The smash product $A\sm_R B$ over $R$ is another $R$-algebra
spectrum equipped with a homomorphism $f=A\sm\eta:A\iso A\sm_RR\to A\sm_R B$ 
of $R$-algebras. Then $f$ gives rise to a Quillen adjoint functor pair
between the associated stable model categories of $A$-modules
and $(A\sm_RB)$-modules:
the right adjoint $f^*$ is `restriction of scalars' and
the left adjoint $f_*=B\sm_R-$ is `extension of scalars'.
This Quillen functor pair descends to a pair  of
adjoint total derived functors 
on the level of triangulated homotopy categories
$$\xymatrix@C=18mm{ \bD(A) \quad \ar@<.4ex>[r]^-{\rho_*=L(f_*)} & 
\ar@<.4ex>[l]^-{\rho^*=R(f^*)} \quad \bD(A\sm_R B)\ . }$$
We claim that this data is a mod-$n$ reduction for $\bD(A)$.
The only missing property is that for every $A$-module $M$
the object $\rho^*(\rho_*M)$ models $M/n$.
For this we can assume without loss of generality
that the $A$-module $M$ is cofibrant.
Since $R$ is central in $A$, we can view the left $A$-module $M$
as an $A$-$R$-bimodule; the functor
$$ M\sm_R - \ : \ R\Mod \ \to \ A\Mod$$
is a left Quillen functor, and so descends to an exact left derived 
functor of triangulated categories
$$ M\sm^L_R - \ \ : \ \bD(R) \ \to \ \bD(A)\ . $$
Hence we obtain a distinguished triangle 
\begin{equation}\label{eq-module triangle}
 M \xra{\ n\cdot\ } M \xra{\, M\sm^L_R \eta\, } 
M\sm_R^L B \xra{\quad} \Sigma M
\end{equation}
in $\bD(A)$ by smashing the triangle~\eqref{eq-algebra triangle} 
over $R$ with $M$ and using the unit isomorphism between $M\sm^L_R R$ and~$M$.

Since $B$ is cofibrant as an $R$-module, the derived smash product
$M\sm_R^L B$ is represented by the pointset level smash product
$M\sm_R B$ which is isomorphic, as a left $A$-module, 
to $f^*(f_*M)=(A\sm_RB)\sm_AM$.
So $\rho^*(\rho_*M)$ is isomorphic
in $\bD(A)$ to $M\sm_R^L B$ in such a way that 
the adjunction unit corresponds to the morphism
$M\sm^L_R\eta:M\to M\sm_R^L B$. 
So the triangle~\eqref{eq-module triangle} completes the proof that
$(\bD(A\sm_R B),\rho_*)$ is a mod-$n$ reduction of $\bD(A)$.
\end{proof}

\begin{eg}
The hypothesis in Proposition~\ref{prop-reduction of ring spectra}
on the commutative symmetric ring spectrum~$R$ can be paraphrased
by saying that the $R$-module spectrum $R/n$ (or rather, some $R$-module
spectrum of this homotopy type) can be given the structure of
an $R$-algebra spectrum.
A theorem of Angeltveit~\cite[Cor.\,3.2]{angeltveit} gives a sufficient
condition for this in terms of the graded ring $\pi_*R$ of homotopy groups
of $R$: if $\pi_*R$ is $n$-torsion free and concentrated in even dimensions,
then $R/n$ admits an $A_\infty$ structure 
compatible with the $R$-module structure; equivalently, 
there is an $R$-algebra spectrum 
whose underlying $R$-module has the homotopy type of $R/n$. 

Some prominent examples of commutative ring spectra that satisfy
Angeltveit's criterion are the complex cobordism spectrum $MU$,
the complex topological $K$-theory spectrum $KU$ and the Lubin-Tate 
spectra $E(k,\Gamma)$ for a formal group law $\Gamma$ of finite
height over a perfect field~$k$. 
So Proposition~\ref{prop-reduction of ring spectra} implies that
for every algebra spectrum $A$ over any of these commutative ring spectra, 
the derived category $\bD(A)$
has infinite $n$-order for every $n\geq 1$.
\end{eg}

\section{Algebraic triangulated categories}\label{sec-algebraic}

An important class of triangulated categories are the
{\em algebraic} triangulated categories, those 
triangulated categories that admit a `differential graded model'. 
In this section we review algebraic triangulated categories
and explain why all algebraic triangulated categories are also topological,
see Proposition~\ref{prop-algebraic is topological}.
Our main new result is then Theorem~\ref{thm-general algebraic}, 
the existence of mod-$n$ reductions for algebraic triangulated categories; 
by Proposition~\ref{prop-reduction implies infinite order} above,
this implies  that algebraic triangulated categories have infinite $n$-order.

The earliest formalization of differential graded models
for triangulated categories seems to be the notion of 
{\em enhanced triangulated category}
of Bondal and Kapranov~\cite[\S 3]{bondal-kapranov}.
We deviate from the standard conventions 
in two minor points. First, we grade complexes {\em homologically}
(as opposed to cohomologically), i.e., differentials decrease the
degree by~1; this is more in tune with grading conventions in topology. 
Second, we use covariant representable functors 
(as opposed to contravariant representable functors),
which amounts to the passage to opposite dg categories.

A {\em differential graded category}, or simply {\em dg category},
is a category $\cB$ enriched in chain complexes of abelian groups.
So a dg category consists of a class of objects,
a chain complex $\cB(X,Y)$ of morphisms for every pair of objects, 
and composition morphisms of chain complexes
$$ \cdot \ : \ \cB(Y,Z) \tensor \cB(X,Y) \ \to \ \cB(X,Z)$$
for every triple of objects.
The composition morphisms have to be associative and have to admit
two-sided unit elements $1_X\in \cB(X,X)_0$ for all objects $X$,
satisfying $d(1_X)=0$.

A {\em $\cB$-module} is a covariant dg functor from $\cB$ to chain complexes
of abelian groups. In more detail, a $\cB$-module $M$ assigns 
to each object $Z$ of $\cB$ a chain complex
$M(Z)$ and to each pair of objects a morphism of chain complexes
$$ \cdot \ : \ \cB(Y,Z) \tensor M(Y) \ \to \ M(Z)\ .$$
This data is required to be associative with respect to the composition
in $\cB$ and the unit cycles have to act as identities. 
A $\cB$-module $M$ is {\em representable} if there exists a pair $(Y,u)$
consisting of an object~$Y$ of $\cB$ and a {\em universal 0-cycle}
$u\in M(Y)_0$ such that for every object $Z$ of $\cB$ the evaluation morphism
\[ \cB(Y,Z)\ \to \ M(Z) \ , \quad \varphi \ \longmapsto \ \varphi\cdot u \]
is an isomorphism of chain complexes. 

A dg category $\cB$ is {\em pretriangulated} if it has a zero object 
and the following two closure properties:
\begin{enumerate}[(a)]
\item (Closure under shifts)
For an object $X$ of a dg category $\cB$ and an integer $n$ 
we define the $\cB$-module $\cB(X,-)[n]$ on objects by
\[ \big(\cB(X,Z)[n]\big)_{n+k}\ = \ \cB(X,Z)_k \]
with differential and action of morphisms by
\[ d(f[n])\ = \ (-1)^n\cdot (df)[n] \text{\qquad respectively\qquad}
 \varphi\cdot(\psi[n]) \ = \ (-1)^{n|\varphi|}\cdot (\varphi\psi)[n]\ .\]
Here we use the notation $f[n]$ when we consider an element $f\in \cB(X,Z)_k$
as an element of $\big(\cB(X,Z)[n]\big)_{n+k}$.
Then for every object $X$ of $\cB$ and every integer $n$, the $\cB$-module
$\cB(X,-)[n]$ is representable.
\item (Closure under cones)
Given a closed morphism $f:X\to Y$ (i.e., a 0-cycle in $\cB(X,Y)$)
we consider the $\cB$-module $M$ defined on objects by
\[ M(Z)_k \ = \  \cB(Y,Z)_k\oplus\cB(X,Z)_{k+1}\]
with differential and action of morphisms by
\[ d(a, b)\ = \ (d(a),\ af-d(b))
\text{\qquad respectively\qquad}
\varphi\cdot (a, b)\ = \ (\varphi a,\ (-1)^{|\varphi|}\cdot \varphi b)\ .\]
Then the $\cB$-module $M$ is representable.
\end{enumerate}

Underlying any dg category $\cB$ is a preadditive `cycle category'
$\cZ(\cB)$ with the same objects as~$\cB$, but with morphisms
given by the 0-cycles in the morphism chain complexes, i.e.,
$\cZ(\cB)(X,Y)=\ker(d:\cB(X,Y)_0\to \cB(X,Y)_{-1})$.
The {\em homology category $\bH(\cB)$} 
(also called the `homotopy category') is a quotient
of the cycle category; it also has the same objects as $\cB$, 
but has as morphism sets the 0-th homology groups
of the homomorphism complexes, $\bH(\cB)(X,Y)=H_0(\cB(X,Y))$.

If a dg category $\cB$ is pretriangulated, 
then the homology category $\bH(\cB)$ 
can be canonically triangulated, as we recall now.
A {\em shift}  of an object $X$ is any object $X[1]$
that represents the module $\cB(X,-)[-1]$. Choices of shifts for all
objects of $\cB$ assemble canonically into an invertible shift
functor $X\mapsto X[1]$ on $\cB$.
The shift functor on $\bH(\cB)$ is induced by this shift functor 
on $\cB$. The distinguished triangles arise from mapping
cone sequences in $\cB$.
Given a closed morphism $f:X\to Y$ we let $Cf$ be 
 {\em mapping cone} of $f$, i.e., 
a representing object for the module in~(b) above.
The cone comes with a universal 0-cycle
\[ (i,u)\ \in \ \cB(Y,Cf)_0\oplus \cB(X,Cf)_1\ = \ M(Cf)_0\ ; \]
the cycle condition means that $d(i)=0$ and $d(u)=if$.
We let $p\in\cB(Cf,X)_{-1}$ be the element characterized by
\[ p\cdot (i,u)\ = \ (0,1_X)\ \in \ 
\cB(Y,X)_{-1}\oplus \cB(X,X)_0\ = \ M(X)_{-1}  \ .\] 
Since $(0,1_X)$ is a cycle, so is $p$, 
which is thus represented by a closed morphism $\bar p:Cf\to X[1]$.
By definition, a triangle in $\bHo(\cB)$ is {\em distinguished}
if it is isomorphic to the image of a triangle of the form
\[    X \ \xra{\ f\ }\ Y \ \xra{\ i\ }\ Cf\ \xra{\ \bar p \ }\  X[1]  \]
for some closed morphism $f$ in $\cB$.
A proof that this really makes $\bH(\cB)$ into a triangulated category
can be found in~\cite[\S 3, Prop.\,2]{bondal-kapranov}.
Alternatively, one can obtain the triangulation by
combining Proposition~\ref{prop-algebraic is topological} 
about the stable cofibration structure on $\cZ(\cB)$
with Theorem~\ref{thm-HoC is triangulated}.
A triangulated category is {\em algebraic} if it is equivalent,
as a triangulated category, to the homology category of
some pretriangulated dg category.

Algebraic triangulated categories can be introduced in
at least two other, equivalent, ways. One way is as 
the full triangulated subcategories of 
homotopy categories of additive categories,
compare Example~\ref{eg-chains in additive} below.
Another way is as the stable categories of exact Frobenius categories.
For the equivalence of these three approaches, and 
for more details, background and references, 
we refer to~\cite{bondal-kapranov},
Keller's ICM article~\cite{keller-differential graded} 
or the paper~\cite{krause-chicago} by Krause.

\medskip

Now we define the structure of a cofibration category
on the cycle category $\cZ(\cB)$ of a pretriangulated dg category $\cB$.
A closed morphism is a {\em weak equivalence} 
if it becomes an isomorphism in $\bH(\cB)$.
A closed morphism $i:A\to B$ is a {\em cofibration} if for every object $Z$
of $\cB$ the induced chain morphism $\cB(i,Z)$
is surjective and the kernel $\cB$-module
\[ Z \ \longmapsto \ \ker\big[ \cB(i,Z):\cB(B,Z)\to\cB(A,Z)\big] \]
is representable. 

We note that if $(C,u)$ represents the kernel module of $\cB(i,-)$,
then the universal 0-cycle is a closed morphism 
$u:B\to C$ such that for every object $Z$ of $\cB$ 
the sequence of cycle groups
\[ 0 \ \to \ \cZ(\cB)(C,Z) \ \xra{\ u^*\ }\ \cZ(\cB)(B,Z)
\ \xra{\ i^*\ } \ \cZ(\cB)(A,Z) \]
is exact (but the map $i^*$ need not be surjective).
This means that $u:B\to C$ is in particular a cokernel
of $i:A\to B$ in the category $\cZ(\cB)$.

\begin{eg}\label{eg-chains in additive} 
Many examples of pretriangulated dg categories
arise from additive categories as follows.
We let $\cA$ be an additive category and denote by
$C(\cA)$ the category of $\mZ$-graded chain complexes of objects in~$\cA$, 
with morphisms the chain maps of homogeneous degree~0.
This category has a natural dg enrichment: for two chain complexes~$X$ and $Y$ 
the chain complex
$\un{C}(\cA)(X,Y)$ of morphisms is given by
\[ \un{C}(\cA)(X,Y)_n \ = \ \prod_{k\in\mZ}\cA(X_k,Y_{k+n})\ ,\]
the abelian group of graded morphisms of homogeneous degree~$n$.
The differential in this complex is given on $f\in\un{C}(\cA)(X,Y)_n$ by
$$ df = d_Y\circ f - (-1)^n  f\circ d_X $$
and the composition morphisms are given by composition.

In this example, the cycle category $\cZ(\un{C}(\cA))$
is simply the category $C(\cA)$, and the homology category $\bH(\un{C}(\cA))$
is the homotopy category $\bK(\cA)$ of complexes modulo chain homotopy.
The weak equivalences specialize to the class of chain
homotopy equivalences, and the cofibrations are those chain maps
that are dimensionwise split monomorphisms.
\end{eg}

An object of a cofibration category $\cC$ is {\em fibrant}
if every acyclic cofibration out of it has a retraction.
Fibrant objects are useful because, for example, every
morphism in $\bHo(\cC)$ into a fibrant object
can be represented by a $\cC$-morphism. For more 
about fibrant objects we refer to Remark~\ref{rk-enough fibrant objects}.

\begin{prop}\label{prop-algebraic is topological} 
Let $\cB$ be a pretriangulated dg category. 
Then the cofibrations and weak equivalences
make the cycle category $\cZ(\cB)$ into a stable cofibration category 
in which every object is fibrant.
Moreover, the homotopy category $\bHo(\cZ(\cB))$
is equivalent, as a triangulated category,
to the homology category $\bH(\cB)$.
In particular,
every algebraic triangulated category is a 
topological triangulated category.
\end{prop}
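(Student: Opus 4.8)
The plan is to verify the four cofibration-category axioms for $\cZ(\cB)$ with the given classes, then check stability of the suspension functor, then identify the resulting homotopy category and triangulation with $\bH(\cB)$. Throughout I will lean on the pretriangulatedness of $\cB$ (closure under shifts and cones) and on the representability bookkeeping that translates colimits in $\cZ(\cB)$ into representing objects for explicit $\cB$-modules.

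First I would dispose of the axioms that are essentially formal. Axiom (C2) is immediate because weak equivalences are defined as the closed morphisms inverted in $\bH(\cB)$, and isomorphisms in any category satisfy two-out-of-three. For (C1), I would check that the defining conditions on a cofibration $i\colon A\to B$ (surjectivity of each $\cB(i,Z)$ together with representability of the kernel module) are preserved under composition and hold for every isomorphism; and that the zero object is initial with every map out of it a cofibration (here the relevant kernel module is just $\cB(B,-)$ itself, hence representable by $B$). Axiom (C3) is the heart of the formal part: given a cofibration $i\colon A\to B$ and any closed $f\colon A\to C$, I would construct the pushout $P$ by exhibiting a representing object for the $\cB$-module $Z\mapsto \cB(B,Z)\times_{\cB(A,Z)}\cB(C,Z)$, using the surjectivity in the definition of cofibration to see that this fibre product module sits in a short exact sequence built from the kernel module of $\cB(i,-)$; representability of that kernel module (plus representability of $\cB(C,-)$) is what forces $P$ to exist, and the same exact-sequence analysis shows $j\colon C\to P$ is again a cofibration and that $j$ is a weak equivalence when $i$ is. For (C4), every closed morphism $f\colon X\to Y$ factors through the mapping cylinder obtained from the cone construction of part~(b) of pretriangulatedness; the inclusion $X\to\operatorname{cyl}(f)$ is a cofibration and the projection $\operatorname{cyl}(f)\to Y$ is a chain-homotopy equivalence, hence a weak equivalence.

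Next I would treat stability. The suspension in $\cZ(\cB)$ coming from the cone/quotient construction must be compared with the shift functor $X\mapsto X[1]$ on $\bH(\cB)$: the cone of the cofibration $A\to CA$ into a contractible cone is, by the explicit formula in part~(b), a representing object for $\cB(A,-)[-1]$, i.e.\ exactly the shift $A[1]$ provided by closure under shifts. Since that shift functor is invertible on $\cB$ (and hence on $\bH(\cB)$), the suspension functor on $\bHo(\cZ(\cB))$ is an autoequivalence, so $\cZ(\cB)$ is stable.

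Finally I would identify homotopy categories and triangulations. Because every object is fibrant (an acyclic cofibration $i\colon A\to B$ admits a retraction: its cokernel $u\colon B\to C$ is then acyclic too, so the representing splitting data gives a section), the basic theory of Theorem~\ref{thm-HoC a la Brown} shows that morphisms in $\bHo(\cZ(\cB))$ are computed as chain-homotopy classes of closed morphisms, giving a canonical identification $\bHo(\cZ(\cB))\simeq\bH(\cB)$ on objects and morphisms. It then remains to match the two triangulated structures: the elementary distinguished triangle of a cofibration $A\to B$, built from the quotient $B/A$ and the connecting map to $\Sigma A\cong A[1]$, must be carried to the Bondal--Kapranov mapping-cone triangle $X\xra{f}Y\to Cf\xra{\bar p}X[1]$. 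Since any closed morphism is weakly equivalent (via the mapping cylinder factorization) to a cofibration whose quotient is exactly the mapping cone, the two notions of distinguished triangle coincide up to isomorphism. The statement that every algebraic triangulated category is topological is then immediate from Definition~\ref{def-topological}. I expect the main obstacle to be axiom (C3): getting the pushout to exist requires threading the representability of the kernel module through a short exact sequence of $\cB$-modules and checking that the resulting representing object genuinely has the universal property of a pushout in $\cZ(\cB)$ (not merely in $\bH(\cB)$), together with the careful sign and degree bookkeeping inherited from the shift and cone formulas.
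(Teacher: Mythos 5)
Your plan reproduces the architecture of the paper's proof almost step for step (representability of explicit $\cB$-modules for the axioms, the pullback module for (C3), mapping cylinders for (C4), fibrancy of every object, identification of $\bHo(\cZ(\cB))$ with $\bH(\cB)$, and the mapping-cylinder comparison of the two classes of triangles), but it has one genuine gap: you treat the key representability statements as consequences of ``exact-sequence analysis,'' when in fact they all rest on a lemma that your proposal never proves and whose proof is the central idea of this part of the argument. The lemma is: \emph{if $0\to\cB(X,-)\to M\to\cB(Y,-)\to 0$ is a short exact sequence of $\cB$-modules, then $M$ is representable.} Neither closure under shifts nor closure under cones applies to $M$ directly; one must first extract a closed morphism from the extension. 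Concretely: choose $g\in M(Y)_0$ lifting $1_Y$; then $d(g)$ comes from a unique $f\in\cB(X,Y)_{-1}$, which is automatically a cycle; $f$ corresponds to a closed morphism $\bar f\colon X[-1]\to Y$ (closure under shifts), and a mapping cone of $\bar f$ (closure under cones) represents a module that one checks is isomorphic to $M$ via $(a,b)\mapsto a\cdot g+j_Z(b)$. Without this lemma your verification of (C3) does not close up, and --- a point your proposal misses entirely --- neither does (C1): the kernel module of a composite $\cB(ji,-)$ of two cofibrations is an \emph{extension} of the kernel modules of $\cB(j,-)$ and $\cB(i,-)$, so even ``cofibrations are closed under composition'' already requires it; the same is true for showing that the cylinder inclusion $X\to Zf$ in your (C4) is a cofibration, since its kernel module is an extension of $\cB(X,-)[-1]$ by $\cB(Y,-)$.

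Two smaller soft spots, both fixable. First, the acyclicity clause of (C3) and your fibrancy claim are not exact-sequence formalities either: the mechanism is that for an acyclic cofibration $i$ each $\cB(i,Z)$ is a surjective chain homotopy equivalence, hence has \emph{contractible} kernel, and one transports contractibility along the isomorphism of kernel modules; your phrase ``the representing splitting data gives a section'' hides the necessary correction argument, in which a not-necessarily-closed retraction $r$ with $ri=1_A$ is modified by an element $u$ of the contractible kernel with $d(u)=d(r)$ to produce the closed retraction $r-u$. Second, your identification of $\bHo(\cZ(\cB))$ with $\bH(\cB)$ via fibrancy and Theorem~\ref{thm-HoC a la Brown} is a legitimate (mild) variant of the paper's argument, which instead verifies the localization property directly; but to use the fibrant-object description of hom-sets you must also check that homotopy in the cofibration-category sense coincides with chain homotopy (one direction uses that the two ends of any cylinder object become homologous, the other uses the dg mapping cylinder as an explicit cylinder object).
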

\begin{proof}
A proof can be combined from the following two results in the literature: 
by~\cite[Sec.\,2.1]{keller-cyclic of exact} 
the cycle category $\cZ(\cB)$ has a preferred structure of 
Frobenius category, and by~\cite[Prop.\,4.19]{cisinski-categories derivables}
every Frobenius category admits a preferred structure of
stable cofibration category. However, it took the author
some effort to combine the arguments from these two references into
a complete proof; so for the convenience of the reader,
we give an independent and self-contained account here.

As a first step we show that every pretriangulated dg category $\cB$
is `closed under extensions' in the following sense.
Given objects $X$ and $Y$ of $\cB$,
a $\cB$-module $M$ and a short exact sequence of $\cB$-modules
\[ 0\ \to\ \cB(X,-)\ \xra{\ j\ }\ M \ \xra{\ p\ }\ \cB(Y,-)\ \to\ 0 \ ,\]
then $M$ is representable.
To see this we let $g\in M(Y)_0$ be a chain with $p_Y(g)=1_Y$.
Then $p_Y(d(g))=d(p_Y(g))=d(1_Y)=0$, so there is 
a unique $f\in\cB(X,Y)_{-1}$
with $j_Y(f)=d(g)$. Then $j_Y(d(f))=d(j_Y(f))=d(d(g))=0$, 
so also $d(f)=0$. 
We define a $\cB$-module $N$ on an object $Z$ in dimension $k$ by
\[ N(Z)_k\ = \ \cB(Y,Z)_k\oplus \cB(X,Z)_k  \ ;\]
the differential and action of $\cB$-morphisms are given by
\[ d(a, b)\ = \ (d(a),\ (-1)^kaf+d(b))
\text{\qquad respectively\qquad}
\varphi\cdot (a, b)\ = \ (\varphi a,\ \varphi b)\ .\]
This functor is representable: if $X[-1]$ is a negative shift of $X$, i.e.,
it represents $\cB(X,-)[1]$, then $f$ corresponds
to a closed morphism $\bar f:X[-1]\to Y$, and any 
mapping cone of this closed morphism represents $N$.

The maps
\[ \Phi(Z)\ : \ N(Z) \ \to \ M(Z)\ , \quad 
(a,b)\ \longmapsto \ a\cdot g\ +\ j_Z(b) \]
constitute a morphism of $\cB$-modules and make the diagram
\[ \xymatrix@C=15mm{ 
0\ar[r] & \cB(X,-) \ar[r]^-{\genfrac{(}{)}{0pt}{}{0}{1}}\ar@{=}[d] & 
N\ar[d]^\Phi\ar[r]^-{(1,0)}&
\cB(Y,-)\ar@{=}[d] \ar[r] & 0\\
0 \ar[r] &\cB(X,-) \ar[r]_-j & M\ar[r]_-p &\cB(Y,-) \ar[r] & 0} \]
commute; since both rows are short exact sequences, 
$\Phi$ is a natural isomorphism.
Altogether this shows that $M$ is representable
(by any mapping cone of $\bar f:X[-1]\to Y$).

Now we prove the axioms of a cofibration category.
Clearly, all isomorphisms are cofibrations and weak equivalences,
and every morphism out of a zero object is a cofibration.
If $i$ and $j$ are two composable cofibrations,
then for every object $Z$ of $\cB$ the chain map $\cB(ji,Z)$
is surjective since  $\cB(j,Z)$ and  $\cB(i,Z)$ are surjective.
Moreover, the sequence of $\cB$-modules
\[ 0\ \to\ \ker[\cB(j,-)] \ \xra{\ \text{incl}\ }\ \ker[\cB(ji,-)]
\ \xra{\ \cB(j,-)\ }\ \ker[\cB(i,-)]\ \to\ 0 \]
is exact.
The sub and quotient modules are representable by hypothesis;
since $\cB$ is closed under extensions, 
the kernel $\cB$-module of $\cB(ji,-)$ is also representable.
Hence $ji$ is again a cofibration, and we have proved axiom~(C1).
The 2-out-of-3 property~(C2) for weak equivalences holds since
isomorphisms in $\bH(\cB)$ have the 2-out-of-3 property.

In axiom (C3) we are given a cofibration 
$i:A\to B$ and a closed morphism $f:A\to C$. 
We define a $\cB$-module $M$ as the pullback of the diagram
of represented modules
\[ \cB(B,-)\ \xra{\cB(i,-)} \ \cB(A,-)\ \xla{\cB(f,-)} \ \cB(C,-)\ . \]
Since $\cB(i,-)$ is objectwise surjective, so is the projection
$M\to \cB(C,-)$; moreover, for every object $Z$
the sequence of $\cB$-modules
\[ 0 \ \to\ \ker[\cB(i,-)] \ \xra{\varphi\mapsto (\varphi,0)}\ M\
\xra{\ \text{proj}\ }\ \cB(C,-)\ \to\ 0 \]
is exact. The module $\ker[\cB(i,-)]$
is representable by hypothesis, so $M$ is representable.
We let~$P$ be a representing object for $M$ and
\[ (g,j)\ \in \ M(P)_0 \ = \ \text{pullback}\big[\
\cB(B,P)_0\ \xra{\cB(i,P)_0} \ \cB(A,P)_0\ \xla{\cB(f,P)_0} \ \cB(C,P)_0\ \big] \]
a universal 0-cycle. Then $g:B\to P$ and $j:C\to P$
are closed morphisms satisfying $gi=jf$, i.e., the square
on the left
\[   \xymatrix@C=12mm{ A \ar[r]^f \ar[d]_i & C \ar[d]^j &&
\cZ(\cB)(P,Z) \ar[r]^-{j^*} \ar[d]_{g^*} & 
\cZ(\cB)(C,Z) \ar[d]^{f^*} \\
B\ar[r]_-g & P &&
\cZ(\cB)(B,Z) \ar[r]_-{i^*} &\cZ(\cB)(A,Z) } \]
commutes. Moreover, the fact that $P$ represents the pullback functor
implies that for every object $Z$ of $\cB$ 
the square of cycle groups on the right
is a pullback. This is precisely the universal property of
a pushout in the cycle category $\cZ(\cB)$; 
so the left square above is a pushout. 

We already argued that  the projection
$M\to \cB(C,-)$, and hence the morphism $\cB(j,-):\cB(P,-)\to\cB(C,-)$, 
is objectwise surjective; the kernel module of $\cB(j,-)$
is isomorphic, via~$\cB(g,-)$, to the 
kernel $\cB$-module of~$\cB(i,-):\cB(B,-)\to\cB(A,-)$.
So the kernel module of~$\cB(j,-)$ is representable, and $j$ is a cofibration.

Now we assume that the cofibration $i$ is also a weak equivalence.
Then for every object~$Z$ of $\cB$,
the chain morphism $\cB(i,Z):\cB(B,Z)\to\cB(A,Z)$
is not only surjective, but also a chain homotopy equivalence.
So the kernel of $\cB(i,Z)$, and hence also the isomorphic kernel of
the epimorphism $\cB(j,Z):\cB(P,Z)\to\cB(C,Z)$, is contractible.
Hence the morphism $\cB(j,Z)$ is a chain homotopy equivalence;
in particular, the map $\bH(j,Z):\bH(P,Z)\to\bH(C,Z)$
is bijective for all objects $Z$. Thus the morphism $j$ is an
isomorphism in $\bH(\cB)$, and so $j$ is a weak equivalence.
This proves axiom~(C3).

For the factorization axiom~(C4) we construct mapping cylinders in $\cB$.
Given a closed morphism $f:X\to Y$, we denote by $Zf$
a mapping cone of the morphism $\genfrac{(}{)}{0pt}{}{1_X}{-f}:X\to X\oplus Y$
(we use that $\cB$ has sums by axiom (C3) with $A$ a zero object).
Then by definition, $Zf$ represents the $\cB$-module given in dimension $k$ by
\[ Z \ \mapsto \  \cB(X,Z)_k\oplus\cB(Y,Z)_k\oplus \cB(X,Z)_{k+1}
\ , \quad d(a,a',b)\ = \ (d(a),\ d(a'),\, a-a'f-d(b))\ ,\]
and with action of morphisms by
\[ \varphi\cdot (a,a', b)\ = \ 
(\varphi a,\,\varphi a',\ (-1)^{|\varphi|}\cdot \varphi b)\ .\]
We let 
\[ (i,j,u)\ \in \ \cB(X,Zf)_0\oplus\cB(Y,Zf)_0\oplus \cB(X,Zf)_1 \]
be a universal 0-cycle.
The cycle condition means that
$d(i)=0$, $d(j)=0$ and $d(u)=i-jf$.
We let $q\in\cB(Zf,Y)_0$ be the element characterized by
\[ q\cdot (i,j,u)\ = \ (f,1_Y,0)\ \in \ 
\cB(X,Y)_0\oplus\cB(Y,Y)_0\oplus \cB(X,Y)_1\ .\] 
Since $(f,1_Y,0)$ is a cycle, so is $q$.
So $q:Zf\to Y$ is a closed morphism and $qi=f$ is a factorization 
of the original morphism $f:X\to Y$ in $\cZ(\cB)$.
The chain map $\cB(i,Z):\cB(Zf,Z)\to\cB(X,Z)$ is isomorphic 
to the projection onto a summand, hence surjective.
The kernel $\cB$-module of the surjection $\cB(i,-)$ is an extension
of $\cB(X,-)[-1]$ and $\cB(Y,-)$, and hence representable.
So the morphism $i:X\to Zf$ is a cofibration.

We let $s\in\cB(Zf,Zf)_1$ satisfy $s\cdot(i,j,u)=(u,0,0)$. The relation
\[ d(s)\cdot(i,j,u)\ = \  d(s\cdot(i,j,u))\ = \ 
d(u,0,0)\ = \ 
(i-jf,0,u) \ = \ (1_{Zf}-jq)\cdot(i,j,u) \]
shows that $d(s)=1_{Zf}-jq$ in $\cB(Zf,Zf)_0$. Together with 
$qj=1_Y$ this shows that $j:Y\to Zf$ is inverse to $q:Zf\to Y$ in $\bH(\cB)$, 
and hence $q$ is a weak equivalence.
This concludes the verification of axiom~(C4), and hence the proof
that $\cZ(\cB)$ is a cofibration category.

In order to show that all objects are fibrant we have to show
that every acyclic cofibration $i:A\to B$ in $\cZ(\cB)$ has a retraction.
Since $i$ is a cofibration and a weak equivalence,
the chain morphism $\cB(i,A):\cB(B,A)\to\cB(A,A)$
is a surjective chain homotopy equivalence.
So the kernel of $\cB(i,A)$ is contractible.
We let $r\in\cB(B,A)_0$ be any 0-chain such that $ri=1_A$.
Then $d(r)$ is a cycle in the contractible kernel complex $\ker[\cB(i,A)]$.
So there is an element $u\in\ker[\cB(i,A)]_0$ with $d(u)=d(r)$.
The element $r-u\in\cB(B,A)_0$ is then a closed
morphism satisfying $(r-u)i=1_A$, i.e., it is the desired retraction.

Now we show that the projection $\gamma:\cZ(\cB)\to\bH(\cB)$ is a localization 
at the class of weak equivalences. 
Since $\gamma$ is the identity on objects and surjective on morphisms,
this amounts to showing that any functor from $\cZ(\cB)$
that inverts weak equivalences takes the same value on homologous morphisms.
So we let $F:\cZ(\cB)\to \cD$ be any functor that takes weak equivalences 
to isomorphisms, and we let $f,g:X\to Z$ be two closed morphisms 
with $f-g=d(h)$ for some $h\in\cB(X,Z)_1$. 
We consider the mapping cylinder $Z1_X$ of the closed morphism~$1_X$
as in~(C4) above. The cycle
\[ (f,g,h)\ \in \ \cB(X,Z)_0\oplus \cB(X,Z)_0\oplus\cB(X,Z)_1 \]
is represented by a closed morphism $H:Z1_X\to Z$
that satisfies $Hi=f$ and $Hj=g$.
By hypothesis, the functor $F$ takes the weak equivalence $q:Z1_X\to X$
to an isomorphism in $\cD$; because $qi=1_X=qj$,
the functor $F$ satisfies $F(i)=F(j)$.
This shows that
\[ F(f)\ = \ F(H)\circ F(i)\ = \ F(H)\circ F(j)\ = \ F(g)\ , \]
and concludes the proof that the homology category $\bH(\cB)$ `is' the
homotopy category of the cofibration structure.

It remains to compare the triangulation from the dg structure
with the triangulation from the cofibration structure 
(see Appendix~\ref{app-triangulation}).
For an object $X$ of $\cB$ we define an object $CX$ as the mapping cylinder, 
as in the proof of (C4) above,  
of the unique morphism $X\to 0$ to a zero object.
By the above, the morphism $i:X\to CX$ is a cofibration and
$CX$ is weakly contractible; hence we have obtained 
a cone of $X$ in the sense of cofibration categories.
The shift~$X[1]$ is a cokernel of $i:X\to CX$;
so the shift functor is a functorial lift 
of the suspension functor in $\bHo(\cZ(\cB))=\bH(\cB)$.
Since the shift functor is invertible (already in $\cZ(\cB)$),
the suspension functor of $\bHo(\cZ(\cB))$ is invertible and
the cofibration structure is stable.

Finally, we let $f:A\to B$ be a cofibration, $q:B\to B/A$ a cokernel
and $Cf$ a mapping cone of~$f$. Then the diagram
\[
\xymatrix{ A \ar@{=}[d] \ar[r]^-{\gamma(f)} &  
B \ar@{=}[d] \ar[r]^-{\gamma(i)} & 
Cf \ar[d]_\iso^{\gamma(0\cup q)} \ar[r]^-{\gamma(p)} & 
A[1] \ar@{=}[d] \\
A \ar[r]_-{\gamma(f)} & B \ar[r]_-{\gamma(q)}  & 
B/A \ar[r]_-{\delta(f)}  & A[1]}\]
commutes in $\bH(\cB)$. The upper triangle is a prototypical distinguished 
triangle in $\bH(\cB)$ arising from the dg structure, and the
lower triangle is distinguished in the triangulation from the
cofibration structure. So every `distinguished dg triangle'
is also a `distinguished cofibration triangle', and hence the two
classes of distinguished triangles coincide.
\end{proof}

The passage from pretriangulated dg categories to
stable cofibration categories described 
in Proposition~\ref{prop-algebraic is topological} 
respects the structure preserving functors. 
More precisely, we let $F:\cB\to\cB'$ be a dg functor between pretriangulated
dg categories. Then one can show that the restriction
$\cZ(F):\cZ(\cB)\to\cZ(\cB')$ 
of $F$ to the cycle categories is automatically exact (in the sense
of cofibration categories).

A well-known example of a topological triangulated category
that is not algebraic is the stable homotopy category.
To see this, we exploit that for every object $X$ of an
algebraic triangulated category the object $X/n$ 
(a cone of multiplication by $n$ on $X$) is annihilated by~$n$;
indeed, this observation is a special case of the much more general
Theorem~\ref{thm-general algebraic} below, 
but it also has a simple direct proof,
see for example~\cite[Prop.\,1]{sch-leeds}.
On the other hand, the mod-2 Moore spectrum in the stable homotopy category
is of the form $\mS/2$ for $\mS$ the sphere spectrum, and
it is well-known that $\mS/2$ is {\em not} annihilated by~2.
An account of the classical argument using Steenrod operations
can be found in~\cite[Prop.\,4]{sch-leeds}.

We will now show that in algebraic triangulated categories
the relation $n\cdot X/n=0$ holds `in a very strong sense',
i.e., the $n$-order of every object of the form $X/n$ is infinite.

\begin{theorem}\label{thm-general algebraic}
Let $\cT$ be an algebraic triangulated category and $n\geq 1$.
Then $\cT$ has a mod-$n$ reduction and thus has infinite $n$-order.
\end{theorem}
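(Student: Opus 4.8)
The plan is to produce a mod-$n$ reduction of $\cT$ and then invoke Proposition~\ref{prop-reduction implies infinite order} to get infinite $n$-order. Since $\cT$ is algebraic and both the notion of mod-$n$ reduction and the $n$-order are invariant under triangulated equivalence, I may assume $\cT=\bH(\cB)$ for a pretriangulated dg category $\cB$. The construction is meant to be the exact algebraic analogue of the spectral argument in Proposition~\ref{prop-reduction of ring spectra}, with the commutative base ring spectrum~$R$ replaced by $\mZ$ and the cone algebra~$B$ replaced by the dg algebra $\Lambda=\mZ\oplus\mZ e$ with $|e|=1$, $d(e)=n\cdot 1$ and $e^2=0$. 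A short sign check shows $\Lambda$ is a dg algebra, and as a complex it is a cone of $n\cdot 1:\mZ\to\mZ$; so $\Lambda$ is the algebraic incarnation of ``$\mZ/n$ as a ring''.

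First I would realize $\cT$ inside a genuinely triangulated ambient category. The dg $\cB$-modules carry the projective model structure, whose homotopy category is the derived category $D(\cB)$, and the Yoneda functor $X\mapsto\cB(X,-)$ embeds $\bH(\cB)=\cT$ as the full subcategory of representable modules; this is a full triangulated subcategory of $D(\cB)$ because $\cB$ is pretriangulated, so that representables are closed under shifts and cones. Next I form the dg category $\cB\tensor_\mZ\Lambda$ (same objects, morphism complexes $\cB(X,Y)\tensor\Lambda$) together with the dg functor $\iota:\cB\to\cB\tensor_\mZ\Lambda$, $f\mapsto f\tensor 1$, which is the identity on objects. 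Extension and restriction of scalars along $\iota$ form a Quillen adjunction of module categories, and I pass to the adjoint pair of derived functors
\[ \rho_*=L\iota_!:\ D(\cB)\ \longrightarrow\ D(\cB\tensor_\mZ\Lambda)\ ,\qquad \rho^*=\iota^*:\ D(\cB\tensor_\mZ\Lambda)\ \longrightarrow\ D(\cB)\ , \]
where $\rho_*$ is extension of scalars (left adjoint) and $\rho^*$ is restriction (right adjoint), mirroring $L(f_*)$ and $R(f^*)$ in the spectral proof.

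I would then define $\cT/n\subseteq D(\cB\tensor_\mZ\Lambda)$ to be the full subcategory of those objects $Z$ for which $\rho^*Z$ lies in $\cT$; since $\rho^*$ is exact and $\cT\subseteq D(\cB)$ is triangulated, $\cT/n$ is a full triangulated subcategory. The crucial computation is that $\rho^*\rho_* X$ models $X/n$: the representable $\cB(X,-)$ is cofibrant, so $\rho_*X=(\cB\tensor_\mZ\Lambda)(X,-)$ is again representable, and restricting gives $\rho^*\rho_* X=\cB(X,-)\tensor_\mZ\Lambda$, which—since $\Lambda$ is a cone of $n\cdot 1$—is exactly a cone of $n\cdot 1_X$, i.e.\ $X/n$. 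In particular $\rho^*\rho_*X\in\cT$, so $\rho_*$ restricts to an exact functor $\cT\to\cT/n$ with right adjoint $\rho^*:\cT/n\to\cT$, and the reduction triangle
\[ X\ \xra{\ n\cdot\ }\ X\ \xra{\ \eta_X\ }\ \rho^*\rho_* X\ \longrightarrow\ \Sigma X \]
is obtained by tensoring the defining cone triangle $\mZ\xra{\,n\,}\mZ\to\Lambda$ with $\cB(X,-)$ over $\mZ$.

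The step I expect to be the real obstacle is the identification of the adjunction unit $\eta_X$ with the canonical map $\pi:X\to X/n$ that makes the displayed triangle distinguished. This is the dg analogue of the assertion in Proposition~\ref{prop-reduction of ring spectra} that ``the adjunction unit corresponds to $M\sm^L_R\eta$''; concretely it amounts to checking that under the isomorphism $\rho^*\rho_* X\iso\cB(X,-)\tensor_\mZ\Lambda$ the unit is induced by the algebra unit $\mZ\to\Lambda$, which is precisely the cone inclusion. Once this compatibility is established, $(\cT/n,\rho_*)$ with right adjoint $\rho^*$ is a mod-$n$ reduction of $\cT$, and Proposition~\ref{prop-reduction implies infinite order} yields infinite $n$-order. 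The remaining points—the Quillen adjunction along $\iota$, cofibrancy of representable modules, and the closure of $\cT/n$ under the triangulated structure—are routine dg-module homotopy theory, and I would only spell them out as far as needed to pin down the unit.
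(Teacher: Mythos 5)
Your proposal is correct, and it reaches the mod-$n$ reduction by a genuinely different route than the paper, even though both arguments rest on the same algebra: your $\Lambda$ is exactly the paper's dg ring $\mZ[e]$ (exterior algebra on a degree-one class $e$ with $d(e)=n$), and your $\cB\tensor_\mZ\Lambda$ is the paper's dg category $\mZ[e]\tensor\cB$. The difference is what one does with this non-pretriangulated dg category. The paper stays inside honest dg categories: it passes to the pretriangulated envelope $\cB/n=(\mZ[e]\tensor\cB)^{\text{pre}}$, builds the right adjoint by hand as the homology functor of an explicit dg functor $t'\colon\cB/n\to\cB$ sending $X$ to a chosen mapping cone of $n\cdot 1_X$, verifies the adjunction on objects of the form $\rho_*Y$ by explicit chain-level formulas, and then extends to all of $\bH(\cB/n)$ by a generation argument (the image of $\rho_*$ generates the envelope as a triangulated category). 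You instead embed everything into derived categories of dg modules and let Quillen machinery do the work: extension/restriction of scalars along $\cB\to\cB\tensor_\mZ\Lambda$ is a Quillen adjunction, and since every dg module is fibrant in the projective model structure, the derived unit on the cofibrant representable is literally the point-set map induced by the algebra unit $\mZ\to\Lambda$ — so the step you single out as the "real obstacle" is in fact purely formal — while the reduction triangle comes from tensoring the cone sequence $\mZ\xra{\,n\,}\mZ\to\Lambda$ into the representable. What your approach buys is that the adjunction, its unit, and exactness are inherited from standard dg-module theory rather than checked by explicit formulas; what the paper's approach buys is self-containedness and size-safety: the envelope (twisted complexes) is a finitary construction valid for dg categories of any size, whereas module categories over a possibly large $\cB$ (which the theorem allows) need universe-type care, both for the model structure and for the hom-classes of your $\cT/n$. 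Two small points to repair in a final write-up: with the paper's convention that $\cB$-modules are \emph{covariant} dg functors, $X\mapsto\cB(X,-)$ is a contravariant embedding and realizes $\cT^{\text{op}}$ inside the derived category; either work with right modules $\cB(-,X)$, so that Yoneda is covariant and your computation goes through verbatim, or run the argument for $\cT^{\text{op}}$ and use that algebraicity is preserved under passage to opposites. Also, your $\cT/n$ (all $Z$ with $\rho^*Z$ in $\cT$) is much larger than the paper's (the triangulated hull of the image of $\rho_*$), but since Definition~\ref{defn-mod n reduction} imposes no minimality condition, this is harmless.
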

\begin{proof} We may suppose that $\cT=\bH(\cB)$
is the homology category of a pretriangulated dg category $\cB$.
We consider the new dg category $\mZ[e]\tensor\cB$
where $\mZ[e]$ is the dg category with a single object
whose endomorphism dg ring is the exterior algebra, over the integers, 
on a 1-dimensional class~$e$ with differential~$d(e)=n$. 
In more detail, the dg category $\mZ[e]\tensor\cB$ 
has the same objects as $\cB$,
the morphism complexes are defined by
\[ (\mZ[e]\tensor\cB)(X,Y) \ = \ \mZ[e]\tensor\cB(X,Y) \ , \]
and composition is given by
\[(1\tensor\varphi\ + \ e\tensor\psi)\cdot (1\tensor\varphi'\ + \ e\tensor\psi')
\ = \  1\tensor\varphi\varphi'\ + \ 
e\tensor(\psi\varphi'+(-1)^{|\varphi|}\varphi\psi') \ .  \]
A dg functor $J:\cB\to\mZ[e]\tensor\cB$ is given by $J(X)=X$
on objects and by $J(\varphi)=1\tensor\varphi$ on morphisms.

The new dg category $\mZ[e]\tensor\cB$ is typically not
closed under mapping cones, hence not pretriangulated.
We form the pretriangulated envelope $\cB/n=(\mZ[e]\tensor\cB)^{\text{pre}}$,
see~\cite[\S 1]{bondal-kapranov} (where the objects are called 
{\em twisted complexes}), 
\cite[2.2 (d)]{keller-cyclic of exact}
(where this is called the {\em exact envelope})
or~\cite[4.5]{keller-differential graded}
(where this is called the {\em pretriangulated hull}\,).
The envelope $\cB/n$ contains $\mZ[e]\tensor\cB$ as a full
dg subcategory and the inclusion $\iota:\mZ[e]\tensor\cB\to\cB/n$ 
is an initial example of a dg functor from $\mZ[e]\tensor\cB$
to a pretriangulated dg category.
The composite morphism of dg categories
\[ i= \iota\circ J\ : \  \cB \ \to \ \mZ[e]\tensor\cB \ \to \
(\mZ[e]\tensor\cB)^{\text{pre}}=\cB/n   \]
descends to an exact functor of triangulated homology categories
$$ \rho_*=\bH(i)\ : \ \bH(\cB) \ \to \ \bH(\cB/n) \ .$$
We will show that $(\bH(\cB/n),\rho_*)$ is a mod-$n$ reduction.

We define a dg functor $t:\mZ[e]\tensor\cB\to\cB$ that will eventually
give rise to a right adjoint to~$\rho_*$.
Given $X$ in $\cB$ we let $t(X)$ be a mapping cone of the closed
morphism $n\cdot 1_X$. So by definition,~$t(X)$ represents the $\cB$-module
given in dimension $k$~by
\[ Z \ \mapsto \  \cB(X,Z)_k\oplus\cB(X,Z)_{k+1}
\ , \quad d(a,b)\ = \ (d(a),\,na-d(b))\ .\]
The object $t(X)$ comes with a universal 0-cycle
\[ (\eta_X,g_X)\ \in \  \cB(X,t(X))_0\oplus\cB(X,t(X))_1\ . \]
The cycle condition means that $d(\eta_X)=0$ and $d(g_X)=n\cdot\eta_X$.
On morphism complexes we let
\[ t \ : \  (\mZ[e]\tensor \cB)(X,Y) =\mZ[e]\tensor\cB(X,Y)
\ \to\ \cB(t(X),t(Y)) \]
be the unique chain map characterized by the relations
\[ t(1\tensor\varphi\ + \ e\tensor\psi)\cdot(\eta_X,g_X) \ = \ 
(\eta_Y\varphi+g_Y\psi,\ g_Y\varphi) 
\ \in \  \cB(X,t(Y))_k\oplus\cB(X,t(Y))_{k+1}\]
for all $\varphi\in\cB(X,Y)_k$ and $\psi\in\cB(X,Y)_{k-1}$.
Compatibility with units and composition is straightforward,
so we have really defined a dg functor $t:\mZ[e]\tensor\cB\to\cB$.
The universal property of the pretriangulated envelope 
then provides a dg functor $t':\cB/n\to\cB$ that extends $t$.
We let 
\[ \rho^*\ = \ \bH(t')\ : \ \bH(\cB/n)\ \to \ \bH(\cB) \]
denote the induced exact functor on homology categories. 

We will now make $\rho^*$ into a right adjoint of $\rho_*$.
The relation $t(1\tensor\varphi)\eta_X=\eta_Y\varphi$ 
means that the homology classes of the 
closed morphisms $\eta_X:X\to t(X)=(t'(i(X))$
constitute a natural transformation 
$\eta:\Id_{\bH(\cB)}\to \bH(t'i)=\rho^*\rho_*$. 
We claim that $\eta$ is the unit of 
an adjunction between $\rho_*$ and $\rho^*$;
so we need to show that the composite
\begin{equation}\label{eq:unit_candidate}
   \bH(\cB/n)(\rho_*X,\,Z) \ \xra{\ \rho^*\ }\ 
\bH(\cB)(\rho^*(\rho_* X),\,\rho^*Z) \ \xra{\bH(\cB)(\eta_X,\rho^*Z)}
\ \bH(\cB)(X,\,\rho^*Z)  
\end{equation}
is bijective for all $X$ in $\cB$ and all $Z$ in $\cB/n$. 
In the special case when $Z=\rho_*Y$ for some $Y$ in $\cB$ 
we have $\rho^*(\rho_*Y)=t(Y)$ and 
the composite~\eqref{eq:unit_candidate} 
is the effect on $H_0$ of the chain map
\[u \ : \  \mZ[e]\tensor \cB(X,Y) = (\cB/n)(i(X),\,i(Y)) 
\ \to\ \cB(X,\,t(Y))  \]
given by
\[ u(1\tensor\varphi \ + \ e\tensor \psi) \ =\
\eta_Y\varphi \ +\ g_Y\psi\ .\]
We can describe the inverse of $u$ explicitly, as follows.
We let $r\in\cB(t(X),X)_0$ and  $p\in\cB(t(X),X)_{-1}$ be the elements
characterized by the relations
\[ r\cdot(\eta_X,g_X)\ = \ (1_X,0) \text{\qquad respectively\qquad}
 p\cdot(\eta_X, g_X)\ = \ (0,1_X) \ . \]
Then $(\eta_Xr+g_Xp)(\eta_X,g_X)=(\eta_X,g_X)$, so we must have
$\eta_Xr+g_Xp=1_{t(X)}$.
We define
\[ v \ : \  \cB(Y,t(X))\ \to  \ (\cB/n)(i(Y),i(X)) \text{\qquad by\qquad} 
v(a)\ = \ 1\tensor ra - e\tensor pa\ ,\]
and direct calculation shows that $uv$ and $vu$ are the identity maps.
This shows that the map~\eqref{eq:unit_candidate}
is bijective in the special case $Z=\rho_*Y$.

We consider the class of objects $Z$ of $\bH(\cB/n)$ such that
the map~\eqref{eq:unit_candidate}
is bijective for all~$X$ in~$\bH(\cB)$.
Since $\rho_*$ and $\rho^*$ are exact functors, 
this class forms a triangulated subcategory of~$\bH(\cB/n)$. 
Moreover, the class contains all objects of the form $\rho_*Y$, 
by the last paragraph.
The homology category $\bH(\cB/n)$ of the pretriangulated
envelope of $\mZ[e]\tensor\cB$ is generated, as a triangulated category, by 
the objects of $\bH(\mZ[e]\tensor\cB)$, and these are precisely the
objects of the from $\rho_*Y$. So the map~\eqref{eq:unit_candidate}
is always bijective, and that shows that  $\rho^*$ 
is right adjoint to $\rho_*$ with $\eta$ as adjunction unit.

For all $X$ in $\cB$, the object $\rho^*(\rho_*X)=t(X)$ 
is a mapping cone of multiplication by $n$ on $X$, by construction.
Moreover, the homology class of $\eta_X:X\to t(X)=\rho^*(\rho_* X)$ 
is the cone inclusion.
Since mapping cone sequences give rise to exact triangles in $\bH(\cB)$,
this proves the existence of a distinguished 
triangle~\eqref{eq-reduction triangle}.
\end{proof}

\begin{rk}
The construction of the mod-$n$ reduction $\cT/n$ 
in Theorem~\ref{thm-general algebraic} depends on the
choice of dg model for $\cT$, and not just on the triangulated
category $\cT$. For a specific example to illustrate this,
we can take $\cT$ as the category of $\mF_2$-vector spaces,
with identity shift functor and
the exact sequences as distinguished triangles.
There are two well-known dg models,
the dg categories of acyclic complexes of
projective modules over the rings $\mF_2[\epsilon]$ respectively $\mZ/4$.
We remark without proof that the construction 
of Theorem~\ref{thm-general algebraic} applied to these
two dg categories yields two mod-2 reductions that are
not equivalent (even as categories).
\end{rk}

\begin{rk}
I expect that rationally there is no difference whatsoever
between algebraic and topological triangulated categories.
In other words: every topological triangulated category whose
morphism groups are uniquely divisible ought to be algebraic.

There are various pieces of evidence for this claim.
On the one hand side, all invariants I know to distinguish algebraic from
topological triangulated categories vanish rationally.
For example, the $n$-order is rationally useless 
since $\mQ$-linear triangulated categories have infinite $n$-order 
for all natural numbers~$n$.
Similarly, the action of the Spanier-Whitehead category
of finite CW-complexes on a topological triangulated category 
(that one can construct from the unstable action 
of Remark~\ref{rk-Ho CW action}) is no extra information 
for $\mQ$-linear triangulated categories since
the chain functor from the Spanier-Whitehead category
to the bounded derived category of finitely generated abelian groups
is rationally an equivalence (both sides are in fact rationally equivalent
to the category of finite dimensional graded $\mQ$-vector spaces). 

Moreover, under certain technical assumptions and cardinality restrictions,
$\mQ$-linear topological triangulated categories are known to be algebraic.
More precisely, a theorem of Shipley~\cite[Cor.\,2.16]{shipley-DGAs and HZ}
says that every $\mQ$-linear spectral model category 
(a stable model category enriched over
the stable model category of symmetric spectra) with a set
of compact generators is Quillen equivalent to dg modules over
a certain differential graded $\mQ$-category. 
\end{rk}

\section{The order of Moore spectra}
\label{sec-Moore}

In the previous section we showed that every algebraic triangulated category
has infinite $n$-order for every number $n$.
In this section we show that for every prime $p$ the $p$-order 
of the stable homotopy category of finite spectra is at most $p-1$. 
This shows in particular that the stable homotopy category 
is not algebraic. The stable homotopy category is a topological
triangulated category, and in Section~\ref{sec-order in topological} below 
we will show that every topological triangulated category 
has $p$-order at least $p-1$.

In this section we work in the homotopy category of so called 
`finite spectra', i.e., the full triangulated subcategory category  $\bSH^c$ 
of compact objects in the stable homotopy category.
This category is equivalent to the 
{\em Spanier-Whitehead category}~\cite{spanier-whitehead}, 
obtained from finite based CW-complexes 
by formally inverting the suspension functor.
We denote by $\mS$ the sphere spectrum as an object of 
the stable homotopy category.
The {\em mod-$n$ Moore spectrum} is a cone of multiplication by $n$ 
on the sphere spectrum, i.e., it is part of a distinguished triangle
\begin{equation}\label{eg-triangle defining Moore} 
\mS \ \xra{\,n\cdot\,} \ \mS \ \to \ \mS/n \ \to \ \Sigma \mS \ . 
\end{equation}
Alternatively, $\mS/n$ can be defined as a suitably desuspended suspension
spectrum of a mod-$n$ Moore space.
The mod-$n$ Moore spectrum is characterized up to 
isomorphism in the category $\bSH^c$ by the property
that its integral spectrum homology is concentrated in dimension zero 
where it is isomorphic to $\mZ/n$.
For a prime~$p$ the mod-$p$ cohomology of $\mS/p$ is
one-dimensional in dimensions~0 and~1, and trivial otherwise,
and the Bockstein operation is non-trivial from dimension~0 to
dimension~1.

The morphism $2\cdot \mS/2$ is nonzero in the stable homotopy category,
so the triangulated category~$\bSH^c$ and any triangulated category 
which contains it is not algebraic by Theorem~\ref{thm-general algebraic}.
However, for odd primes~$p$ we have $p\cdot\mS/p=0$
and we use the concept of $p$-order to
show the homotopy category of finite $p$-local spectra is not algebraic.
We denote by~$\bSH^c_{(p)}$ the category of 
{\em finite $p$-local spectra}, i.e., the compact objects in the 
triangulated category of $p$-local spectra. The finite $p$-local spectra
are precisely the $p$-localizations of objects in $\bSH^c$,
but they are not in general compact in the larger category $\bSH$
(in other words, $\bSH^c_{(p)}$ is larger than the intersection of 
$\bSH^c$ and $\bSH_{(p)}$).

\begin{theorem}\label{thm-order of Moore}
Let $p$ be a prime. Then the mod-$p$ Moore
spectrum $\mS/p$ has $p$-order at most $p-2$ in the triangulated
category $\bSH^c_{(p)}$ of finite $p$-local spectra. 
Hence the category $\bSH^c_{(p)}$ has $p$-order at most $p-1$, 
and is not algebraic.
\end{theorem}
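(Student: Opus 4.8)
The plan is to establish the sharp bound $p\text{-}\ord^{\bSH^c_{(p)}}(\mS/p)\le p-2$ by exhibiting, at the top of the inductive tower defining $p$-order, a morphism that cannot be extended with the required properties. Unwinding Definition~\ref{def-order}, saying the order is at most $p-2$ means the order is \emph{not} greater or equal to $p-1$. So I would aim to produce a single test morphism $f:K\to\mS/p$ (for a well-chosen compact $p$-local spectrum $K$) such that for \emph{every} extension $\bar f:K/p\to\mS/p$, some cone $C(\bar f)$ fails to have $p$-order $\ge p-2$. Since order $\ge k$ is an inductive condition, the most economical route is to arrange that the failure is detected by the very first level: I would try to choose things so that any cone $C(\bar f)$ does \emph{not} satisfy $p\cdot C(\bar f)=0$, which by the recorded consequences of the definition is exactly the statement that $p\text{-}\ord(C(\bar f))\ge 1$ fails, hence certainly $p\text{-}\ord(C(\bar f))\ge p-2$ fails once $p-2\ge 1$, i.e.\ $p\ge 3$. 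The prime $p=2$ is then a degenerate boundary case where the asserted bound $p-2=0$ is automatic, and I would dispose of it separately (indeed for $p=2$ one already knows $2\cdot\mS/2\ne0$, so $\mS/2$ has order $0$).

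The natural source of obstructions here is the mod-$p$ cohomology together with the Steenrod algebra, exactly as in the classical computation (quoted in the excerpt) that $2\cdot\mS/2\ne0$ via $\mathrm{Sq}^2$. For odd $p$ the analogous tool is the first Steenrod power $P^1$, which connects cohomology classes $p-1$ dimensions apart and governs the behaviour of the Moore spectrum under iterated mod-$p$ reductions. The mod-$p$ cohomology of $\mS/p$ is one-dimensional in degrees $0$ and $1$, joined by the Bockstein $\beta$; iterating the cone-on-$p$ construction $\mS/p\mapsto(\mS/p)/p\mapsto\cdots$ builds up cohomology classes in degrees $0,1,2,\dots$ connected by Bocksteins, and the first Steenrod power $P^1$ raises degree by $2(p-1)$, so it becomes available precisely after about $p-1$ reductions. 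The strategy is to feed the relevant $p$-fold iterate into the order recursion: choosing $K$ so that $K/p$ carries a cohomology class on which $P^1$ acts nontrivially relative to $\mS/p$, I would show that no extension $\bar f$ can kill this operation, forcing a cone whose cohomology still supports a nonzero $P^1$ and therefore cannot be annihilated by $p$.

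First I would pin down the cohomology $H^*(\mS/p;\mF_p)$ and the action of $\beta$ and $P^1$, then set up the specific $K$ and $f$ (the candidate will be essentially a suitably suspended sphere or small Moore-type spectrum mapping into $\mS/p$, so that $K/p$ has exactly the cohomology needed to detect $P^1$). The core computation is to track, through an arbitrary extension $\bar f:K/p\to\mS/p$ and its cone, how the Steenrod power survives: in cohomology an extension corresponds to a choice of nullhomotopy / coboundary, and the indeterminacy in that choice lives in a degree where it cannot cancel the $P^1$-term because $P^1$ lands outside the range of the Bockstein-generated classes. Concluding, I would read off that $p\cdot C(\bar f)\ne0$, hence $p\text{-}\ord(C(\bar f))<1\le p-2$, so $p\text{-}\ord(\mS/p)\le p-2$; the statement that $\bSH^c_{(p)}$ has $p$-order at most $p-1$ then follows from the recorded fact that the $p$-order of a triangulated category is one more than the minimum over objects $K/p$, and non-algebraicity follows from Theorem~\ref{thm-general algebraic} since an algebraic category would force infinite $p$-order.

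\medskip

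The hard part will be the precise bookkeeping of the Steenrod power's indeterminacy under an \emph{arbitrary} extension. It is not enough to compute $P^1$ on a single model of $\mS/p$; I must show that \emph{every} choice of $\bar f$ leaves a nonzero secondary-type term, which means controlling the functional/relative Steenrod operation and checking that its ambiguity (coming from the freedom in extending $f$ across the cone, and from the non-uniqueness of the cone itself) cannot absorb the offending class. This is where a clean naturality argument for $P^1$, combined with the explicit low-dimensional cohomology of the iterated Moore construction, is essential, and it is the step most likely to require genuine care rather than formal manipulation.
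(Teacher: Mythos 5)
There is a genuine gap, and it is structural rather than a matter of missing bookkeeping: your ``most economical route'' --- choosing $f:K\to\mS/p$ so that \emph{every} extension $\bar f:K/p\to\mS/p$ has a cone with $p\cdot C(\bar f)\neq 0$ --- is provably impossible for $p\geq 5$. The reason is the complementary lower bound (Theorem~\ref{thm-general topological}, which applies here since $\bSH^c_{(p)}$ is topological): $\mS/p=\mS/p$ is a cone of multiplication by $p$, so $p\ord(\mS/p)\geq p-2$. Unwinding Definition~\ref{def-order}, this says precisely that for \emph{every} morphism $f:K\to \mS/p$ there \emph{exists} an extension $\bar f$ whose cone has $p$-order at least $p-3$; for $p\geq 5$ we have $p-3\geq 1$, so that cone is annihilated by $p$. (Concretely, this reflects the homotopy associative multiplication on $\mS/p$ for $p\geq 5$: multiplying by $f$ gives an extension whose cone is a homotopy $\mS/p$-module, hence killed by $p$.) So no test morphism with the property you want can exist, and the failure of $p$-order $\geq p-1$ cannot be detected at the first level of the recursion when $p\geq 5$; it only appears at the top of a tower of depth $p-1$. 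Your plan happens to coincide with the truth only for $p=2$ (trivial) and $p=3$, where $p-2\leq 1$.

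This is exactly why the paper's proof is organized as a downward induction through conditions $(C_0),(C_1),\dots,(C_{p-1})$: given $X$ satisfying $(C_i)$, one produces (using Toda's class $\tilde\beta_1$ and the vanishing of the $p$-components of the stable stems in dimensions $jpq-3$ and $jpq-2$, $j=2,\dots,p$, to kill the lifting obstructions) a map detected by $P^p$, and one shows that the cone of \emph{any} extension again satisfies the next condition $(C_{i+1})$ --- so these cones are perfectly well annihilated by $p$, and no contradiction arises yet. The contradiction occurs only at stage $p-1$, via the Adem relation $P^pP^{(p-1)p}=p\cdot P^{p^2}+P^{p^2-1}P^1\equiv P^{p^2-1}P^1 \pmod p$: the coefficient $j$ in~\eqref{Adem relation} is a unit for $j<p$ (which is what propagates the condition through the tower) and vanishes exactly at $j=p$, while $P^1$ acts trivially for dimensional reasons, so no spectrum can satisfy $(C_{p-1})$. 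Your sketch has the germ of the right ingredients (Steenrod operations, Bocksteins, the special role of degree $2(p-1)$), but any repaired argument must carry a positive piece of structure --- such as the isomorphism condition on $P^{ip}$ --- through all $p-1$ levels of the order recursion rather than trying to force a failure of $p\cdot C(\bar f)=0$ at the first level.
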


The category $\bSH^c_{(p)}$ of finite $p$-local spectra 
is contained in the stable homotopy category $\bSH$. 
Since the stable homotopy category is a topological triangulated
category, its $p$-order and that of any subcategory is at least
$p-1$ by Theorem~\ref{thm-general topological}. 
By Theorem~\ref{thm-order of Moore} the $p$-order of $\bSH^c_{(p)}$, 
and hence of every triangulated category which contains it, is at most $p-1$. 
So we conclude that any triangulated category
which sits between $\bSH^c_{(p)}$ and $\bSH$ has $p$-order exactly $p-1$.

\begin{proof}
For $p=2$ the theorem just rephrases the fact that $2\cdot\mS/2\ne 0$. 
So we assume for the rest of the proof that $p$ is an odd prime.
Readers familiar with the proof of the rigidity theorem 
for the stable homotopy category will recognize the following arguments 
as a key step in~\cite[Thm.\,3.1]{schwede-rigid}.
We will use mod-$p$ cohomology operations
and some knowledge about stable homotopy groups of spheres.
We adopt the standard abbreviation $q=2p-2$.
Then the Steenrod operation $P^i$ has degree $iq$.
Below we will use the Adem relation
\begin{equation}\label{Adem relation}
P^pP^{(j-1)p} \ = \ j\cdot P^{jp}\ +\ P^{jp-1}P^1
\end{equation}
which holds for all positive~$j$.

We say that a finite $p$-local spectrum $X$
{\em satisfies condition $(C_i)$} if its 
mod-$p$ cohomology is 1-dimensional in dimensions
$jpq$ and $jpq+1$, connected by a Bockstein operation, for $j=0,\dots,i$ 
and trivial in all other dimensions and the Steenrod operation 
$P^{ip}:H^0(X,\mF_p)\to H^{ipq}(X,\mF_p)$ is an isomorphism. 

For the course of this proof we write
$$\mS^i_{(p)} \ = \ \Sigma^i\, \mS_{(p)}$$
for the $i$-dimensional $p$-local sphere spectrum.\medskip

Step 1: Let $X$ be a finite $p$-local spectrum
which satisfies condition $(C_i)$ for some $i$ between~0 and $p-1$.
Then there exists a morphism 
$$ f\ :\ \mS^{(i+1)pq-1}_{(p)}\ \to\ X$$ 
which is detected by the operation $P^p$, i.e., such that in the
mod-$p$ cohomology of any mapping cone $C(f)$ the operation
$P^{p}:H^{ipq}(C(f),\mF_p)\to H^{(i+1)pq}(C(f),\mF_p)$ is an isomorphism. 

For the proof of this step we let $X^{(n)}$ denote a stable 
$p$-local $n$-skeleton of $X$. 
Since $X/X^{(ipq-1)}$ has the mod-$p$ cohomology of the suspended Moore
spectrum $\Sigma^{ipq}\mS/p$, it is isomorphic to $\Sigma^{ipq}\mS/p$ in 
the stable homotopy category. 
Hence there exists a morphism 
$$\tilde\beta_1 \ :\ \mS^{(i+1)pq-1}_{(p)} \ \to\ X/X^{(ipq-1)}$$ 
which is detected by $P^p$, see for example page~60
of~\cite[\S 5]{toda-realizing} (the notation indicates that $\tilde\beta_1$
can be chosen so that the composite with the appropriate shift of
the pinch map $\mS/p\to \mS^1_{(p)}$ 
is the class~$\beta_1$ that generates the $p$-component
of the stable stem of dimension $pq-2$).
Now we claim that $\tilde\beta_1$ can be lifted 
to a morphism $f:\mS^{(i+1)pq-1}_{(p)}\to X$.

The obstruction to lifting $\tilde\beta_1$ to $X$ is the composite morphism
$$  \mS_{(p)}^{(i+1)pq-1} \ \xra{\ \tilde\beta_1\ }\ X/X^{(ipq-1)} 
\  \to \  \Sigma X^{(ipq-1)} \ , $$
where the second map is the connecting morphism.
Since $\Sigma X^{(ipq-1)}$ has stable cells in dimensions
$jpq+1$ and $jpq+2$ for $j=0,\dots,i-1$,
the obstructions lie in the $p$-local stable stems
of dimension $jpq-3$ and $jpq-2$ for $j=2,\dots,i+1$;
by  serious calculations it is known that indeed for $j=2,\dots,p$, 
the $p$-components of the stable stems
of dimension $jpq-3$ and $jpq-2$ are trivial.
We give detailed references for this calculation in 
the proof of Theorem~3.1 of~\cite{schwede-rigid}.
Since the obstruction group vanishes, there exists a morphism
$f:\mS_{(p)}^{(i+1)pq-1}\to X$ as above.\medskip

Step 2: We show that there does not exist any finite $p$-local
spectrum $X$ which satisfies condition $(C_{p-1})$.
Suppose to the contrary that such an $X$ exists. 
By Step~1 there is a morphism $f:\mS_{(p)}^{p^2q-1}\to X$ which is 
detected by the operation $P^p$, and we consider its mapping cone $C(f)$.
Since $X$ satisfies $(C_{p-1})$, the composite Steenrod operation
$$ P^pP^{(p-1)p} \ : \ H^0(C(f);\mF_p) \ \to \
H^{p^2q}(C(f);\mF_p)  $$
is non-trivial.
On the other hand, for $i=p$ the Adem relation~\eqref{Adem relation}
becomes $P^pP^{(p-1)p}=P^{p^2-1}P^1$.
Since the operation $P^1$ is trivial in the
cohomology of $C(f)$ for dimensional reasons,
we arrive at a contradiction, which means that
there is no object $X$ satisfying condition $(C_{p-1})$.\medskip

Step 3: We show by downward induction on $i$ that 
an object $X$ of the category~$\bSH^c_{(p)}$ 
which satisfies condition $(C_i)$ has $p$-order at most $p-i-2$.
We start the induction with $i=p-1$, where the conclusion 
`$X$ has $p$-order at most $-1$' means that
there does not exist such an $X$, which was shown in Step~2.

For the inductive step we assume known that for some 
$i=0,\dots,p-2$ every object which satisfies condition $(C_{i+1})$ 
has $p$-order at most $p-i-3$. Let $X$ be an object 
which satisfies condition $(C_{i})$. We consider a morphism 
$f:\mS_{(p)}^{(i+1)pq-1}\to X$ that is detected by the operation~$P^p$,
and which exists by Step~1. We claim that any mapping cone $C(\bar f)$
of any extension $\bar f:\Sigma^{(i+1)pq-1}\mS/p\to X$ 
satisfies condition $(C_{i+1})$, so it has $p$-order at most $p-i-3$.
This proves that $X$ has $p$-order at most $p-i-2$.

Indeed, by attaching a copy of the suspended mod-$p$ Moore spectrum 
$\Sigma^{(i+1)pq-1}\mS/p$ to~$X$, the mod-$p$ cohomology increases 
by one copy of $\mF_p$ in dimensions $(i+1)pq$ and $(i+1)pq+1$, 
connected by a Bockstein operation, and it remains unchanged 
in all other dimensions.
So the mapping cone $C(\bar f)$ has its mod-$p$ cohomology in the
right dimensions. Since $f$ is detected by~$P^p$, 
the composite Steenrod operation
$$ P^pP^{ip} \ : \ H^0(C(\bar f);\mF_p) \ \to \
H^{(i+1)pq}(C(\bar f);\mF_p)  $$
is an isomorphism.
By the Adem relation~\eqref{Adem relation} and since $P^1$ acts trivially
for dimensional reasons, $P^{(i+1)p}$  acts as a unit multiple of $P^pP^{ip}$,
and thus as an isomorphism. This proves that $C(\bar f)$
satisfies condition $(C_{i+1})$ and finishes Step~3.\medskip

Now we draw the final conclusion. The mod-$p$ Moore spectrum $\mS/p$
satisfies condition $(C_0)$, so by Step~3 for $i=0$ it has $p$-order
at most $p-2$.
\end{proof}

\section{\texorpdfstring{Review of $\Delta$-sets}{Review of Delta-sets}}
\label{sec-Delta review}

In this section we recall $\Delta$-sets and review some of their properties. 
In the later sections we will use actions of
$\Delta$-sets on cofibration categories to establish 
lower bounds for the $p$-order, $p$ any prime, in topological 
triangulated categories.
A general reference for $\Delta$-sets is the paper~\cite{rourke-sanderson}
by Rourke and Sanderson.

We let $\Delta$ denote the category whose objects
are the totally ordered sets $[n]=\{0<1<\dots<n\}$ for $n\geq 0$,
and whose morphisms are the injective monotone maps.
A {\em $\Delta$-set} (sometimes called a {\em semisimplicial set}
or a {\em presimplicial set})
is a contravariant functor from the category
$\Delta$ to the category of sets; a morphism of $\Delta$-sets is a
natural transformation of functors. 
We write $K_n=K([n])$ for the value of a $\Delta$-set
$K:\Delta^\text{op}\to\text{(sets)}$ and call the elements
of this set the {\em $n$-simplices} of $K$. For a morphism
$\alpha:[m]\to [n]$ in $\Delta$ and an $n$-simplex $x$ of $K$
we write
\[ x\alpha \ = \ K(\alpha)(x) \ \in \ K_m \]
for the effect of the map induced by $\alpha$. The functor
property then becomes the relation $(x\alpha)\beta=x(\alpha\beta)$.
A $\Delta$-set $K$ is {\em finite} if the disjoint union of all
the sets $K_n$ is finite. Equivalently, $K$ is finite if each $K_n$ is finite 
and almost all $K_n$ are empty.

For $0\leq i\leq n$ we denote by $d_i:[n-1]\to[n]$ 
the unique morphism in $\Delta$ whose image does not contain~$i$.
A $\Delta$-set can be defined by specifying the sets of simplices
and the {\em face maps}, i.e., the effect of the morphisms  $d_i$.
These maps have to satisfy the relations
\[ xd_jd_i \ = \ xd_id_{j-1} \text{\qquad for all $i<j$.}\]
We will often specify the faces of an $n$-simplex $x$
in a compact way by writing 
$$\partial x\ =\ (xd_0,\,xd_1,\dots,xd_n)\ . $$
The {\em geometric realization} of a $\Delta$-set $K$ is the topological space
\begin{equation}\label{eq:Delta_set_realization}
   |K| \ = \ \bigcup_{n\geq 0} \ K_n\times\nabla^n / \sim \ .
\end{equation}
Here $\nabla^n$ is the topological $n$-simplex (the convex hull of
the standard basis vectors in $\mR^{n+1}$), and
the equivalence relation is generated by 
$$ (x\alpha,t) \ \sim \ (x,\alpha_*t)$$
for all $x\in K_m$, $t\in\nabla^n$ and $\alpha:[n]\to[m]$,
where $\alpha_*(t_0,\dots,t_n)=(s_0,\dots,s_m)$ with
$s_i=\sum_{\alpha(j)=i}t_j$.
A morphism of $\Delta$-sets is a {\em weak equivalence}
if it becomes a homotopy equivalence after geometric realization.
A $\Delta$-set is {\em weakly contractible}
if its geometric realization is contractible.
We emphasize that although the category of $\Delta$-sets
has useful notions of cofibrations (the monomorphisms)
and weak equivalences,  $\Delta$-sets do {\em not} 
form a cofibration category because the factorization axiom~(C4) fails.

\begin{eg} Important examples are the representable $\Delta$-sets 
$\Delta[n]=\Delta(-,[n])$, for $n\geq 0$.  
For any $\Delta$-set $K$, the Yoneda lemma says that 
evaluation at the unique $n$-simplex $\Id_{[n]}$ of $\Delta[n]$ is 
a natural bijection from the morphism set $\Delta\text{-set}(\Delta[n],K)$
to the set $K_n$ of $n$-simplices of~$K$.
The maps
\[ \nabla^n \ \to \ |\Delta[n]|\ , \quad t\ \longmapsto\ [\Id_{[n]},t] 
\text{\qquad and\qquad}
|\Delta[n]|\ \to \ \nabla^n \ , \quad [\alpha,t]\ \longmapsto\ \alpha_*(t) \]
are mutually inverse homeomorphisms between the topological $n$-simplex 
and the geometric realization of $\Delta[n]$.

Important sub-$\Delta$-sets of $\Delta[n]$
are the {\em boundary} $\partial\Delta[n]=\Delta[n]-\{\Id_{[n]}\}$
and the {\em horns} 
\[ \Lambda^i[n]\ = \ \partial\Delta[n]-\{d_i\} \ =\ \Delta[n]-\{\Id_{[n]},d_i\} \]
for $0\leq i\leq n$. The geometric realization of $\partial\Delta[n]$
maps homeomorphically onto the boundary of the topological simplex $\nabla^n$;
the geometric realization of $\Lambda^i[n]$
maps homeomorphically onto the $i$-horn of $\nabla^n$, i.e., the
boundary with the interior of the $i$-th face removed.
\end{eg}

\begin{defn}\label{def-elementary expansion}
Let $K$ be a sub-$\Delta$-set of $L$. The inclusion
$K\to L$ is an {\em elementary expansion} of dimension~$n$ if there is
an $n$-simplex $e\in L_n-K_n$ and an $i\in\{0,\dots,n\}$ such that
$L$ is the disjoint union of $K$ and $\{e, ed_i\}$, 
and $ed_j\in K$ for all $j\ne i$.  
\end{defn}

Elementary expansions can be characterized as pushouts of horn inclusions: 
$K\to L$ is an elementary expansion of dimension $n$ if 
and only if there is a pushout
\begin{equation}\begin{aligned}\label{eq:expansion_pushout}
\xymatrix{ \Lambda^i[n] \ar[r]\ar[d] & \Delta[n]\ar[d]\\
K\ar[r] &L}
\end{aligned}\end{equation}
for some $i\in\{0,\dots,n\}$. The simplex $e$ in the definition of
elementary expansion is then the image of
the $n$-simplex of $\Delta[n]$.

Geometric realization has a right adjoint, so it commutes with pushouts. 
So if $K\to L$ is an elementary expansion of dimension~$n$, then the realization
$|L|$ is obtained from $|K|$ by attaching a topological simplex
along a horn, so the inclusion $|K|\to |L|$ is a homotopy equivalence.
Hence every elementary expansion of $\Delta$-sets is a weak equivalence.

\begin{eg}\label{eg-cone}
For a $\Delta$-set $K$ we denote by $CK$ the {\em cone} of $K$, defined by
$(CK)_0=K_0\amalg\{\ast\}$ and
\[
(CK)_n \ = \   K_n\ \amalg\ \{\sigma x\ |\ x\in K_{n-1} \} \ ,
\]
for $n\geq 1$. The face operators are determined by requiring that
the inclusion of $K_n$ as the first summand of $(CK)_n$ makes
$K$ a sub-$\Delta$-set of $CK$, and by the formulas
$$ (\sigma x)d_i \ = \ 
\begin{cases}
\sigma(xd_i) & \text{ for $0\leq i< n$ and}\\
\quad x & \text{ for $i=n$,}
\end{cases}$$
with the interpretation $(\sigma x)d_1=*$ for $x\in K_0$.
For example, the unique morphism $\Delta[n+1]\to C\Delta[n]$ is an isomorphism.
The geometric realization $|CK|$ is homeomorphic to the cone of $|K|$,
hence contractible.
If $K'$ is a sub-$\Delta$-set of $K$ and $K-K'$ consists of a single
$n$-simplex $x$, then the inclusion $CK'\to CK$ has complement
$\{x,\,\sigma x\}$ and is an elementary expansion of dimension $n+1$.
So if $K$ is finite, then the inclusion $\{*\}\to CK$ of the cone
point is a composite of elementary expansions, one for each simplex of $K$.
\end{eg}

Categorical products of $\Delta$-sets are not homotopically well behaved; 
this is one of the reasons why simplicial sets are preferable 
for many purposes.
For example, the geometric realization 
of the categorical product $\Delta[1]\times\Delta[1]$ 
is not even connected. However, there is another construction,
the {\em geometric product} $K\tensor L$ of two $\Delta$-sets $K$ and $L$,
defined as follows.
An $n$-simplex of $K\tensor L$ is an equivalence class of triples
$(x,y;\,\varphi)$ where $x\in K_i$, $y\in L_j$ and 
$\varphi:[n]\to[i]\times[j]$ is an injective monotone map.
The equivalence relation is generated by
\begin{equation}
  \label{eq:generate_relation_geometric}
 (x\alpha,y\beta;\,\varphi) \ \sim \ 
(x,y;\,(\alpha\times\beta)\varphi)    
\end{equation}
for morphisms $\alpha$ and $\beta$ in the category $\Delta$.
Every equivalence class has a preferred representative,
namely the unique triple $(x,y;\,\varphi)$ where both components 
$\varphi^1:[n]\to[i]$ and $\varphi^2:[n]\to[j]$ of~$\varphi$ 
are surjective; however, we will not use this.
A morphism $\nu:[m]\to[n]$ of~$\Delta$ acts on the
equivalence class of a triple by
\[  [x,y;\,\varphi]\nu \ = \  [x,y;\,\varphi\nu] \ . \]
For example, every $n$-simplex of $\Delta[i]\tensor\Delta[j]$ 
has a unique representative of the form $(\Id_{[i]},\Id_{[j]};\,\varphi)$
for an injective monotone map $\varphi:[n]\to[i]\times[j]$,
so $\Delta[i]\tensor\Delta[j]$ is isomorphic to the $\Delta$-set
of injective monotone maps into $[i]\times[j]$.

The geometric product is symmetric monoidal with unit object $\Delta[0]$.
The unit isomorphism sends an
$n$-simplex $[x,\Id_{[0]};\,\varphi]$ of $K\tensor\Delta[0]$ to $x\in K_n$. 
The associativity isomorphism 
$(K\tensor L)\tensor M\iso K\tensor(L\tensor M)$ sends an $m$-simplex
$[[x,y;\,\varphi],z;\,\psi]$ to
\[ [x,[y,z;\,\varphi'];\,\psi'] \ \in \ ( K\tensor(L\tensor M))_m\ , \]
where $\varphi':[l]\to[j]\times[k]$ and $\psi':[m]\to[i]\times[l]$
are the unique monotone injections that make the square 
\[\xymatrix@C=18mm{ [m]\ar[r]^-{\psi} \ar[d]_{\psi'} & 
[n]\times[k]\ar[d]^{\varphi\times[k]}\\
[i]\times[l]\ar[r]_-{[i]\times\varphi'}& [i]\times[j]\times[k]} \]
commute. The symmetry isomorphism 
$K\tensor L\iso L\tensor K$ sends 
$[x,y;\,\varphi]$ to $[y,x;\,\tau\varphi]$,
where $\tau:[i]\times[j]\to[j]\times[i]$ interchanges the factors.

The product $\tensor$ is called `geometric' because geometric realization 
takes it to cartesian product of topological spaces.
More precisely, a continuous natural map
$p_K:|K\tensor L|\to|K|$ is defined by sending the equivalence class of 
$$ ([x,y;\,\varphi],\, t) \ \in \ (K\tensor L)_n\times\nabla^n$$
to the class of $(x,\varphi^1_*(t))\in K_i\times\nabla^i$,
where $\varphi^1:[n]\to[i]$ is the first component 
of $\varphi:[n]\to[i]\times[j]$. One should beware that $p_K$ is not
induced by a morphism of $\Delta$-sets from $K\tensor L$ to $K$.
There is an analogous map for the second factor $L$, and the 
combined map
\begin{equation}\label{eq:geometric_versus_cartesian_product}
 (p_K,p_L) \ : \ |K\tensor L| \ \to \ |K|\times |L|  
\end{equation}
is a homeomorphism (when the product is given the
compactly generated topology).
It follows that the functor $K\tensor-$ preserves weak equivalences
of $\Delta$-sets.

A general injective weak equivalence between finite $\Delta$-sets 
is {\em not} a sequence of elementary expansions.
Nevertheless, it is well known that the localization 
of the category of $\Delta$-sets
at the class of elementary expansions agrees with the
localization of the category of $\Delta$-sets
at the class of weak equivalences, and the resulting homotopy category
is equivalent to the homotopy category of CW-complexes,
compare Section~I.4 of~\cite{buonchristiano-rourke-sanderson}.
For the convenience of the reader we recall a proof of the first of
these two facts inside of the category of {\em finite} $\Delta$-sets.

\begin{prop}\label{prop-expansions generate equivalences}
Let $F$ be a functor defined on the category of finite $\Delta$-sets
that takes elementary expansions to isomorphisms.
Then $F$ takes all weak equivalences to isomorphisms.
\end{prop}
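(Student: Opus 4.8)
The plan is to deduce that $F$ inverts every weak equivalence by first proving that $F$ is \emph{combinatorially homotopy invariant}, and then reducing an arbitrary weak equivalence to a combinatorial homotopy equivalence by means of subdivision and simplicial approximation. The homotopy-invariance part is essentially formal; the genuinely technical part, which I expect to be the main obstacle, is the combinatorialization step, since a weak equivalence of finite $\Delta$-sets need not be a combinatorial homotopy equivalence on the nose. For instance, if $K$ is a minimally triangulated dunce hat, then $\{\ast\}\to K$ is a weak equivalence but is neither a composite of elementary expansions (the dunce hat is not collapsible) nor an obvious combinatorial contraction, so $F$ cannot be forced to invert it by expansions alone.

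First I would establish homotopy invariance. Writing $\iota_0,\iota_1:K\to K\tensor\Delta[1]$ for the two inclusions induced by the vertices of $\Delta[1]$, the key combinatorial input is that $\iota_0$ is a finite composite of elementary expansions: one builds $K\tensor\Delta[1]$ from $K\tensor\Delta[0]\cong K$ by attaching, in order of increasing dimension, the prisms over the simplices of $K$, each attachment being a sequence of horn fillings, hence of elementary expansions as in~\eqref{eq:expansion_pushout}. Consequently $F(\iota_0)$ is an isomorphism. Now the unique map $c:\Delta[1]\to\Delta[0]$ is an honest morphism of $\Delta$-sets, so $\pi=\Id_K\tensor c:K\tensor\Delta[1]\to K\tensor\Delta[0]\cong K$ is defined and satisfies $\pi\iota_0=\Id_K=\pi\iota_1$. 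Hence $F(\pi)$ is inverse to $F(\iota_0)$, and $F(\pi)F(\iota_1)=\Id$ forces $F(\iota_1)=F(\iota_0)$. It follows that combinatorially homotopic maps (those differing by a map out of $K\tensor\Delta[1]$) have equal image under $F$, so $F$ sends every combinatorial homotopy equivalence to an isomorphism; in particular $F$ inverts each horn inclusion $\Lambda^i[n]\to\Delta[n]$, which is a combinatorial strong deformation retract.

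Next I would pass from weak equivalences to combinatorial homotopy equivalences. Using the combinatorial mapping cylinder $M_f=(K\tensor\Delta[1])\cup_{\iota_1}L$ of a map $f:K\to L$, the end-inclusion $L\to M_f$ and the evident retraction $M_f\to L$ are mutually inverse combinatorial homotopy equivalences, so by the previous step $F$ inverts $M_f\to L$; thus $F(f)$ is an isomorphism if and only if $F$ inverts the front inclusion $K\to M_f$, which is an injective weak equivalence. This reduces the statement to acyclic cofibrations. To invert these I would introduce the barycentric subdivision $\mathrm{Sd}$ of finite $\Delta$-sets together with its natural comparison map $\mathrm{Sd}(K)\to K$, prove \emph{directly}—by an induction over simplices producing explicit combinatorial homotopies—that each comparison map is $F$-invertible (so as to avoid circularly appealing to the theorem being proved), and then invoke simplicial approximation: after finitely many subdivisions a topological homotopy inverse of $|f|$ is realized by a genuine morphism of $\Delta$-sets, and after sufficient subdivision realizations that are topologically homotopic are combinatorially homotopic. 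Combining the $F$-invertibility of the subdivision comparisons with homotopy invariance then yields that $F(f)$ is an isomorphism.

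The hard part will be this last step: constructing the subdivision functor on finite $\Delta$-sets, showing its comparison maps are $F$-invertible without invoking the conclusion, and carrying out simplicial approximation combinatorially so as to produce the required map $\mathrm{Sd}^r(L)\to K$ and control it up to combinatorial homotopy. By contrast, the homotopy-invariance and mapping-cylinder arguments are routine once the prism decomposition of $\iota_0$ into elementary expansions is in hand.
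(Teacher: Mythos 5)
Your proposal is built on three maps, none of which exists in the category of $\Delta$-sets, and this is precisely the feature of $\Delta$-sets (as opposed to simplicial sets) that the paper warns about: since all morphisms in $\Delta$ are injective, $\Delta[0]$ is \emph{not} a terminal object, and by the Yoneda lemma a morphism of $\Delta$-sets $c:\Delta[1]\to\Delta[0]$ would be an injective monotone map $[1]\to[0]$ --- there is none. Consequently: (a) the projection $\pi=\Id_K\tensor c:K\tensor\Delta[1]\to K$, on which your homotopy-invariance step rests, is undefined; (b) the ``evident retraction'' $M_f\to L$ of the mapping cylinder is undefined (already for $K=L=\Delta[0]$ and $f=\Id$ it would be a map $\Delta[1]\to\Delta[0]$), and likewise a horn inclusion $\Lambda^i[n]\to\Delta[n]$ admits no retraction at all, since $\Lambda^i[n]$ has no $n$-simplices; and (c) the ``natural comparison map'' $\mathrm{Sd}(K)\to K$ does not exist: already for $K=\Delta[1]$ a last-vertex-type map must send one of the two edges of $\mathrm{Sd}(\Delta[1])$ to a \emph{degenerate} edge of $\Delta[1]$, and $\Delta$-sets have no degenerate simplices. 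Note also that invertibility of $F(\iota_0)$ and $F(\iota_1)$ alone does not give $F(\iota_0)=F(\iota_1)$; the natural substitute for the missing projection --- the map $\Delta[1]\to P$ to the weakly contractible $\Delta$-set $P$ with one simplex in dimensions $0,1,2$ --- is non-injective, so forcing $F$ to invert $K\tensor(\Delta[1]\to P)$ is an instance of the very statement being proved. So both the homotopy-invariance step and the subdivision/approximation step fail for the same structural reason, and the dunce hat you mention (which is exactly $|P|$) is a symptom of it rather than a side remark.

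The paper's proof is engineered around exactly this one-sidedness. It uses only constructions made from elementary expansions plus one external input: fill horns to embed $K$ in a Kan $\Delta$-set $K^\infty=\bigcup_n K^n$; form the mapping cylinder $Mf=K\tensor\Delta[1]\cup_f L$, whose end inclusion $L\to Mf$ is a cobase change of the back inclusion $i_1$ and hence a composite of expansions; then use the generalized extension property of Kan $\Delta$-sets (Rourke--Sanderson) to obtain $\varphi:Mf\to K^\infty$ with $\varphi g i_0$ the inclusion $K\to K^\infty$, which lands in a finite stage $K^n$ because $Mf$ is finite. Applying $F$ shows that $F(f)$ has a left inverse and $F(\varphi)$ has a right inverse; running the identical argument once more with the weak equivalence $\varphi:Mf\to K^n$ in place of $f$ gives $F(\varphi)$ a left inverse as well, so $F(\varphi)$, and then $F(f)$, is an isomorphism. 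This bootstrap never needs $F(\iota_0)=F(\iota_1)$, cylinder retractions, subdivision, or simplicial approximation; any repair of your outline would have to replace every projection, retraction, and comparison map by one-sided extension data of this kind (or switch to simplicial sets, which changes the statement).
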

\begin{proof}
We let $f:K\to L$ be a weak equivalence between finite $\Delta$-sets.
By `filling all horns' (cf.\,\cite[p.\,334]{rourke-sanderson}) 
we obtain a sequence of elementary expansions
$$ K=K^0 \subseteq K^1\subseteq K^2\subseteq \cdots \subseteq K^n\subseteq 
\cdots  $$
such that the union $K^\infty=\cup_{n\geq 0}K^n$ is a Kan $\Delta$-set, i.e.,
every morphism of $\Delta$-sets from a horn~$\Lambda^i[n]$ to $K^\infty$
can be extended to the simplex $\Delta[n]$.
Of course, $K^\infty$ is no longer finite.

We let $i_0, i_1:K\to K\tensor\Delta[1]$ be the `front and back inclusion', 
i.e., the morphisms defined on $x\in K_n$  by 
\[ i_0(x) \ = \  [x,d_1;\, \psi_n] \text{\qquad respectively\qquad} 
i_1(x) \ = \  [x,d_0;\, \psi_n]\ , \]
where $\psi_n:[n]\to[n]\times[0]$ is the unique monotone injection.
We form the mapping cylinder $Mf=K\tensor\Delta[1]\cup_fL$, i.e.,
a pushout of the diagram:
$$\xymatrix@C=15mm{K \ar[d]_-{i_1} \ar[r]^f &  L\ar[d]^j \\
K\tensor\Delta[1]  \ar[r]_-g & Mf}$$ 
The back inclusion $i_1$ is a sequence of elementary expansions, 
hence so is its cobase change $j:L\to Mf$. The geometric realization
of $Mf$ is homeomorphic to the topological mapping cylinder of
$|f|:|K|\to |L|$ and the monomorphism $gi_0:K\to Mf$
becomes the front inclusion after realization. Hence
$|gi_0|:|K|\to |Mf|$ is a homotopy equivalence since~$f$ is, and
$|K|$ is a retract of $|Mf|$. By the `generalized extension property'
of Kan $\Delta$-sets~\cite[Cor.\,5.4]{rourke-sanderson} there is
a morphism of $\Delta$-sets $\varphi:Mf\to K^\infty$ such that
$\varphi gi_0:K\to K^\infty$ is the inclusion.
Since $K$ and $L$ are finite $\Delta$-sets, so is 
the mapping cylinder $Mf$. The image of $\varphi:Mf\to K^\infty$
is thus contained in $K^n$ for some $n\geq 0$.

Now let $F$ be a functor to some category $\cD$, 
defined on the category of finite $\Delta$-sets, 
that takes elementary expansions to isomorphisms. 
We apply $F$ to the various morphisms 
of finite $\Delta$-sets and obtain a commutative diagram in $\cD$:
$$\xymatrix@C=18mm{F(K) \ar[d]_{F(i_1)}^{\iso} \ar[r]^-{F(f)} & 
F(L) \ar[d]^-{F(j)}_-\iso \\
F(K\tensor\Delta[1]) \ar[r]_{F(g)}& 
F(Mf) \ar[d]^{F(\varphi)} \\
F(K)\ar[r]_{\iso}  \ar[u]^{F(i_0)}_{\iso} & F(K^n)}$$
The morphisms decorated with symbols `$\iso$'
are isomorphisms since they arise from morphisms
of $\Delta$-sets that are sequences of elementary expansions.
So the diagram shows that $F(f)$ has a left inverse in $\cD$
and $F(\varphi)$ has a right inverse in $\cD$.

Now we apply the same argument to the weak equivalence $\varphi:Mf\to K^n$
instead of $f$. We deduce that $F(\varphi)$ 
has a left inverse in $\cD$. Since  $F(\varphi)$ 
has a left inverse and a right inverse, it is an isomorphism. Hence the
morphism $F(f)$ is also an isomorphism.
\end{proof}

\section{Frames in cofibration categories}
\label{sec-framings}

In this section we develop the technique of `framings', or
`$\Delta$-resolutions' in cofibration categories. Framings 
are a way to construct homotopically meaningful pairings with $\Delta$-sets,
compare the notion of `$\Delta$-cofibration category' in
Definition~\ref{def-Delta cofibration category}.
In Section~\ref{sec-pointed Delta} we need such a pairing 
to talk about actions of Moore spaces on objects of a cofibration category. 

In the context of Quillen model categories, 
the theory of framings is well established 
and goes back to Dwyer and Kan~\cite[4.3]{DK-function}, 
who use the terminology {\em (co-)simplicial resolutions}. 
In our more general setup of cofibration categories we cannot work with
cosimplicial objects -- the lack of fibrations and matching objects
in a cofibration category does not allow the construction
of codegeneracy maps in a frame. The solution is not to ask
for codegeneracy morphisms, i.e., to work
with co-$\Delta$-objects instead of cosimplicial objects.
In other words: $\Delta$-sets are to cofibration categories 
what simplicial sets are to Quillen model categories.

\medskip

As before we let $\Delta$ denote the category whose objects
are the totally ordered sets $[n]=\{0<1<\dots<n\}$ for $n\geq 0$,
and whose morphisms are the injective monotone maps.
A {\em co-$\Delta$-object} in a category~$\cC$ is a
covariant functor $A:\Delta\to\cC$. Morphisms of co-$\Delta$-objects are
natural transformations of functors.
In a co-$\Delta$-object $A$ we typically write $A^n$ for $A([n])$.

Now we discuss how to pair co-$\Delta$-objects with $\Delta$-sets.
If $A$ a co-$\Delta$-object in $\cC$ and $Z$ an object of $\cC$,
then the composite functor
$$ \Delta^\text{op} \ \xra{\ A\ } \ \cC^\text{op} \ 
\xra{\ \cC(-,Z)\ } \ \text{(sets)}$$
is a $\Delta$-set that we denote by $\cC(A,Z)$.
If $K$ is a $\Delta$-set, then we denote by $K\cap A$
a representing object, if it exists, of the functor
$$ \cC \ \to \ \text{(sets)}\ , \quad Z \longmapsto\ 
\Delta\text{-set}(K,\cC(A,Z)) \ .$$
In more detail, $K\cap A$ is an initial example of a $\cC$-object
equipped with morphisms $x_*:A^n\to K\cap A$ for
every $n\geq 0$ and every $n$-simplex $x$ of $K$, such that
for all morphisms $\alpha:[m]\to [n]$ in the category $\Delta$ the composite
$$ A^m \ \xra{\ \alpha_*\ } \ A^n \ \xra{\ x_*\ }\
K\cap A$$
is equal to $(x\alpha)_*$. 
The universal morphisms $x_*$ are
part of the data, but we will often omit them from the notation.

The defining property of $K\cap A$ can be rephrased in several other ways.
For example, $K\cap A$ is a colimit of the composite functor 
\[ s(K) \ \xra{(n,x)\mapsto[n]} \ \Delta \ \xra{\quad A\quad }\ \cC\ . \]
Here $s(K)$ is the simplex category of $K$:
objects are pairs $(n,x)$ with $x\in K_n$,
and morphisms from  $(n,x)$ to $(m,y)$ are those $\Delta$-morphisms
$\alpha:[n]\to[m]$ that satisfy $y\alpha=x$.
If $\cC$ has coproducts, then $K\cap A$ is also a coend of the functor
\[ \Delta^\text{op}\times \Delta \ \to \ \cC \ , \quad
([n],[m]) \  \longmapsto \  K_n\times A^m \ , \]
where $K_n\times A^m$ denotes a coproduct 
of copies  of $A^m$ indexed by the set~$K_n$.

\begin{eg}
By the Yoneda lemma, the object $A^n$ represents the functor
$\Delta\text{-set}(\Delta[n],\cC(A,-))$. 
So for every co-$\Delta$-object $A$ we can -- and will -- take
$\Delta[n]\cap A=A^n$ with respect to the structure morphisms of $A$.

The product of a co-$\Delta$-object with the boundary $\partial\Delta[n]$
of a simplex will play an important role in the following, 
so we use the special notation
\[ \partial^n\!A \ = \ \partial\Delta[n]\cap A \]
and refer to this object as the $n$-th {\em latching object}  of 
a co-$\Delta$-object $A$. The simplex category~$s(\partial\Delta[n])$
is the full subcategory of the over category $\Delta\downarrow[n]$ 
with objects all $\alpha:[i]\to[n]$ for~$i<n$.
So $\partial^n\!A$, if it exists, is a colimit 
of the functor $s(\partial\Delta[n])\to\cC$ that sends
$\alpha:[i]\to[n]$ to~$A^i$.
For example, $\partial^0\!A$ is an initial object and 
$\partial^1\!A$ is a coproduct of two copies of $A^0$.
\end{eg}

The pairing $(K,A)\mapsto K\cap A$ extends to a functor
in two variables in a rather formal way, whenever the representing objects
exist. Indeed, if $\lambda:K\to L$ is a morphism of $\Delta$-sets
and $f:A\to B$ a morphism of co-$\Delta$-objects, then
precomposition with $\lambda$ and $f$ is a natural transformation
of set-valued functors on $\cC$
$$\Delta\text{-set}(\lambda,\cC(f,-)) \ : \
 \Delta\text{-set}(L,\cC(B,-))\ \to \ \Delta\text{-set}(K,\cC(A,-)) \ . $$
If $L\cap B$ respectively $K\cap A$ represent these two functors,
then the Yoneda lemma provides a unique $\cC$-morphism 
$\lambda\cap f:K\cap A\to L\cap B$ that represents the transformation
$\Delta\text{-set}(\lambda,\cC(f,-))$.

\begin{defn}\label{def-Reedy} 
A co-$\Delta$-object $A$ in  a cofibration category $\cC$ 
is {\em cofibrant} if for every $n\geq 0$
the latching object $\partial^n\!A$ exists and 
the canonical morphism $\nu^n:\partial^n\!A\to A^n$ is a cofibration.
A morphism $f:A\to B$ of cofibrant co-$\Delta$-objects
is a {\em cofibration} if for every $n\geq 0$ the morphism 
$$ f^n\cup\nu^n : \  A^n\cup_{\partial^n\!A}\partial^n\!B \ \to \ B^n$$
is a cofibration.
A morphism $f:A\to B$ of co-$\Delta$-objects is a  {\em level equivalence}
if $f^n:A^n\to B^n$ is a weak equivalence for all $n\geq 0$.
\end{defn}

In the definition of cofibrations,
$A^n\cup_{\partial^nA}\partial^nB$ denotes a pushout of the diagram
$$ A^n \ \xla{\ \nu^n \ }\ \partial^n\!A \ \xra{\ \partial^n\!f\ } \ \partial^n\!B \ ,$$
which exists since $\nu^n$ is a cofibration.

\begin{prop}\label{prop-represent}
Let $K$ be a finite $\Delta$-set and  $\cC$ a cofibration category.
Then for every cofibrant co-$\Delta$-object $A$ in $\cC$
the object $K\cap A$ exists in $\cC$. Moreover, the functor
$K\cap-$ takes cofibrations between cofibrant co-$\Delta$-objects
to cofibrations in $\cC$, and it takes acyclic cofibrations 
between cofibrant co-$\Delta$-objects 
to acyclic cofibrations in $\cC$.
\end{prop}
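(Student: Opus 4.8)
The plan is to build $K$ from the empty $\Delta$-set one simplex at a time and to push this filtration through the pairing. First I would note that $-\cap A$ preserves colimits in the $\Delta$-set variable: since $K\cap A$ represents $Z\mapsto\Delta\text{-set}(K,\cC(A,Z))$, and this functor sends colimits of $\Delta$-sets to limits of sets, a colimit $K=\colim_i K_i$ gives $K\cap A=\colim_i(K_i\cap A)$ as soon as the latter colimit exists in $\cC$. Now every finite $\Delta$-set arises from the empty one by finitely many simplex attachments, each a pushout of $\partial\Delta[n]\to\Delta[n]$ along an attaching map. Under $-\cap A$, and using $\Delta[n]\cap A=A^n$ and $\partial\Delta[n]\cap A=\partial^n\!A$, such an attachment becomes a pushout of the canonical morphism $\nu^n:\partial^n\!A\to A^n$. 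Since $A$ is cofibrant, $\nu^n$ is a cofibration, so axiom~(C3) supplies the pushout and makes the resulting inclusion a cofibration. Beginning from $\emptyset\cap A$, an initial object, an induction on the number of simplices yields $K\cap A$ (and shows in passing that every sub-$\Delta$-set inclusion induces a cofibration).

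For the two preservation claims I would isolate a pushout-product lemma and read the proposition off from its case $K=\emptyset$: for every inclusion $\lambda:K\to L$ of finite $\Delta$-sets and every cofibration $f:A\to B$ of cofibrant co-$\Delta$-objects, the morphism
\[ \lambda\,\square\,f\ :\ (L\cap A)\cup_{K\cap A}(K\cap B)\ \to\ L\cap B \]
is a cofibration, and is a weak equivalence whenever $f$ is a level equivalence. Taking $K=\emptyset$ collapses the source to $L\cap A$ and identifies $\lambda\,\square\,f$ with $L\cap f$, which is exactly what the proposition asserts (an acyclic cofibration here meaning a cofibration that is also a level equivalence). I would prove the lemma by induction on the number of simplices of $L$ outside $K$; the base case $L=K$ gives an isomorphism. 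For the inductive step, write $L$ as $L'$ with a single further $n$-simplex attached. Using that $-\cap A$ and $-\cap B$ preserve pushouts, a pasting-of-pushouts argument exhibits $\lambda\,\square\,f$ as a cobase change of $\lambda'\,\square\,f$ (for $K\to L'$, known by induction) followed by a cobase change of the \emph{elementary} pushout-product $(\partial\Delta[n]\to\Delta[n])\,\square\,f$, which is nothing but $f^n\cup\nu^n:A^n\cup_{\partial^n\!A}\partial^n\!B\to B^n$. As cofibrations and acyclic cofibrations are each stable under composition and cobase change by~(C1) and~(C3), everything reduces to these elementary pushout-products.

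The cofibration half is then immediate, since $f^n\cup\nu^n$ is a cofibration by the very definition of a cofibration of co-$\Delta$-objects (Definition~\ref{def-Reedy}). The acyclic half is the main obstacle, as Definition~\ref{def-Reedy} builds in no acyclicity at all. I would handle it by a secondary induction on $n$: assuming $f$ a level equivalence, factor $f^n$ as
\[ A^n\ \to\ A^n\cup_{\partial^n\!A}\partial^n\!B\ \xra{\,f^n\cup\nu^n\,}\ B^n, \]
where the first map is the cobase change of the latching morphism $\partial^n\!f:\partial^n\!A\to\partial^n\!B$ along the cofibration $\nu^n$. Now $\partial^n\!f$ equals $(\emptyset\to\partial\Delta[n])\,\square\,f$, and since $\partial\Delta[n]$ involves only simplices of dimension below $n$, the pushout-product lemma in those lower dimensions shows $\partial^n\!f$ is an acyclic cofibration; hence by~(C3) so is its cobase change $A^n\to A^n\cup_{\partial^n\!A}\partial^n\!B$. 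Because $f^n$ is a weak equivalence by hypothesis, the 2-out-of-3 axiom~(C2) then forces $f^n\cup\nu^n$ to be one as well. The point to keep straight is that this induction on the dimension of the latching objects is interleaved with the induction on simplices for the pushout-product lemma; since $\partial^n\!A$ uses only simplices of dimension $<n$, the interleaving is well-founded and no circularity occurs.
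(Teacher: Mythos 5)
Your proof is correct, and its engine is the same as the paper's: build $K$ cell by cell, reduce all statements to the elementary pieces $f^n\cup\nu^n$, and obtain their acyclicity from the inductive hypothesis applied to $\partial\Delta[n]$ (which has dimension $n-1$) together with the 2-out-of-3 axiom (C2). The difference is in the packaging. The paper proves Proposition~\ref{prop-represent} directly, by induction on the dimension of $K$ (attaching all $n$-simplices of the top skeleton at once, hence using finite coproducts of copies of $\nu^n$), with the acyclic-cofibration claim for $f^n\cup\nu^n$ established exactly as you establish it; the pushout-product statement that you take as your key lemma is in the paper a separate, later result (Proposition~\ref{prop-SM7}), proved \emph{from} Proposition~\ref{prop-represent}: its cofibration part~(i) by the same induction on the number of simplices of $L$ outside $K$ that you run, and its acyclicity part~(ii) not by induction at all, but by a short 2-out-of-3 comparison of $K\cap j$, $L\cap j$ and the cobase change of $K\cap j$ (all three already known to be acyclic cofibrations, respectively weak equivalences, by Proposition~\ref{prop-represent}). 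So your route merges Proposition~\ref{prop-represent} and Proposition~\ref{prop-SM7} into one interleaved induction and recovers the proposition as the degenerate case $K=\emptyset$; this buys a single self-contained argument proving the stronger statement in one pass, at the price of carrying that stronger inductive hypothesis and of having to check, as you do, that the double induction (on simplex count for the pushout-product, on dimension for the latching objects) is well-founded. The paper's ordering buys a leaner induction for the proposition itself and a non-inductive proof of the acyclic half of the pushout-product property. Your preliminary observations (colimit preservation of $-\cap A$, and the simplex-by-simplex filtration) likewise correspond to Proposition~\ref{prop-colimits} and to the skeletal pushout used in the paper's existence argument, so no step of your outline is missing an ingredient the paper needs.
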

\begin{proof} 
We argue by induction over the dimension of $K$.
If $K$ is empty (i.e., $(-1)$-dimensional), 
then any initial object of $\cC$ represents the functor 
$\Delta\text{-sets}(K,\cC(A, -))$, and can thus be taken as $\emptyset\cap A$.
Hence for every morphism $f:A\to B$ of co-$\Delta$-objects,
the morphism $\emptyset\cap f$ is an isomorphism, thus an acyclic cofibration.

Now suppose that $n\geq 0$ and we have established the proposition 
for all finite $\Delta$-sets of dimension less than~$n$.
We claim first that for every morphism $f:A\to B$ 
between cofibrant co-$\Delta$-objects that is a 
cofibration and a level equivalence, the morphism
$f^n\cup\nu^n:A^n\cup_{\partial^n\!A}\partial^n\!B\to B^n$ 
is an acyclic cofibration.
Since $\partial^n\!A=\partial\Delta[n]\cap A$ and 
$\partial\Delta[n]$ has dimension $n-1$, 
the morphism $\partial^n\!f:\partial^n\!A\to\partial^n\!B$
is an acyclic cofibration by induction. 
So its cobase change
$A^n\to A^n\cup_{\partial^n\!A}\partial^n\!B$ is an acyclic cofibration. 
Since $f^n:A^n\to B^n$ is a weak equivalence,
the morphism $f^n\cup\nu^n:A^n\cup_{\partial^n\!A}\partial^n\!B\to B^n$ is 
a weak equivalence, hence an acyclic cofibration.

Now suppose that $K$ is $n$-dimensional.
We can write $K$ as a pushout
$$\xymatrix{ K_n\times \partial\Delta[n] \ar[r]\ar[d] & 
K_n\times \Delta[n] \ar[d]\\
K' \ar[r] & K}$$
where $K'$ is the $(n-1)$-skeleton of $K$.
By induction there is a representing object $K'\cap A$
for the functor $\Delta\text{-sets}(K',\cC(A, -))$.
The latching object $\partial^n\!A$ represents the functor 
$\Delta\text{-sets}(\partial\Delta[n],\cC(A, -))$
and $A^n$  represents the functor 
$\Delta\text{-sets}(\Delta[n],\cC(A, -))$.
Moreover, the morphism $\nu^n:\partial^n\!A\to A^n$ is a cofibration 
since $A$ is cofibrant, and hence so is a finite coproduct of copies of
$\nu^n$.
So any pushout in $\cC$
$$\xymatrix@C=15mm{ K_n\times\partial^n\!A \ar[r]^-{K_n\times \nu^n}\ar[d] &
K_n\times A^n \ar[d]\\
K'\cap A \ar[r] & K\cap A}$$
can serve as the object $K\cap A$.
Here, and in the rest of the proof, we write $K_n\times X$ 
for a coproduct, indexed by the finite set $K_n$, of copies of an object $X$.
 
Now we let $f:A\to B$ be a cofibration between cofibrant co-$\Delta$-objects.
The morphism $K\cap f:K\cap A\to K\cap B$ is obtained by passage to 
horizontal pushouts from the commutative diagram:
$$\xymatrix@C=18mm{ K_n\times A^n\ar[d]_{K_n\times f^n} & 
K_n\times\partial^n\!A\ar[r]\ar[l]_-{K_n\times\nu^n}
\ar[d]^{K_n\times \partial^n\!f} & K'\cap A \ar[d]^{K'\cap f}\\
K_n\times B^n & K_n\times\partial^n\! B\ar[r]\ar[l]^-{K_n\times \nu^n} & 
K'\cap B}$$
This induced map on pushouts factors as the composite
\begin{align*}
 K_n\times A^n\cup_{K_n\times\partial^n\!A}(K'\cap A) \ 
&\xra{\ \Id\cup (K'\cap f)\ } \
K_n\times A^n\cup_{K_n\times\partial^n\!A}(K'\cap B)\\ 
&\xra{(K_n\times f^n)\cup\Id } \
K_n\times B^n\cup_{K_n\times\partial^n\!B}(K'\cap B)\ .
\end{align*}
The first map is a cobase change of $K'\cap f$, 
which is a cofibration (and a weak equivalence if $f$ 
is also a level equivalence) by induction.
The second map is a cobase change of a finite coproduct of copies of
$f^n\cup\nu^n:A^n\cup_{\partial^n\!A}\partial^n\!B\to B^n$,
which a cofibration by hypothesis on $f$.
If $f$ is also a level equivalence, then $f^n\cup\nu^n$,
and hence the second map, is an acyclic cofibration by the above claim.
So the composite map $K\cap f$ is a cofibration
(respectively acyclic cofibration) as the composite of two cofibrations
(respectively acyclic cofibrations).
\end{proof}

Another purely formal consequence of the definition is that the 
$\cap$-pairing preserves colimits in both variables.
We emphasize that the next proposition does not claim the existence
of any kind of colimits; it only says that if a certain
colimit exists in the category of cofibrant co-$\Delta$-objects
(respectively  the category of  finite $\Delta$-sets), 
then the functor $K\cap-$
(respectively $-\cap A$) preserves the colimit.

\begin{prop}\label{prop-colimits} 
For every finite $\Delta$-set $K$ the functor
$K\cap-$ takes colimits in the category of cofibrant co-$\Delta$-objects
to colimits in $\cC$.
For every cofibrant co-$\Delta$-object $A$ the functor
$-\cap A$ takes colimits in the category of finite $\Delta$-set 
to colimits in $\cC$.
\end{prop}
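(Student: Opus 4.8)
The plan is to derive both statements formally from the defining universal property of the pairing, namely the natural bijection
\[ \cC(K\cap A,\,Z)\ \iso\ \Delta\text{-set}(K,\,\cC(A,Z)) \]
(equivalently, from the description of $K\cap A$ as the colimit $\colim_{(n,x)\in s(K)}A^n$ over the simplex category), together with the elementary fact that hom-functors send colimits to limits. In each part I would verify the universal property of a colimit in $\cC$ by testing against an arbitrary object $Z$ and rewriting both sides with the displayed bijection. Throughout, the objects $K\cap A$, $K_j\cap A$, etc.\ exist by Proposition~\ref{prop-represent}, since the $\Delta$-sets involved are finite and the co-$\Delta$-objects involved are cofibrant.

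I would treat the second statement first, since it is the more purely formal of the two. Fix a cofibrant co-$\Delta$-object $A$ and suppose $K=\colim_j K_j$ in finite $\Delta$-sets. For every $Z$ in $\cC$ I would compute
\[ \cC(K\cap A,\,Z)\ \iso\ \Delta\text{-set}(K,\,\cC(A,Z))\ \iso\ \lim_j\Delta\text{-set}(K_j,\,\cC(A,Z))\ \iso\ \lim_j\cC(K_j\cap A,\,Z)\ , \]
where the middle isomorphism is the continuity of the contravariant hom-functor $\Delta\text{-set}(-,\cC(A,Z))$ (using that the colimit in finite $\Delta$-sets agrees with the underlying $\Delta$-set colimit, as holds for the finite colimits that occur). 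Since this chain is natural in $Z$ and compatible with the structure morphisms, it exhibits $K\cap A$ as $\colim_j(K_j\cap A)$.

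For the first statement I would fix the finite $\Delta$-set $K$ and a colimit $A=\colim_i A_i$ of cofibrant co-$\Delta$-objects, and run the analogous computation, finding
\[ \cC(K\cap A,\,Z)\ \iso\ \Delta\text{-set}(K,\,\cC(A,Z)) \qquad\text{and}\qquad \lim_i\cC(K\cap A_i,\,Z)\ \iso\ \Delta\text{-set}(K,\,\lim_i\cC(A_i,Z))\ , \]
again using that $\Delta\text{-set}(K,-)$ preserves limits. The two right-hand sides agree precisely when $\cC(A,Z)\iso\lim_i\cC(A_i,Z)$ as $\Delta$-sets, i.e.\ when $A^n\iso\colim_i A_i^n$ in $\cC$ for every $n$. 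Equivalently, via the coend formula, one needs the interchange $\colim_{s(K)}\colim_i A_i^n\iso\colim_i\colim_{s(K)}A_i^n$, which is the standard fact that colimits commute with colimits, once one knows the colimit is formed levelwise.

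Thus the one non-formal point, and the step I expect to be the main obstacle, is that a colimit $A=\colim_i A_i$ in the category of cofibrant co-$\Delta$-objects is computed levelwise, i.e.\ that the evaluation functors $A\mapsto A^n$ preserve it; indeed the case $K=\Delta[n]$ of the first statement is exactly this assertion, since $\Delta[n]\cap A=A^n$. I would dispatch it by recalling that the colimits which actually exist in a cofibration category are coproducts and pushouts along cofibrations, and that the levelwise coproduct, respectively levelwise pushout along a cofibration, of cofibrant co-$\Delta$-objects is again cofibrant (a pushout-product argument with the latching morphisms of Definition~\ref{def-Reedy}). Hence the levelwise construction already represents the colimit inside the full subcategory of cofibrant objects, so these colimits are levelwise. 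Granting this, the interchange of colimits and the continuity of the two hom-functors complete both parts.
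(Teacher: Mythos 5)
Your proof of the second statement coincides with the paper's: the paper establishes exactly the chain
\[ \cC((\colim_IF)\cap A,-)\ \iso\ \Delta\text{-set}(\colim_IF,\cC(A,-))\ \iso\ {\lim}_I\,\Delta\text{-set}(F,\cC(A,-))\ \iso\ {\lim}_I\,\cC(F\cap A,-) \]
and then disposes of the first statement with the words ``the other case being completely analogous''. Your genuine divergence from the paper is that you decline to accept this analogy, and you are right to do so: the analogous chain for $K\cap-$ cannot even be written down without knowing that $\cC(\colim_iA_i,Z)\iso\lim_i\cC(A_i,Z)$ as $\Delta$-sets, i.e.\ that evaluation at every level preserves the colimit --- which, as you observe, is precisely the case $K=\Delta[n]$ of the statement being proved. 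Isolating this as the one non-formal point is a real improvement on the paper's proof; the same care shows in your parenthetical about colimits of finite $\Delta$-sets, a caveat the paper's own computation also quietly needs.

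However, your way of discharging the levelwise claim rests on a false assertion: it is not true that ``the colimits which actually exist in a cofibration category are coproducts and pushouts along cofibrations''. The axioms (C1)--(C4) guarantee those colimits but do not exclude others; $\cC$ may perfectly well be cocomplete. What your argument actually proves is the first statement for diagrams whose levelwise colimit is again cofibrant (initial objects, coproducts, pushouts along cofibrations), since for these the levelwise construction is the colimit in the full subcategory. Moreover, this restriction cannot be removed, so the gap is in the statement as much as in your proof. Take $\cC$ to be $k$-vector spaces with injections as cofibrations and all maps as weak equivalences, and let $L_n$ be the co-$\Delta$-object whose level $m$ is the vector space with basis $\Delta([n],[m])$; one checks $L_n$ is cofibrant (its latching morphisms are isomorphisms except at level $n$, where the latching morphism is $0\to k$). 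Consider the two morphisms $L_1\rightrightarrows L_0$ that send the generator of $L_1$ to the two basis vectors of $(L_0)^1$. A morphism $q:L_0\to Q$ into a cofibrant $Q$ coequalizing them corresponds to a vector $v\in Q^0$ with $(d^0)_*v=(d^1)_*v$; since the latching map $Q^0\oplus Q^0\to Q^1$ kills $(v,-v)$ and is injective, $v=0$. Hence the coequalizer in the category of cofibrant co-$\Delta$-objects is the zero object, while the levelwise coequalizer is $k$ in level $0$; so $\Delta[0]\cap-$ does not preserve this colimit. The proposition is therefore only correct --- and is only ever used, e.g.\ in the proof of Theorem~\ref{thm-frames are Delta-enriched} --- for colimits that are computed levelwise, and your proof becomes correct once your quoted claim is replaced by that explicit hypothesis.
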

\begin{proof}
We show the claim for the functor $-\cap A$,
the other case being completely analogous.
Let~$I$ be a small category and $F$ a functor from $I$ to
the category of finite $\Delta\text{-sets}$ that has a colimit $\colim_IF$
(inside the category of finite $\Delta$-sets).
We denote by $F\cap A:I\to\cC$ the functor  given by capping $F$ 
objectwise with $A$.
The representability property of the cap product and 
universal property of a colimit combine into natural isomorphisms 
\begin{align*}
 \cC((\colim_IF)\cap A,-) \ &\iso \ 
\Delta\text{-set}(\colim_IF,\cC(A,-)) \ \iso \ 
{\lim}_I\ \Delta\text{-set}(F,\cC(A,-)) \\ 
&\iso \ {\lim}_I\ \cC(F\cap A,-) \ .
\end{align*}
So $(\colim_IF)\cap A$ has the universal property of a colimit of $F\cap A$.
\end{proof}

\begin{prop}\label{prop-SM7} 
  Let $i:K\to L$ be a monomorphism between finite $\Delta$-sets
  and $j:A\to B$ a cofibration between cofibrant co-$\Delta$-objects in $\cC$.
  \begin{enumerate}[\em (i)]
  \item The pushout of the diagram
    $$L\cap A \ \xla{\ i\cap A\ } \ K\cap A 
    \ \xra{\ K\cap j\ } \ K\cap B$$
    exists in $\cC$ and the pushout product morphism
    $$(L\cap j)\cup(i\cap B)\ : \ 
    (L\cap A) \cup_{(K\cap A)} (K\cap B)  \ \to \  L\cap B$$
    is a cofibration.
  \item If moreover $j$ is a level equivalence, then 
    $(L\cap j)\cup(i\cap B)$ is a weak equivalence in $\cC$.
  \end{enumerate}
\end{prop}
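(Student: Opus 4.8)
The plan is to fix the cofibration $j:A\to B$ of cofibrant co-$\Delta$-objects once and for all and to prove both parts for every monomorphism $i:K\to L$ by induction along the skeletal filtration of $L$ relative to $K$. First I would dispose of the existence claim in~(i): by Proposition~\ref{prop-represent} the morphism $K\cap j:K\cap A\to K\cap B$ is a cofibration, so the pushout $(L\cap A)\cup_{K\cap A}(K\cap B)$ exists by axiom~(C3) (formed along $K\cap j$, not along $i\cap A$). Writing $P(i)$ for the pushout product morphism $(L\cap j)\cup(i\cap B)$, the remaining goal is that $P(i)$ is a cofibration, and an acyclic cofibration when $j$ is in addition a level equivalence.

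The base case is the boundary inclusion $i=(\partial\Delta[n]\to\Delta[n])$. Here I would use the identifications $\Delta[n]\cap A=A^n$ and $\partial\Delta[n]\cap A=\partial^n\!A$, and likewise for $B$, under which the source of $P(i)$ is $A^n\cup_{\partial^n\!A}\partial^n\!B$, the target is $B^n$, and $P(i)$ is exactly the morphism $j^n\cup\nu^n:A^n\cup_{\partial^n\!A}\partial^n\!B\to B^n$ of Definition~\ref{def-Reedy}. This is a cofibration precisely because $j$ is a cofibration of co-$\Delta$-objects, and it is an acyclic cofibration when $j$ is a level equivalence by the claim established in the proof of Proposition~\ref{prop-represent}. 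Thus the generating case requires no new input.

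Next I would promote this to arbitrary $i$ through three formal closure properties of the assignment $i\mapsto P(i)$, all resting on the fact that $-\cap A$ and $-\cap B$ preserve colimits in the $\Delta$-set variable (Proposition~\ref{prop-colimits}). First, for a coproduct one has $P(\coprod_\alpha i_\alpha)=\coprod_\alpha P(i_\alpha)$, and a coproduct of (acyclic) cofibrations is again one by (C1) and (C3); this handles the simultaneous attachment $S_n\times\partial\Delta[n]\to S_n\times\Delta[n]$ of the set $S_n$ of all $n$-simplices of $L\setminus K$. Second, if $i'$ is a cobase change of $i$ along a map of finite $\Delta$-sets, then $P(i')$ is a cobase change of $P(i)$ in $\cC$, hence still an (acyclic) cofibration by (C3); this lets me pass from the generating attachment to the skeletal inclusion $L^{(n-1)}\to L^{(n)}$. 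Third, for a composite $i=i_2\,i_1$ through an intermediate $\Delta$-set $M$ one checks that $P(i_2 i_1)$ is $P(i_2)$ precomposed with a cobase change of $P(i_1)$, using the identity $(L\cap A)\cup_{K\cap A}(K\cap B)\cup_{(M\cap A)\cup_{K\cap A}(K\cap B)}(M\cap B)\cong(L\cap A)\cup_{M\cap A}(M\cap B)$; it is therefore a composite of (acyclic) cofibrations, and again an (acyclic) cofibration by (C1) together with the two-out-of-three property~(C2). Feeding the filtration $K=L^{(-1)}\subseteq L^{(0)}\subseteq\cdots\subseteq L^{(N)}=L$ through these three properties then yields both~(i) and~(ii).

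The genuinely new content is confined to the base case; the main obstacle will be the bookkeeping in the last two closure properties, namely verifying that the iterated pushouts assemble as claimed so that $P(i')$ really is a cobase change of $P(i)$ and $P(i_2 i_1)$ really decomposes as stated. These are routine diagram chases once $\cap$ is known to commute with the relevant colimits, but I would take care at each step to keep track of which pushouts are formed along cofibrations, so that axiom~(C3) both guarantees their existence and preserves the (acyclic) cofibration property throughout.
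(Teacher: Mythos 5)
Your proof is correct, and for part (i) it is essentially the paper's argument in different packaging: the paper also reduces everything to the latching-object condition of Definition~\ref{def-Reedy}, but it inducts on the number of simplices of $L$ not in $i(K)$, adjoining one simplex at a time, so that each step is a cobase change of $j^n\cup\nu^n$ composed with a cobase change of the previous pushout product; the coproduct closure property therefore never enters. Your skeleton-by-skeleton filtration attaches all $n$-simplices simultaneously and so additionally needs that finite coproducts of (acyclic) cofibrations are (acyclic) cofibrations, which is fine but is extra bookkeeping. The genuine divergence is in part (ii): the paper proves it with no induction at all, by pure 2-out-of-3. Namely, $K\cap j$ and $L\cap j$ are acyclic cofibrations by Proposition~\ref{prop-represent}, hence so is the cobase change $\psi\colon L\cap A\to(L\cap A)\cup_{K\cap A}(K\cap B)$ of $K\cap j$; since $((L\cap j)\cup(i\cap B))\circ\psi=L\cap j$, axiom (C2) makes the pushout product a weak equivalence. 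Your route instead threads acyclicity through the entire cellular induction, using the claim inside the proof of Proposition~\ref{prop-represent} as base case (that claim can also be re-derived from the statement of that proposition plus (C2) and (C3), so citing it is legitimate); this costs more work but directly yields the nominally stronger conclusion that the pushout product is an acyclic cofibration — a conclusion that, to be fair, also follows from the paper's (ii) combined with (i). Finally, the existence issue you flagged is real but harmless in your applications: every intermediate pushout, e.g.\ $(L\cap A)\cup_{K\cap A}(K'\cap B)$ in the cobase-change step or $(L\cap A)\cup_{M\cap A}(M\cap B)$ in the composition step, is formed along a morphism that is already known to be a cofibration (a finite coproduct of latching cofibrations $\nu^n$, or a morphism of the form $M\cap j$ covered by Proposition~\ref{prop-represent}), so axiom (C3) supplies both existence and the preservation of (acyclic) cofibrations, exactly as you anticipated.
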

\begin{proof}
(i) The morphism $K\cap j$ is 
a cofibration by Proposition~\ref{prop-represent},
so the pushout exists by axiom~(C3). 
For the proof that $(L\cap j)\cup(i\cap B)$ is a cofibration 
we argue by induction on the number of simplices of $L$ that are not in the
image of $i$. If $i$ is bijective, then 
$(L\cap j)\cup(i\cap B)$ is an isomorphism. Otherwise we 
choose a sub-$\Delta$-set~$L'$ of $L$ such that $i(K)\subseteq L'$ and
such that~$L'$ has one simplex less than $L$. The morphism
$(L\cap j)\cup(i\cap B)$ then factors as the composite
$$ (L\cap A) \cup_{(K\cap A)} (K\cap B) \ \xra{\ (L\cap A)\cup(i\cap B)} \ 
(L\cap A) \cup_{(L'\cap A)} (L'\cap B)
\xra{\ (L\cap j)\cup(\text{incl}\cap B)} \  L\cap B\ . $$
The first map is a cobase change of
$$ (L'\cap j)\cup(i\cap B)\ : \ 
(L'\cap A) \cup_{(K\cap A)} (K\cap B)
\ \to \  L'\cap B\ ,$$
which is a cofibration by induction. The second map is
a cobase change of the cofibration
$f^n\cup\nu^n:A^n\cup_{\partial^n\!A}\partial^n\!B\to B^n$,
where $n$ is the dimension of the simplex not in $L'$.
So $(L\cap j)\cup(i\cap B)$ is the composite of two cofibrations,
hence a cofibration itself.

(ii) The morphism $K\cap j:K\cap A\to K\cap B$
is an acyclic cofibration by Proposition~\ref{prop-represent}. 
Hence its cobase change $\psi:L\cap A\to (L\cap A) \cup_{(K\cap A)} (K\cap B)$ 
is an acyclic cofibration. Also by Proposition~\ref{prop-represent}, 
the morphism $L\cap j:L\cap A\to L\cap B$
is an acyclic cofibration. Since 
$((L\cap j)\cup(i\cap B))\circ\psi=L\cap j$,
the pushout product map is a weak equivalence by the 2-out-of-3 property.
\end{proof}

\begin{defn}
A {\em frame} in  a cofibration category is a cofibrant 
co-$\Delta$-object $A$ that is homotopically constant, i.e.,
for every morphism $\alpha:[n]\to[m]$ in the category $\Delta$ 
the morphism $\alpha_*:A^n\to A^m$ is a weak equivalence.
\end{defn}

A cofibration category $\cC$ is {\em saturated} if every morphism that
becomes an isomorphism under the localization functor $\gamma:\cC\to\bHo(\cC)$
is already a weak equivalence. Saturation is no serious restriction since
the weak equivalences in any cofibration category can  be saturated
without changing the cofibrations or the homotopy category 
(see~\cite[Prop.\,3.16]{cisinski-categories derivables}).
If $A$ is a frame, then the functor $-\cap A$ tries hard 
to turn weak equivalences of finite $\Delta$-sets
into weak equivalences in $\cC$. This does not work in complete
generality, but the next proposition shows, among other things,
that $-\cap A$ has this property in all saturated cofibration categories.

\begin{prop}\label{prop-cap with frame}
Let $A$ be a frame in a cofibration category $\cC$.
\begin{enumerate}[\em (i)]
\item For every elementary expansion
$K\subset L$ the morphism $K\cap A\to L\cap A$
induced by the inclusion is an acyclic cofibration.
\item For every weak equivalence $f$ between finite $\Delta$-sets,
the induced morphism $f\cap A$ becomes an isomorphism in $\bHo(\cC)$. 
\item If the cofibration category $\cC$ is saturated, then
the functor $-\cap A$ takes weak equivalences between
finite $\Delta$-sets to weak equivalences in $\cC$.
\end{enumerate}
\end{prop}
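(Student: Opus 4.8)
The plan is to concentrate all the real work in part~(i): once it is known that capping with the frame $A$ sends elementary expansions to acyclic cofibrations, parts~(ii) and~(iii) follow formally from Proposition~\ref{prop-expansions generate equivalences} and from saturation. Throughout, the two structural facts I would lean on are that $-\cap A$ preserves colimits in the $\Delta$-set variable (Proposition~\ref{prop-colimits}) and that it converts monomorphisms of finite $\Delta$-sets into cofibrations; the \emph{frame} hypothesis is precisely what promotes certain of these cofibrations to weak equivalences.

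For part~(i), I would first reduce to a horn. Writing an elementary expansion $K\subset L$ of dimension~$n$ as the pushout~\eqref{eq:expansion_pushout} of a horn inclusion $\Lambda^i[n]\to\Delta[n]$ and applying Proposition~\ref{prop-colimits}, the morphism $K\cap A\to L\cap A$ becomes a cobase change of $\Lambda^i[n]\cap A\to\Delta[n]\cap A=A^n$; by axiom~(C3) it then suffices to show this horn morphism is an acyclic cofibration. That it is a cofibration I would extract from Proposition~\ref{prop-SM7}(i), applied to the monomorphism $\Lambda^i[n]\to\Delta[n]$ together with the cofibration $0\to A$ out of the co-$\Delta$-object $0$ constant at an initial object. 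This $0$ is cofibrant and $0\to A$ is a cofibration because every latching object of $0$ is initial; since moreover every cap $K\cap 0$ is again initial, the pushout product of Proposition~\ref{prop-SM7}(i) degenerates exactly to $\Lambda^i[n]\cap A\to A^n$.

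To see that this horn morphism is also a weak equivalence I would induct on~$n$. The combinatorial input is the identification $\Lambda^i[n]\cong C(\partial\Delta[n-1])$ of the horn with the cone on $\partial\Delta[n-1]$, under which the cone point corresponds to the vertex~$i$. By Example~\ref{eg-cone} the cone-point inclusion $\{i\}\to\Lambda^i[n]$ is then a finite composite of elementary expansions, one for each simplex of $\partial\Delta[n-1]$, and each of these has dimension at most $n-1$. By the inductive hypothesis every such lower-dimensional expansion caps to an acyclic cofibration, so $A^0=\{i\}\cap A\to\Lambda^i[n]\cap A$ is an acyclic cofibration. The composite $A^0\to\Lambda^i[n]\cap A\to A^n$ is the structure morphism of the frame $A$ along the vertex $i\colon[0]\to[n]$, hence a weak equivalence, so the two-out-of-three axiom~(C2) forces $\Lambda^i[n]\cap A\to A^n$ to be a weak equivalence, completing the induction. (The base case $n=1$, where $\Lambda^i[1]$ is a single vertex, reduces directly to the frame property, the collapse being empty.)

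Parts~(ii) and~(iii) are then short. Fixing the representing objects $K\cap A$ supplied by Proposition~\ref{prop-represent} makes $K\mapsto\gamma(K\cap A)$ a functor from finite $\Delta$-sets to $\bHo(\cC)$, and part~(i) says it inverts elementary expansions; Proposition~\ref{prop-expansions generate equivalences} upgrades this to inverting all weak equivalences, which is~(ii). For~(iii), a weak equivalence $f$ of finite $\Delta$-sets yields by~(ii) a morphism $f\cap A$ that is invertible in $\bHo(\cC)$, so saturation of $\cC$ forces $f\cap A$ to be a weak equivalence. I expect the genuine obstacle to be the weak-equivalence half of~(i): one must run the induction on~$n$ while tracking the dimensions produced by the cone collapse and checking they stay below~$n$, which is exactly why one collapses the horn (of dimension $n-1$) rather than $\Delta[n]$ itself. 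Verifying the isomorphism $\Lambda^i[n]\cong C(\partial\Delta[n-1])$ with cone point~$i$ is the combinatorial heart of the matter; everything else is a formal consequence of the cofibration-category axioms and the earlier propositions.
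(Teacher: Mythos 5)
Your overall strategy coincides with the paper's: reduce a general elementary expansion to a horn inclusion by cobase change (Proposition~\ref{prop-colimits} plus (C3)), prove that $\Lambda^i[n]\cap A\to A^n$ is an acyclic cofibration by collapsing the horn to a vertex through elementary expansions of dimension $<n$ and invoking the frame property together with two-out-of-three, and then deduce (ii) and (iii) formally from Proposition~\ref{prop-expansions generate equivalences} and saturation. Your justification of the cofibration half via Proposition~\ref{prop-SM7}~(i) applied to the cofibration $0\to A$ from the constant initial co-$\Delta$-object is correct, and is in fact a more precise citation than the one the paper gives at this point.

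There is, however, a genuine error in the step you yourself call the combinatorial heart: for an inner horn, $0<i<n$, there is no isomorphism of $\Delta$-sets $\Lambda^i[n]\cong C(\partial\Delta[n-1])$ carrying the cone point to the vertex $i$ --- indeed no isomorphism at all. The point is that a $\Delta$-set isomorphism must commute with every face operator $d_j$, and in the cone of Example~\ref{eg-cone} the cone point occupies the same (extremal) position in every simplex $\sigma x$ containing it. Concretely, both edges of $C(\partial\Delta[1])$ have the cone point as their face under one and the same face operator, whereas the two edges $\{0,1\}$ and $\{1,2\}$ of $\Lambda^1[2]$ meet the middle vertex $1$ as the $d_0$-face of the former but the $d_1$-face of the latter; so $\Lambda^1[2]\not\cong C(\partial\Delta[1])$, and the same phenomenon occurs for every inner horn. (For the outer horns the identification is fine, which is why the claim looks plausible.) What your induction actually needs is only the weaker statement that the inclusion $\{i\}\to\Lambda^i[n]$ is a finite composite of elementary expansions of dimension at most $n-1$ --- which is exactly what the paper asserts, for the vertex $0$, without proof. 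This is true, but requires a direct argument rather than Example~\ref{eg-cone}: pair each simplex $S$ of $\Lambda^i[n]$ with $i\notin S$ against $S\cup\{i\}$ (both lie in the horn, since the horn is star-shaped with apex $i$), and remove the pairs in order of decreasing $|S|$; at each stage every proper superset of $S$ other than $S\cup\{i\}$, and every proper superset of $S\cup\{i\}$, has already been removed, so each removal is an elementary collapse, of dimension $|S|\le n-1$. With this substitute for the false cone identification, your induction, and hence the whole proof, goes through and agrees with the paper's.
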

\begin{proof}
(i) We proceed by induction on the dimension $n$ of the elementary expansion.
For $n=1$ the $\Delta$-set $\Lambda^i[1]$ is isomorphic to $\Delta[0]$
and the inclusion $\Lambda^i[0]\to \Delta[1]$ corresponds to
the face map $d_{1-i}:\Delta[0]\to\Delta[1]$.
So the map in question is isomorphic to $(d_{1-i})_*:A^0\to A^1$,
hence an acyclic cofibration.

Now we suppose that $n\geq 2$. We start with the special (but universal)
cases of the horn inclusions $\Lambda^i[n]\to\Delta[n]$ for $0\leq i\leq n$.
We let $v:[0]\to[n]$ be the map with $v(0)=0$.
The inclusion $\{v\}\to\Lambda^i[n]$ is the composite of a sequence 
of elementary expansions of dimensions strictly less than $n$.
So the induced morphism 
$\{v\}\cap A\to\Lambda^i[n]\cap A$ is a weak equivalence by induction. 
The composite of this weak equivalence with the morphism
$\Lambda^i[n]\cap A\to\Delta[n]\cap A$
is isomorphic to the structure morphism $v_*:A^0\to A^n$ and hence 
a weak equivalence. So the map $\Lambda^i[n]\cap A\to\Delta[n]\cap A$ 
is also a weak equivalence. This map is also a cofibration by
Proposition~\ref{prop-represent}, hence an acyclic cofibration.
If $K\subset L$ is a general elementary expansion of dimension~$n$, then the
pushout square~\eqref{eq:expansion_pushout} caps with $A$ to a pushout
square in $\cC$:
\[ \xymatrix{ \Lambda^i[n]\cap A \ar[r]\ar[d] & \Delta[n]\cap A\ar[d]\\
K\cap A\ar[r] &L\cap A} \]
As a cobase change of an acyclic cofibration, 
the morphism $K\cap A\to L\cap A$ is itself an acyclic cofibration. 

(ii) By part~(i) the functor  $\gamma(-\cap A)$
from the category of finite $\Delta$-sets to $\bHo(\cC)$
takes elementary expansions to isomorphisms.
So it takes weak equivalences to isomorphisms by 
Proposition~\ref{prop-expansions generate equivalences}.

Claim~(iii) is a combination of part~(ii) and the saturation property.
\end{proof}

The following proposition is a special case of
Theorem 9.2.4 (1a) of~\cite{radulescu-ABC} where the indexing
category is the direct category $\Delta$.
More precisely, part~(i) is Radulescu-Banu's axiom (CF4) 
saying that every morphism $f:A\to B$ from a cofibrant object 
can be factored as $f=pi$ with~$i$ a cofibration 
and $p$ a pointwise weak equivalence.

\begin{prop}\label{prop-co-Delta factorization}
  Let $\cC$ be a cofibration category.
  \begin{enumerate}[\em (i)]
  \item Let $f:A\to Z$ be a morphism of co-$\Delta$-objects 
    in $\cC$ such that $A$ is cofibrant.
    Then there exists a cofibrant co-$\Delta$-object $B$ and
    a factorization $f=pi$ as a cofibration 
    $i:A\to B$ followed by a level equivalence $p: B\to Z$.
  \item The cofibrations and level equivalences make the category
    of cofibrant co-$\Delta$-objects into a cofibration category.
  \end{enumerate}
\end{prop}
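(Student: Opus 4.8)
The plan is to treat $\Delta$ as a direct (Reedy) category and to build the factorization in part~(i) by induction on the simplicial degree using latching objects; part~(ii) then follows by assembling the cofibration-category axioms, the crucial observation being that its factorization axiom~(C4) is literally part~(i). Alternatively one could simply invoke Theorem~9.2.4(1a) of~\cite{radulescu-ABC}, but I would prefer a self-contained argument resting on Propositions~\ref{prop-represent} and~\ref{prop-colimits}.

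For part~(i), I would construct $B$ one level at a time. Suppose that the cofibrant co-$\Delta$-object $B$, the cofibration $i$ and the level equivalence $p$ have been constructed and checked in all degrees $<n$. Because $\partial\Delta[n]$ has dimension $n-1$, the latching object $\partial^n\!B=\partial\Delta[n]\cap B$ depends only on these lower degrees and exists by Proposition~\ref{prop-represent}; likewise $\partial^n\!A$ and $\partial^n\!Z$ exist. The morphism $f^n\colon A^n\to Z^n$ and the composite $\partial^n\!B\to\partial^n\!Z\to Z^n$ agree after restriction along $\nu^n\colon\partial^n\!A\to A^n$ and $\partial^n i\colon\partial^n\!A\to\partial^n\!B$, so they assemble into a canonical morphism $A^n\cup_{\partial^n\!A}\partial^n\!B\to Z^n$ (the pushout existing by~(C3), since $\nu^n$ is a cofibration). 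Applying the factorization axiom~(C4) of $\cC$ to this morphism yields a cofibration $j^n\colon A^n\cup_{\partial^n\!A}\partial^n\!B\to B^n$ followed by a weak equivalence $p^n\colon B^n\to Z^n$, and I set $B([n])=B^n$. Then $\nu^n_B\colon\partial^n\!B\to B^n$ is a cofibration (it is the cobase change of $\nu^n$ along $\partial^n i$, followed by $j^n$), so $B$ remains cofibrant; the latching map of $i$ in degree $n$ is exactly the cofibration $j^n$; and $p^n$ is a weak equivalence.

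The step I expect to be the main obstacle is supplying and checking the structure maps of $B$: one must verify that the objects $B^n$, together with the maps out of the latching objects produced by~(C4), glue into an honest functor $B\colon\Delta\to\cC$, i.e.\ that every $\alpha\colon[m]\to[n]$ with $m<n$ induces $\alpha_*\colon B^m\to\partial^n\!B\to B^n$ compatibly, and that $i$ and $p$ are natural. All of this is forced by the universal property of the latching objects, but the bookkeeping is the genuinely technical part of the argument.

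For part~(ii) I would verify the axioms of Definition~\ref{def-cofibration category} for the cofibrations and level equivalences of Definition~\ref{def-Reedy}. Axiom~(C2) holds levelwise from~(C2) in $\cC$. For~(C1), the co-$\Delta$-object constant at an initial object of $\cC$ is initial and has initial latching objects, so every morphism out of it has latching maps $\nu^n_B$ and is thus a cofibration; the composite of two cofibrations is a cofibration by the standard latching-object computation. For~(C3), pushouts of co-$\Delta$-objects are formed levelwise, and by Proposition~\ref{prop-colimits} the latching functor $\partial^n(-)=\partial\Delta[n]\cap-$ carries the pushout $P=B\cup_A C$ to $\partial^n\!B\cup_{\partial^n\!A}\partial^n\!C$; a short pushout-pasting calculation then identifies the latching map of $j\colon C\to P$ with a cobase change of the latching map of $i\colon A\to B$, so $j$ is again a cofibration and $P$ is Reedy cofibrant. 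If moreover $i$ is a level equivalence then, as established in the course of proving Proposition~\ref{prop-represent}, its latching maps are acyclic cofibrations; their cobase changes are acyclic cofibrations, whence each $C^n\to P^n$ is an acyclic cofibration and $j$ is a level equivalence. Finally, axiom~(C4) is precisely part~(i), which completes the proof.
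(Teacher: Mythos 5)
Your proposal is correct, but it takes a genuinely different route from the paper: the paper does not prove this proposition at all, instead deducing it as a special case of Theorem~9.2.4~(1a) of~\cite{radulescu-ABC} for diagrams over the direct category $\Delta$ (part~(i) being exactly Radulescu-Banu's axiom (CF4)). What you have written out is, in effect, the standard Reedy-style degreewise induction that underlies that general theorem, specialized to $\Delta$ and resting on Propositions~\ref{prop-represent} and~\ref{prop-colimits}; your version makes the paper self-contained, while the citation buys brevity and a statement valid for arbitrary direct indexing categories. You note this alternative yourself, so the two approaches are really the hand-made and the outsourced versions of the same argument.

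Two points in your sketch need more care than you give them. First, in the inductive step of part~(i) you invoke the composite $\partial^n\!B\to\partial^n\!Z\to Z^n$; but $Z$ is an arbitrary co-$\Delta$-object, and in a cofibration category the latching object $\partial^n\!Z$ (a colimit) need not exist. The repair is immediate: the compatible family of morphisms $B^m\xra{\ p^m\ }Z^m\xra{\ \alpha_*\ }Z^n$, for $\alpha:[m]\to[n]$ with $m<n$, induces the required morphism $\partial^n\!B\to Z^n$ directly from the universal property of the colimit $\partial^n\!B$, with no mention of $\partial^n\!Z$. Second, in part~(ii) the appeal to Proposition~\ref{prop-colimits} to compute $\partial^n\!P$ for the pushout $P=B\cup_AC$ is mildly circular: that proposition concerns colimits in the category of \emph{cofibrant} co-$\Delta$-objects, and cofibrancy of $P$ is exactly what you are trying to establish. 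Instead, first observe that each $i^n:A^n\to B^n$ is a cofibration in $\cC$ (it factors as the cobase change of $\partial^n\!i$ along $\nu^n$, followed by the latching map of $i$; here $\partial^n\!i$ is a cofibration by Proposition~\ref{prop-represent}), so the levelwise pushouts $P^n=B^n\cup_{A^n}C^n$ exist and assemble into the pushout in the category of all co-$\Delta$-objects; then interchange of colimits identifies $\partial^n\!P$ with $\partial^n\!B\cup_{\partial^n\!A}\partial^n\!C$, whose existence is guaranteed because $\partial^n\!i$ is a cofibration. With these two emendations your argument is complete.
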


The next proposition is the key step in the proof that the homotopy category
of frames in $\cC$ is equivalent to the homotopy category of $\cC$,
see Theorem~\ref{theorem-framing} below.

\begin{prop}\label{prop-App2 for frames} 
Let $A$ be a frame in a cofibration category $\cC$, $Z$ an object of $\cC$
and $\varphi:A^0\to Z$ a $\cC$-morphism.
Then there is a homotopically constant co-$\Delta$-object $Y$, 
a morphism $y:A\to Y$ of co-$\Delta$-objects 
and a weak equivalence
$p:Z\to Y^0$ such that $y^0=p\varphi$.
\end{prop}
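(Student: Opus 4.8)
The plan is to build $Y$ together with the co-$\Delta$-morphism $y\colon A\to Y$ by induction over the simplicial degree, constructing at the $n$-th stage the object $Y^n$, the latching morphism $\partial^n\!Y\to Y^n$, and the component $y^n\colon A^n\to Y^n$, while keeping $Y$ cofibrant through degree $n$ so that the latching objects continue to exist. For the base case I would simply set $Y^0=Z$, take $p=\Id_Z$ and $y^0=\varphi$; the latching morphism $\emptyset\to Z$ is a cofibration by~(C1). It is convenient to record that homotopical constancy of the finished $Y$ can be arranged in the single form that every vertex morphism $v_*\colon Y^0\to Y^n$ is a weak equivalence: given $\alpha\colon[n]\to[m]$ and a vertex $v\colon[0]\to[n]$ one has $(\alpha v)_*=\alpha_*\circ v_*$ with both $(\alpha v)_*$ and $v_*$ vertex morphisms, so the two-out-of-three axiom~(C2) forces $\alpha_*$ to be a weak equivalence.

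For the inductive step, the latching object $\partial^n\!Y=\partial\Delta[n]\cap Y$ exists because it depends only on the degrees $<n$ (where $Y$ is cofibrant), and the partial map $y$ induces $y^{\partial}\colon\partial^n\!A\to\partial^n\!Y$. Using the latching cofibration $\nu^n\colon\partial^n\!A\to A^n$ of the frame (Proposition~\ref{prop-represent}), I would form the pushout $P=A^n\cup_{\partial^n\!A}\partial^n\!Y$, whose leg $\partial^n\!Y\to P$ is a cofibration by~(C3) and whose leg $A^n\to P$ records the intended $y^n$. The difficulty is immediately visible: the composite vertex morphism $Y^0\to\partial^n\!Y\to P$ is a cofibration but \emph{not} a weak equivalence. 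Already for $n=1$ the object $P$ is the double mapping cylinder $Z\cup_{A^0}Z$ (built using the cylinder $A^1$), which is weakly equivalent to $Z$ only when $\varphi$ is, so $P$ has the wrong homotopy type and cannot serve as $Y^n$ directly.

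The correction step is therefore the heart of the argument. I must enlarge $P$ to an object $Y^n$ that is weakly equivalent to $Z$, receives $P$ by a map whose restriction to $\partial^n\!Y$ remains a cofibration and whose restriction to each vertex of $Y^0$ becomes a weak equivalence; then $A^n\to P\to Y^n$ is taken as $y^n$, and its compatibility with $y^{\partial}$ on $\partial^n\!A$ holds by construction. Since a general cofibration category possesses no cones of its own, the contractible object needed to absorb the $(n-1)$-sphere's worth of homotopy carried by $\partial^n\!Y$ has to be \emph{imported through the frame}: by Example~\ref{eg-cone} the inclusion of the cone point into $C(\partial\Delta[n])$ is a composite of elementary expansions, so by Proposition~\ref{prop-cap with frame}(i) capping it with $A$ yields an acyclic cofibration out of $A^0$, and this is the only source of ``cone-like'' acyclic cofibrations available. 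My plan is to attach such a capped cone to $P$ along the frame's boundary and then apply the factorization axiom~(C4) to produce $Y^n$ together with a weak equivalence $Y^n\to Z$, verifying by the gluing lemma and~(C2) that the latching morphism stays a cofibration and that the vertex morphisms become weak equivalences.

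I expect this correction to be the main obstacle, for two intertwined reasons. First is the tension between functoriality and constancy noted above: symmetric gluing of $Z$ along all vertices reproduces the wrong homotopy type of $P$, while gluing along a single vertex breaks the naturality demanded by the morphisms of $\Delta$ that do not preserve the initial vertex, so the contractible correction must be organized through the frame's own boundary structure rather than by hand. Second, the factorizations and acyclic cofibrations must be matched precisely enough that the induced $A^n\to Y^n$ is genuinely a morphism of co-$\Delta$-objects extending $y$; keeping track of this coherence, and confirming that each corrected level remains cofibrant so the induction can proceed, is where Propositions~\ref{prop-represent}, \ref{prop-cap with frame} and~\ref{prop-co-Delta factorization}, together with the gluing lemma, all have to be brought to bear simultaneously.
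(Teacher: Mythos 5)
Your reduction of homotopical constancy to the vertex morphisms is correct, and you have located the real difficulty: the naive latching pushout $P=A^n\cup_{\partial^n\!A}\partial^n\!Y$ has the wrong homotopy type. But the ``correction step'', which you yourself call the heart of the argument, is never actually carried out, and the mechanism you sketch does not work. First, attaching $C(\partial\Delta[n])\cap A$ to $P$ along $\partial^n\!A$ does not restore the homotopy type. Take $\cC$ to be spaces, $Z=\ast$, $A$ the product frame on $A^0=S^0$ (so $A^n\simeq S^0\times\nabla^n$), $\varphi$ the constant map, and $n=1$: then $P\simeq S^1$, while $C(\partial\Delta[1])\cap A$ is two cones on $S^0$, and gluing them in along the four points of $\partial^1\!A$ yields a graph with two old and two new vertices and six edges, i.e.\ a wedge of \emph{three} circles --- worse, not better. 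Second, the final appeal to (C4) has nothing to factor: to produce ``a weak equivalence $Y^n\to Z$'' by factorization you first need \emph{some} morphism from the enlarged object to $Z$, and a morphism $P\to Z$ compatible with the two copies of $Z$ already requires a morphism $A^n\to Z$ restricting to $\varphi$ on every vertex. For $n=1$ that is a self-homotopy of $\varphi$ through $A^1$, which need not exist in a general cofibration category, because a frame carries no projections $A^n\to A^0$. So the correction is circular: it presupposes exactly the kind of extension of $\varphi$ that the proposition is trying to construct.

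The root of the trouble is that you froze $Y^0=Z$ and $p=\Id_Z$ (and additionally insisted that $Y$ stay cofibrant, which the statement does not ask for). The statement deliberately allows $p\colon Z\to Y^0$ to be an arbitrary weak equivalence, and the paper's proof exploits precisely this freedom: it sets $Y^0=Z\cup_{A^0}(C(0)\cap A)$, a mapping-cylinder--type replacement of $Z$, with $p$ the resulting acyclic cofibration. It then avoids latching objects altogether by capping $A$ with the cones $C(n)$ of the $\Delta$-sets $P(n)$ having a unique simplex in each dimension $\leq n$: the inclusions $C(n-1)\to C(n)$ are elementary expansions, so by Proposition~\ref{prop-cap with frame}~(i) the morphisms $C(n-1)\cap A\to C(n)\cap A$ are acyclic cofibrations, and $Y^n$ is defined as the pushout $Y^{n-1}\cup_{C(n-1)\cap A}(C(n)\cap A)$. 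All face maps $Y^{n-1}\to Y^n$ are declared to be this single cobase change, so the co-$\Delta$-structure and homotopical constancy are automatic; $y^n=\lambda_n\circ(x_n)_*$ is a morphism of co-$\Delta$-objects because $x_nd_j=x_{n-1}$ in $P(n)$; and $Y$ is not (and need not be) cofibrant. If you want to salvage your induction, the slack must be built into degree~$0$ from the start rather than corrected for at each stage.
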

\begin{proof}
We denote by $P(n)$ the $n$-dimensional $\Delta$-set with a unique
$i$-simplex $x_i$ for $i=0,\dots,n$ and we let $C(n)$ be the cone
(see Example~\ref{eg-cone}) of $P(n)$.
We start by choosing a pushout:
$$\xymatrix{ A^0 \ar[r]^-{\varphi} \ar[d]_{(x_0)_*}^\sim & Z \ar[d]^p_\sim\\
C(0)\cap A \ar[r]_-{\lambda_0} & Y^0}$$
The inclusion $\{x_0\}\to C(0)$ is an elementary expansion,
so $(x_0)_*:A^0\iso\{x_0\}\cap A\to C(0)\cap A$ is an acyclic cofibration
by Proposition~\ref{prop-cap with frame}~(i).
Then we proceed by induction on~$n$ and define~$Y^n$ as a pushout:
$$\xymatrix@C=15mm{ 
C(n-1)\cap A \ar[r]^-{\lambda_{n-1}} \ar[d]_\sim & Y^{n-1} \ar[d]^{i_n}\\
C(n)\cap A \ar[r]_-{\lambda_n} & Y^n}$$
The inclusion $C(n-1)\to C(n)$ is an elementary expansion, so
the left vertical map is an acyclic cofibration, again 
by Proposition~\ref{prop-cap with frame}~(i).
For $0\leq i\leq n$, we define the structure map $d_j:Y^{n-1}\to Y^n$
as the cobase change $i_n$ of $C(n-1)\cap A\to C(n)\cap A$.
So $d_j$ is independent of $j$ and an acyclic cofibration.
The co-$\Delta$-object $Y$ is thus homotopically constant
(but usually not cofibrant).
Because $x_{n+1}d_j=x_n$ the composite maps 
$$ A^n\ \xra{\ (x_n)_*\ } \  C(n)\cap A\ \xra{\ \lambda_n\ }
\ Y^n$$
constitute a morphism of co-$\Delta$-objects $y:A\to Y$.
Moreover, we have $y^0=\lambda_0\circ (x_0)_*=p\varphi$.
\end{proof}

Now we can prove the main result of this section.

\begin{theorem}\label{theorem-framing} Let $\cC$ be a cofibration category. 
Then the category $f\cC$ of frames in $\cC$ forms a cofibration category
with respect to cofibrations and level equivalences. 
The functor $f\cC\to \cC$ that evaluates in dimension~0
is exact and its derived functor is an equivalence of categories
from $\bHo(f\cC)$ to $\bHo(\cC)$. 
\end{theorem}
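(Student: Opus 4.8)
The plan is to realize $f\cC$ as the full subcategory of homotopically constant objects inside the cofibration category of cofibrant co-$\Delta$-objects from Proposition~\ref{prop-co-Delta factorization}~(ii), and to check that the axioms restrict. Axioms (C1) and (C2) are inherited at once, and the constant co-$\Delta$-object on an initial object of $\cC$ is cofibrant and homotopically constant, hence an initial frame. For (C3) I would form the pushout of a cofibration of frames in the ambient cofibration category; since colimits of co-$\Delta$-objects are levelwise and the levelwise components of such a cofibration are again cofibrations, the gluing lemma applied in each dimension shows the pushout is homotopically constant, hence a frame, while the remaining assertions of (C3) are inherited. For (C4) I would factor a map of frames $f\colon A\to B$ in the ambient category as a cofibration $A\to D$ followed by a level equivalence $D\to B$; as $B$ is homotopically constant and $D\to B$ is a level equivalence, the 2-out-of-3 property makes $D$ homotopically constant, hence a frame, so the factorization already lies in $f\cC$.

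For the evaluation functor $\operatorname{ev}_0\colon f\cC\to\cC$, $A\mapsto A^0$, exactness is straightforward. It preserves weak equivalences by the definition of level equivalence, and it preserves initial objects and the pushouts of (C3) because these are levelwise. It preserves cofibrations because the zeroth latching object $\partial^0\!A=\partial\Delta[0]\cap A$ is initial, so for a cofibration $f$ the map $f^0\cup\nu^0$ reduces to $f^0\colon A^0\to B^0$, which is therefore a cofibration. Being exact, and in particular sending weak equivalences to weak equivalences, $\operatorname{ev}_0$ descends directly to homotopy categories, and this descent is its derived functor $\bHo(\operatorname{ev}_0)\colon\bHo(f\cC)\to\bHo(\cC)$.

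To prove $\bHo(\operatorname{ev}_0)$ an equivalence I would verify the \emph{approximation property} and invoke the approximation theorem for cofibration categories. That $\operatorname{ev}_0$ reflects weak equivalences is immediate: if $f\colon A\to B$ has $f^0$ a weak equivalence, then in each dimension the naturality square relating $f^0$ to $f^n$ along a structure map $A^0\to A^n$ has weak equivalences for its vertical maps (frames are homotopically constant), so $f^n$ is a weak equivalence by 2-out-of-3 and $f$ is a level equivalence. Essential surjectivity amounts to the existence of frames: given $T$ in $\cC$, factor the map to the constant co-$\Delta$-object $cT$ from an initial (hence cofibrant) co-$\Delta$-object as a cofibration followed by a level equivalence; the middle term is cofibrant, and being level equivalent to $cT$ it is homotopically constant, so it is a frame with zeroth level weakly equivalent to $T$.

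The substantial half is the lifting axiom, which is precisely the purpose of Proposition~\ref{prop-App2 for frames}: a morphism $\psi\colon A^0\to T$ out of a frame extends to a map of co-$\Delta$-objects $y\colon A\to W$ with $W$ homotopically constant and a weak equivalence $T\to W^0$ through which $y^0$ factors. The only defect is that $W$ need not be cofibrant, so I would apply Proposition~\ref{prop-co-Delta factorization}~(i) to factor $y$ as a cofibration $i\colon A\to C$ followed by a level equivalence $C\to W$; the same 2-out-of-3 argument makes $C$ a frame and $i$ a cofibration of frames, so the approximation data lives in $f\cC$. The main obstacle will be the bookkeeping at this final step: one must match the output of Proposition~\ref{prop-App2 for frames} to the exact hypotheses of the approximation theorem, tracking the directions of the weak equivalences and, in particular, reconciling the auxiliary level equivalence $C\to W$ introduced by the frame replacement (which turns the comparison with $T$ into a zig-zag of weak equivalences) with the precise form of the lifting axiom. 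Granting this, the approximation theorem gives that $\bHo(\operatorname{ev}_0)$ is fully faithful and essentially surjective, hence an equivalence.
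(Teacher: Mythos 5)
Your proposal is correct and follows essentially the same route as the paper: frames inherit the cofibration structure from the category of cofibrant co-$\Delta$-objects (gluing lemma for pushouts, closure under level equivalences for factorizations), and the equivalence of homotopy categories is deduced from Cisinski's approximation theorem, with (AP2) supplied by Proposition~\ref{prop-App2 for frames} followed by the factorization of Proposition~\ref{prop-co-Delta factorization}~(i). The ``bookkeeping'' you flag at the end is not an obstacle but exactly how the paper concludes: condition (AP2) is formulated with precisely the zig-zag $B^0\xra{\ \psi\ }Y^0\xla{\ p\ }Z$, so taking $\psi=q^0$ yields $\psi i^0=q^0i^0=y^0=p\varphi$ with no further reconciliation needed.
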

\begin{proof}
Frames are closed under level equivalences
within the category of cofibrant co-$\Delta$-objects.
Also, the initial co-$\Delta$-object is a frame, and
for every pushout square~\eqref{eq-C3pushout}
in which $A, B$ and $C$ are frames, the pushout 
again homotopically constant by the gluing lemma.
So the class of frames is also closed under pushouts along cofibrations,
and hence form a cofibration category. 

In order to show that the derived functor 
of evaluation in dimension~0
is an equivalence of categories we use the criterion
given by the 
`approximation theorem'~\cite[Theorem~3.12]{cisinski-categories derivables}.
The necessary hypotheses are satisfied: 
a morphism of $f:A\to B$ 
of homotopically constant co-$\Delta$-objects is a level equivalence if and
only if $f^0:A^0\to B^0$ is a weak equivalence, i.e., the evaluation functor
satisfies the approximation property (AP1) 
of~\cite[3.6]{cisinski-categories derivables}. 
The second condition~(AP2) demands that for every frame $A$,
every $\cC$-object $Z$ and every morphism $\varphi:A^0\to Z$
there should be a frame $B$, a cofibration $i:A\to B$ 
and weak equivalences
$\psi:B^0\to Y^0$ and $p:Z\to Y^0$ such that $\psi i^0=p\varphi$.
Indeed, Proposition~\ref{prop-App2 for frames} provides
a morphism $y:A\to Y$ that we can factor,
using Proposition~\ref{prop-co-Delta factorization}~(i),
as $y=qi$ for a cofibration $i:A\to B$
followed by a level equivalence $q:B\to Y$.
Then $B$ is cofibrant (since $A$ is cofibrant and $i$ is a cofibration)
and homotopically constant  (since $Y$ is homotopically constant 
and $q$ is a level equivalence). So $B$ is a frame.
Moreover, the morphism $\psi=q^0$ is a weak equivalence and satisfies
\[ \psi i^0 \ = \ q^0i^0 \ = \ y^0 \ = \ p\varphi \ .\qedhere\]
\end{proof}

\begin{rk}\label{rk-Ho CW action} 
As a combination of the previous results 
we have effectively constructed a natural
action of the homotopy category of finite CW-complexes on
the homotopy category of any cofibration category.
In more detail, the composite functor
$$ \Delta\text{-sets}^\text{fin} \times f\cC 
\ \xra{\ \cap\ }\ \cC \ \xra{\ \gamma\ }\ \bHo(\cC) $$
takes weak equivalences of finite $\Delta$-sets and
level equivalences of frames to isomorphisms
(by Proposition~\ref{prop-cap with frame}~(ii)
and Proposition~\ref{prop-SM7}~(ii)).
So the functor factors over the localization of the left hand side
through a unique functor
\begin{equation}\label{eq:cap^L}
 \cap^L \ : \ \bHo(\Delta\text{-sets}^\text{fin}) \times \bHo(f\cC) 
\ \to \ \bHo(\cC) \ .\end{equation}
Here we denote by $\bHo(\Delta\text{-sets}^\text{fin})$
a localization of the category of finite $\Delta$-sets
at the class of weak equivalences.
One should beware though that finite $\Delta$-sets do {\em not} form a
cofibration category (the factorization axiom (C4) fails),
but such a localization can be constructed `by hand',
for example by setting
$$\bHo(\Delta\text{-sets}^\text{fin})(K,L) \ = \ [|K|,|L|]\ ,$$
the set of homotopy classes of continuous maps between
the geometric realizations of $K$ and~$L$.
In fact, the homotopy category $\bHo(\Delta\text{-sets}^\text{fin})$
is equivalent, via the functor of geometric realization,
to the homotopy category (in the traditional sense) of finite CW-complexes,
see for example~\cite[Ch.\,I, Thm.\,4.3]{buonchristiano-rourke-sanderson}.

The functor~\eqref{eq:cap^L} is not quite an action of
$\bHo(\Delta\text{-sets}^\text{fin})$ on $\bHo(\cC)$ yet,
but we can fix that by choosing a `framing' of $\cC$, i.e., an inverse
$\cF:\bHo(\cC)\to\bHo(f\cC)$ to the equivalence 
of Theorem~\ref{theorem-framing}. One can then show that
the composite functor
$$  \bHo(\Delta\text{-sets}^\text{fin}) \times \bHo(\cC) 
\ \xra{\ \Id\times \cF\ }\ \bHo(\Delta\text{-sets}^\text{fin}) \times \bHo(f\cC) 
\ \xra{\ \cap^L\ } \ \bHo(\cC) $$
is coherently associative and unital with respect to the 
(derived) geometric product of $\Delta$-sets, 
and it is natural for exact functors of cofibration categories.
We shall not elaborate on this point since we don't need it 
in the present paper.
\end{rk}

There is now a standard way of extending the $\cap$-pairing
to a pairing $(K,A)\mapsto K\tensor A$
that takes a finite $\Delta$-set $K$ and a co-$\Delta$-object $A$
to another co-$\Delta$-object $K\tensor A$.
In dimension~$n$ we set
\begin{equation}\label{eq:define_tensor}
   (K\tensor A)^n \ = \ (\Delta[n]\tensor K)\cap A \ ,
\end{equation}
where the tensor symbol on the right hand side is the geometric product
of $\Delta$-sets. The structure maps arise via the functoriality in $\Delta[-]$.

The tensor product construction comes with
coherent natural isomorphisms
\begin{equation}\label{eq:cap_tensor_iso}
    K\cap (L \tensor A) \ \iso \  (K\tensor L) \cap A   \text{\qquad and\qquad}
 K\tensor (L \tensor A) \ \iso \ (K\tensor L) \tensor A   
\end{equation}
in the category $\cC$ respectively 
the category of co-$\Delta$-objects in $\cC$.
Indeed, given a simplex  $x\in K_i$ we let $\bar x:\Delta[i]\to K$ be the
morphism with $\bar x(\Id_{[i]})=x$. As $i$ and $x$ vary, the morphisms
\[ x_*= (\bar x\tensor L)\cap A\ : \ 
(L\tensor A)^i = (\Delta[i]\tensor L)\cap A 
\ \to \ (K\tensor L) \cap A \]
are compatible, so the universal property of $K\cap(L\tensor A)$
provides a morphism
\[ \psi\ : \ K\cap (L\tensor A) \ \to \ (K\tensor L) \cap A \ .\]
In the other direction we consider a triple  $(x,y;\,\varphi)$
with $x\in K_i$, $y\in L_j$ and $\varphi:[n]\to[i]\times[j]$
a monotone injection. Then $[\Id_{[i]},y;\,\varphi]$
is an $n$-simplex of $\Delta[i]\tensor L$, so we can form the composite 
\[  A^n \ \xra{[\Id_{[i]},y;\,\varphi]_*} \ 
(\Delta[i]\tensor L)\cap A = (L\tensor A)^i\ \xra{\ x_*\ }\
K\cap(L\tensor A) \ ,\]
which in fact only depends on the equivalence class 
$[x,y;\,\varphi]$ in $(K\tensor L)_n$. As the class $[x,y;\,\varphi]$ varies,
the maps are again compatible, so the universal property of 
$(K\tensor L)\cap A$ provides a morphism
\[ \bar\psi\ : \ (K\tensor L) \cap A  \ \to \  K\cap (L\tensor A)\ .\]
The morphisms $\psi$ and $\bar\psi$ are natural in all three variables
and are mutually inverse isomorphisms.
The second isomorphism in~\eqref{eq:cap_tensor_iso} 
is then given in dimension~$n$ by the composite
\begin{align*}
 (K\tensor (L \tensor A))^n \ &= \ 
 (\Delta[n]\tensor K)\cap(L\tensor A) \ \iso \
 ((\Delta[n]\tensor K)\tensor L)\cap A \\ 
&\iso \
  (\Delta[n]\tensor(K\tensor L)) \cap  A \ = \
  ((K\tensor L) \tensor A)^n \ 
\end{align*}
that combines the isomorphism $\psi$ with the associativity isomorphism
$(\Delta[n]\tensor K)\tensor L\iso\Delta[n]\tensor(K\tensor L)$
of the geometric product of $\Delta$-sets.

\begin{defn} \label{def-Delta cofibration category}
A {\em $\Delta$-cofibration category} is a cofibration category $\cC$ 
equipped with a pairing
$$ \Delta\text{-sets}^\text{fin}\times \cC \ \to \ \cC \ ,
\quad (K,X) \ \longmapsto \ K\tensor X$$
that is coherently associative and unital with respect to the
geometric product of $\Delta$-sets and
satisfies the following  properties:
\begin{itemize}
\item For every finite $\Delta$-set $K$ the functor
$K\tensor-:\cC\to\cC$ is exact.
\item For every object $A$ of $\cC$ the functor
$-\tensor A:\Delta\text{-sets}^\text{fin}\to\cC$ is exact.
\item Let $i:K\to L$ be a monomorphism of finite $\Delta$-sets
and $j:A\to B$ a cofibration in $\cC$. Then the pushout product
morphism
$$  (i\tensor B)\cup(L\tensor j) 
\ : \ K\tensor B\cup_{K\tensor A}L\tensor A\ \to \ L\tensor B$$
is a cofibration.
\end{itemize}
\end{defn}

\begin{rk}\label{rk-PPP weak equivalence} 
Suppose that $\cC$ is a $\Delta$-cofibration category,
$i:K\to L$ a monomorphism and $j:A\to B$ a cofibration 
as in the pushout product property.
If in addition $i$ or $j$ is a weak equivalence, then the morphism
$$  (i\tensor B)\cup(L\tensor j) 
\ : \ K\tensor B\cup_{K\tensor A}L\tensor A\ \to \ L\tensor B$$
is also a weak equivalence. Indeed, if $i$ is also weak equivalence,
then so is $i\tensor A$ (since $-\tensor A$ is exact), and similarly
for $i\tensor B$. So $i\tensor A:K\tensor A\to L\tensor A$ is
an acyclic cofibration, hence so is its cobase change
$K\tensor B\to K\tensor B\cup_{K\tensor A}L\tensor A$.
The composite of this weak equivalence with 
$(i\tensor B)\cup(L\tensor j)$ is the weak equivalence $i\tensor B$,
so $(i\tensor B)\cup(L\tensor j)$ is a weak equivalence, as claimed. 
The argument is analogous when $j$ is a weak equivalence.
\end{rk}

\begin{theorem}\label{thm-frames are Delta-enriched} 
Let $\cC$ be a saturated cofibration category.
Then the $\tensor$-product~\eqref{eq:define_tensor} makes the 
category of frames $f\cC$  into a $\Delta$-cofibration category.
\end{theorem}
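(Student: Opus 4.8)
The plan is to verify the three clauses of Definition~\ref{def-Delta cofibration category} in turn, reducing each to the cap-product results of this section; the category $f\cC$ is already a cofibration category by Theorem~\ref{theorem-framing}, with the level equivalences as weak equivalences. First I would check that the pairing~\eqref{eq:define_tensor} takes values in frames. The basic computation is the identification of latching objects $\partial^n(K\tensor A)\iso(\partial\Delta[n]\tensor K)\cap A$: the latching object is the colimit of the $(\Delta[i]\tensor K)\cap A$ over the simplex category of $\partial\Delta[n]$, and since $-\tensor K$ and $-\cap A$ preserve colimits (Proposition~\ref{prop-colimits}) while $\colim_{s(\partial\Delta[n])}\Delta[i]=\partial\Delta[n]$, the colimit is $(\partial\Delta[n]\tensor K)\cap A$. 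The latching morphism $\nu^n$ is then $(\iota\tensor K)\cap A$ for the boundary inclusion $\iota:\partial\Delta[n]\to\Delta[n]$; as $\iota\tensor K$ is a monomorphism of finite $\Delta$-sets, Proposition~\ref{prop-SM7}~(i), applied with the cofibration $\emptyset\to A$, shows that $\nu^n$ is a cofibration, so $K\tensor A$ is cofibrant. It is homotopically constant because each structure map has the form $(\Delta[\alpha]\tensor K)\cap A$ with $\Delta[\alpha]$ a weak equivalence of $\Delta$-sets; since the geometric product preserves weak equivalences in each variable and $\cC$ is saturated, Proposition~\ref{prop-cap with frame}~(iii) makes this a weak equivalence. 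Coherent associativity and unitality are formal consequences of the natural isomorphisms~\eqref{eq:cap_tensor_iso} and the symmetric monoidal structure of the geometric product.

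Next I would dispose of the parts of exactness not concerning cofibrations. For a cofibration $j:A\to B$ of frames, the relative latching morphism of $K\tensor j$ in level~$n$ is precisely the cap-product pushout-product of the monomorphism $\iota\tensor K:\partial\Delta[n]\tensor K\to\Delta[n]\tensor K$ with $j$, so it is a cofibration by Proposition~\ref{prop-SM7}~(i); thus $K\tensor-$ preserves cofibrations, and in particular the pushouts needed below exist in $f\cC$. When $j$ is moreover a level equivalence, each $(\Delta[n]\tensor K)\cap j$ is an acyclic cofibration by Proposition~\ref{prop-represent}, so $K\tensor j$ is a level equivalence and $K\tensor-$ preserves acyclic cofibrations. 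Preservation of initial objects and of the pushouts of~(C3) is immediate from Proposition~\ref{prop-colimits}, so $K\tensor-$ is exact. For the functor $-\tensor A$, a weak equivalence $f$ of finite $\Delta$-sets gives $(\Delta[n]\tensor f)\cap A$, which is a weak equivalence by Proposition~\ref{prop-cap with frame}~(iii) and saturation, so $f\tensor A$ is a level equivalence; colimit-preservation again supplies the remaining clauses.

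The main obstacle is the pushout-product axiom. Given a monomorphism $i:K\to L$ and a cofibration $j:A\to B$ of frames, I would argue that the relative latching morphism of $(i\tensor B)\cup(L\tensor j)$ in level~$n$ coincides with the cap-product pushout-product, in the sense of Proposition~\ref{prop-SM7}, of the single monomorphism
$$ s\ :\ \Delta[n]\tensor K\cup_{\partial\Delta[n]\tensor K}\partial\Delta[n]\tensor L\ \to\ \Delta[n]\tensor L $$
with $j$. This rests on two inputs. The first is a pushout-product property for the geometric product of $\Delta$-sets, namely that $s$ is a monomorphism; in terms of the preferred surjective representatives of simplices this is transparent, since the source of $s$ is exactly the sub-$\Delta$-set of $\Delta[n]\tensor L$ consisting of those simplices whose first coordinate lies in $\partial\Delta[n]$ or whose second lies in $K$. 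The second is the observation that both the relative latching morphism and the cap-product pushout-product of $s$ and $j$ are the total pushout-product of the three monomorphisms $\iota:\partial\Delta[n]\to\Delta[n]$, $i:K\to L$ and $j:A\to B$ under the three-variable functor $(M,N,C)\mapsto(M\tensor N)\cap C$, so that by associativity of iterated pushout-products they agree. Granting this identification, Proposition~\ref{prop-SM7}~(i) shows at once that the relative latching morphism is a cofibration, which is the pushout-product axiom; taking $i=\emptyset\to K$ or $j=\emptyset\to A$ recovers in passing that $K\tensor-$ and $-\tensor A$ preserve cofibrations. The bulk of the work, and the only genuinely delicate point, is the bookkeeping identifying these two iterated colimits; every other step is a direct appeal to the cap-product propositions and to saturation.
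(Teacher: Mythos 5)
Your proposal is correct and follows essentially the same route as the paper's proof: closure of frames under $K\tensor-$ via the identification $\partial^n(K\tensor A)\iso(\partial\Delta[n]\tensor K)\cap A$ together with Proposition~\ref{prop-SM7}~(i), reduction of the pushout-product axiom -- by exactly the paper's ``rearranging of pushouts'' -- to the cap-product pushout-product of the inclusion $\Delta[n]\tensor K\cup_{\partial\Delta[n]\tensor K}\partial\Delta[n]\tensor L\to\Delta[n]\tensor L$ with $j$, and the exactness clauses from Propositions~\ref{prop-represent}, \ref{prop-colimits} and~\ref{prop-cap with frame} plus saturation. The only immaterial deviations are that you get homotopical constancy of $K\tensor A$ from saturation and Proposition~\ref{prop-cap with frame}~(iii) where the paper uses elementary expansions and part~(i), and that you prove preservation of cofibrations directly rather than as the $K=\emptyset$ special case of the pushout-product property.
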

\begin{proof}
We start by showing that for every finite $\Delta$-set $K$
and every frame $A$ the co-$\Delta$-object $K\tensor A$
is again a frame. 
Since $A$ is cofibrant, the object $(\partial\Delta[n]\tensor K)\cap A$
exists; by~\eqref{eq:cap_tensor_iso}, this object is isomorphic to
\[ \partial\Delta[n]\cap (K\tensor A)\ = \  
\partial^n(K\tensor A) \ , \]
so the $n$-th latching object of $K\tensor A$ exists.
The latching morphism $\partial^n(K\tensor A)\to (K\tensor A)^n$
is induced by the inclusion $\partial\Delta[n]\to\Delta[n]$, and thus
isomorphic to $(\text{incl}\tensor K)\cap A:(\partial\Delta[n]\tensor K)\cap A\to(\Delta[n]\tensor K)\cap A$;
this is a cofibration by Proposition~\ref{prop-SM7}~(i).
In other words, if~$A$ is cofibrant, then so is $K\tensor A$.  

If $A$ is a frame, then $K\tensor A$ is also homotopically constant:
for every morphism $\alpha:[n]\to [m]$ in the category $\Delta$
the induced morphisms $\alpha_*:\Delta[n]\to\Delta[m]$ and
$\alpha_*\tensor K:\Delta[n]\tensor K\to \Delta[m]\tensor K$
are sequences of elementary expansions,
so by Proposition~\ref{prop-cap with frame}~(i) the induced map
$$ (K\tensor A)^n \ = \   (\Delta[n]\tensor K)\cap A
\ \to \ (\Delta[m]\tensor K)\cap A \ = \ (K\tensor A)^m$$
is an acyclic cofibration. 

The $\tensor$-pairing preserves all existing colimits in both variables,
in particular initial objects and pushouts along cofibrations.
Indeed, colimits in functor categories are objectwise, so we must show that
for every $n\geq 0$ the functor 
$(K,A)\mapsto(\Delta[n]\tensor K)\cap A$
takes all colimits in $K$ and $A$ to colimits in $\cC$. 
Since the geometric product $\Delta[n]\tensor -$
preserves colimits of finite $\Delta$-sets, this follows from
the fact that the $\cap$-pairing preserves colimits in both variables,
see Proposition~\ref{prop-colimits}.

For the pushout product property we consider an inclusion
$i:K\to L$ of finite $\Delta$-sets and a cofibration 
$j:A \to B$ between frames in $\cC$. 
To shorten the notation we write
\[ \td{i,j} = (i\tensor B)\cup(L\tensor j) \ : \ 
K\tensor B\cup_{K\tensor A}L\tensor A \ \to \ L\tensor B \]
for a pushout product map. We then have to show that $\td{i,j}$ is
a cofibration of co-$\Delta$-objects, and that in turn means showing
that the morphism
$$  \td{i,j}^n\cup\nu^n \ : \
(K\tensor B\cup_{K\tensor A}L\tensor A)^n \cup_{\partial^n(K\tensor B\cup_{K\tensor A}L\tensor A)}
\partial^n(L\tensor B) \ \to \ (L\tensor B)^n$$
is a cofibration in $\cC$.
The isomorphism~\eqref{eq:cap_tensor_iso} and a rearranging of pushouts
translates this into the claim that the morphism
\begin{align*}
 \td{k,j}\colon
(\Delta[n]\tensor K\cup\partial\Delta[n]\tensor L)\cap B
\ \cup_{(\Delta[n]\tensor K\cup\partial\Delta[n]\tensor L)\cap A} &(\Delta[n]\tensor L)\cap A \ 
\to \  (\Delta[n]\tensor L)\cap B   
\end{align*}
is a cofibration in $\cC$,  where 
$$ k\ :\ 
\Delta[n]\tensor K\,\cup\,\partial\Delta[n]\tensor L\
\to\  \Delta[n]\tensor L  $$
is the inclusion (the union in the source is along the
intersection $\partial\Delta[n]\tensor K$).
So $\td{k,j}$ is a cofibration in $\cC$ 
by Proposition~\ref{prop-SM7}~(i). This completes the proof of the
pushout product property.

In the special case when $K$ is the empty $\Delta$-set the pushout
product property shows that the functor $L\tensor-$ preserves cofibrations.
Since $(L\tensor A)^n=(\Delta[n]\tensor L)\cap A$, 
Proposition~\ref{prop-SM7}~(ii) 
shows that $L\tensor -$ preserves level equivalences.

In the special case when $A$ is the initial object the pushout
product property shows that the functor $-\tensor B$ takes monomorphisms 
to cofibrations.
If $f:K\to L$ is a weak equivalence, then so is 
$\Delta[n]\tensor f:\Delta[n]\tensor K\to \Delta[n]\tensor L$.
So the morphism $(f\tensor B)^n=(\Delta[n]\tensor f)\cap B$ 
becomes an isomorphism in $\bHo(\cC)$ 
by Proposition~\ref{prop-cap with frame}~(ii).
Since $\cC$ is saturated, $(f\tensor B)^n$ is a weak equivalence for
every $n\geq 0$, i.e., $-\tensor B$ takes weak equivalences 
to level equivalences. 
\end{proof}

\begin{rk}
If $\cC$ has arbitrary coproducts and colimits of sequences of cofibrations, 
then the functor $\Delta\text{-sets}(K,\cC(A,-))$
is representable for every $\Delta$-set $K$.
If coproducts and sequential colimits are suitably compatible
with cofibrations and weak equivalences, then most of the result
of this section carry over from finite to arbitrary $\Delta$-sets.
\end{rk}

\section{\texorpdfstring{Pointed $\Delta$-cofibration categories}
{Pointed Delta-cofibration categories}}
\label{sec-pointed Delta}

In this section we introduce and study actions of based $\Delta$-sets 
on objects of a pointed $\Delta$-cofibration category.
A {\em based $\Delta$-set} is a contravariant functor
from the category $\Delta$ to the category of based sets.
So a based $\Delta$-set is a $\Delta$-set equipped with 
a distinguished basepoint in every dimension, preserved under the face maps. 
One should beware that, in contrast to the world
of simplicial sets, the 0-simplex $\Delta[0]$ is {\em not} a terminal
object in the category of $\Delta$-sets. 
So specifying a vertex in a $\Delta$-set does {\em not} 
determine a morphism from the terminal $\Delta$-set (which has
exactly one simplex in every dimension), and does {\em not}
make a $\Delta$-set based.
A based $\Delta$-set is non-empty in every dimension,
so it is never finite dimensional, and never finite.

Now we discuss how to pair objects in a pointed 
$\Delta$-cofibration category $\cC$ 
with based $\Delta$-sets.
If $X$ and $Z$ are objects of $\cC$, we can define a $\Delta$-set
$\map(X,Z)$ as  the composite functor
$$ \Delta^\text{op} \ \xra{\ [n]\mapsto\Delta[n]\ } \ 
(\Delta\text{-sets})^\text{op} \ 
\xra{\ \cC(-\tensor X,Z)\ } \ \text{(sets)}\ . $$
Since $\cC$ is pointed, this $\Delta$-set is canonically based:
the basepoint of $\map(X,Z)_n$ is the zero map from $\Delta[n]\tensor X$ to $Z$.
If $K$ is a based $\Delta$-set, then we denote by $K\sm X$
a representing object, if it exists, of the functor
$$ \cC \ \to \ \text{(sets)}\ , \quad Z \longmapsto\ 
\Delta\text{-set}_*(K,\map(X,Z)) \ ,$$
where the right hand side is the set of morphisms of
based $\Delta$-sets from $K$ to $\map(X,Z)$.

The smash product $K\sm X$ plays the role of a quotient of
$K\tensor X$ by $\ast\tensor X$ where $\ast\subseteq K$ 
denotes the sub-$\Delta$-set consisting only of the various basepoints
in all dimensions. 
However, this is not literally true, because $K$ and $\ast$ 
are infinite $\Delta$-sets, and so the expressions
$K\tensor X$ and $\ast\tensor X$ do not individually make sense
in a $\Delta$-cofibration category. The following concepts of
`essentially finite' based $\Delta$-sets and `finite approximation'
allow us to deal with the fact that based $\Delta$-sets are never finite.

\begin{defn} A based $\Delta$-set is {\em essentially finite} if it
has only finitely many non-basepoint simplices.
A {\em finite presentation} of a based $\Delta$-set $K$
is a morphism $r:R\to K$ of (unbased) $\Delta$-sets such that $R$ is finite
(in the absolute sense) and every non-basepoint simplex of $K$ has exactly one
preimage under~$r$.
\end{defn}

In other words, a finite presentation is an 
`isomorphism away from the basepoints'.
So if a based $\Delta$-set has a finite presentation, then it must be
essentially finite. 
A based $\Delta$-set $K$ is essentially finite if and only if
each $K_n$ is finite  and almost all~$K_n$ consist only of the basepoint.
So if $m$ is the maximum of the dimensions of the non-basepoint simplices,
then the inclusion $K^{(m)}\to K$ of the $m$-skeleton is a finite presentation.
(This particular finite presentation is also injective, but that is not
required of finite presentations in general.) 
We conclude that a based $\Delta$-set has a finite presentation 
if and only if it is essentially finite.

\begin{prop}\label{prop-finite presentation}
Let $r:R\to K$ be a finite presentation of a based $\Delta$-set
and denote by~$r^{-1}(\ast)$ the sub-$\Delta$-set of $R$ 
consisting of all preimages of the respective basepoints. 
Then for every object~$X$ of a pointed $\Delta$-cofibration category
every cokernel of the morphism 
\[  \iota\tensor X\ :\ r^{-1}(\ast)\tensor X\ \to\ R\tensor X\]
is an object $K\sm X$, where $\iota:r^{-1}(\ast)\to R$ is the inclusion.
In particular, the object $K\sm X$ exists 
for every essentially finite based $\Delta$-set~$K$.
\end{prop}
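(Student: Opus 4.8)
The plan is to verify that the cokernel $C$ of $\iota\tensor X$ satisfies the universal property defining $K\sm X$, i.e.\ that $\cC(C,Z)$ is naturally bijective to the set $\Delta\text{-set}_*(K,\map(X,Z))$ of \emph{based} morphisms. Since representing objects are unique up to isomorphism, it suffices to do this for one cokernel, and the computation will in fact apply to any. The argument has two essentially independent ingredients: an \emph{unbased} adjunction bijection relating $\cC$-morphisms out of $L\tensor X$ to unbased $\Delta$-morphisms into $\map(X,Z)$, valid for all finite $\Delta$-sets $L$; and a combinatorial translation, governed by the finite presentation $r$, between unbased morphisms $R\to\map(X,Z)$ that vanish on $r^{-1}(\ast)$ and based morphisms $K\to\map(X,Z)$.

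First I would establish, naturally in the finite $\Delta$-set $L$ and in $Z$, a bijection
$$\cC(L\tensor X,\,Z)\ \iso\ \Delta\text{-set}(L,\,\map(X,Z))\ .$$
For $L=\Delta[m]$ this is the Yoneda lemma, as both sides are the set $\map(X,Z)_m=\cC(\Delta[m]\tensor X,Z)$. Both sides are contravariant functors of $L$ carrying colimits of finite $\Delta$-sets to limits of sets: the right-hand side because it is a hom-functor, and the left-hand side because $-\tensor X$ is exact and sends monomorphisms to cofibrations (take the pushout product against the cofibration $\ast\to X$), hence preserves the pushouts along the boundary inclusions $\partial\Delta[n]\to\Delta[n]$ by which any finite $\Delta$-set is built from simplices. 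An induction on the number of simplices of $L$, matching the two pullback descriptions at each cell attachment, then upgrades the bijection from representables to all finite $L$. Under it the zero morphism corresponds to the constant map at the basepoint of $\map(X,Z)$, and, by naturality in $L$, precomposition with $\iota\tensor X$ corresponds to precomposition with $\iota$.

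Next I would compute $\cC(C,Z)$. As $\cC$ is pointed, $C$ is the pushout of $\ast\leftarrow r^{-1}(\ast)\tensor X\xra{\ \iota\tensor X\ }R\tensor X$, so $\cC(C,Z)$ is the set of $f\colon R\tensor X\to Z$ with $f\circ(\iota\tensor X)=0$. By the previous step these correspond to unbased morphisms $g\colon R\to\map(X,Z)$ whose restriction along $\iota$ is the constant basepoint map, i.e.\ whose values on all of $r^{-1}(\ast)$ are zero maps. It remains to identify this set with $\Delta\text{-set}_*(K,\map(X,Z))$, and here the defining property of a finite presentation enters: since $r$ carries the non-basepoint simplices of $R$ bijectively onto the non-basepoint simplices of $K$, a based morphism $\varphi\colon K\to\map(X,Z)$ yields $g=\varphi\circ r$, which visibly kills $r^{-1}(\ast)$; conversely such a $g$ determines $\varphi$ by sending a non-basepoint simplex $k$ to $g(\tilde k)$, where $\tilde k$ is the unique $r$-preimage of $k$, and sending basepoints to basepoints. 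A check of the face relations — using uniqueness of preimages for non-basepoint faces, and the vanishing of $g$ on $r^{-1}(\ast)$ for faces that become basepoints — shows $\varphi$ is a well-defined based morphism and that the two assignments are mutually inverse. Composing these bijections gives $\cC(C,Z)\iso\Delta\text{-set}_*(K,\map(X,Z))$ naturally in $Z$, so $C$ is an object $K\sm X$.

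Finally, existence is immediate: $\iota$ is the inclusion of a sub-$\Delta$-set, hence a monomorphism, so $\iota\tensor X$ is a cofibration and its cokernel (the pushout against $r^{-1}(\ast)\tensor X\to\ast$) exists by axiom~(C3); for an arbitrary essentially finite $K$ one applies this to the $m$-skeleton inclusion $K^{(m)}\to K$ with $m$ the top dimension of a non-basepoint simplex. I expect the main obstacle to be the combinatorial translation of the third paragraph: a finite presentation need not be injective, so the correspondence between $g$ and $\varphi$ must be checked to respect faces using precisely the `exactly one preimage' condition — and one must also be careful in the first step that finite $\Delta$-sets do not themselves form a cofibration category, so the colimit-preservation of $-\tensor X$ has to be invoked only for the cell-attachment pushouts along the cofibrations $\partial\Delta[n]\to\Delta[n]$.
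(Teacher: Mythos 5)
Your proposal is correct and is essentially the paper's own proof: the paper likewise identifies $\Delta\text{-set}_*(K,\map(X,Z))$ with the fiber, over the morphism with constant zero values, of the restriction map $\Delta\text{-set}(R,\map(X,Z))\to\Delta\text{-set}(r^{-1}(\ast),\map(X,Z))$, identifies this restriction map with $\cC(\iota\tensor X,Z):\cC(R\tensor X,Z)\to\cC(r^{-1}(\ast)\tensor X,Z)$, and observes that its fiber is represented by any cokernel of $\iota\tensor X$ --- your first step merely spells out the natural bijection $\cC(L\tensor X,Z)\iso\Delta\text{-set}(L,\map(X,Z))$ that the paper treats as immediate. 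One minor repair: that cell induction should be organized by the dimension of $L$ and then the number of top-dimensional simplices (as in Proposition~\ref{prop-represent}), not by the raw number of simplices, since the abstract $\partial\Delta[n]$ along which a top cell is attached can have far more simplices than $L$ itself (e.g.\ for the $\Delta$-set $P(n)$ with one simplex in each dimension $\leq n$).
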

\begin{proof} We let $Y$ be another based $\Delta$-set.
Then precomposition with $r$ induces a bijection from the set
$\Delta\text{-set}_*(K,Y)$ to the subset of $\Delta\text{-set}(R,Y)$
consisting those morphisms $\varphi:R\to Y$ 
that send the entire sub-$\Delta$-set $r^{-1}(\ast)$ to the basepoints. 
In the special case of the based $\Delta$-set
$Y=\map(X,Z)$ this shows that $\Delta\text{-set}_*(K,\map(X,Z))$
is the fiber, over the constant morphism with zero map values, 
of the restriction map
\[ \Delta\text{-set}(R,\map(X,Z))\ \to \ 
\Delta\text{-set}(r^{-1}(\ast),\map(X,Z)) \ .\]
This restriction map is isomorphic to
\[  \cC(\iota\tensor X,Z)\ : \ \cC(R\tensor X,Z)\ \to \ 
\cC(r^{-1}(\ast)\tensor X,Z) \ ,\]
so its fiber is represented by any cokernel 
of the morphism $\iota\tensor X$.
In other words, any cokernel represents the functor 
$\Delta\text{-set}_*(K,\map(X,-))$ and is thus a possible choice of $K\sm X$.
Since every essentially finite based $\Delta$-set has a finite presentation,
the object $K\sm X$ always exists. 
\end{proof}

The same kind of arguments as for the $\cap$-pairing in Section~\ref{sec-framings}
show that the smash product pairing $K\sm X$ 
canonically extends to a functor of two variables,
and that it preserves colimits in each variable.

\begin{prop}\label{prop-smash with equivalence gives weak equivalence}
Let $f:K\to L$ be a weak equivalence between
essentially finite based $\Delta$-sets
and $X$ an object of a pointed $\Delta$-cofibration category.
Then the morphism
$$ f\sm X \ : \  K\sm X \ \to \ L\sm X$$
is a weak equivalence.
\end{prop}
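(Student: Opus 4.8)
The plan is to reduce the statement to the unbased pushout--product machinery of Section~\ref{sec-framings}, using Proposition~\ref{prop-finite presentation} to model both smash products as cokernels, and to produce \emph{genuine} weak equivalences (not merely isomorphisms in $\bHo(\cC)$), since $\cC$ is not assumed saturated. Everything rests on one special case, which I will call the crux: \emph{if $i\colon K\to L$ is a based monomorphism of essentially finite based $\Delta$-sets that is a weak equivalence, then $i\sm X$ is an acyclic cofibration.}

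To prove the crux I would fix an integer $M$ bounding the dimensions of all non-basepoint simplices of $L$ and use the skeletal finite presentations $R=K^{(M)}\subseteq S=L^{(M)}$. Because $i$ is based and injective, these share the same basepoint sub-$\Delta$-set $B_\ast$ (the $M$-truncation of the terminal $\Delta$-set), and the induced inclusion $\rho\colon R\to S$ restricts to the identity on $B_\ast$. By Proposition~\ref{prop-finite presentation}, $K\sm X$ and $L\sm X$ are the cokernels of the cofibrations $B_\ast\tensor X\to R\tensor X$ and $B_\ast\tensor X\to S\tensor X$, and $i\sm X$ is the induced map of cokernels. The key observation is that $\rho$ is itself a weak equivalence of finite $\Delta$-sets: since $i$ is an acyclic monomorphism the based quotient $L/K$ is weakly contractible, so $|S|/|R|\simeq|L|/|K|\simeq\ast$, and the CW-pair criterion (an inclusion with contractible cofiber is a homotopy equivalence) shows $|\rho|$ is a homotopy equivalence. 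Then $\rho\tensor X$ is an acyclic cofibration by Remark~\ref{rk-PPP weak equivalence}, and the gluing lemma applied to the two cokernel (pushout) diagrams --- whose left legs are the displayed cofibrations and whose vertical maps are $\rho\tensor X$, $\Id_{B_\ast\tensor X}$ and $\Id_\ast$ --- shows that $i\sm X$ is a weak equivalence. That $i\sm X$ is a cofibration follows formally from the pushout--product property of Definition~\ref{def-Delta cofibration category} together with the fact that for cofibrations $U\to V\to W$ the induced map $V/U\to W/U$ is a cofibration.

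With the crux in hand I would treat a general weak equivalence $f\colon K\to L$ via a based mapping cylinder $Mf$, built from the unbased mapping cylinder of $\rho=f^{(M)}\colon K^{(M)}\to L^{(M)}$ by collapsing the cylinder over the common basepoint part $B_\ast$. It is an essentially finite based $\Delta$-set carrying based monomorphisms $c\colon K\to Mf$ (front inclusion) and $j\colon L\to Mf$ (back inclusion) and a retraction $\pi\colon Mf\to L$ with $\pi j=\Id_L$ and $\pi c=f$; a routine analysis of geometric realizations shows $j$ is always a weak equivalence and $c$ is one precisely because $f$ is, so $c$ and $j$ are based acyclic monomorphisms. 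Applying the crux to $j$ makes $j\sm X$ an acyclic cofibration, whence $\pi\sm X$ is a weak equivalence by the 2-out-of-3 property (as $(\pi\sm X)(j\sm X)=\Id$); applying the crux to $c$ makes $c\sm X$ a weak equivalence; hence $f\sm X=(\pi\sm X)(c\sm X)$ is a weak equivalence.

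The main obstacle is the crux, and inside it the passage from the homotopy-theoretic hypothesis (that $|f|$ is a homotopy equivalence) to a genuine weak equivalence in the possibly non-saturated category $\cC$. This is exactly why the argument must be funneled through \emph{monomorphisms}: Remark~\ref{rk-PPP weak equivalence} manufactures genuine acyclic cofibrations only out of acyclic \emph{cofibrations} of $\Delta$-sets, so a general weak equivalence cannot be fed into it directly, and ``contractible cofiber implies weak equivalence'' is unavailable in $\cC$ without saturation. Keeping the basepoint sub-$\Delta$-sets identified across the two presentations (so the relevant gluing-lemma vertical is literally an identity) and the explicit construction and homotopical analysis of the based mapping cylinder are the two places where the innocent-looking bookkeeping is genuinely load-bearing.
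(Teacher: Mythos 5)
Your proof has a genuine gap at the crux, and it sits exactly at the point that the paper's own proof is engineered to avoid. The ``CW-pair criterion'' you invoke --- that an inclusion of CW complexes with contractible cofiber is a homotopy equivalence --- is false. For a counterexample, let $A$ be a finite $2$-dimensional acyclic complex with $\pi_1(A)\neq 1$ (for instance the presentation complex of the binary icosahedral group); the inclusion $A\hookrightarrow CA$ into its cone has cofiber $CA/A\cong\Sigma A$, which is simply connected and acyclic, hence contractible, yet $A\to CA$ is not a homotopy equivalence. The criterion holds only under simple-connectivity hypotheses, and your $|R|=|K^{(M)}|$ is not simply connected: it contains the basepoint skeleton $|\ast^{(M)}|$, which for odd $M$ is a sphere $S^M$ rather than a point, and $\pi_1(|K^{(M)}|)\cong\pi_1(|K|)$ can be arbitrary. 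So the step ``$|S|/|R|\simeq |L|/|K|\simeq\ast$, hence $|\rho|$ is a homotopy equivalence'' is unjustified as written. The conclusion you need is in fact true, but establishing it requires real input: either the paper's parity device --- take $M$ \emph{even}, so that the skeleton inclusions $K^{(M)}\to K$ and $L^{(M)}\to L$ are themselves weak equivalences (above the top non-basepoint dimension the basepoint cells cancel in adjacent pairs), and then conclude by 2-out-of-3 --- or a five-lemma argument on universal covers exploiting that $\rho$ restricts to the identity on the simply connected subcomplex $|\ast^{(M)}|$. Note that for odd $M$ the skeleton inclusion is genuinely \emph{not} a weak equivalence (already $|\ast^{(1)}|=S^1$ while $|\ast|$ is weakly contractible), so the evenness in the paper's proof is load-bearing, and your bookkeeping, which imposes no parity condition on $M$, cannot be repaired by a one-line citation.

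Separately, the premise that forced you into the monomorphism detour is a misreading of the axioms. Definition~\ref{def-Delta cofibration category} requires that for every object $X$ the functor $-\tensor X\colon\Delta\text{-sets}^{\text{fin}}\to\cC$ be \emph{exact}, and exactness by definition includes preserving \emph{all} weak equivalences of finite $\Delta$-sets --- not merely acyclic monomorphisms, and with no saturation hypothesis on $\cC$; indeed the proof of Remark~\ref{rk-PPP weak equivalence} invokes exactly this. (Saturation enters only when one \emph{verifies} this axiom for the category of frames, which is not at issue here.) Consequently the paper's proof needs neither a mapping cylinder nor a reduction to monomorphisms: choose $m$ even and at least the top non-basepoint dimension of both $K$ and $L$; then $f^{(m)}\colon K^{(m)}\to L^{(m)}$ is a weak equivalence of honestly finite $\Delta$-sets, so $f^{(m)}\tensor X$ is a weak equivalence by the axiom, and the gluing lemma applied to the cokernel presentations of Proposition~\ref{prop-finite presentation}, with the basepoint part $P\tensor X$ mapped by the identity, shows that $f\sm X$ is a weak equivalence. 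Your cylinder reduction in the last paragraph is sound as architecture, but it solves a problem the axioms have already solved, and it rests on the broken crux.
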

\begin{proof}
We let $m$ be any even number at least as large as the maximum dimension of 
a non-basepoint simplex of $K$.
We claim that the inclusion $K^{(m)}\to K$ of the $m$-skeleton
is a weak equivalence of $\Delta$-sets. Indeed, the geometric realization
of $K^{(m)}$ is the $m$-skeleton of the canonical CW-structure on $|K|$.
Since $K$ consists only of basepoints above dimension $m$,
from $|K^{(m)}|$ to $|K^{(m+2)}|$ two cells of dimension
$m+1$ and $m+2$ are attached; 
since $m$ is even the $(m+2)$-cell is attached to the
$(m+1)$-cell by a map of degree~1.
So for all large enough even $m$ the skeleton inclusion
$|K^{(m)}|\to|K^{(m+2)}|$ is a homotopy equivalence.
Hence the skeleton inclusion
$|K^{(m)}|\to|K|$ is also a homotopy equivalence.

Now we let $m$ be even and at least 
large  as the maximum dimension of 
a non-basepoint simplex of both $K$ and $L$.
Since $f$ is a weak equivalence, so is 
$f^{(m)}:K^{(m)}\to L^{(m)}$, by the previous paragraph.
These skeleta are finite $\Delta$-sets (in the absolute sense). 
We let $P$ denote a $\Delta$-set with a unique simplex in each dimension
up to dimension~$m$, and no simplices above dimension~$m$.
We let $j:P\to K^{(m)}$ be the morphism that hits the basepoints.
In  the commutative diagram
\[ \xymatrix@C015mm{ \ast\ar@{=}[d] & 
P\tensor X \ar@{=}[d] \ar[r]^-{j\tensor X}\ar[l] &
K^{(m)} \tensor X \ar[d]^{f^{(m)}\tensor X}_\sim \\
\ast & P\tensor X \ar[r]_-{f^{(m)}j\tensor X} \ar[l] &
L^{(m)} \tensor X } \]
the right vertical morphism is a weak equivalence since $f^{(m)}$ is.
Since $f\sm X:K\sm X\to L\sm X$ can be obtained from this
diagram by passage to horizontal pushouts 
(by Proposition~\ref{prop-finite presentation}), it is a weak equivalence
by the gluing lemma.
\end{proof}

\begin{eg}\label{eg-smash with S^1}
The smash product with certain based $\Delta$-sets
$I$ respectively $S^1$ provides
functorial cones and suspensions in any pointed $\Delta$-cofibration
category $\cC$.
We let $I$ be the essentially finite based $\Delta$-set with
\[ I_1=\{z,\ast\}\ , \quad 
 I_0=\{zd_0,zd_1\} \]
and with $I_k$ consisting only of the basepoint for
$k\geq 2$. The basepoint in dimension~0 is the vertex $zd_0$.
The morphism $r:\Delta[1]\to I$ that hits the
1-simplex $z$ is a finite presentation with $r^{-1}(\ast)=\{d_0\}$.
Proposition~\ref{prop-finite presentation} provides a pushout square:
\[ \xymatrix@C=15mm{ 
X \ar[r]^-{(d_0)_*} \ar[d] & \Delta[1]\tensor X\ar[d]^q\\
\ast\ar[r] & I\sm X }\]
Since $(d_0)_*:X\to\Delta[1]\tensor X$ is an acyclic cofibration,
the object $I\sm X$ is weakly contractible. 

The object $\partial\Delta[1]\tensor X$ is a coproduct of two copies of $X$,
and the composite 
\[ \partial\Delta[1]\tensor X\ \xra{\text{incl}\tensor X} 
\ \Delta[1]\tensor X \ \xra{\quad q\quad }\ 
I\sm X \]
is zero on the copy of $X$ indexed by $d_0$.
So the commutative square
\[\xymatrix@C=15mm{
\partial\Delta[1]\tensor X\ar[r]^-{\text{incl}\tensor X}\ar[d]_p & 
\Delta[1]\tensor X\ar[d]^q\\
X\ar[r]_-{i_X} & I\sm X }\]
is a pushout, where $i_X=q\circ(d_1)_*$, and where $p$ 
is the morphism such that $p\circ(d_0)_*$ is zero and $p\circ(d_1)_*=\Id_X$.
So $i_X:X\to I\sm X$ is a cofibration with weakly contractible
target, i.e., a functorial cone of $X$.

We let $S^1$ be the based $\Delta$-set with a unique non-basepoint simplex
$z$ of dimension~1. The morphism $r:\Delta[1]\to S^1$ 
that hits $z$ is a finite presentation, 
and it satisfies $r^{-1}(\ast)=\partial\Delta[1]$.
By Proposition~\ref{prop-finite presentation}, the object $S^1\sm X$ is then
a cokernel of the morphism $\partial\Delta[1]\tensor X\to\Delta[1]\tensor X$,
hence also a cokernel of the cone inclusion $i_X:X\to CX$.
So $S^1\sm X$ is isomorphic in $\bHo(\cC)$ to the suspension of $X$.

We define the {\em mapping cone} $Cf$ of a morphism $f:X\to Y$ as a pushout:
\begin{equation}\begin{aligned}\label{eq:mapping_cone}
\xymatrix{X\ar[d]_f 
\ar[r]^-{i_X} & I\sm X  \ar[d]\\
Y \ar[r]_-j & Cf}
\end{aligned}\end{equation}
The pushout exists because the cone inclusion $i_X$ is a cofibration.
We can compare the elementary distinguished triangles of the
two cofibrations $i_X$ and $j$ in $\bHo(\cC)$:
\[ \xymatrix@C=15mm{ 
X \ar[r]^-{\gamma(i_X)} \ar[d]_{\gamma(f)} &  
I\sm X \ar[r] \ar[d] & 
(I\sm X)/X \ar[d]^\iso \ar[r]^-{\delta(i_X)}_-\iso & 
\Sigma X  \ar[d]^{\Sigma\gamma(f)}\\
Y \ar[r]_-{\gamma(j)} & Cf\ar[r]& 
Cf/Y \ar[r]_-{\delta(j)}&  \Sigma Y}    \]
Since the square~\eqref{eq:mapping_cone} is a pushout, 
the induced map $(I\sm X)/X\to Cf/Y$ is an isomorphism in~$\cC$.
The cone $I\sm X$ is weakly contractible, hence the connecting morphism 
$\delta(i_X)$ is an isomorphism in $\bHo(\cC)$.
In the lower distinguished triangle we can thus replace 
$Cf/Y$ by the isomorphic object 
$\Sigma X$ and obtain a distinguished triangle
\[  Y \xra{\ \gamma(j)\ } \ Cf \ \xra{\qquad} \ 
\Sigma X  \ \xra{\ \Sigma\gamma(f)\ } \
 \Sigma Y  \ .  \]
We rotate this triangle to the left and compensate for the sign
by changing the unnamed morphism into its negative; the result is
a distinguished triangle
\begin{equation}\label{eq:mapping_cone_triangle}
   X    \ \xra{\ \gamma(f)\ } \ Y\ \xra{\ \gamma(j)\ } \ 
Cf\ \xra{\qquad} \  \Sigma X  \ .
\end{equation}
\end{eg}

For $n\geq 2$ we define an essentially finite based $\Delta$-set $S\td{n}$ by
$$  S\td{n}_0 \ = \ \{e_i\ |\ i\in\mZ/n\}  \text{\qquad and\qquad}
S\td{n}_1 \ = \ \{f_i\ |\ i\in\mZ/n\}\cup\{\ast\}   \ ,
$$
and with $S\td{n}_k$ consisting only of the basepoint for $k\geq 2$.
We take the vertex $e_0$ as the basepoint in $S\td{n}_0$.
The face maps are given by $\partial f_i=(e_i,e_{i+1})$ (to be read modulo $n$).
As in Example~\ref{eg-smash with S^1} we let $S^1$ be the based $\Delta$-set
with exactly one non-basepoint simplex $z$ of dimension~1.
We now define morphisms of based $\Delta$-sets 
\[ \psi_i \ : S\td{n}\ \to \ S^1 
\text{\qquad and\qquad} 
\nabla\ :\ S\td{n}\ \to\  S^1\ .   \]
The morphism $\psi_i$ is determined by $\psi_i(f_i)=z$
and $\psi_i(f_k)=*$ for $i\not\equiv k$ modulo $n$.
The morphism~$\nabla$ is determined by $\nabla(f_i)=z$ for all $i\in\mZ/n$.
The square
\[ \xymatrix@C=15mm{S\td{n}-\{f_i\}\ar[d]_{\text{incl}}\ar[r]^-{\psi_i} &
\ast \ar[d] \\
S\td{n} \ar[r]_-{\psi_i} & S^1 } \]
is a pushout of based $\Delta$-sets
and both $S\td{n}-\{f_i\}$ and $\ast$ are weakly contractible,
so $\psi_i$ is a weak equivalence.

\begin{prop}\label{prop-psi_i homotopic}
Let $X$ be an object in a $\Delta$-cofibration category $\cC$.
\begin{enumerate}[\em (i)]
\item 
The relation $\gamma(\psi_i\sm X)=\gamma(\psi_{i+1}\sm X)$
holds as morphism from $S\td{n}\sm X$ to $S^1\sm X$ 
in $\bHo(\cC)$.  
\item If $\cC$ is stable, then the relation
\[ \gamma(\nabla\sm X) \ = \ n\cdot \gamma(\psi_1\sm X)  \]
holds as morphisms from $S\td{n}\sm X$ to $S^1\sm X$ 
in $\bHo(\cC)$.
\end{enumerate}
\end{prop}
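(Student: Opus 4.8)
The plan is to deduce both parts from manipulations of based $\Delta$-sets, using that $-\sm X$ is functorial and (by Proposition~\ref{prop-smash with equivalence gives weak equivalence}) sends based weak equivalences to weak equivalences. For part~(i) I would realize the evident based homotopy between the two degree-one maps $|\psi_i|,|\psi_{i+1}|\colon|S\td{n}|\to|S^1|$ at the combinatorial level. Concretely, I would form the essentially finite based cylinder $\mathrm{Cyl}$ on $S\td{n}$ (the geometric product $\Delta[1]\tensor S\td{n}$ with the basepoint copy $\Delta[1]\tensor\ast$ collapsed), with its two end inclusions $j_0,j_1\colon S\td{n}\to\mathrm{Cyl}$ and collapse map $c\colon\mathrm{Cyl}\to S\td{n}$; the latter is a based weak equivalence and satisfies $cj_0=cj_1=\mathrm{id}$. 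Granting a based map $H\colon\mathrm{Cyl}\to T$ into a suitable target (see below) with $Hj_0=\iota\psi_i$ and $Hj_1=\iota\psi_{i+1}$, applying $-\sm X$ and passing to $\bHo(\cC)$ finishes the argument: since $\gamma(c\sm X)$ is invertible and $cj_0=cj_1=\mathrm{id}$, we get $\gamma(j_0\sm X)=\gamma(c\sm X)^{-1}=\gamma(j_1\sm X)$, and composing with $\gamma(H\sm X)$ then cancelling the isomorphism $\gamma(\iota\sm X)$ yields $\gamma(\psi_i\sm X)=\gamma(\psi_{i+1}\sm X)$. Note that no stability is needed here, matching the hypotheses of~(i).

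The step I expect to be the main obstacle is constructing the target $T$ and the homotopy $H$. Since $S^1$ has no non-basepoint simplices above dimension one, the two-dimensional cells of $\mathrm{Cyl}$ (the prisms over the edges $f_k$) cannot map non-trivially into $S^1$, which would force the active edges to degenerate; so one cannot map $\mathrm{Cyl}$ into $S^1$ directly. The fix is to enlarge the target: by attaching the finitely many simplices needed to carry the sweep that slides the active edge from $f_i$ to $f_{i+1}$, I would obtain an essentially finite based $\Delta$-set $T\supseteq S^1$ with a based weak equivalence $\iota\colon S^1\to T$ and the desired $H\colon\mathrm{Cyl}\to T$. Verifying that $\iota$ remains a weak equivalence (for instance on geometric realizations) and that the attached cells fit together into a genuine morphism of based $\Delta$-sets is the only non-formal part of~(i).

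For part~(ii) I would use a pinch--fold factorization of $\nabla$ together with the additive structure on $\bHo(\cC)$ supplied by stability. Collapsing the $0$-skeleton of $S\td{n}$ gives a based map $q\colon S\td{n}\to\bigvee_{i\in\mZ/n}S^1$ with $p_i q=\psi_i$, where $p_i$ collapses all but the $i$-th circle, and $\nabla'q=\nabla$ for the fold map $\nabla'\colon\bigvee_i S^1\to S^1$. As $-\sm X$ preserves coproducts, $(\bigvee_i S^1)\sm X$ is a coproduct of $n$ copies of $S^1\sm X$, hence a biproduct in the triangulated category $\bHo(\cC)$; since $\nabla'\,\mathrm{in}_j=\mathrm{id}$ for every inclusion $\mathrm{in}_j$, the fold map decomposes as $\gamma(\nabla'\sm X)=\sum_i\gamma(p_i\sm X)$. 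Therefore
\[ \gamma(\nabla\sm X)=\gamma(\nabla'\sm X)\,\gamma(q\sm X)=\sum_{i\in\mZ/n}\gamma(p_i\sm X)\,\gamma(q\sm X)=\sum_{i\in\mZ/n}\gamma(\psi_i\sm X) \]
and part~(i) replaces each summand by $\gamma(\psi_1\sm X)$, giving $\gamma(\nabla\sm X)=n\cdot\gamma(\psi_1\sm X)$. Here stability is exactly what makes $\bHo(\cC)$ additive, so that the biproduct decomposition and the sum make sense; the remaining checks (that $q\sm X$ has components $\psi_i\sm X$, and that the wedge becomes a coproduct) are routine.
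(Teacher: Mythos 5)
Your part (ii) is correct and is essentially the paper's own argument: the pinch map to the wedge (the paper's $\kappa$), the observation that in the additive category $\bHo(\cC)$ the fold map out of a finite coproduct equals the sum of the projections, and then part (i) to identify all the summands with $\gamma(\psi_1\sm X)$. The gap is in part (i), at its very first step: the collapse map $c\colon\mathrm{Cyl}\to S\td{n}$ that your whole argument rests on does not exist. Morphisms of $\Delta$-sets strictly preserve dimension (there are no degeneracy operators in this setting), and your cylinder has non-basepoint $2$-simplices, namely the prisms lying over the edges $f_k$. Since $S\td{n}$ has no non-basepoint simplices above dimension $1$, any based morphism $\mathrm{Cyl}\to S\td{n}$ must send these prisms, and hence all of their faces, to basepoints; but the images $j_0(f_k)$ and $j_1(f_k)$ of the edges under the two end inclusions occur among those faces, so any such morphism kills them. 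In particular no $c$ with $cj_0=cj_1=\mathrm{id}$ can exist --- this is the same phenomenon as the non-existence of a morphism $\Delta[1]\to\Delta[0]$, which the paper explicitly warns about. Consequently the identification $\gamma(j_0\sm X)=\gamma(c\sm X)^{-1}=\gamma(j_1\sm X)$ is unfounded, and this equality is exactly the point that requires a genuine argument.

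The paper repairs precisely this step by a different device: it introduces a weakly contractible $2$-dimensional $\Delta$-set $P$ with a single simplex in each of the dimensions $0$, $1$, $2$, and uses the unique morphism $u\colon\Delta[1]\to P$. Because $P$ has only one vertex, the two end inclusions become \emph{equal on the nose} after composing with $S\tensor u$, and since $S\tensor u\tensor X$ is a weak equivalence one can cancel it in $\bHo(\cC)$ to conclude $\gamma(i_0\tensor X)=\gamma(i_1\tensor X)$; no retraction of the cylinder is ever used. (The paper also works with the unbased finite presentation $S\subset S\td{n}$ and the split epimorphism $\gamma(q)\colon S\tensor X\to S\td{n}\sm X$ rather than with a based cylinder, but that is a minor packaging difference.) Your second, honestly flagged obstacle --- that the homotopy cannot land in $S^1$ itself and the target must be enlarged --- is also real, and is resolved in the paper by the explicit extension $\hat S^1$ (two extra edges $g,g'$ and two $2$-simplices $c,c'$, attached by elementary expansions, so that $S^1\to\hat S^1$ is a weak equivalence) together with an explicit combinatorial homotopy $H\colon S\tensor\Delta[1]\to\hat S^1$. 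Your proposal defers exactly this construction, so as written part (i) is an outline whose two essential steps are missing, and one of them (the retraction of the cylinder) is not merely missing but impossible.
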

\begin{proof} 
(i) We let $\hat S^1$ be the extension of the $\Delta$-set $S^1$ given by
\[ \hat S^1_0=\{\ast\}\ , \quad \hat S^1_1=\{\ast, z, g, g'\} 
\text{\qquad and\qquad} \hat S^1_2=\{\ast, c, c'\} \] 
and $\hat S^1_k=\{\ast\}$ for $k\geq 3$.
The faces of the additional 2-simplices are 
$\partial c = (\ast,g,z)$ and $\partial c'=(z,g',\ast)$.
The inclusion $j:S^1\to\hat S^1$ is a sequence 
of two elementary expansions, hence a weak equivalence.
The morphism $\gamma(j\sm X):S^1\sm X\to\hat S^1\sm X$ is then an
isomorphism in $\bHo(\cC)$.

We let $S\subseteq S\td{n}$ be the (unbased) 1-dimensional sub-$\Delta$-set
consisting of the simplices $e_i$ and~$f_i$ for all $i\in\mZ/n$.
The inclusion $\iota:S\to S\td{n}$ is a finite presentation
with~$\iota^{-1}(\ast)=\{e_0\}$, the basepoint in dimension~0.
By Proposition~\ref{prop-finite presentation}, the object $S\td{n}\sm X$ 
is then a cokernel of the cofibration 
$\iota\tensor X:\{e_0\}\tensor X\to S\tensor X$,
and we claim that the projection $q:S\tensor X\to S\td{n}\sm X$
becomes a split epimorphism in the homotopy category $\bHo(\cC)$.
Indeed, the cofibration $\iota\tensor X$
gives rise to an elementary distinguished triangle in $\bHo(\cC)$: 
\[ \{e_0\}\tensor X\ \xra{\gamma(\iota\tensor X)}\ 
S\tensor X \ \xra{\ \gamma(q)\ }\  S\td{n}\sm X \ 
\xra{\delta(\iota\tensor X)}
\ \Sigma(\{e_0\}\tensor X)\]
The composite of the inclusion $\{e_0\}\to S$ 
with the cone inclusion $i_S:S\to CS$ (compare Example~\ref{eg-cone})
is a weak equivalence, hence the composite 
\[  \{e_0\}\tensor X\ \xra{\gamma(\iota\tensor X)}\ 
S\tensor X \ \xra{\gamma(i_S\tensor X)}\  CS\tensor X  \]
is an isomorphism. So $\gamma(\iota\tensor X)$ is a split monomorphism,
and so  $\gamma(q)$ is a split epimorphism.

We will now define a combinatorial homotopy, i.e., a morphism
of $\Delta$-sets $H:S\tensor\Delta[1]\to \hat S^1$.
We let $\varphi,\varphi':[2]\to [1]\times[1]$ be the two monotone
injective maps defined by
\[ \varphi(0)=\varphi'(0)=(0,0)\ , \quad 
 \varphi(1)=(0,1)\ , \quad \varphi'(1)=(1,0)\text{\quad and\quad}
 \varphi(2)=\varphi'(2)=(1,1)\ . \]
The $\Delta$-set $S\tensor\Delta[1]$ is generated by the 2-simplices
\[ A_k \ = \ [f_k,\Id_{[1]};\, \varphi] \text{\qquad and\qquad}
 B_k \ = \ [f_k,\Id_{[1]};\, \varphi'] \]
for $k\in\mZ/n$, subject only to the relations
\[ A_kd_1 \ = \ B_kd_1 \text{\qquad and\qquad}
A_kd_2 \ = \ B_{k+1}d_0   \]
(to be read modulo $n$).
The homotopy $H:S\tensor\Delta[1]\to\hat S^1$ is determined by 
\[ H(A_k) \ = \  H(B_k) \ = \
\begin{cases}
\  c&\text{\ if $k\equiv i$ modulo $n$,}\\
\  c'&\text{\ if $k\equiv i+1$ modulo $n$, and}\\
\  \ast&\text{\ else.}
\end{cases} \]

We let $i_0, i_1:S\to S\tensor\Delta[1]$ be the `front and back inclusion', 
i.e., the morphisms  determined by 
\[ i_0(f_k) \ = \ B_kd_2 \text{\qquad respectively\qquad} 
i_1(f_k) \ = \  A_kd_0\ . \]
We let $P$ be a 2-dimensional $\Delta$-set with a
unique simplex of dimension~0, 1 and 2. 
The unique morphism $u:\Delta[1]\to P$ satisfies 
$(S\tensor u)i_0=(S\tensor u)i_1$ as morphisms $S\to S\tensor P$.
So we also have
$\gamma(S\tensor u\tensor X)\circ \gamma(i_0\tensor X)=\gamma(S\tensor u\tensor X)\circ \gamma(i_1\tensor X)$.
Since $P$ is weakly contractible, the morphism $u$, and hence also
the morphism $S\tensor u\tensor X:S\tensor\Delta[1]\tensor X\to S\tensor P\tensor X$ 
is a weak equivalence. 
So $\gamma(S\tensor u\tensor X)$ is invertible and we conclude that 
$\gamma(i_0\tensor X)=\gamma(i_1\tensor X)$.

The homotopy $H$ has image in the 2-skeleton $(\hat S^1)^{(2)}$ of $\hat S^1$.
The morphisms $Hi_0$ and $Hi_1$ are lifts of the based morphism $j\psi_i$ 
respectively $j\psi_{i+1}$ 
to morphisms of unbased $\Delta$-sets, i.e., the squares 
\[ 
 \xymatrix@C=19mm{ 
S\ar[r]^-{Hi_0}\ar[d]_{\text{incl}} &
(\hat S^1)^{(2)}\ar[d]^{\text{incl}} &
S\ar[r]^-{Hi_1}\ar[d]_{\text{incl}} &
(\hat S^1)^{(2)}\ar[d]^{\text{incl}} \\
S\td{n}\ar[r]_-{j\psi_i} & \hat S^1 &
S\td{n}\ar[r]_-{j\psi_{i+1}} & \hat S^1 } \]
commute. Since the vertical maps are finite presentations,  the squares
\[ \xymatrix@C=17mm{
S\tensor X\ar[d]_q  \ar[r]^-{Hi_0\tensor X} &
(\hat S^1)^{(2)}\tensor X \ar[d]^-{\text{proj}}&
S\tensor X\ar[d]_q  \ar[r]^-{Hi_1\tensor X} &
(\hat S^1)^{(2)}\tensor X \ar[d]^-{\text{proj}}
\\
S\td{n}\sm X \ar[r]_-{j\psi_i\sm X} & \hat S^1\sm X  &
S\td{n}\sm X \ar[r]_-{j\psi_{i+1}\sm X} & \hat S^1\sm X  
} \]
commute in $\cC$. So we conclude that
\begin{align*}
   \gamma(j\sm X)\circ\gamma(\psi_i\sm X)\circ\gamma(q) \ &= \ 
   \gamma(\text{proj})\circ\gamma(H\tensor X)\circ\gamma(i_0\tensor X) \\ 
&= \ \gamma(\text{proj})\circ\gamma(H\tensor X)\circ\gamma(i_1\tensor X) \\ 
&= \ \gamma(j\sm X)\circ\gamma(\psi_{i+1}\sm X)\circ\gamma(q) \ .
\end{align*}
Since $\gamma(j\sm X)$ is an isomorphism and $\gamma(q)$ 
is a split epimorphism, this implies the desired relation 
$\gamma(\psi_i\sm X)=\gamma(\psi_{i+1}\sm X)$.

(ii) We define a morphism of based $\Delta$-sets
$\kappa:S\td{n}\to\bigvee_{j=1}^n  S^1$
by sending $f_i$ to the $i$-th copy of $z$.
This morphism makes the diagram
\[ \xymatrix@C=20mm{ 
& S\td{n}\sm X \ar[ld]_{\nabla\sm X} \ar[d]_-{\kappa\sm X} 
\ar[dr]^-(.7){\psi_i\sm X} \\
S^1\sm X &
\bigvee_{j=1}^n S^1\sm X \ar[l]^-{\text{fold}} \ar[r]_-{p_i} &
S^1\sm X } \]
commute in $\cC$, where $p_i$ is the projection to the $i$-th wedge summand.
Since coproducts in $\cC$ become sums
in the homotopy category of $\cC$, the fold map occurring in the diagram
becomes the sum of the $n$ projections in $\bHo(\cC)$. 
So we obtain the desired relation
\[   \gamma(\nabla\sm X)\ = \ 
\sum_{i=1}^n\ \gamma(p_i) \circ \gamma(\kappa\sm X) 
\ = \ \sum_{i=1}^n\ \gamma(\psi_i) \ = \ n\cdot \gamma(\psi_1)\ . \qedhere \]
\end{proof}

Our next aim is to define `actions' of a mod-$n$ Moore space 
(i.e., $\Delta$-set) on objects of a pointed $\Delta$-cofibration category.
For the arguments below we need a $\Delta$-set model for
a Moore space whose reduced homology
is concentrated in an {\em even} dimension. The easiest way to construct one
is to cone off a suspension of the morphism $\nabla:S\td{n}\to S^1$.
To make this precise we need the following `reduced' version of the geometric
product for based $\Delta$-sets.
A based $\Delta$-set $K$ admits a unique based morphism $\ast\to K$
from the terminal $\Delta$-set.
Given two based $\Delta$-sets~$K$ and $L$, we define their
{\em geometric smash product} $K\sm L$ as the pushout
\[ \xymatrix@C=15mm{ (K\tensor\ast) \cup_{(\ast\tensor\ast)}(\ast\tensor L)
\ar[r]^-{\text{incl}} \ar[d] & K\tensor L \ar[d] \\
\ast \ar[r]& K\sm L} \]
If both $K$ and $L$ are essentially finite,
then the smash product $K\sm L$ is again essentially finite.
The associativity isomorphism in a pointed $\Delta$-cofibration
category then passes to coherent isomorphisms
\[  (K\sm L)\sm X \ \iso\ K\sm (L\sm X) \ .  \]

From Example~\ref{eg-smash with S^1} we recall 
the essentially finite based $\Delta$-set $I$ with
\[ I_1=\{z,\ast\} \ , \quad  I_0=\{zd_0,zd_1\}\]
and with $I_k=\{\ast\}$ for $k\geq 2$. The vertex $zd_0$
is the basepoint in dimension~0. For every based $\Delta$-set $K$
the smash product $I\sm K$ is weakly contractible
and comes with a monomorphism $i_K:K\to I\sm K$.
Now we can define an essentially finite based $\Delta$-set $M$ as the pushout:
\begin{equation}\begin{aligned}\label{eq-define M}
\xymatrix@C=25mm{ S^1\sm S\td{n} 
\ar[d]_{S^1\sm\nabla} 
\ar[r]^-{i_{S^1\sm S\td{n}}} & 
I\sm S^1\sm S\td{n}\ar[d]\\
S^2 \ar[r]_-\iota &  M }  
\end{aligned}\end{equation}
where we use the abbreviation $S^2=S^1\sm S^1$.
The geometric realization of $M$ is then a mod-$n$ Moore space, i.e.,
a simply connected CW-complex whose reduced homology is $\mZ/n$ 
concentrated in dimension 2.
Moreover, the inclusion $\iota:S^2\to M$
induces an epimorphism in integral homology.

If we smash the pushout square~\eqref{eq-define M}
with an object $X$ of a pointed $\Delta$-cofibration category~$\cC$,
we obtain a pushout in~$\cC$:
\[ \xymatrix@C=20mm{ 
S^1\sm S\td{n}\sm X \ar[d]_-{S^1\sm \nabla\sm X} 
\ar[r]^-{i_{S^1\sm S\td{n}\sm X}} & 
I\sm S^1\sm S\td{n}\sm X\ar[d] \\
S^2\sm X \ar[r]_-{\iota\sm X}& M\sm X } \]
This shows that $M\sm X$ is the mapping cone, 
as defined in~\eqref{eq:mapping_cone}, of the morphism
$S^1\sm \nabla\sm X:S^1\sm S\td{n}\sm X\to S^2\sm X$.
The distinguished triangle~\eqref{eq:mapping_cone_triangle}
then becomes a distinguished triangle
\[     S^1\sm S\td{n}\sm X    \ \xra{\ \gamma(S^1\sm \nabla\sm X)\ } 
\ S^2\sm X\ \xra{\ \gamma(\iota\sm X)\ } \ 
M\sm X\ \xra{\qquad} \  \Sigma(S^1\sm S\td{n}\sm X ) \ .  \]
We use the isomorphism
$\gamma(S^1\sm\psi_1\sm X):S^1\sm S\td{n}\sm X\to S^1\sm S^1\sm X=S^2\sm X$
in $\bHo(\cC)$
to replace the first and last objects in this triangle.
Since $\gamma(\nabla\sm X) = n\cdot \gamma(\psi_1)$
by Proposition~\ref{prop-psi_i homotopic}~(ii), 
the morphism $\gamma(S^1\sm\nabla\sm X)$
then turns into the $n$-fold multiple of the identity.
The upshot is a distinguished triangle
\begin{equation}\label{eq-smash M is mod n} 
S^2\sm X \ \xra{n\cdot(S^2\sm X)}\  S^2\sm X \
\xra{\ \gamma(\iota\sm X)\ } \ M\sm X \ \xra{\qquad} \ 
\Sigma(S^2\sm X)  \ .
\end{equation}

\begin{defn}\label{def-symmetric powers}
We denote by $P^i$ the $i$-th {\em symmetric power} of the
based $\Delta$-set $M$,
\[ P^i \ = \ M^{\sm i}/\Sigma_i \ . \] 
Here the symmetric group $\Sigma_i$ permutes the factors
of the $i$-th geometric smash power.
The associativity isomorphism 
$M^{\sm i}\sm M^{\sm j}\iso M^{\sm(i+j)}$
passes to a quotient morphism of symmetric powers
\begin{align}\label{eq-canonical projection}
 \mu_{i,j} \ : \ P^i \sm P^j  \ = \  
(M^{\sm i}/\Sigma_i)\, &\sm\, (M^{\sm j}/\Sigma_j)\
\to \ M^{\sm (i+j)}/\Sigma_{i+j} =  P^{i+j} 
\end{align}
which we refer to as the {\em canonical projection}.
The canonical projections are associative in the sense that the following 
diagram commutes
\[\xymatrix@C=25mm{
P^i  \sm  P^j  \sm P^k \ar_{\mu_{i,j} \, \sm\, P^k}[d]
\ar^-{P^i\, \sm\,  \mu_{j,k}}[r] &
P^i  \sm P^{j+k} \ar^{\mu_{i,j+k}}[d] \\
P^{i+j}  \sm P^k \ar_{\mu_{i+j,k}}[r] & P^{i+j+k}
}\]
for all $i,j,k\geq 1$.
\end{defn}

The Moore `space' $M$ is an essentially finite based $\Delta$-set, 
hence all its smash powers $M^{\sm i}$ and its symmetric powers $P^i$ 
are essentially finite. 
So it makes sense to smash $P^i$ with objects in any pointed
$\Delta$-cofibration category.

\begin{defn}\label{def-coherent action} 
Let $\cC$ be a pointed $\Delta$-cofibration category. 
An {\em $M$-module} $X$ consists of an infinite sequence 
\[ X_{(1)}, X_{(2)}, \dots, X_{(k)}, \dots  \] 
of objects of $\cC$, together with morphisms in $\cC$
\[ \alpha_{i,j} \ : \  P^i\sm X_{(j)} \ \to \ X_{(i+j)}\] 
for $i,j\geq 1$ such that the associativity diagram
\[\xymatrix@C=20mm{
 P^i  \sm  P^j  \sm X_{(k)}\ar^-{\mu_{i,j}\sm X_{(k)}}[r]
\ar_{P^i\sm \alpha_{j,k}}[d] &
P^{i+j}  \sm X_{(k)} \ar^{\alpha_{i+j,k}}[d] \\
P^i \sm X_{(j+k)} \ar_{\alpha_{i,j+k}}[r] & X_{(i+j+k)}
}\]
commutes for all $i,j,k\geq 1$.
The {\em underlying object} of an $M$-module $X$ 
is the object $X_{(1)}$ of $\cC$. 
A {\em morphism} $f:X\to Y$ of $M$-modules consists
of $\cC$-morphisms $f_{(j)}:X_{(j)}\to Y_{(j)}$ for
$j\geq 1$, such that the diagrams
\[\xymatrix@C=20mm{  P^i \sm X_{(j)} \ar[d]_{\alpha_{i,j}} 
\ar[r]^{P^i\sm f_{(j)}} &
 P^i \sm Y_{(j)} \ar[d]^{\alpha_{i,j}} \\
X_{(i+j)} \ar[r]_-{f_{(i+j)}} & Y_{(i+j)} }\]
commute for $i,j\geq 1$.
\end{defn}

\begin{eg}\label{eg-tautological action} 
For every object $K$ of $\cC$ we define the {\em free $M$-module}
$M\triangle K$ generated by $K$  as follows. 
The $j$-th term of this $M$-module is
$$ (M\triangle K)_{(j)} \ = \  P^j\sm K$$ 
and the structure map 
$\alpha_{i,j}:  P^i\sm (M\triangle K)_{(j)}\to(M\triangle K)_{(i+j)}$
is the morphism
$$  \mu_{i,j}\sm K\ : \ P^i\sm P^j\sm K \ \to \ P^{i+j}\sm K \ .$$
The associativity condition is then a consequence of the associativity of
the projection maps.

Now we suppose that $X$ is any $M$-module and $f:K\to X_{(1)}$ 
a morphism in~$\cC$.
We denote by $X_{\bullet+1}$ the $M$-module
obtained from $X$ by forgetting the object $X_{(1)}$ and reindexing the
rest of the data, i.e.,  $(X_{\bullet+1})_{(i)}=X_{(i+1)}$, 
and similarly for the action maps.
As $i$ varies, the action maps $\alpha_{i,1}$ of $X$
assemble into a morphism of $M$-modules
\[ \alpha_{\bullet,1} \ : \ M\triangle X_{(1)} \ \to \ X_{\bullet+1} \ .\]
The {\em free extension} of $f:K\to X_{(1)}$
is the composite morphism of $M$-modules 
\begin{equation}\label{eq:free_extension}
 M\triangle K \ \xrightarrow{M\triangle f} \ 
M\triangle X_{(1)} \ \xrightarrow{\alpha_{\bullet,1}} \ X_{\bullet+1} \ .  
\end{equation}
\end{eg}

Now we prove a purely topological fact, namely that
for every prime~$p$ and for all~$i$ that are strictly less than $p$,
the $i$-th reduced symmetric power of a mod-$p$ Moore space 
is again a mod-$p$ Moore space.
The following proposition ought to be well-known,
but the author was unable to find a reference.

\begin{prop}\label{prop-symmetric power of Moore} 
Let $p$ be a prime and $(N,A)$ be a based CW-pair 
such that $A$ and $N$ are simply connected,
the reduced integral homology of $A$ and $N$ is concentrated in dimension~2
and the map $H_2(A,\mZ)\to H_2(N,\mZ)$ induced by the inclusion 
$A\subset N$ is isomorphic to the reduction map $\mZ\to\mZ/p$. 
Then for all $2\leq i\leq p-1$ the composite
$$ A\sm (N^{\sm (i-1)}/\Sigma_{i-1}) \ \xra{\text{\em incl}} \ 
N\sm (N^{\sm (i-1)}/\Sigma_{i-1}) \ \xra{\text{\em proj}} \ N^{\sm i}/\Sigma_i $$
is a weak equivalence.
\end{prop}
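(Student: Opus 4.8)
The plan is to reduce the statement to a homology calculation and then carry out that calculation one prime at a time. Since $A$ and $N$ are simply connected, the smash powers $N^{\sm i}$ are $(2i-1)$-connected, and both $N^{\sm i}/\Sigma_i$ and $A\sm(N^{\sm(i-1)}/\Sigma_{i-1})$ have the homotopy type of CW complexes whose cells lie in dimensions $\geq 2i\geq 4$; in particular source and target of the map in question are simply connected. So by the homology Whitehead theorem it suffices to show that the composite induces an isomorphism on reduced integral homology, and since everything in sight is of finite type this can be tested after applying the field coefficients $\mF_p$, the fields $\mF_\ell$ for the primes $\ell\neq p$, and $\mQ$.

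First I would dispose of the coefficients away from $p$. From the hypotheses $\tilde H_*(N;\mZ)=\mZ/p$ is concentrated in dimension~$2$ and $\tilde H_*(A;\mZ)=\mZ$ in dimension~$2$, so $N$ is both $\mQ$-acyclic and $\mF_\ell$-acyclic for every prime $\ell\neq p$. Because the reduced homology of a symmetric power $X^{\sm j}/\Sigma_j$ with field coefficients is a functor of the reduced homology of $X$ with those coefficients (a theorem of Dold on the homology of symmetric products; one can also see the vanishing through the extended power $D_jX=E\Sigma_{j+}\sm_{\Sigma_j}X^{\sm j}$, whose reduced $\mF_\ell$-homology is computed by a spectral sequence with $E^2$-term $H_*(\Sigma_j;(\tilde H_*X)^{\otimes j})$ that vanishes when $X$ is $\mF_\ell$-acyclic), every symmetric power of $N$ is again $\mQ$-acyclic and $\mF_\ell$-acyclic. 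Hence both $N^{\sm i}/\Sigma_i$ and $A\sm(N^{\sm(i-1)}/\Sigma_{i-1})$ have trivial reduced homology with these coefficients, and the map is an isomorphism for trivial reasons.

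The interesting coefficients are $\mF_p$, and this is where the hypothesis $i\leq p-1$ enters. For $j\leq i\leq p-1$ the order $j!$ of $\Sigma_j$ is prime to $p$, so the transfer identifies $\tilde H_*(N^{\sm j}/\Sigma_j;\mF_p)$ with the $\Sigma_j$-coinvariants of $\tilde H_*(N^{\sm j};\mF_p)=(\tilde H_*N)^{\otimes j}$. Writing $\tilde H_*(N;\mF_p)=\mF_p\{\alpha,\beta\}$ with $\alpha$ in the even degree~$2$ and $\beta$ in the odd degree~$3$, the Koszul signs make these coinvariants the graded symmetric power $\mathrm{Sym}^j(\mF_p\alpha\oplus\mF_p\beta)$; since $\beta$ is odd it squares to zero, so this is two-dimensional, spanned by $\alpha^j$ in degree $2j$ and $\alpha^{j-1}\beta$ in degree $2j+1$. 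I would then identify the map under study, using naturality of the transfer for the $\Sigma_{i-1}$-equivariant inclusion $A\sm N^{\sm(i-1)}\to N^{\sm i}$ together with the fact that $A\to N$ realizes the reduction $\mZ\to\mZ/p$ and hence sends the generator to $\alpha$, with multiplication by $\alpha$ from $\mathrm{Sym}^{i-1}$ to $\mathrm{Sym}^i$. This sends $\alpha^{i-1}\mapsto\alpha^i$ and $\alpha^{i-2}\beta\mapsto\alpha^{i-1}\beta$, so it is an isomorphism, and the composite is an $\mF_p$-homology equivalence.

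Combining the three cases, the map is an isomorphism on homology with all field coefficients, hence on integral homology, and therefore a weak equivalence. I expect the main obstacle to be the behaviour of the mod-$\ell$ homology of the symmetric powers for those primes $\ell\neq p$ with $\ell\leq i$, where $\ell$ divides $|\Sigma_i|$ and the transfer is unavailable; the functoriality of symmetric-power homology in field-coefficient homology is precisely what pushes the acyclicity through in that range. The hypothesis $2\leq i\leq p-1$ is used twice: the lower bound guarantees $i-1\geq 1$, so that $\alpha$-multiplication is defined and bijective, while the upper bound makes $i!$ prime to $p$, so that the transfer computes the mod-$p$ homology in the first place.
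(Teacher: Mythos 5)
Your overall strategy is the same as the paper's: show both spaces are simply connected, then check that the map is an isomorphism on (co)homology with every field coefficient, splitting into characteristic $p$ and characteristic $\neq p$. Your mod-$p$ step is correct and is essentially the paper's argument in dual form: since $i!$ is prime to $p$, the transfer identifies $\tilde H_*(N^{\sm i}/\Sigma_i;\mF_p)$ with the coinvariants $\bigl(\tilde H_*(N;\mF_p)^{\tensor i}\bigr)_{\Sigma_i}$ (the paper phrases this as exactness of $\Sigma_i$-invariants, via injectivity of the constant coefficient system in Bredon cohomology), and your two-dimensional computation with $\alpha$ and $\beta$ matches the paper's computation with $x$ and $\beta(x)$.

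However, there are two genuine gaps. First, your justification of simple connectivity does not work: the product CW structure on $N^{\sm i}$ is not a $\Sigma_i$-CW structure (a product cell can be carried to itself by a non-identity permutation of its factors, so invariant cells are not fixed pointwise), and repairing this by equivariant subdivision creates cells in all dimensions up to $3i$, destroying the bound ``cells only in dimensions $\geq 2i$''. So one cannot read off $\pi_1(N^{\sm i}/\Sigma_i)=0$ from cell dimensions of the quotient; one needs Armstrong's theorem on fundamental groups of orbit spaces of finite group actions, or the paper's direct argument that loops of 1-cells lift because the basepoint is the unique preimage of the basepoint. Second, in characteristic $\ell\neq p$ your parenthetical extended-power argument is wrong: the spectral sequence with $E^2$-term $H_*(\Sigma_j;(\tilde H_*X)^{\tensor j})$ computes the homology of the homotopy orbits $E\Sigma_{j+}\sm_{\Sigma_j}X^{\sm j}$, not of the strict quotient $X^{\sm j}/\Sigma_j$; these differ exactly because the action has fixed points on the diagonals (for $X=S^2$ the strict quotient $X^{\sm 2}/\Sigma_2$ is $S^4$, while $D_2S^2$ has mod-2 homology in infinitely many degrees). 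Your primary justification via Dold can be salvaged, but not as stated: Dold's theorem concerns functoriality of the \emph{integral} homology of the symmetric products $SP^n X=X^{\times n}/\Sigma_n$ in the integral homology of $X$, whereas you need a mod-$\ell$ statement for the smash-power quotient; this additionally requires (a) the cofibration $SP^{i-1}N\subset SP^{i}N$ with quotient $N^{\sm i}/\Sigma_i$, and (b) an argument that the Dold functor takes $p$-torsion input to $p$-torsion output (true, but not off the shelf). The paper's Bredon-cohomology argument avoids all of this by using the crucial extra input that \emph{every} fixed-point space $(N^{\sm i})^K$ for $K\leq\Sigma_i$ --- not just $N^{\sm i}$ itself --- is a smash power of mod-$p$ Moore spaces and hence $\mF_\ell$-acyclic; it is precisely this fixed-point information that the Borel construction cannot see.
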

\begin{proof}
The space $N^{\sm i}/\Sigma_i$ is simply connected because it is the quotient 
of the simply connected $\Sigma_i$-CW-complex 
$N^{\sm i}$ with $\Sigma_i$-fixed basepoint.
Indeed, any element in the fundamental group of $N^{\sm i}/\Sigma_i$
can be represented by a closed path of 1-cells. Such a path can be lifted 
to a path of 1-cells in $N^{\sm i}$, which a priori need not close up. 
However, since the basepoint in $N^{\sm i}$ is $\Sigma_i$-fixed, 
it is the only preimage of the basepoint in  $N^{\sm i}/\Sigma_i$, so
any closed path at the basepoint in $N^{\sm i}/\Sigma_i$
lifts to a closed path in the simply connected space $N^{\sm i}$,
and is thus null-homotopic.

Since the source and target of the map 
$A \sm (N^{\sm i-1}/\Sigma_{i-1})\to N^{\sm i}/\Sigma_i$ 
in question are simply connected,
we may prove the claim by showing that the map induces an
isomorphism in cohomology with coefficients in any field~$k$.
We calculate this with the help of Bredon cohomology~\cite{Bredon-SLN}.
We denote by $\cO$ the orbit category of~$\Sigma_i$, with objects
the cosets $\Sigma_i/K$ for all subgroups $K$ of~$\Sigma_i$ and with
morphisms of maps of left $\Sigma_i$-sets.
In Chapter I, (10.4), of~\cite{Bredon-SLN} Bredon sets up the
{\em universal coefficient spectral sequence}
$$ E_2^{s,t}\ = \ \Ext_\cO^s(\mathbf H_t(N^{\sm i}),\underline k) 
\quad \Longrightarrow \quad \tilde H^{s+t}_{\Sigma_i}(N^{\sm i},\underline k) \
\iso \ \tilde H^{s+t}(N^{\sm i}/\Sigma_i,k) \ .$$ 
abutting to the reduced Bredon cohomology 
$\tilde H^*_{\Sigma_i}(N^{\sm i},\underline k)$ of the 
$\Sigma_i$-CW-complex  $N^{\sm i}$ with coefficients in $\underline k$, 
the constant functor 
$\cO^{\text{op}}\to (k\text{-vector spaces})$ with value~$k$.
Here $\mathbf H_t(N^{\sm n})$ is the
contravariant functor on the orbit category with values
$\mathbf H_t(N^{\sm n})(\Sigma_i/K)=\tilde H_t((N^{\sm i})^K,k)$,
the reduced $k$-homology of the $K$-fixed points.
The Ext groups are formed in the abelian category of coefficient systems, i.e.,
functors $\cO^{\text{op}}\to (k\text{-vector spaces})$.
In the identification of the abutment we use that Bredon cohomology 
with coefficients in a constant functor
is isomorphic to the cohomology of the quotient space.

For every subgroup $K$ of $\Sigma_i$ the fixed point subspace 
$(N^{\sm i})^K$ is homeomorphic to a smash product 
of at most $i$ copies of the mod-$p$ Moore space $N$.
If the field $k$ has characteristic different from $p$,
the functor $\mathbf H_t(N^{\sm i})$ is thus identically zero.
So the $E^2$-term of the universal coefficient spectral sequence
vanishes and we conclude that the reduced cohomology groups
$\tilde H^t(N^{\sm i}/\Sigma_i,k)$ all vanish if the characteristic of $k$ 
is different from $p$.

It remains to discuss the case $k=\mF_p$.
If $F:\cO^\text{op}\to (k\text{-vector spaces})$ is any functor,
then natural transformations from $F$ to the constant functor 
$\underline {\mF}_p$ are in bijective correspondence with 
the $\Sigma_i$-invariants of the  $\mF_p$-dual of $F(\Sigma_i/\{1\})$.
Since $i<p$, taking invariants by a $\Sigma_i$-action is exact,
so the constant functor $\underline \mF_p$ is an injective object.
Hence the Ext groups vanish in positive dimensions
and the universal coefficient spectral sequence collapses at
the $E_2$-term to isomorphisms
$$ \Hom_\cO(\mathbf H_t(N^{\sm i}),\underline \mF_p) 
\ \iso\ \tilde H^t(N^{\sm i}/\Sigma_i,\mF_p) \ . $$ 
The left hand side in turn is isomorphic to
$\tilde H^*(N^{\sm i},\mF_p)^{\Sigma_i}$,
the $\Sigma_i$-invariants of the reduced mod-$p$ cohomology of $N^{\sm i}$.

The reduced $\mF_p$-cohomology of $N$ has a basis
consisting of a 2-dimensional class $x$ and its 
\mbox{mod-$p$} Bockstein $\beta(x)$.
By the Kunneth theorem the $\mF_p$-cohomology of $N^{\sm i}$ 
is isomorphic to the \mbox{$i$-fold} tensor product of $\mF_p\{x,\beta(x)\}$. 
The $\Sigma_i$-action on cohomology introduces a sign
whenever two instances of the odd dimensional class $\beta(x)$ are permuted.
So the $\Sigma_i$-invariants of $\tilde H^*(N^{\sm i},\mF_p)$ 
are 2-dimensional, with basis consisting of $x^{\tensor i}$ of dimension $2i$ and
\[ \beta(x^{\tensor i})\ = \ 
\sum_{k=1}^i\ x^{\tensor (k-1)}\tensor \beta(x) \tensor x^{\tensor{i-k}}\]
of dimension $2i+1$. The class $x$ restricts to a generator $\bar x$
of $H^2(A,\mF_p)$, and so the mod-$p$ cohomology of  
$A\sm (N^{\sm (i-1)}/\Sigma_{i-1})$ has an $\mF_p$-basis 
consisting of the classes
\[ \bar x\tensor x^{\tensor(i-1)}\text{\qquad and\qquad}
 \beta(\bar x\tensor x^{\tensor(i-1)})\ ;\]
we can thus conclude that the map
$A\sm (N^{\sm (i-1)}/\Sigma_{i-1})\to N^{\sm i}/\Sigma_i$
in question is indeed an $\mF_p$-cohomology isomorphism.
\end{proof}

\begin{defn} An $M$-module $X$ is {\em $k$-coherent}
for a natural number $k\geq 1$, if the composite
\[ S^2\sm X_{(j-1)} \ \xrightarrow{\ \iota\,\sm X_{(j-1)}\ } \ 
M \sm X_{(j-1)} \ 
\ \xrightarrow{\ \alpha_{1,j-1}\ }\ X_{(j)} \]
is a weak equivalence for all $2\leq j\leq k$
(where we use that $M=P^1$).
\end{defn}

For example, every $M$-module $X$ is 1-coherent, 
and if $X$ is 2-coherent, then in the homotopy category of $\cC$ 
the composite map
\[  M \sm X_{(1)} \ \xrightarrow{\ \gamma(\alpha_{1,1})\ } 
\ X_{(2)}  \ \xleftarrow{\ \gamma(\alpha_{1,1}\circ(\iota\sm X_{(1)}))^{-1}\ } \ 
S^2\sm X_{(1)} \] 
is a retraction to $\gamma(\iota\sm X_{(1)}):S^2\sm X_{(1)}\to M\sm X_{(1)}$. 
So if the cofibration category is stable, then the identity map of $X_{(1)}$ 
is annihilated by $n$ in the group $[X_{(1)},X_{(1)}]_{\bHo(\cC)}$,
by the distinguished triangle~\eqref{eq-smash M is mod n}.

Now we can prove a key result, namely that every free $M$-module $M\triangle K$
is $(p-1)$-coherent.
Since the geometric realization of the terminal $\Delta$-set $\ast$
is {\em not} a point 
(but rather an infinite dimensional contractible CW-complex), 
the geometric realization does {\em not}
take the geometric smash product to the smash product of spaces. 
However, we can define the {\em reduced realization}
of a based $\Delta$-set $K$ as
\[ |K|_{\bullet} \ = \ |K|/|\ast| \ ,\]
where $\ast$ denotes the sub-$\Delta$-space of $K$ consisting of
the basepoints in the various dimensions.
Since a terminal $\Delta$-set is weakly contractible,
the projection $|K|\to |K|_\bullet$ is a homotopy equivalence.
Thus a morphism  of based $\Delta$-sets is a weak equivalence
if and only if it induces a weak equivalence of reduced realizations.
The homeomorphism~\eqref{eq:geometric_versus_cartesian_product}
from $|K\tensor L|$ to $|K|\times |L|$ passes to a homeomorphism
\[ |K\sm L|_{\bullet} \ \iso \  |K|_{\bullet}\sm |L|_{\bullet}\ . \]
For example, the reduced realization of the based $\Delta$-set
$S^1$ is a circle, so $|S^1\sm L|_\bullet$
is homeomorphic to the reduced suspension of $|L|_\bullet$.

\begin{prop}\label{prop-powers of Moore}
Let $p$ be an odd prime. Then for $2\leq i\leq p-1$ 
and every object $K$ of a pointed $\Delta$-cofibration category
the composite map
\[ S^2 \sm  P^{i-1}\sm K\ \xrightarrow{\ \iota\sm P^{i-1}\sm K\ } 
M\sm P^{i-1}\sm K \ \xrightarrow{\ \mu_{1,i-1}\sm K\ } \  P^i\sm K \] 
is a weak equivalence. In other words, 
the free $M$-module $M\triangle K$ is $(p-1)$-coherent.
\end{prop}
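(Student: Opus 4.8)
The plan is to reduce the statement to the purely topological Proposition~\ref{prop-symmetric power of Moore} by passing to geometric realizations, and then to transport the resulting weak equivalence of $\Delta$-sets back into $\cC$ via Proposition~\ref{prop-smash with equivalence gives weak equivalence}. First I would observe that the composite in question is of the form $\theta_i\sm K$ for the morphism of essentially finite based $\Delta$-sets
\[ \theta_i \ = \ \mu_{1,i-1}\circ(\iota\sm P^{i-1}) \ : \ S^2\sm P^{i-1}\ \to\ P^i \ ; \]
this uses functoriality of the smash product together with the coherence isomorphisms $(K\sm L)\sm X\iso K\sm (L\sm X)$. By Proposition~\ref{prop-smash with equivalence gives weak equivalence} it therefore suffices to prove that $\theta_i$ is a weak equivalence of based $\Delta$-sets, i.e.\ that $|\theta_i|_\bullet$ is a homotopy equivalence. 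Granting this for $2\le i\le p-1$, the $(p-1)$-coherence of $M\triangle K$ is immediate: since $(M\triangle K)_{(j)}=P^j\sm K$ and the structure map $\alpha_{1,j-1}$ is $\mu_{1,j-1}\sm K$, the $k$-coherence composite for index $j$ is exactly $\theta_j\sm K$.

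Next I would identify the reduced realization of $\theta_i$. Writing $N=|M|_\bullet$, the homeomorphism $|K\sm L|_\bullet\iso |K|_\bullet\sm |L|_\bullet$, together with the fact that geometric realization commutes with colimits -- in particular with the $\Sigma_i$-quotient defining $P^i$, whose basepoint is $\Sigma_i$-fixed -- yields natural homeomorphisms $|P^i|_\bullet\iso N^{\sm i}/\Sigma_i$. Under these, $|\mu_{1,i-1}|_\bullet$ becomes the canonical projection $N\sm (N^{\sm(i-1)}/\Sigma_{i-1})\to N^{\sm i}/\Sigma_i$ and $|\iota\sm P^{i-1}|_\bullet$ becomes $|\iota|_\bullet\sm\mathrm{id}$. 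Thus $|\theta_i|_\bullet$ is precisely the composite appearing in Proposition~\ref{prop-symmetric power of Moore}, with $A=|S^2|_\bullet$ (a $2$-sphere) and $N=|M|_\bullet$ (the mod-$p$ Moore space).

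It then remains to check the hypotheses of Proposition~\ref{prop-symmetric power of Moore}. Both $A$ and $N$ are simply connected with reduced integral homology concentrated in dimension $2$, where $\tilde H_2(A)=\mZ$ and $\tilde H_2(N)=\mZ/p$, as recorded just after the definition~\eqref{eq-define M} of $M$; and since $\iota:S^2\to M$ induces an epimorphism on integral homology, the map $H_2(A)\to H_2(N)$ is the reduction $\mZ\to\mZ/p$. Finally, $\iota$ is a monomorphism of $\Delta$-sets: it is a cobase change of the cone inclusion $i_{S^1\sm S\td{n}}$, and monomorphisms of $\Delta$-sets are stable under pushout (pushouts being computed objectwise, where pushouts of injections in sets are injective). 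Hence $(|M|,|S^2|)$ is a CW-pair -- or, if one prefers, one replaces $|\iota|$ by its mapping cylinder, which changes nothing up to homotopy. Applying Proposition~\ref{prop-symmetric power of Moore} for $2\le i\le p-1$ shows that $|\theta_i|_\bullet$ is a weak equivalence, hence so is $\theta_i$, and therefore $\theta_i\sm K$ is a weak equivalence in $\cC$ by Proposition~\ref{prop-smash with equivalence gives weak equivalence}.

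The genuine topological content has already been carried out in Proposition~\ref{prop-symmetric power of Moore}, so the present statement is essentially a transfer result. The only delicate points are bookkeeping ones: verifying that the reduced realization $|{-}|_\bullet$ intertwines the geometric smash product and the symmetric-power quotient with their space-level counterparts, and confirming the CW-pair hypothesis through the monomorphism property of $\iota$. I expect the identification $|P^i|_\bullet\iso N^{\sm i}/\Sigma_i$, and the compatibility of $|\theta_i|_\bullet$ with the space-level map, to be the step requiring the most care.
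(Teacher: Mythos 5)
Your proposal is correct and follows essentially the same route as the paper's own proof: identify the composite as $\theta_i\sm K$, use the fact that reduced geometric realization intertwines the geometric smash product and symmetric-power quotients to recognize $|\theta_i|_\bullet$ as the map of Proposition~\ref{prop-symmetric power of Moore}, and then transfer the resulting weak equivalence back into $\cC$ via Proposition~\ref{prop-smash with equivalence gives weak equivalence}. The only difference is that you spell out the verification of the hypotheses of Proposition~\ref{prop-symmetric power of Moore} (including that $\iota$ is a monomorphism, so one gets a genuine CW-pair), which the paper leaves implicit.
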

\begin{proof} 
The reduced geometric realization of the finite based $\Delta$-set $M$
is a mod-$p$ Moore space. So the CW-pair $(|M|_\bullet, |S^2|_\bullet)$
satisfies the hypothesis of Proposition~\ref{prop-symmetric power of Moore}.
The composite
\[  |S^2|_\bullet\sm (|M|_\bullet^{\sm (i-1)}/\Sigma_{i-1}) \ \xra{\text{incl}} \ 
|M|_\bullet\sm (|M|_\bullet^{\sm (i-1)}/\Sigma_{i-1}) \ \xra{\text{proj.}} \ |M|_\bullet^{\sm i}/\Sigma_i \]
is thus a weak equivalence. Reduced geometric realization preserves
smash products and commutes with orbits by group actions,
so the composite above is homeomorphic to the reduced realization
of the composite morphism of based $\Delta$-sets
\[  S^2\sm P^{i-1} \ \xra{\iota\sm P^{i-1}} \ 
M\sm P^{i-1} \ \xra{\ \mu_{1,i-1}\ } \ P^i\ . \]
So Proposition~\ref{prop-smash with equivalence gives weak equivalence}
ensures that smashing this morphism with $K$ results in a weak equivalence. 
\end{proof}

\begin{construction}
The mapping cone construction of Example~\ref{eg-smash with S^1}
gives a way to make new $M$-modules from old ones.
Given a morphism $f:X\to Y$ of $M$-modules we construct
another $M$-module $Cf$, the {\em mapping cone} of $f$ as follows.
For $j\geq 1$ we set  $(Cf)_{(j)}=C(f_{(j)})$, i.e.,
the $j$-th object $(Cf)_{(j)}$
is the mapping cone of the morphism $f_{(j)}:X_{(j)}\to Y_{(j)}$.
The action map~$\alpha_{i,j}:P^i\sm Cf_{(j)}\to Cf_{(i+j)}$ 
is obtained by taking horizontal pushouts in the commutative diagram:
\[\xymatrix@C=20mm{
P^i \sm Y_{(j)} \ar_{\alpha_{i,j}}[d] & 
P^i \sm X_{(j)} \ar[l]_-{P^i\sm f_{(j)}} \ar_{\alpha_{i,j}}[d]\ar[r]^-{P^i\sm i_{X_{(j)}}} &
P^i\sm CX_{(j)} \ar[d]^{\bar\alpha_{i,j}} \\
Y_{(i+j)}  & X_{(i+j)} \ar[l]^-{f_{(i+j)}}\ar[r]_-{i_{X_{(i+j)}}} &
CX_{(i+j)} }\]
where the right vertical map is the composite
\[  P^i\sm I\sm X_{(j)}\ 
\xra{\text{symmetry}\sm X_{(j)}} \ 
I\sm P^i\sm X_{(j)}\ \xra{\ I\sm \alpha_{i,j}\ } \ 
I\sm X_{(i+j)} \ .\]
Associativity is inherited from associativity of the actions on $X$ and $Y$.
\end{construction}

\begin{prop}\label{prop-cones}
Let $\cC$ be a pointed $\Delta$-cofibration category.
\begin{enumerate}[\em (i)]
\item Let $f:X\to Y$ be a morphism of $M$-modules.
If $X$ and $Y$ are $k$-coherent, then the mapping cone $Cf$
is again $k$-coherent.
\item Let $p$ be a prime and $X$ a $(k+1)$-coherent $M$-module 
for some $1\leq k\leq p-1$. Then for every $\cC$-morphism $f:K\to X_{(1)}$ 
the mapping cone 
of the free extension~\eqref{eq:free_extension}
$\hat f:M\triangle K\to X_{\bullet +1}$ is $k$-coherent.
\end{enumerate}
\end{prop}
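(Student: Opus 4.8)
The plan is to reduce both parts to a single observation: for a morphism $f:X\to Y$ of $M$-modules, the $j$-th coherence map of the mapping cone $Cf$ is, level by level, the map on mapping cones induced by the coherence maps of $X$ and $Y$. I would begin with part~(i) and fix $j$ with $2\le j\le k$. By construction $(Cf)_{(j)}$ is a pushout of $Y_{(j)}\xleftarrow{f_{(j)}}X_{(j)}\xrightarrow{i_{X_{(j)}}}I\sm X_{(j)}$, and since $S^2\sm-$ and $M\sm-$ preserve pushouts, the objects $S^2\sm(Cf)_{(j-1)}$ and $M\sm(Cf)_{(j-1)}$ are the corresponding pushouts at level $j-1$. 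Unravelling the action map $\bar\alpha_{1,j-1}$ on the cone summand — in particular tracking the symmetry isomorphism $M\sm I\cong I\sm M$ built into it — I would identify the coherence map $\bar\alpha_{1,j-1}\circ(\iota\sm(Cf)_{(j-1)})$ with the map on pushouts induced by the three vertical morphisms $c_Y^{(j)}$, $c_X^{(j)}$ (writing $c_Z^{(j)}$ for the $j$-th coherence map of a module $Z$), and, up to the symmetry isomorphism $S^2\sm I\cong I\sm S^2$, the morphism $I\sm c_X^{(j)}$ on the cone summand.

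Then the gluing lemma completes part~(i). The two left legs of the relevant spans are the cone inclusions $i_{X_{(j)}}$ and $S^2\sm i_{X_{(j-1)}}$, which are cofibrations since the cone inclusion is a cofibration (Example~\ref{eg-smash with S^1}) and $S^2\sm-$ preserves cofibrations. The three vertical morphisms are weak equivalences: $c_X^{(j)}$ and $c_Y^{(j)}$ by the $k$-coherence of $X$ and $Y$, and $I\sm c_X^{(j)}$ because $I\sm-$ preserves weak equivalences. (Both preservation properties of $S^2\sm-$ and $I\sm-$ follow from the exactness of the $\tensor$-pairing together with the gluing lemma, since smashing with a fixed essentially finite based $\Delta$-set is a cokernel of a cofibration built from $\tensor$.) Hence the induced map on pushouts, namely $c_{Cf}^{(j)}$, is a weak equivalence, so $Cf$ is $k$-coherent.

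For part~(ii) I would apply part~(i) to the free extension $\hat f:M\triangle K\to X_{\bullet+1}$, which requires only that both $M\triangle K$ and $X_{\bullet+1}$ be $k$-coherent. If $p=2$ then $k=1$ and $1$-coherence is automatic, so I may assume $p$ is odd. The free module $M\triangle K$ is $(p-1)$-coherent: its $j$-th coherence map is precisely the composite $(\mu_{1,j-1}\sm K)\circ(\iota\sm P^{j-1}\sm K)$ shown to be a weak equivalence in Proposition~\ref{prop-powers of Moore} for $2\le j\le p-1$, and since $k\le p-1$ it is in particular $k$-coherent. For $X_{\bullet+1}$, the reindexing of the action maps gives the identity $c_{X_{\bullet+1}}^{(j)}=c_X^{(j+1)}$, which is a weak equivalence for $2\le j\le k$ because $X$ is $(k+1)$-coherent. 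With both modules $k$-coherent, part~(i) yields that $C\hat f$ is $k$-coherent.

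The main obstacle will be the first step of part~(i): carefully identifying the coherence map of the mapping cone as a pushout-induced map, which amounts to verifying that the cone-summand component of the action carries the coherence map of $X$ through the symmetry isomorphism as claimed. Once this naturality bookkeeping is in place, both parts are formal consequences of the gluing lemma, with Proposition~\ref{prop-powers of Moore} — and hence the prime $p$ together with the bound $k\le p-1$ — entering only through the $k$-coherence of the free module $M\triangle K$.
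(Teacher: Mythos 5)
Your proposal is correct and follows essentially the same route as the paper: identify the $j$-th coherence map of $Cf$ as the map induced on pushouts by the coherence maps of $X$, $Y$ and the cone term, apply the gluing lemma, and deduce part~(ii) by combining Proposition~\ref{prop-powers of Moore} (for $M\triangle K$) with the reindexing observation for $X_{\bullet+1}$. Your extra bookkeeping — tracking the symmetry isomorphism to identify the cone-summand map with $I\sm c_X^{(j)}$, and treating $p=2$ separately since Proposition~\ref{prop-powers of Moore} assumes $p$ odd — only makes explicit details the paper leaves implicit.
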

\begin{proof}
(i) The morphism 
$\alpha_{1,j-1}(\iota\sm Cf_{(j-1)}):S^2\sm Cf_{(j-1)}\to Cf_{(j)}$
is obtained by passage to horizontal pushouts in the commutative diagram:
$$\xymatrix@C=18mm{
 S^2\sm Y_{(j-1)} \ar[d]_{\alpha_{1,j-1}(\iota\sm Y_{(j-1)})} & 
S^2\sm X_{(j-1)} \ar[d]^{\alpha_{1,j-1}(\iota\sm X_{(j-1)})}\ar[l]_-{S^2\sm f_{(j-1)}}\ar[r]^-{S^2\sm i_{X_{(j-1)}}} &
S^2\sm CX_{(j-1)}\ar[d]^{\bar\alpha_{1,j-1}(\iota\sm CX_{(j-1)})} \\
 Y_{(j)} &  X_{(j)} \ar[l]^-{f_{(j)}}\ar[r]_-{i_{X_{(j)}}} & 
CX_{(j)} }$$
The two left horizontal morphisms are cofibrations, and
for $j\leq k$ all  vertical morphisms are weak equivalences.
So by the gluing lemma, the induced map on pushouts is a weak equivalence for
$j\leq k$.

(ii) If $X$ is $(k+1)$-coherent, then the shifted $M$-module $X_{\bullet+1}$
is $k$-coherent. The free module $M\triangle K$ is $(p-1)$-coherent 
by Proposition~\ref{prop-powers of Moore}, hence $k$-coherent. 
So the mapping cone of the free extension
$\widehat f:M\triangle K\to X_{\bullet +1}$ is $k$-coherent by part~(i).
\end{proof}

We come to a final useful property of $M$-modules, needed in the next section.

\begin{lemma}\label{lemma-extension over acyclic}
 For every $M$-module $X$ and every acyclic cofibration
$\varphi_{(1)}:X_{(1)}\to Z_{(1)}$ there exists an $M$-module $Z$ 
with underlying object $Z_{(1)}$
and a morphism of $M$-modules $\varphi:X\to Z$
that extends $\varphi_{(1)}$ and such that every component
$\varphi_{(j)}:X_{(j)}\to Z_{(j)}$ is an acyclic cofibration.
\end{lemma}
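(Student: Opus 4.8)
The plan is to build $Z$ degreewise by a single pushout in each degree, guided by the free-module description of $M$-modules. Recall from Example~\ref{eg-tautological action} that the maps $\alpha_{i,1}$ exhibit $X$ as a module ``generated in degree~$1$'': writing $P^0$ for the unit $S^0$ of the geometric smash product, so that $P^0\sm Y\iso Y$, the free $M$-module on a degree-$1$ generator $K$ has $j$-th term $P^{j-1}\sm K$, and its counit to $X$ is $\alpha_{j-1,1}\colon P^{j-1}\sm X_{(1)}\to X_{(j)}$ for $j\ge 2$ and the identity for $j=1$. The module I want is the pushout $X\cup_{F(X_{(1)})}F(Z_{(1)})$ of $M$-modules along $F(\varphi_{(1)})$ and this counit; since the functors $P^i\sm-$ preserve colimits in the second variable, such a pushout is computed degreewise. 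Rather than invoke this abstractly, I would simply \emph{define} the degreewise pushouts and check directly that they assemble into an $M$-module. Concretely, for $j\ge 1$ set
$$ Z_{(j)}\ =\ X_{(j)}\ \cup_{P^{j-1}\sm X_{(1)}}\ \bigl(P^{j-1}\sm Z_{(1)}\bigr), $$
the pushout in $\cC$ of $\alpha_{j-1,1}$ along $P^{j-1}\sm\varphi_{(1)}$, with legs $\varphi_{(j)}\colon X_{(j)}\to Z_{(j)}$ and $\beta_j\colon P^{j-1}\sm Z_{(1)}\to Z_{(j)}$. For $j=1$ this reads $Z_{(1)}=X_{(1)}\cup_{X_{(1)}}Z_{(1)}=Z_{(1)}$, so the underlying object is indeed $Z_{(1)}$.

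Everything hinges on the following smashing lemma, the one-variable analogue of the pushout-product axiom: \emph{if $K$ is an essentially finite based $\Delta$-set and $\varphi\colon A\to B$ is an acyclic cofibration in $\cC$, then $K\sm\varphi\colon K\sm A\to K\sm B$ is an acyclic cofibration.} This is the main point, and I expect the real work to lie here; the rest is bookkeeping. To prove it, choose a finite presentation $r\colon R\to K$ with $S=r^{-1}(\ast)\subseteq R$, so that by Proposition~\ref{prop-finite presentation} the object $K\sm A$ is the cokernel of $\iota\tensor A\colon S\tensor A\to R\tensor A$, and likewise for $B$. First, $K\sm\varphi$ is a cofibration: the pushout-product axiom of Definition~\ref{def-Delta cofibration category}, applied to the monomorphism $\iota\colon S\hookrightarrow R$ and the cofibration $\varphi$, yields a cofibration $S\tensor B\cup_{S\tensor A}R\tensor A\to R\tensor B$, and collapsing $S\tensor B$ to the zero object carries this map to $K\sm\varphi$; thus $K\sm\varphi$ is a cobase change of a cofibration. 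Second, $K\sm\varphi$ is a weak equivalence: it is induced on pushouts by the map of spans $(R\tensor A\xleftarrow{\iota\tensor A}S\tensor A\to\ast)\ \Rightarrow\ (R\tensor B\xleftarrow{\iota\tensor B}S\tensor B\to\ast)$, whose cofibration legs are $\iota\tensor A$ and $\iota\tensor B$ (monomorphisms smashed with objects) and whose vertical maps $R\tensor\varphi$, $S\tensor\varphi$, and the identity of $\ast$ are weak equivalences because $R\tensor-$ and $S\tensor-$ are exact; the gluing lemma then applies. Deleting ``acyclic'' throughout shows that $K\sm-$ also preserves plain cofibrations, which I use below.

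Granting the smashing lemma, the construction goes through. Since $P^{j-1}\sm\varphi_{(1)}$ is a cofibration, the pushout defining $Z_{(j)}$ exists by~(C3), and since it is moreover an acyclic cofibration, its cobase change $\varphi_{(j)}$ is an acyclic cofibration, as required. It remains to give $Z$ an $M$-action making $\varphi$ a morphism. Because $P^i\sm-$ sends the cofibration $P^{j-1}\sm\varphi_{(1)}$ to a cofibration and preserves this pushout, the object $P^i\sm Z_{(j)}$ is again the pushout of $P^i\sm\alpha_{j-1,1}$ along $P^i\sm P^{j-1}\sm\varphi_{(1)}$. I define $\alpha^Z_{i,j}\colon P^i\sm Z_{(j)}\to Z_{(i+j)}$ out of this pushout by $\varphi_{(i+j)}\circ\alpha_{i,j}$ on $P^i\sm X_{(j)}$ and by $\beta_{i+j}\circ(\mu_{i,j-1}\sm Z_{(1)})$ on $P^i\sm P^{j-1}\sm Z_{(1)}$ (with $\mu_{i,0}$ the unit isomorphism). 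These agree on $P^i\sm P^{j-1}\sm X_{(1)}$: associativity of the $X$-action rewrites $\alpha_{i,j}\circ(P^i\sm\alpha_{j-1,1})$ as $\alpha_{i+j-1,1}\circ(\mu_{i,j-1}\sm X_{(1)})$, and the defining relation of $Z_{(i+j)}$ together with naturality of $\mu$ matches the two composites. Associativity of the family $\alpha^Z$ reduces, summand by summand, to associativity of the canonical projections $\mu$ (Definition~\ref{def-symmetric powers}) and of the $X$-action, while the identity $\varphi_{(i+j)}\circ\alpha_{i,j}=\alpha^Z_{i,j}\circ(P^i\sm\varphi_{(j)})$ holds by construction; hence $\varphi\colon X\to Z$ is a morphism of $M$-modules with the desired acyclic-cofibration components. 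All of this final verification is routine diagram-chasing.
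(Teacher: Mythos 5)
Your construction coincides with the paper's own proof: the same degreewise pushout $Z_{(j)}=X_{(j)}\cup_{P^{j-1}\sm X_{(1)}}(P^{j-1}\sm Z_{(1)})$ along $\alpha_{j-1,1}$ and $P^{j-1}\sm\varphi_{(1)}$, the same structure maps induced on pushouts (your agreement check on $P^i\sm P^{j-1}\sm X_{(1)}$ is precisely the commutativity of the paper's diagram, using associativity of the $X$-action and naturality of $\mu_{i,j-1}$), and associativity inherited in the same way. The only difference is that you prove the auxiliary fact that $K\sm{-}$ preserves acyclic cofibrations for essentially finite based $K$ (via a finite presentation, the pushout-product axiom, and the gluing lemma), which the paper asserts without comment; this fills in a detail rather than changing the route.
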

\begin{proof}
For $j\geq 2$ we define the object  $Z_{(j)}$
and the morphism $\varphi_{(j)}$ as the pushout:
\[\xymatrix@C=20mm{ 
P^{j-1}\sm X_{(1)} \ar[r]^-{P^{j-1}\sm\varphi_{(1)}} \ar[d]_{\alpha_{j-1,1}}  & 
P^{j-1}\sm Z_{(1)} \ar[d]^{\alpha_{j-1,1}} \\ 
X_{(j)} \ar[r]_{\varphi_{(j)}} & \ Z_{(j)} }\]
Since the morphism $\varphi_{(1)}$ is an acyclic cofibration,
so is $P^{j-1}\sm\varphi_{(1)}$;
hence the pushout exists and the morphism $\varphi_{(j)}$ 
is an acyclic cofibration. 
The structure maps $\alpha_{i,j}:P^i\sm Z_{(j)}\to Z_{(i+j)}$
are induced on pushouts by the commutative diagram:
\[\xymatrix@C=25mm{ 
P^i\sm X_{(j)} \ar[d]_{\alpha_{i,j}} &
P^i\sm P^{j-1}\sm X_{(1)} \ar[r]^-{P^i\sm P^{j-1}\sm\varphi_{(1)}} 
\ar[l]_-{P^i\sm\alpha_{j-1,1}} \ar[d]^{\mu_{i,j-1}\sm X_{(1)}} & 
P^i\sm  P^{j-1}\sm Z_{(1)} \ar[d]^{\mu_{i,j-1}\sm Z_{(1)}} \\ 
X_{(i+j)} &  P^{i+j-1}\sm X_{(1)} 
\ar[r]_-{P^{i+j-1}\sm\varphi_{(1)}} \ar[l]^-{\alpha_{i+j-1,1}}  & 
P^{i+j-1}\sm Z_{(1)} }\]
The associativity condition follows.
\end{proof}

\section{\texorpdfstring{The $p$-order in topological triangulated categories}{The p-order in topological triangulated categories}}
\label{sec-order in topological}

In this section we prove that every topological triangulated category
has $p$-order at least~$p-1$, for any prime~$p$,
see Theorem~\ref{thm-general topological}.
The proof relies on the techniques developed in the last two sections.
A key ingredient is the following link between 
the concepts of coherent $M$-action
(which take place in a cofibration category) 
and the notion of $p$-order 
(which takes place in the triangulated homotopy category).

In any pointed $\Delta$-cofibration category 
the functor $S^2\sm -:\cC\to \cC$ is exact and so it
descends to an exact functor of triangulated categories
on $\bHo(\cC)$ (see Proposition~\ref{prop-left derived is exact}).
We denote the derived functor again by $S^2\sm-$; it is
naturally isomorphic to the double suspension functor.
We define an {\em $M$-extension} of a morphism $f:K\to X$ in $\bHo(\cC)$
to be a morphism $\Phi:M\sm K\to S^2\sm X$ in $\bHo(\cC)$ such that 
$\Phi\circ\gamma(\iota\sm K)=S^2\sm f:S^2\sm K\to S^2\sm X$.

\begin{prop}\label{prop-order and coherence} 
Let $\cC$ be a stable $\Delta$-cofibration category and $p$ a prime.
Let $X$ be the underlying object of a $(k+1)$-coherent $M$-module 
for some $k< p$.
\begin{enumerate}[\em (i)]
\item For every object $K$ of $\cC$ and every morphism
$f:K\to X$ in $\bHo(\cC)$ there exists an $M$-extension
$\Phi:M\sm K\to S^2\sm X$ of  $f$ and a distinguished triangle
$$ M\sm K \ \xra{\ \Phi\  } \  
S^2\sm X \ \xra{\qquad } \ C \ \xra{\qquad } \ \Sigma(M\sm K)$$
such that the object $C$ is underlying a $k$-coherent $M$-module.
\item The $p$-order of $X$ is at least $k$.
\end{enumerate}
\end{prop}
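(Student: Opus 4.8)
The plan is to prove part~(ii) by induction on $k$ using part~(i), so the real work is the construction in part~(i). The engine for~(i) is the mapping cone of a free extension: given a $\cC$-morphism $f\colon K\to X_{(1)}$, the free $M$-module $M\triangle K$ of Example~\ref{eg-tautological action} maps by its free extension $\hat f\colon M\triangle K\to X_{\bullet+1}$ into the shifted module, and Proposition~\ref{prop-cones}~(ii) tells us (for $1\leq k\leq p-1$) that the mapping cone $C\hat f$ is a $k$-coherent $M$-module. Its underlying object $C=(C\hat f)_{(1)}$ will be the third term of our triangle. The other ingredient is that $M\sm K$ is a mod-$p$ cone of $S^2\sm K$: the distinguished triangle~\eqref{eq-smash M is mod n} exhibits $\gamma(\iota\sm K)\colon S^2\sm K\to M\sm K$ as the cone projection onto $(S^2\sm K)/p$. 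Throughout, $k=0$ is trivial for~(ii) and carries no content, so I assume $k\geq 1$; then $X$ is in particular $2$-coherent, which is what makes the construction go.

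For part~(i) I would first reduce to the case where $f\colon K\to X$ is represented by an honest $\cC$-morphism into $X_{(1)}$. Choosing a fibrant replacement $X_{(1)}\to RX_{(1)}$ (an acyclic cofibration; see Remark~\ref{rk-enough fibrant objects}) and extending it to a level acyclic cofibration of $M$-modules by Lemma~\ref{lemma-extension over acyclic}, I obtain a $(k+1)$-coherent module $RX$ with $RX_{(1)}$ fibrant; level equivalences preserve coherence by a $2$-out-of-$3$ argument, and the final triangle transports back along the isomorphism $\gamma(S^2\sm\,\cdot\,)$. Since every morphism into a fibrant object is represented by a $\cC$-morphism, I may assume $f\colon K\to X_{(1)}$ is genuine. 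The underlying morphism of the free extension is $(\hat f)_{(1)}=\alpha_{1,1}\circ(M\sm f)\colon M\sm K\to X_{(2)}$, whose mapping-cone triangle~\eqref{eq:mapping_cone_triangle} reads
\[ M\sm K\ \xra{\ \gamma((\hat f)_{(1)})\ }\ X_{(2)}\ \to\ C\ \to\ \Sigma(M\sm K)\ . \]
By $2$-coherence the map $w=\gamma(\alpha_{1,1}(\iota\sm X_{(1)}))\colon S^2\sm X\to X_{(2)}$ is an isomorphism in $\bHo(\cC)$; replacing $X_{(2)}$ by $S^2\sm X$ via $w^{-1}$ turns this into the asserted triangle with $\Phi=w^{-1}\circ\gamma((\hat f)_{(1)})$. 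The $M$-extension identity is then a one-line check: bifunctoriality of the smash pairing gives $(M\sm f)(\iota\sm K)=(\iota\sm X_{(1)})(S^2\sm f)$, so $\gamma((\hat f)_{(1)})\circ\gamma(\iota\sm K)=w\circ(S^2\sm f)$ and hence $\Phi\circ\gamma(\iota\sm K)=S^2\sm f$, as required.

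For part~(ii) I argue by induction on $k$, the statement being ``$(k+1)$-coherent implies $p\ord\geq k$''; the base case $k=0$ is the trivial bound $p\ord\geq 0$. Since $p$-order is invariant under shift and $S^2\sm\,\cdot\,$ is the double suspension, it suffices to prove $p\ord(S^2\sm X)\geq k$. Let $g\colon L\to S^2\sm X$ be any morphism in $\bHo(\cC)$. Stability makes $S^2\sm\,\cdot\,$ an autoequivalence, so up to isomorphism $L=S^2\sm K$ and $g=S^2\sm f$ for a unique $f\colon K\to X$. Part~(i) produces an $M$-extension $\Phi\colon M\sm K\to S^2\sm X$ of $f$ together with a triangle whose third term $C$ underlies a $k$-coherent module, whence $p\ord(C)\geq k-1$ by the inductive hypothesis. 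By~\eqref{eq-smash M is mod n} the map $\gamma(\iota\sm K)$ is the projection onto the mod-$p$ cone $M\sm K\cong(S^2\sm K)/p$, so the $M$-extension identity $\Phi\circ\gamma(\iota\sm K)=g$ says exactly that $\Phi$ is an extension of $g$ over $(S^2\sm K)/p$ whose cone $C$ has $p$-order at least $k-1$. As $g$ was arbitrary, this is the definition of $p\ord(S^2\sm X)\geq k$, and therefore $p\ord(X)\geq k$.

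The main obstacle is the bookkeeping in the first reduction of part~(i): one must represent the homotopy class $f$ by a strict $\cC$-morphism \emph{without} destroying the ambient coherent $M$-module structure, which is why Lemma~\ref{lemma-extension over acyclic} (propagating an acyclic cofibration of underlying objects to the whole module) is indispensable, and one must keep careful track of which identities hold in $\cC$ versus in $\bHo(\cC)$. Once $f$ is strict, the remaining steps are formal: assembling the two distinguished triangles and verifying the single interchange identity $(M\sm f)(\iota\sm K)=(\iota\sm X_{(1)})(S^2\sm f)$.
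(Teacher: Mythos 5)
Your construction in part~(i) --- free extension, its mapping cone, Proposition~\ref{prop-cones}~(ii), the $2$-coherence isomorphism identifying $X_{(2)}$ with $S^2\sm X$ --- and your induction in part~(ii) follow the paper's argument closely. But your very first reduction contains a genuine gap: you ``choose a fibrant replacement $X_{(1)}\to RX_{(1)}$'', and no axiom of a cofibration category (or of a $\Delta$-cofibration category) produces fibrant replacements. Remark~\ref{rk-enough fibrant objects} only \emph{defines} fibrancy and introduces ``enough fibrant objects'' as an additional hypothesis, precisely because it does not follow from (C1)--(C4); and the stable $\Delta$-cofibration categories to which this proposition is actually applied in Theorem~\ref{thm-general topological} are the frame categories $f\cC$ of arbitrary saturated stable cofibration categories, for which there is no reason to expect fibrant replacements to exist. (For a concrete failure: in the cofibration category of finite simplicial sets, an object is fibrant in this sense only if it is a finite Kan complex, so most objects admit no fibrant replacement.) Consequently the step ``every morphism into a fibrant object is represented by a $\cC$-morphism, so I may assume $f$ is genuine'' is not available, and your proof of~(i) does not get off the ground in the stated generality.

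The repair uses exactly the tools you already cite, aimed at a different map. By Theorem~\ref{thm-HoC a la Brown}~(i), every morphism in $\bHo(\cC)$ is a left fraction: write $f=\gamma(s_{(1)})^{-1}\circ\gamma(a)$ with $a\colon K\to Z_{(1)}$ a $\cC$-morphism and $s_{(1)}\colon X_{(1)}\to Z_{(1)}$ an acyclic cofibration. Apply Lemma~\ref{lemma-extension over acyclic} to $s_{(1)}$ --- not to a hypothetical fibrant replacement --- to extend $Z_{(1)}$ to an $M$-module $Z$ and $s_{(1)}$ to a morphism of $M$-modules all of whose components are acyclic cofibrations; your $2$-out-of-$3$ observation shows that $Z$ is again $(k+1)$-coherent. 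Now run your argument on the genuine morphism $a$: the first component $\hat a_{(1)}=\alpha_{1,1}\circ(M\sm a)$ of its free extension has mapping cone underlying a $k$-coherent module, and one sets
\[
\Phi\ =\ \gamma\bigl(\alpha_{1,1}\circ(\iota\sm s_{(1)})\bigr)^{-1}\circ\gamma(\hat a_{(1)})\ ,
\]
which makes sense because $\alpha_{1,1}\circ(\iota\sm s_{(1)})=\bigl(\alpha_{1,1}\circ(\iota\sm Z_{(1)})\bigr)\circ(S^2\sm s_{(1)})$ is a composite of weak equivalences, by $2$-coherence of $Z$ and exactness of $S^2\sm-$. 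Your naturality identity, now in the form $(M\sm a)\circ(\iota\sm K)=(\iota\sm Z_{(1)})\circ(S^2\sm a)$, then gives
\[
\Phi\circ\gamma(\iota\sm K)\ =\ \gamma(S^2\sm s_{(1)})^{-1}\circ\gamma(S^2\sm a)\ =\ S^2\sm f\ ,
\]
so $\Phi$ is an $M$-extension of the original fraction $f$. With this replacement for your first paragraph, the rest of your proof, including part~(ii), goes through and agrees with the paper's.
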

\begin{proof} (i) By assumption there is a $(k+1)$-coherent $M$-module
$Y$ such that $Y_{(1)}=X$. The morphism $f:K\to X=Y_{(1)}$ 
in the homotopy category can be written as a fraction 
$f=\gamma(s_{(1)})^{-1}\gamma(a)$ where $a:K\to Z_{(1)}$ 
and $s_{(1)}:Y_{(1)}\to Z_{(1)}$ are morphisms in $\cC$ 
and $s$ is an acyclic cofibration. 
By Lemma~\ref{lemma-extension over acyclic} the object $Z_{(1)}$ 
can be extended to an $M$-module $Z$ and $s$ can be extended to a morphism
$s:Y\to Z$ of $M$-modules all of whose components $s_{(i)}:Y_{(i)}\to Z_{(i)}$
are acyclic cofibrations. Since $Y$ is $(k+1)$-coherent, $Z$ is
then also $(k+1)$-coherent.

We let $\hat a_{(1)}:M\sm  K\to Z_{(2)}$ be the first component
of the free extension~\eqref{eq:free_extension} of $a$, i.e., the composite
\[
M\sm K \ \xrightarrow{M\sm a} \ M\sm Z_{(1)} \
\xrightarrow{\alpha_{1,1}} \ Z_{(2)} \ .  \]
The square in~$\cC$
$$\xymatrix@C=13mm{ S^2\sm K \ar[r]^-{S^2\sm a} \ar[d]_{\iota\sm K} &
S^2\sm Z_{(1)} \ar[d]^{\alpha_{1,1}(\iota\sm Z_{(1)})}_{\sim} \\
M\sm K \ar[r]_-{\hat a_{(1)}} & Z_{(2)}}$$
commutes and the right vertical morphism is a weak equivalence.
The composite in $\bHo(\cC)$
$$ M\sm K \ \xra{\ \gamma(\hat a_{(1)})\ } \ Z_{(2)} \ 
\xra{\gamma(\alpha_{1,1}(\iota\sm s_{(1)}))^{-1}} \ S^2\sm X $$
is thus an $M$-extension $\Phi$ of $f$. The diagram
$$\xymatrix@C=15mm{ M\sm K \ar[r]^-{\Phi} \ar@{=}[d] & 
S^2\sm X \ar[d]_\iso^{\gamma(\alpha_{1,1}(\iota\sm s_{(1)}))}\ar[r] &  
C\hat a_{(1)} \ar@{=}[d] \\
 M\sm K \ar[r]_-{\gamma(\hat a_{(1)})}  & Z_{(2)} \ar[r] &  C\hat a_{(1)}}$$
commutes in $\bHo(\cC)$; since the lower row is part 
of a distinguished triangle by~\eqref{eq:mapping_cone_triangle}, 
so is the upper row.
The mapping cone $C\hat a_{(1)}$ is the underlying object
of the $M$-module $C\hat a$ which is $k$-coherent 
by Proposition~\ref{prop-cones}~(ii). 

(ii) We proceed by induction on $k$. For $k=0$ there is
nothing to show, so we assume $k\geq 1$.
Given any object $K$ of $\cC$ we let
$$ K \xra{\ p\cdot\ } K \xra{\ \pi\ } K/p \xra{\quad } \Sigma K $$
be a distinguished triangle that provides a cone of multiplication
by $p$ on $K$. We obtain a distinguished triangle
by smashing this triangle from the left with $S^2$, and another one 
from~\eqref{eq-smash M is mod n} (with $n=p$ and with $K$ instead of $X$). 
So there is an isomorphism  
$\psi:M\sm K\iso S^2\sm (K/p)$ in $\bHo(\cC)$ such that 
$S^2\sm \pi=\psi\circ\gamma(\iota\sm K)$
as morphisms $S^2\sm K\to S^2\sm(K/p)$. 

Now we let $f:K\to X$ be any morphism in $\bHo(\cC)$.
Part~(i) provides an $M$-extension $\Phi:M\sm K\to S^2\sm X$ of 
$f$ and a cone $C$ of $\Phi$ 
that admits a $k$-coherent $M$-action. 
By induction, this cone $C$ has $p$-order at least $k-1$. 
Since smashing with $S^2$ is isomorphic
to double suspension, and thus an equivalence of categories,
the map
$$ \bHo(\cC)(K/p,X) \ \to \  \bHo(\cC)(M\sm K,S^2\sm X) \ ,\quad
\varphi \ \longmapsto \ (S^2\sm \varphi)\circ\psi $$
is a bijection; so there is a unique morphism $\bar f:K/p\to X$ 
such that $(S^2\sm\bar f)\circ\psi=\Phi$, and the morphism $\bar f$
is an extension of $f$.
Finally, if $C\bar f$ is a cone of the extension $\bar f$,
then $S^2\sm C\bar f$ is a cone of 
$\Phi=(S^2\sm\bar f)\circ\psi:M\sm K\to S^2\sm X$.
The $p$-order is invariant under suspension and isomorphism, 
so the cone of $\bar f$ has $p$-order at least $k-1$.
Hence $X$ has $p$-order at least $k$.
\end{proof}

Now we can prove our main result.

\begin{theorem}\label{thm-general topological}
Let $\cT$ be a topological triangulated category and $p$ a prime.
Then for every object~$X$ of $\cT$ the object $X/p$ 
has $p$-order at least~$p-2$. 
In particular, the $p$-order of $\cT$ is at least $p-1$.
\end{theorem}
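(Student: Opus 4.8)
The plan is to reduce $\cT$ to a concrete model in which the coherence machinery of the previous two sections applies, and then to recognize $X/p$ as the underlying object of a highly coherent $M$-module. Since $\cT$ is topological I may write $\cT\simeq\bHo(\cC)$ as triangulated categories for some stable (hence pointed) cofibration category $\cC$. Replacing $\cC$ by its saturation changes neither the cofibrations nor the homotopy category, so I may assume $\cC$ is saturated. I then pass to the frame category $f\cC$: by Theorem~\ref{theorem-framing} the evaluation functor $f\cC\to\cC$ is an exact equivalence on homotopy categories, so $\bHo(f\cC)\simeq\cT$ as triangulated categories, and by Theorem~\ref{thm-frames are Delta-enriched} the saturation of $\cC$ makes $f\cC$ a $\Delta$-cofibration category. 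It is pointed (the constant frame at the zero object of $\cC$ is a zero object of $f\cC$) and stable (its suspension corresponds to the invertible suspension of $\bHo(\cC)$). Since the $p$-order is an invariant of the triangulated structure, it suffices to prove the statement after this reduction, so I may assume $\cT=\bHo(\cC)$ for a stable pointed $\Delta$-cofibration category $\cC$, which is exactly the setting in which $M$-modules live.

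The core step is to identify $X/p$ up to suspension with the underlying object of a free $M$-module. For an object $X$ of $\cC$ I consider the free $M$-module $M\triangle X$ of Example~\ref{eg-tautological action}, whose underlying object is $(M\triangle X)_{(1)}=M\sm X$. The distinguished triangle~\eqref{eq-smash M is mod n} (with $n=p$) reads
$$ S^2\sm X \ \xra{\,p\cdot(S^2\sm X)\,} \ S^2\sm X \ \xra{\ \gamma(\iota\sm X)\ } \ M\sm X \ \xra{\quad} \ \Sigma(S^2\sm X)\ , $$
exhibiting $M\sm X$ as a cone of $p\cdot(S^2\sm X)$. As $S^2\sm-$ is naturally isomorphic to the double suspension functor and $\Sigma^2$ is exact, applying $\Sigma^2$ to the defining triangle of $X/p$ shows that $\Sigma^2(X/p)$ is a cone of $p\cdot(\Sigma^2 X)\cong p\cdot(S^2\sm X)$; hence $M\sm X\cong\Sigma^2(X/p)$ in $\cT$.

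Now I invoke the coherence results. By Proposition~\ref{prop-powers of Moore} the free module $M\triangle X$ is $(p-1)$-coherent — this is where oddness of $p$ enters, whereas for $p=2$ the claim $p\ord(X/p)\geq p-2=0$ is vacuous. Applying Proposition~\ref{prop-order and coherence}~(ii) with $k=p-2<p$ to the $(k+1)$-coherent module $M\triangle X$ shows that its underlying object $M\sm X$ has $p$-order at least $p-2$. Since the $p$-order is invariant under shift and isomorphism, and $M\sm X\cong\Sigma^2(X/p)$, I conclude that $X/p$ has $p$-order at least $p-2$, which is the first assertion of the theorem.

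For the final clause I would use the recorded consequence of Definition~\ref{def-order} that the $p$-order of a triangulated category is one larger than the minimum of the $p$-orders of all objects of the form $K/p$. Since every $X/p$ has $p$-order at least $p-2$, this minimum is at least $p-2$, giving $p\ord(\cT)\geq 1+(p-2)=p-1$. The only genuinely delicate point in this argument is the reduction in the first paragraph — checking that the passage to saturated frames yields a stable pointed $\Delta$-cofibration category with the same triangulated homotopy category; the rest is assembly of Propositions~\ref{prop-powers of Moore} and~\ref{prop-order and coherence} with the mod-$p$ Moore triangle~\eqref{eq-smash M is mod n}, together with the bookkeeping of the double suspension.
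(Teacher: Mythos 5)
Your proposal is correct and follows essentially the same route as the paper: reduce via saturation and passage to frames to a stable $\Delta$-cofibration category (Theorems~\ref{theorem-framing} and~\ref{thm-frames are Delta-enriched}), then combine Proposition~\ref{prop-powers of Moore} and Proposition~\ref{prop-order and coherence}~(ii) with the triangle~\eqref{eq-smash M is mod n}. The only cosmetic difference is that you identify $M\sm X$ with $\Sigma^2(X/p)$ and invoke shift-invariance of the $p$-order, whereas the paper first chooses a desuspension $Y$ of $X$ and identifies $M\sm Y$ with $X/p$ directly; the two bookkeeping devices are interchangeable.
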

\begin{proof}
It suffices to treat the case where $\cT=\bHo(\cC)$
is the homotopy category of a stable cofibration category $\cC$.
We may assume without loss of generality that $\cC$
is {\em saturated}, i.e., every morphism that becomes an isomorphism
in $\bHo(\cC)$ is a weak equivalence. 
Indeed, for an arbitrary cofibration category 
we can define another cofibration structure $\cC^\text{sat}$
on the same category $\cC$ with the same class of cofibrations as before, 
but with new weak equivalences
those morphisms that become isomorphisms in the (old) homotopy category.
By Proposition~3.16 of~\cite{cisinski-categories derivables},
this is indeed a saturated cofibration structure 
and the identity functor $\cC\to \cC^\text{sat}$ induces an
isomorphism of homotopy categories
\[  \bHo(\cC)\ \to\ \bHo(\cC^\text{sat})\ .  \]
Assuming now that the cofibration category $\cC$ is saturated,
Theorem~\ref{theorem-framing} lets us replace it
by the category $f\cC$ of frames in $\cC$ without changing the
homotopy category; the cofibration category $f\cC$ of frames
is then a $\Delta$-cofibration category 
by Theorem~\ref{thm-frames are Delta-enriched}.
The upshot is that we can assume without loss of generality that $\cC$ is
a stable $\Delta$-cofibration category.

Now we let $X$ be an arbitrary object of $\cC$.
We choose an object $Y$ whose 2-fold suspension
is isomorphic to $X$ in~$\bHo(\cC)$.
Then the smash product $M\sm Y$ with the mod-$p$ Moore space is 
isomorphic to~$X/p$, 
by the distinguished triangle~\eqref{eq-smash M is mod n}. 
This smash product $M\sm Y$ is underlying the free $M$-module 
$M\triangle Y$ that is $(p-1)$-coherent 
by Proposition~\ref{prop-powers of Moore}.
So $M\sm Y$ has $p$-order at least $p-2$ by 
Proposition~\ref{prop-order and coherence}~(ii).
\end{proof}

Theorem~\ref{thm-general topological} is trivially
true for $p=2$, so its content lies in the odd primary cases.
If $\cT$ is a topological triangulated category and $p$ an odd prime, 
then Theorem~\ref{thm-general topological} 
in particular implies that $p\cdot X/p=0$ for every object $X$ of $\cT$. 
It is an open question whether there is an odd prime~$p$,
a triangulated category $\cT$ -- necessarily non-topological -- 
and an object $X$ of $\cT$ such that the $p$-order of
$X/p$ is less than $p-2$, or even $p\cdot X/p\ne 0$.

The stable homotopy category is topological, 
so Theorem~\ref{thm-general topological} shows in particular
that its $p$-order is at least $p-1$. 
In combination with~Theorem~\ref{thm-order of Moore} we thus obtain:

\begin{cor}\label{cor-exact order of Moore}
For every prime~$p$ the homotopy category $\bSH^c_{(p)}$ 
of finite $p$-local spectra has $p$-order exactly $p-1$.
\end{cor}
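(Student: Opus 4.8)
The plan is to pin the value $p-1$ between two matching bounds: the upper bound is supplied directly by Theorem~\ref{thm-order of Moore}, while the lower bound is transported from the full stable homotopy category via the monotonicity of $p$-order under passage to subcategories. The corollary is thus almost purely a bookkeeping combination of the two main theorems, with the only genuine input beyond them being the inclusion $\bSH^c_{(p)}\subseteq\bSH$.

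First I would record the upper bound. Theorem~\ref{thm-order of Moore} states verbatim that the mod-$p$ Moore spectrum has $p$-order at most $p-2$ in $\bSH^c_{(p)}$, and hence that $p\ord(\bSH^c_{(p)})\leq p-1$; nothing further is required for this direction.

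For the lower bound I would use that $\bSH^c_{(p)}$ sits inside $\bSH$ as a \emph{full triangulated subcategory} (as recorded in the discussion preceding the corollary, noting that $p$-local finite spectra need not be compact in $\bSH$, so one embeds $\bSH^c_{(p)}$ into $\bSH$ via the $p$-local objects rather than into the compact objects). The relevant tool is the monotonicity of $n$-order under passage to subcategories, namely the fourth of the direct consequences listed after Definition~\ref{def-order}: for a full triangulated subcategory $\cS\subseteq\cT$ one has $n\ord(\cS)\geq n\ord(\cT)$. Applying this with $\cS=\bSH^c_{(p)}$ and $\cT=\bSH$ yields
$$ p\ord(\bSH^c_{(p)}) \ \geq \ p\ord(\bSH)\ . $$
Since $\bSH$ is a topological triangulated category, Theorem~\ref{thm-general topological} gives $p\ord(\bSH)\geq p-1$, and therefore $p\ord(\bSH^c_{(p)})\geq p-1$. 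Combining the two inequalities gives $p\ord(\bSH^c_{(p)})=p-1$, as claimed.

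There is no real obstacle to overcome in the argument itself; all the weight has already been carried by Theorem~\ref{thm-order of Moore} and Theorem~\ref{thm-general topological}. The single point deserving care is to make sure the monotonicity consequence is invoked with the topological category $\bSH$ as the ambient one (so that Theorem~\ref{thm-general topological} applies), rather than only with $\bSH_{(p)}$; this is exactly why one emphasizes that $\bSH^c_{(p)}$ is a full triangulated subcategory of the full stable homotopy category $\bSH$.
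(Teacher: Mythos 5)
Your proposal is correct and follows essentially the same route as the paper: the upper bound $p\ord(\bSH^c_{(p)})\leq p-1$ comes from Theorem~\ref{thm-order of Moore}, and the lower bound comes from Theorem~\ref{thm-general topological} applied to the topological triangulated category $\bSH$, transported to the full triangulated subcategory $\bSH^c_{(p)}$ by the monotonicity consequence of Definition~\ref{def-order}. Your cautionary remark about embedding $\bSH^c_{(p)}$ in $\bSH$ via $p$-local objects (rather than compact ones) matches the paper's own discussion preceding the corollary.
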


\begin{rk}
Theorem~\ref{thm-general topological} can be improved to give
a sufficient condition for when the $p$-order of $X/p$ in a topological
triangulated category is at least $p-1$. A choice of model for $\cT$ 
(i.e., a stable cofibration category $\cC$ and an exact equivalence
$\cT\iso\bHo(\cC)$) provides an action of the homotopy category of
finite $\Delta$-sets on $\cT$, see Remark~\ref{rk-Ho CW action}.
The obstruction for the object $X/p$ to have $p$-order strictly 
greater than $p-2$ can be expressed in terms of this action as follows.

We let $\alpha_1:S^{2p}\to S^3$ denote a morphism of finite $\Delta$-sets
such that the geometric realizations of $S^{2p}$ and $S^3$ are homotopy 
equivalent to a $2p$-sphere respectively a 3-sphere and such that
the realization of $\alpha_1$ is a generator of the $p$-torsion 
in the homotopy group $\pi_{2p}(S^3)$; 
here $p=2$ is allowed, and then $\alpha_1$ realizes 
the homotopy class of the Hopf map $\eta$.
The arguments of Theorem~2.5 of~\cite{schwede-rigid} 
can be generalized from simplicial stable model categories
to stable $\Delta$-cofibration categories to show that if
the morphism $\alpha_1\sm X:S^{2p}\sm X\to S^3\sm X$ 
is zero (or divisible by~$p$) in $\bHo(\cC)$,
then the object $X/p$ admits a $p$-coherent $M$-action. 
By Proposition~\ref{prop-order and coherence}~(ii), the object $X/p$ then
has $p$-order at least $p-1$.

We want to emphasize that the morphism 
$\alpha_1\sm X:S^{2p}\sm X\to S^3\sm X$ does not 
have an intrinsic meaning in a topological triangulated category $\cT$, 
and depends on the choice of model for $\cT$. 
The $K_{(p)}$-local stable homotopy category 
at an odd prime~$p$ is an explicit example where different models lead to
different morphisms $\alpha_1\sm X$: 
there is the `natural model', i.e., the category of 
sequential spectra~\cite[Def.\,2.1]{BF} 
with the $K_{(p)}$-localization of the stable model structure of 
Bousfield and Friedlander.
The class $\alpha_1$ maps non-trivially to the $(2p-3)$-th homotopy group
of the localized sphere spectrum $L_{K_{(p)}}\mS$, 
so it acts non-trivially on $L_{K_{(p)}}\mS$.
However, Franke's theorem~\cite[Sec.\,2.2, Thm.\,5]{franke-adams} 
provides an `exotic' algebraic model for $\bHo(K_{(p)}\text{-local})$,
and in any algebraic model, all positive dimensional
stable homotopy classes act trivially on all objects.
\end{rk}

\begin{rk}
The proof of Theorem~\ref{thm-general topological} shows that 
the special features of topological 
over algebraic triangulated categories are closely related to
existence and properties of multiplications on mod-$p$ Moore spectra.
For primes~$p\geq 5$, the mod-$p$ Moore spectrum has a multiplication
in the stable homotopy category which is commutative and associative.
So on the level of tensor triangulated categories, there does not
seem to be any qualitative difference between the Moore spectrum $\mS/p$ 
as an object of the triangulated category $\bSH^c$
of finite spectra and $\mZ/p$ as an object of the bounded derived category 
of finitely generated abelian groups, as long as $p\geq 5$.
However, mod-$p$ Moore spectra do not have models
as strict ring spectra (or $A_\infty$ ring spectra);
this seems to be folklore, and a proof can be found 
in~\cite[Ex.\,3.3]{angeltveit}. 
With the methods of this paper, this also follows by combining 
Proposition~\ref{prop-reduction of ring spectra} 
and Theorem~\ref{thm-order of Moore}.
So the notion of $p$-order explains and quantifies
how the higher order non-associativity eventually
manifests itself in the triangulated structure of the
stable homotopy category (i.e., without any reference to the smash product).
\end{rk}

\begin{appendix}
  
\section{Homotopy category, suspension and triangulation}
\label{app-triangulation}

In this appendix we recall certain facts about the homotopy category
of a cofibration category that we need in the body of the paper.
In particular, we introduce the suspension functor
on the homotopy category of a pointed cofibration category
and show in Theorem~\ref{thm-HoC is triangulated}
that the homotopy category of a stable cofibration category
is naturally triangulated.

For us the {\em homotopy category} of a cofibration category is
any localization of $\cC$ at the class of weak equivalences.
Hence the homotopy category consists of a category $\bHo(\cC)$
with the same objects as $\cC$ and a functor $\gamma:\cC\to\bHo(\cC)$ 
that is the identity on objects, takes all weak equivalences
to isomorphisms and satisfies the following
universal property: for every category~$\cD$ and every
functor $F:\cC\to\cD$ that takes all weak equivalences to isomorphisms
there is a unique functor $\bar F:\bHo(\cC)\to\cD$ such that $\bar F\gamma=F$.

A {\em cylinder object} for an object $A$ in a cofibration category
is a quadruple $(I,i_0,i_1,p)$ consisting of an object $I$,
morphisms $i_0,i_1:A\to I$ and a weak equivalence $p:I\to A$ satisfying 
$pi_0=pi_1=\Id_A$ and such that $i_0+i_1:A\vee A\to I$
is a cofibration. Every object has a cylinder object: axiom (C4)
allows us to factor the fold map $\Id+\Id:A\vee A\to A$
as a cofibration $i_0+i_1:A\vee A\to I$ followed by a weak equivalence
$p:I\to A$.

Two morphisms $f,g:A\to Z$ in a cofibration category are {\em homotopic}
if there exists a cylinder object $(I,i_0,i_1,p)$ for  $A$ and
a morphism $H:I\to Z$ (the {\em homotopy}) such that $f=Hi_0$ and $g=Hi_1$.
Since the morphism $p$ in a cylinder object is a weak equivalence,
$\gamma(p)$ is an isomorphism in $\bHo(\cC)$ and so 
$\gamma(i_0)=\gamma(i_1)$ since they share $\gamma(p)$ as common left inverse.
So if $f$ and $g$ are homotopic via $H$, then
$\gamma(f)=\gamma(H)\gamma(i_0)=\gamma(H)\gamma(i_1)=\gamma(g)$.
In other words: homotopic morphisms become equal in the homotopy category.
The converse is not true in general, but part~(ii) of the following theorem 
says that the converse is true up to post-composition with a weak equivalence.

Parts~(i) and~(ii) of the following theorem are the dual statements
to Theorem~1 and Remark~2 of~\cite[part~I.2]{brown}. 
The results can also be found, with more detailed proofs
and slightly different terminology,
as~Theorem~6.4.5~(1a) respectively Theorem~6.4.4~(1c)
in~\cite{radulescu-ABC}. 
Part~(iii) (or rather the dual statement)
is a special case of~\cite[Cor.\,2.9]{cisinski-categories derivables}.

\begin{theorem}\label{thm-HoC a la Brown} 
Let $\cC$ be a cofibration category and $\gamma:\cC\to\bHo(\cC)$ 
a localization at the class of weak equivalences. Then:
\begin{enumerate}[\em (i)]
\item Every morphism in $\bHo(\cC)$ is a `left fraction', i.e., is
of the form $\gamma(s)^{-1}\gamma(f)$ where $f$ and~$s$ are $\cC$-morphisms
with the same target and $s$ is an acyclic cofibration.
\item Given two morphisms $f,g:A\to B$ in $\cC$, then
$\gamma(f)=\gamma(g)$ in $\bHo(\cC)$ if and only if there is an
acyclic cofibration $s:B\to\bar B$ such that $sf$ and $sg$ are homotopic.
\item The localization functor $\gamma:\cC\to\bHo(\cC)$ preserves coproducts. 
In particular, the homotopy category $\bHo(\cC)$ has finite coproducts.
\end{enumerate}
\end{theorem}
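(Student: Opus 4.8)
The plan is to realize $\bHo(\cC)$ explicitly as a \emph{category of left fractions} with respect to the acyclic cofibrations, following the (dual of the) method of Brown; all three assertions then drop out of the construction. The first and most delicate step is to develop the homotopy calculus. Using cylinder objects, which exist by (C4), I would define left homotopy $f\simeq g$ for parallel morphisms $A\to Z$ and prove two facts: that $\simeq$ is an equivalence relation, and that it is preserved by post-composition. Reflexivity and symmetry are formal and post-composition is immediate; the one genuine argument is transitivity, where two homotopies through cylinders $I,I'$ on $A$ are glued along the shared copy of $A$ by a pushout (C3), and the gluing lemma shows the result is again weakly equivalent to $A$, hence (after a (C4)-factorization) a cylinder witnessing $f\simeq h$. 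I would not attempt to prove compatibility with \emph{pre}-composition, since the construction below does not need it.

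With the homotopy calculus in hand, I would build a category $\bar\cC$ with the same objects as $\cC$, whose morphisms $A\to B$ are equivalence classes of left fractions $A\xra{f}\bar B\xla{s}B$ with $s$ an acyclic cofibration, where two fractions are identified if they have a common refinement along a further acyclic cofibration out of $\bar B$, and fractions that differ by a homotopy in the numerator (over a fixed denominator) are identified. Composition of $A\to\bar B\xla{s}B$ with $B\to\bar C\xla{t}C$ is defined by the pushout $P$ of $\bar B\xla{s}B\to\bar C$: by (C3) the cobase change of $s$ is again an acyclic cofibration, providing the composite denominator $C\to\bar C\to P$, and the gluing lemma together with post-composition-invariance of homotopy make the composite well defined and associative. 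The functor $\delta\colon\cC\to\bar\cC$, $f\mapsto[(f,\Id)]$, inverts weak equivalences (the inverse of an acyclic cofibration $w$ is $[(\Id,w)]$, and the general case follows by a (C4)-factorization and 2-out-of-3), and it is initial among such functors: any $F\colon\cC\to\cD$ inverting weak equivalences extends uniquely by $[(f,s)]\mapsto F(s)^{-1}F(f)$. This is well defined because $F$ identifies homotopic maps (if $f\simeq g$ via $H\colon I\to\bar B$ then $F(f)=F(H)F(i_0)=F(H)F(i_1)=F(g)$, using that $F(i_0)=F(i_1)$ are both inverse to the isomorphism $F(p)$), and it respects composition because the defining pushout square commutes already in $\cC$, so no preservation of pushouts by $F$ is required. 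Hence $\delta$ is a localization and $\bHo(\cC)\iso\bar\cC$; part (i) is the fraction description of morphisms, and part (ii) is the statement that two fractions with denominator $\Id_B$ are identified exactly when they admit a common refinement $s\colon B\to\bar B$ with $sf\simeq sg$ --- which requires checking that the generated relation is confluent, i.e.\ that equivalent fractions always have such a single common refinement.

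For part (iii), the coproduct $A\vee B$ exists in $\cC$ by the remark after (C3), and I would show $\gamma(A\vee B)$ is a coproduct in $\bHo(\cC)$. Given $\varphi\colon A\to X$ and $\psi\colon B\to X$, I would write them as fractions by (i) and pass to a \emph{common denominator}: pushing the two denominators out over $X$ (again using that cobase changes of acyclic cofibrations are acyclic cofibrations) yields a single acyclic cofibration $w\colon X\to\bar X$ and $\cC$-morphisms $A\to\bar X$, $B\to\bar X$ representing $\gamma(w)\varphi$ and $\gamma(w)\psi$; their copairing $A\vee B\to\bar X$ then gives $\gamma(w)^{-1}(\cdot)$ restricting to $\varphi$ and $\psi$. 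Uniqueness reduces, via (i) and (ii), to gluing a homotopy over $A$ and a homotopy over $B$ into one over $A\vee B$ after a common acyclic cofibration, which works because the inclusions $A,B\to A\vee B$ are cofibrations and the gluing lemma applies. The main obstacle throughout is the homotopy calculus of the first step and its manifestation in (ii): because a cofibration category has no fibrations or matching objects, one cannot lift cylinders against a fibration as in a model category, so equality in $\bHo(\cC)$ forces left homotopy only after post-composition with an acyclic cofibration, and all the confluence and gluing arguments are needed precisely to control this.
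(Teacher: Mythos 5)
Your overall architecture --- a cylinder calculus followed by an explicit category of left fractions and a verification of the universal property --- is the right one: it is the dual of Brown's construction, which is precisely what the paper points to (the paper proves nothing here; it cites the duals of Brown's Theorem~1 and Remark~2 in Part I.2, Radulescu-Banu, and Cisinski). But the proposal has a genuine gap at the hardest point, namely your explicit claim that compatibility of homotopy with \emph{pre}-composition can be avoided. It cannot, and the place it is needed is exactly where you wave at ``the gluing lemma together with post-composition-invariance'': well-definedness of composition of fractions in the \emph{second} variable. Replace the second fraction $B\xra{g}\bar C\xla{t}C$ by $B\xra{g'}\bar C\xla{t}C$ with $g\simeq g'$ via $H\colon I_B\to\bar C$. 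The composite with $A\xra{f}\bar B\xla{s}B$ is built from the pushout $P_g=\bar B\cup_{s,g}\bar C$, and this object depends on the representative $g$, not on its homotopy class; the gluing lemma does not apply, because the identity maps of $\bar B$, $B$, $\bar C$ do not intertwine $g$ and $g'$. The best available comparison is this: pushing out the cofibration $i_0+i_1\colon B\vee B\to I_B$ along the acyclic cofibration $s\vee s$ gives a cylinder $D$ on $\bar B$, and $E:=D\cup_{I_B,H}\bar C\iso P_g\cup_{\bar C}P_{g'}$ receives both $P_g$ and $P_{g'}$ by acyclic cofibrations with the same refined denominator. But the two refined numerators are $k_0f$ and $k_1f$, where $k_0,k_1\colon\bar B\to E$ are the two ends of $D$ inside $E$: they are homotopic via the cylinder $D$ \emph{on $\bar B$}, and to identify the two fractions you must convert this into a homotopy of $k_0f$ and $k_1f$ via a cylinder \emph{on $A$} (possibly after a further acyclic cofibration). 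That is pre-composition compatibility, verbatim. It does not follow from post-composition invariance or gluing: the naive transport --- form $G=I_A\cup_{A\vee A}D$ by gluing a cylinder on $A$ to $D$ along the two ends via $f$, and push the homotopy out --- fails because the cofibration $D\to G$ has quotient $I_A/(A\vee A)$ and is not acyclic, so the homotopy lands in a target not weakly equivalent to $E$.

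This missing lemma is the real content of the theorem, and it is why the paper defers to the literature. Dually it is Brown's hard lemma that right homotopy is compatible with post-composition; in the cofibration setting one first proves that for every $u\colon Z\to Y$ and cylinder $I_Z$ there is a cylinder $I_Y$ and a map $I_Z\to I_Y$ extending $u\vee u$ (take $(Y\vee Y)\cup_{Z\vee Z}I_Z$ and factor its canonical map to $Y$ using (C4)), and then one must compare homotopies taken with respect to different cylinders on the same object --- which, since objects of a cofibration category need not be fibrant, again costs stabilization by acyclic cofibrations and is where the substantial work lies (these are exactly the statements whose proofs occupy a large part of Radulescu-Banu's Chapter~6). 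Until this is supplied, your $\bar\cC$ is not known to be a category, so (i)--(iii) do not get off the ground. Two smaller remarks: in your transitivity argument, weak equivalence of $I\cup_AI'$ to $A$ alone does not make it a cylinder --- you must factor the end-inclusion $A\vee A\to I\cup_AI'$ and restrict the glued homotopy along that factorization (fixable, but as stated imprecise); on the other hand, your worry about confluence in (ii) is unfounded, since the one-step relation ``common refinement with homotopic numerators'' is already transitive, by pushing out refinements over each other and composing cylinders.
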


\begin{rk}\label{rk-enough fibrant objects}
On the face of it, the homotopy category of a cofibration category
raises set-theoretic issues:
in general the hom-`sets' in $\bHo(\cC)$ may not be small,
but rather proper classes.
One way to deal with this is to work with universes
in the sense of Grothendieck; 
the homotopy category of a cofibration category then always exists 
in a larger universe.

Another way to address the set theory issues is to 
restrict attention to those cofibration categories
that have `enough fibrant objects'.
An object of a cofibration category $\cC$ is {\em fibrant}
if every acyclic cofibration out of it has a retraction.
If the object $Z$ is fibrant, then the map
$$ \cC(A,Z) \ \to \ \bHo(\cC)(A,Z) \ , 
\quad f \ \longmapsto \ \gamma(f)$$
is surjective: an arbitrary morphism from $A$ to $Z$ in $\bHo(\cC)$
is of the form $\gamma(s)^{-1}\gamma(a)$ for some acyclic cofibration
$s:Z\to Z'$. Since $Z$ is fibrant, there is a retraction $r$
with $rs=\Id_Z$, and then $\gamma(s)^{-1}\gamma(a)=\gamma(ra)$.
Moreover, if two $\cC$-morphisms $f,g:A\to Z$ become equal
after applying the functor $\gamma$, then there is an
acyclic cofibration $s:Z\to Z'$ such that $sf$ is homotopic to $sg$. 
Composing with any retraction to $s$ shows that $f$ 
is already homotopic to $g$. So `homotopy' for morphisms 
into a fibrant object $Z$ is an equivalence relation and the map
$$ \cC(A,Z)/_\text{homotopy} \ \to \ \bHo(\cC)(A,Z) \ , 
\quad [f] \ \longmapsto \ \gamma(f)$$
is bijective.

We say that the cofibration category $\cC$ {\em has enough fibrant objects} 
if for every object~$X$ there is a weak equivalence 
$r:X\to Z$ with fibrant target.
For example, if $\cC$ is the collection of cofibrant objects
in an ambient Quillen model category, then it has enough fibrant objects. 
In the cofibration structure on pretriangulated dg categories
discussed in Proposition~\ref{prop-algebraic is topological},
every object is fibrant (so in particular, there are enough fibrant objects).

If $r:X\to Z$ a weak equivalence with fibrant target, then 
for every other object  $A$ the two maps
$$ \bHo(\cC)(A,X) \ \xra{\gamma(r)_*}\ \bHo(\cC)(A,Z) \ \xla{\ \gamma\ }\
\cC(A,Z)/_\text{homotopy}$$
are bijective, so the morphisms  $\bHo(\cC)(A,X)$ form a set
(as opposed to a proper class). 
So if $\cC$ has enough fibrant objects,
then the homotopy category $\bHo(\cC)$ has small hom-sets
(or is `locally small').
\end{rk}

Now we proceed to the construction of the cone and suspension functor.
From now on, $\cC$ is a {\em pointed} cofibration category,
i.e., every initial object is also terminal, hence a zero object.
The following construction of the suspension functor 
is isomorphic to  the construction 
by Quillen~\cite[I.2]{Q} and (the dual of) Brown~\cite[Part~I, Thm.\,3]{brown},
although our exposition is somewhat different.

A {\em cone} in a pointed cofibration category $\cC$ is
a cofibration $i:A\to C$ whose target $C$ is weakly contractible.
The unique morphism from any given object
to the terminal object can be factored as a cofibration followed by
a weak equivalence; so every object is the source of a cone.
Cones of objects in a pointed cofibration category
are unique up to homotopy in a rather strong sense. 
Indeed, the category $\Cone\cC$ of cones in $\cC$ has a natural structure of
cofibration category and the source functor that sends a cone $i:A\to C$
to $A$ is exact and passes to an equivalence
$\bHo(\Cone\cC)\to \bHo(\cC)$ of homotopy categories.

Let us now choose a cone for every object $A$ of
$\cC$, i.e., a cofibration $i_A:A\to CA$ with weakly contractible target. 
The {\em suspension} $\Sigma A$ of $A$ is then the quotient 
of the chosen cone inclusion $i_A:A\to CA$, i.e., a pushout:
$$\xymatrix{ A \ar[r]^-{i_A}\ar[d] & CA \ar[d]^p\\
\ast\ar[r]&\Sigma A}$$

\begin{lemma}\label{lemma-cone extensions} 
Let $i:A\to C$ be a cone and $\alpha:A\to B$ a morphism 
in a pointed cofibration category $\cC$. 
Then there exists a {\em cone extension} of $\alpha$, i.e.,
a pair $(\bar\alpha,s)$ consisting of a morphism $\bar\alpha:C\to\bar C$
and an acyclic cofibration $s:CB\to\bar C$ 
such that $\bar\alpha i=si_B\alpha$ and such that the induced morphism
$\bar\alpha\cup s:C\cup_ACB\to \bar C$ is a cofibration,
where the source is a pushout of $i$ and $i_B\alpha$.
Moreover, the composite morphism in $\bHo(\cC)$
$$ C/A \ \xra{\gamma(\bar\alpha/\alpha)} \ \bar C/B \ 
\xra{\gamma(s/B)^{-1}}\ CB/B = \Sigma B $$
is independent of the cone extension $(\bar\alpha,s)$.
\end{lemma}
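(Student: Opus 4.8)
The plan is to strip a cone extension down to a single cofibration, then reduce the independence to comparing two cone extensions joined by one morphism, and finally to connect any two cone extensions through a common one.

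First I would reinterpret the data. Let $P=C\cup_A CB$ be the pushout of the cone $i\colon A\to C$ and $i_B\alpha\colon A\to CB$; it exists by~(C3) since $i$ is a cofibration, and the induced map $CB\to P$ is again a cofibration. The identity $\bar\alpha i=si_B\alpha$ says exactly that $\bar\alpha$ and $s$ amalgamate to a single morphism $j=\bar\alpha\cup s\colon P\to\bar C$, and the defining condition on a cone extension is precisely that $j$ be a cofibration. Conversely, any cofibration $j\colon P\to\bar C$ with $\bar C$ weakly contractible is a cone extension with $\bar\alpha=j|_C$ and $s=j|_{CB}$: indeed $s$ is a cofibration (it is $j$ precomposed with $CB\to P$) and a weak equivalence, because $CB$ and $\bar C$ are both weakly contractible and $s$ commutes with the maps to $\ast$, so $s$ is acyclic by the 2-out-of-3 property~(C2). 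In particular $si_B\colon B\to\bar C$ is a cofibration, so the quotient $\bar C/B$ and the morphism $s/B\colon CB/B=\Sigma B\to\bar C/B$ are defined, and $s/B$ is a weak equivalence by the gluing lemma; this is what makes $\gamma(s/B)^{-1}$, and hence the entire composite, meaningful.

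Next I would prove a comparison step: if $(\bar\alpha,s)$ and $(\bar\alpha',s')$ are cone extensions into $\bar C$ and $\bar C'$ and $h\colon\bar C\to\bar C'$ satisfies $hj=j'$ (equivalently $h\bar\alpha=\bar\alpha'$ and $hs=s'$), then the two composites agree. Applying the functor $(-)/B$ produces $h/B\colon\bar C/B\to\bar C'/B$ with $(h/B)(\bar\alpha/\alpha)=\bar\alpha'/\alpha$ and $(h/B)(s/B)=s'/B$. Since $s/B$ and $s'/B$ are weak equivalences, the last relation gives $\gamma(s'/B)^{-1}\gamma(h/B)=\gamma(s/B)^{-1}$ in $\bHo(\cC)$, whence $\gamma(s'/B)^{-1}\gamma(\bar\alpha'/\alpha)=\gamma(s'/B)^{-1}\gamma(h/B)\gamma(\bar\alpha/\alpha)=\gamma(s/B)^{-1}\gamma(\bar\alpha/\alpha)$. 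Note that $h$ itself need not be assumed a weak equivalence; the relation $hs=s'$ already forces $\gamma(h/B)$ to be invertible.

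Finally I would join two arbitrary cone extensions $j_0\colon P\to\bar C_0$ and $j_1\colon P\to\bar C_1$. The subtle point is that the naive pushout $\bar C_0\cup_P\bar C_1$ is in general \emph{not} weakly contractible — amalgamating two weakly contractible objects along the cofibration $P$ behaves like forming a suspension — so it is not itself a cone extension. I would repair this by factoring $\bar C_0\cup_P\bar C_1\to\ast$ through a cofibration $\bar C_0\cup_P\bar C_1\to\bar C_2$ followed by a weak equivalence, using~(C4). Then $\bar C_2$ is weakly contractible and the composite $j_2\colon P\to\bar C_0\cup_P\bar C_1\to\bar C_2$ is a cofibration, so $j_2$ is a genuine cone extension; the structure maps $h_0\colon\bar C_0\to\bar C_2$ and $h_1\colon\bar C_1\to\bar C_2$ satisfy $h_0 j_0=j_2=h_1 j_1$. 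Two applications of the comparison step then give that the composites for $j_0$, $j_2$ and $j_1$ all coincide, which is the asserted independence. I expect the main obstacle to be exactly this failure of contractibility of the plain pushout, and the need to restore it via the factorization without breaking the comparison morphisms; the remaining verifications (functoriality of $(-)/B$ and the gluing-lemma weak equivalences) are routine.
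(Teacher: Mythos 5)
Your proposal is correct and takes essentially the same route as the paper's proof: the paper likewise treats first the special case of two cone extensions connected by a single compatible morphism (your comparison step, with the same cancellation of $\gamma(s/B)$ against $\gamma(s'/B)$), and then connects two arbitrary cone extensions by forming the pushout over $C\cup_A CB$ and coning it off, which is exactly your (C4)-factorization of the map to the zero object. The only point you leave unstated is the existence assertion itself, but in your dictionary it is immediate: a cone on $P$, obtained by factoring $P\to\ast$ as a cofibration followed by a weak equivalence, is a cone extension — the same move you perform for $\bar C_0\cup_P\bar C_1$.
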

\begin{proof}
Since $i$ is a cofibration we can choose a pushout:
$$\xymatrix{ A \ar[r]^-{i_B\alpha} \ar[d]_i & CB \ar[d]^-{i'} \\
C \ar[r]_-k & C\cup_A CB }$$
Then we choose a cone $l:C\cup_ACB\to \bar C$,
and $\bar\alpha=lk$ and $s=li'$ have the desired properties.

Suppose that $(\bar\alpha':C\to\bar C',s':CB\to\bar C')$ 
is another cone extension.
Let us first suppose that there is a morphism
$\varphi:\bar C\to\bar C'$ (necessarily a weak equivalence) 
such that $\varphi\bar\alpha=\bar\alpha'$ and $\varphi s=s'$.
Then we have
\begin{align*}
\gamma(s/B)^{-1}\circ \gamma(\bar\alpha/\alpha) \ &=\
\gamma(s/B)^{-1}\circ \gamma(\varphi/B)^{-1}\circ \gamma(\varphi/B)\circ \gamma(\bar\alpha/\alpha)\\
&=\ \gamma(\varphi s/B)^{-1}\circ \gamma(\varphi\bar\alpha/\alpha)
\ =\ \gamma(s'/B)^{-1}\circ \gamma(\bar\alpha'/\alpha)\ .
\end{align*}
In the general case we choose a pushout:
$$\xymatrix{ C\cup_A CB \ar[r]^-{\bar\alpha\cup s} \ar[d]_{\bar\alpha'\cup s'}
& \bar C \ar[d]^-{k'} \\
\bar C' \ar[r]_-k & P }$$
Then $(i_Pk'\bar\alpha,i_Pk's)$ is yet another  cone extension of $\alpha$,
where $i_P:P\to CP$ is a cone on $P$.
Moreover, this new cone extension receives morphisms from both 
$(\bar\alpha,s)$ and $(\bar\alpha',s')$, so by the special case all three
cone extensions give rise to the same morphism $C/A\to\Sigma B$ in $\bHo(\cC)$.
\end{proof}

Now we can define suspension on morphisms, thereby
extending it to a functor $\Sigma:\cC\to\bHo(\cC)$.
Given a $\cC$-morphism $\alpha:A\to B$, we choose
a cone extension $(\bar\alpha,s)$ with respect to the chosen cone
$i_A:A\to CA$, as in Lemma~\ref{lemma-cone extensions}.
We define $\Sigma \alpha$ as the composite in $\bHo(\cC)$
$$ \Sigma A = CA/A \ \xra{\gamma(\bar\alpha/\alpha)} \ \bar C/B \ 
\xra{\gamma(s/B)^{-1}}\ CB/B = \Sigma B\ . $$
Lemma~\ref{lemma-cone extensions} guarantees that this definition is
independent of the cone extension.

\begin{prop}\label{prop-suspension} 
The suspension construction is a functor $\Sigma:\cC\to\bHo(\cC)$.
The suspension functor takes weak equivalences to isomorphisms
and preserves coproducts.
\end{prop}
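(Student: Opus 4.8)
The plan is to verify the three assertions in turn, in each case using that, by Lemma~\ref{lemma-cone extensions}, the morphism $\Sigma\alpha$ does not depend on the chosen cone extension; this frees me to compute each $\Sigma\alpha$ with whatever cone extension is most convenient. Throughout I would rely on two elementary facts, both immediate from the gluing lemma. First, for any cone extension $(\bar\alpha,s)$ of a morphism $\alpha\colon A\to B$, the comparison $s/B\colon CB/B\to\bar C/B$ is a weak equivalence (apply the gluing lemma to the span map $(CB\xleftarrow{i_B}B\to\ast)\to(\bar C\xleftarrow{s\,i_B}B\to\ast)$). Second, $\bar\alpha\colon CA\to\bar C$ is a weak equivalence: since $CA$ and $\bar C$ are weakly contractible, their maps to $\ast$ are weak equivalences, and $2$-out-of-$3$ applied to $CA\xrightarrow{\bar\alpha}\bar C\to\ast$ does the rest.

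The heart of the argument is compatibility with composition. Given $\alpha\colon A\to B$ and $\beta\colon B\to D$, I would choose cone extensions $(\bar\alpha\colon CA\to\bar C,\ s\colon CB\to\bar C)$ and $(\bar\beta\colon CB\to\bar C',\ t\colon CD\to\bar C')$ and form the pushout $\bar C''=\bar C\cup_{CB}\bar C'$ of $s$ and $\bar\beta$, with structure maps $u\colon\bar C\to\bar C''$ and $v\colon\bar C'\to\bar C''$ satisfying $us=v\bar\beta$. As $s$ is an acyclic cofibration, so is its cobase change $v$, hence $vt\colon CD\to\bar C''$ is an acyclic cofibration, and $u\bar\alpha\,i_A=us\,i_B\alpha=v\bar\beta\,i_B\alpha=vt\,i_D\beta\alpha$. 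The naive datum $(u\bar\alpha,vt)$ need not satisfy the cofibration condition of a cone extension, so I would repair it by factoring, via axiom~(C4), the induced morphism $(u\bar\alpha)\cup(vt)\colon CA\cup_A CD\to\bar C''$ as a cofibration $c\colon CA\cup_A CD\to\hat C$ followed by a weak equivalence $w\colon\hat C\to\bar C''$. Then $(\theta,s''):=(c\,\text{incl}_{CA},\,c\,\text{incl}_{CD})$ is a genuine cone extension of $\beta\alpha$ (its cofibration condition is exactly that $c$ is a cofibration, and $s''$ is a weak equivalence by $2$-out-of-$3$ since $ws''=vt$). Passing to quotients, $u$ and $v$ descend to $\tilde u\colon\bar C/B\to\bar C''/D$ and $\tilde v\colon\bar C'/D\to\bar C''/D$ with $\tilde v$ a weak equivalence; the relation $us=v\bar\beta$ gives $\gamma(\tilde v)^{-1}\gamma(\tilde u)=\gamma(\bar\beta/\beta)\gamma(s/B)^{-1}$. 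Inserting $\gamma(w/D)^{-1}\gamma(w/D)$ and using $(w/D)(\theta/(\beta\alpha))=\tilde u(\bar\alpha/\alpha)$ and $(w/D)(s''/D)=\tilde v(t/D)$ then yields
$\Sigma(\beta\alpha)=\gamma(s''/D)^{-1}\gamma(\theta/(\beta\alpha))=\gamma(t/D)^{-1}\gamma(\bar\beta/\beta)\gamma(s/B)^{-1}\gamma(\bar\alpha/\alpha)=\Sigma\beta\circ\Sigma\alpha$.

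For the unit law I would observe that when $\alpha=\Id_A$ the relation $\bar\alpha\,i_A=s\,i_A$ lets the gluing lemma show that both $\bar\alpha/A$ and $s/A$ are weak equivalences, so $\Sigma\Id_A=\gamma(s/A)^{-1}\gamma(\bar\alpha/A)$ is an \emph{isomorphism} of $\bHo(\cC)$. Applying the composition law just proved to $\Id_A=\Id_A\circ\Id_A$ gives $\Sigma\Id_A=\Sigma\Id_A\circ\Sigma\Id_A$, and cancelling the isomorphism $\Sigma\Id_A$ forces $\Sigma\Id_A=\Id_{\Sigma A}$; thus $\Sigma$ is a functor. That $\Sigma$ sends a weak equivalence $w\colon A\to B$ to an isomorphism is then quick: for any cone extension $(\bar w,s)$ the gluing lemma applied to $(CA\xleftarrow{i_A}A\to\ast)\to(\bar C\xleftarrow{s\,i_B}B\to\ast)$, with vertical maps $\bar w,w,\Id_\ast$ all weak equivalences and left legs cofibrations, shows $\bar w/w$ is a weak equivalence, so $\Sigma w=\gamma(s/B)^{-1}\gamma(\bar w/w)$ is invertible.

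Finally, preservation of coproducts I would obtain by exploiting the freedom in choosing cones: given chosen cones $CA,CB$, the morphism $i_A\vee i_B\colon A\vee B\to CA\vee CB$ is again a cone, since a coproduct of cofibrations is a cofibration and $CA\vee CB$ is weakly contractible (it is the pushout of $CA\gets\ast\to CB$, so the gluing lemma compares it with $\ast\vee\ast=\ast$). Taking $C(A\vee B)=CA\vee CB$ and using that pushouts commute with coproducts, the quotient $(CA\vee CB)/(A\vee B)$ is literally $\Sigma A\vee\Sigma B$, and the same compatibility identifies $\Sigma\iota_A,\Sigma\iota_B$ with the two coproduct inclusions, so the canonical map $\Sigma A\vee\Sigma B\to\Sigma(A\vee B)$ is an isomorphism. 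The main obstacle is the composition law, and specifically the fact that the glued datum $(u\bar\alpha,vt)$ fails the cofibration condition because $\bar\beta$ and $u$ are only weak equivalences; re-factoring the corner morphism through~(C4) fixes this at the cost of an auxiliary weak equivalence $w$ that becomes invertible in $\bHo(\cC)$, after which everything reduces to bookkeeping of the induced morphisms on the various quotients.
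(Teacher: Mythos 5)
Your arguments for functoriality and for preservation of weak equivalences are correct, and for composition you in fact build exactly the paper's construction: your pushout $\bar C''=\bar C\cup_{CB}\bar C'$ is the paper's $E$, and your $u,v$ are its $\bar\beta',\bar s$. Where the paper simply asserts that $(\bar\beta'\bar\alpha,\bar s t)$ is a cone extension of $\beta\alpha$, you insert a (C4)-factorization to secure the cofibration condition; this is harmless and arguably more careful than the paper (as it happens the naive datum does satisfy the condition, since its induced map on $CA\cup_A CD$ factors as a composite of cobase changes of the cofibrations $CD\to CB\cup_B CD$, $\bar\alpha\cup s$ and $\bar\beta\cup t$, but this does require checking). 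Your unit argument also deviates from the paper, which just declares $(\Id_{CA},\Id_{CA})$ a cone extension of $\Id_A$; since that pair actually violates the cofibration condition in general (the fold map $CA\cup_A CA\to CA$ need not be a cofibration), your detour --- $\Sigma\Id_A$ is invertible by the gluing lemma, and an invertible idempotent is an identity --- is a legitimate way around a point the paper glosses over.

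The coproduct step, however, has a genuine gap. The functor $\Sigma$ is defined relative to a \emph{fixed} choice of cones $i_X\colon X\to CX$, and the proposition is a statement about that fixed functor; you are not free to ``take $C(A\vee B)=CA\vee CB$''. Doing so proves coproduct preservation for a different suspension functor, and transporting the conclusion back to the given $\Sigma$ requires a comparison between the two functors --- and producing that comparison is precisely what the paper's proof does: it applies Lemma~\ref{lemma-cone extensions} to the cone $i_A\vee i_B\colon A\vee B\to CA\vee CB$ and the identity of $A\vee B$, obtaining $\bar\alpha\colon CA\vee CB\to\bar C$ and an acyclic cofibration $s\colon C(A\vee B)\to\bar C$ with $\bar\alpha(i_A\vee i_B)=s\,i_{A\vee B}$, whence the zig-zag on quotients $\Sigma A\vee\Sigma B=(CA\vee CB)/(A\vee B)\to\bar C/(A\vee B)\leftarrow \Sigma(A\vee B)$ is the desired isomorphism in $\bHo(\cC)$. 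A second, related omission: even granting your re-choice of cone, the claim that ``the same compatibility identifies $\Sigma\iota_A,\Sigma\iota_B$ with the two coproduct inclusions'' is not automatic, because $\Sigma\iota_A$ must be computed from a genuine cone extension of $\iota_A\colon A\to A\vee B$, and the obvious candidate $\bigl(\mathrm{incl}_{CA}\colon CA\to CA\vee CB,\ \Id\bigr)$ fails exactly the cofibration condition you were so careful about in the composition step: its induced map is $\mathrm{fold}\vee\Id_{CB}\colon (CA\cup_A CA)\vee CB\to CA\vee CB$, and fold maps of cones are not cofibrations in general. So this identification also needs an argument; for instance, $(\bar\alpha\circ\mathrm{incl}_{CA},\,s)$, with $(\bar\alpha,s)$ as above, is a genuine cone extension of $\iota_A$ (its cofibration condition holds because the induced map factors as a cobase change of $i_B$ followed by the cofibration $\bar\alpha\cup s$), and from it one reads off that $\Sigma\iota_A$ is the coproduct inclusion followed by the comparison isomorphism.
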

\begin{proof} 
The pair $(\Id_{CA},\Id_{CA})$ is a cone extension of the identity of $A$,
so we have $\Sigma\Id_A=\Id_{\Sigma A}$. 
Given another morphism $\beta:B\to D$
and a cone extension $(\bar\beta,t)$ of $\beta$, 
we choose a pushout:
$$\xymatrix{ CB \ar[r]^-s \ar[d]_{\bar\beta} & \bar C \ar[d]^{\bar\beta'} \\
\bar C' \ar[r]_-{\bar s} & E }$$
The morphism $\bar s$ is an acyclic cofibration since $s$ is.
So the pair $(\bar\beta'\bar\alpha,\bar st)$ 
is a cone extension of $\beta\alpha:A\to D$ and we get
\begin{align*}
\Sigma(\beta\alpha) \ &= \  \gamma(\bar s t/D)^{-1} \circ\gamma(\bar\beta'\bar\alpha/\beta\alpha) \ = \ 
\gamma(t/D)^{-1} \circ\gamma(\bar s/D)^{-1}
 \circ\gamma(\bar\beta'/\beta)
\circ\gamma(\bar\alpha/\alpha)\\
&= \ 
\gamma(t/D)^{-1}  \circ\gamma(\bar\beta/\beta)
\circ\gamma(s/B)^{-1}
\circ\gamma(\bar\alpha/\alpha)\ =\ (\Sigma\beta)\circ(\Sigma\alpha)\ .
\end{align*}
The third equation uses that $\bar\beta's=\bar s\bar\beta$.
So the suspension construction is functorial. If $\alpha:A\to B$ is
a weak equivalence and $(\bar\alpha,s)$ a cone extension,
then $\bar\alpha/\alpha:CA/A\to\bar C/B$ is a weak equivalence by
the gluing lemma. So $\gamma(\bar\alpha/\alpha)$, and hence $\Sigma\alpha$
is an isomorphism in $\bHo(\cC)$.

It remains to show that the suspension functor preserves coproducts.
Indeed, if $i_A:A\to CA$, $i_B:B\to CB$ and $i_{A\vee B}:A\vee B\to C(A\vee B)$
are the chosen cones for two objects $A$, $B$ and a coproduct $A\vee B$,
then $i_A\vee i_B:A\vee B\to CA\vee CB$ is another cone,
so Lemma~\ref{lemma-cone extensions} provides a morphism 
$\bar\alpha:CA\vee CB\to\bar C$ and an acyclic cofibration 
$s:C(A\vee B)\to\bar C$ such that 
$\bar\alpha\circ(i_A\vee i_B)=s\circ i_{A\vee B}$.
Passing to quotients and applying the localization functor
produces an isomorphism 
\begin{align*}
\Sigma A\vee\Sigma B \ = \ 
(CA\vee CB)/(A\vee B)\ &\xra{\ \gamma(\bar\alpha/A\vee B)\ }\
\bar C/(A\vee B)\\ 
&\xra{\ \gamma(s/A\vee B)^{-1}\ }\
C(A\vee B)/(A\vee B)=\Sigma(A\vee B)\ , 
\end{align*}
which is in fact the canonical morphism.
\end{proof}

Since the suspension functor takes weak equivalences to isomorphisms,
it descends to a unique functor 
$$ \Sigma \ : \ \bHo(\cC) \ \to \  \bHo(\cC) $$
such that $\Sigma\circ\gamma=\Sigma$.
Since coproducts in $\cC$ are coproducts in $\bHo(\cC)$,
this induced suspension functor again preserves coproducts.

\begin{rk} 
In many examples, cones (and hence suspensions)
can be chosen functorially already on the 
level of the cofibration category.
However, the punchline of the previous construction is that even without
functorial cones in $\cC$, suspension becomes functorial after passage
to the homotopy category. 
On the other hand, it would not be a serious loss of generality 
to assume functorial cones and suspensions. Indeed,
Theorems~\ref{theorem-framing} and~\ref{thm-frames are Delta-enriched}
together say that for every saturated cofibration category $\cC$ 
the category $f\cC$ of frames in $\cC$ is a $\Delta$-cofibration category 
such that $\bHo(\cC)$ is equivalent to $\bHo(f\cC)$. 
Moreover, if $\cC$ is pointed,
then so is $f\cC$ and the category $f\cC$ has functorial cones 
and functorial suspensions given by smash product with the 
`based interval' $I$ respectively the `circle'~$S^1$,
compare Example~\ref{eg-smash with S^1}.
\end{rk}

There is extra structure on a suspension, namely a certain
{\em collapse morphism} $\kappa_A:\Sigma A\to\Sigma A\vee\Sigma A$
in $\bHo(\cC)$.
To define it, we consider a pushout $CA\cup_A CA$ 
of two copies of the cone~$CA$ along $i_A$.
The gluing lemma
guarantees that the map $0\cup p:CA\cup_A CA\to CA/A=\Sigma A$ 
induced on horizontal pushouts of the left commutative diagram
$$\xymatrix{ CA \ar[d]_\sim & A \ar[l]_-{i_A} \ar[r]^-{i_A} \ar@{=}[d] & 
CA \ar@{=}[d] &&&
 CA \ar[d]_p & A \ar[l]_-{i_A} \ar[r]^-{i_A} \ar[d] & CA \ar[d]^p\\
\ast  & A \ar[l] \ar[r]_-{i_A}  & CA &&&
\Sigma A  & \ast \ar[l] \ar[r]  & \Sigma A}$$
is a weak equivalence. We define the $\kappa_A$ as the composite
$$ \Sigma A\ \ \xra{\gamma(0\cup p)^{-1}} \ CA\cup_ACA \
\xra{\ \gamma(p\cup p)\ } \ \Sigma A\vee \Sigma A $$
where the second morphism is the image of the $\cC$-morphism
induced on horizontal pushouts of the right commutative diagram above.

\begin{prop}\label{prop-kappa properties}
The morphism $\kappa_A:\Sigma A\to \Sigma A\vee \Sigma A$ 
satisfies the relations
$$ (0+\Id)\kappa_A\ =\ \Id  \text{\quad and\quad}
(\Id+\Id)\kappa_A\ =\ 0 $$
as endomorphisms of $\Sigma A$ in $\bHo(\cC)$.
The endomorphism 
$$ m_A\ = \  (\Id+0)\kappa_A$$ 
of $\Sigma A$ is an involution, i.e., $m_A^2=\Id$.
The morphism $\kappa_A$ is natural, i.e., for every morphism
$a:A\to B$ we have
$(\Sigma a\vee \Sigma a)\circ\kappa_A=\kappa_B\circ(\Sigma a)$. 
\end{prop}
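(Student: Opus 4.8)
The plan is to read the first three relations directly off the defining formula $\kappa_A=\gamma(p\cup p)\circ\gamma(0\cup p)^{-1}$, where I abbreviate $p=p_A\colon CA\to CA/A=\Sigma A$ for the quotient of the chosen cone and $0\cup p,\ p\cup p$ for the two maps out of the pushout $CA\cup_A CA$ occurring in the construction. In each case I would precompose $\kappa_A$ with one of the maps $0+\Id$, $\Id+\Id$ or $\Id+0$ and observe that the resulting composite of $\cC$-morphisms out of $CA\cup_A CA$ simplifies, by the universal property of the pushout, to an honest $\cC$-morphism whose class I can identify.

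For the relation $(0+\Id)\kappa_A=\Id$ I would note that $(0+\Id)\circ(p\cup p)$ restricts to $0$ on the first copy of $CA$ and to $p$ on the second, hence equals $0\cup p$; therefore $(0+\Id)\kappa_A=\gamma(0\cup p)\circ\gamma(0\cup p)^{-1}=\Id$. For $(\Id+\Id)\kappa_A=0$ I would note that $(\Id+\Id)\circ(p\cup p)$ restricts to $p$ on both copies of $CA$, so it factors as $p\circ\nabla$, where $\nabla\colon CA\cup_A CA\to CA$ is the fold map induced by $\Id_{CA}$ on both summands (well defined since both summands share the attaching map $i_A$). Because $CA$ is weakly contractible, $\gamma(CA)$ is a zero object of $\bHo(\cC)$, so $\gamma(\nabla)$ is the unique morphism into a zero object and hence vanishes; thus $(\Id+\Id)\kappa_A=\gamma(p)\circ\gamma(\nabla)\circ\gamma(0\cup p)^{-1}=0$.

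The involution claim is handled by symmetry. Writing $\tau\colon CA\cup_A CA\to CA\cup_A CA$ for the involution interchanging the two copies of $CA$, the identity $(\Id+0)\circ(p\cup p)=p\cup 0=(0\cup p)\circ\tau$ gives $m_A=(\Id+0)\kappa_A=\gamma(0\cup p)\circ\gamma(\tau)\circ\gamma(0\cup p)^{-1}$, exhibiting $m_A$ as the conjugate of $\gamma(\tau)$ by the isomorphism $\gamma(0\cup p)$. Since $\tau^2=\Id$ on the nose, it follows that $m_A^2=\Id$ (and incidentally that $m_A$ is an isomorphism).

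The main work is the naturality statement, where the difficulty is that neither the cone $CA$ nor the suspension is functorial on $\cC$. First I would isolate the naturality of $\kappa$ with respect to \emph{maps of cones}: if $b\colon CX\to CY$ covers a map $x\colon X\to Y$ on bases, in the sense that $b\,i_X=i_Y x$, then $b$ induces a map $b\cup b\colon CX\cup_X CX\to CY\cup_Y CY$ which, by the universal property of the pushouts, intertwines both the collapse maps $0\cup p$ and the pinch maps $p\cup p$ with the induced quotient map $b/x\colon CX/X\to CY/Y$; applying $\gamma$ then shows that $b/x$ is compatible with the two collapse morphisms $\kappa^{CX}$ and $\kappa^{CY}$ built from these cones. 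To deduce naturality for a general $a\colon A\to B$, I would choose a cone extension $(\bar a\colon CA\to\bar C,\ s\colon CB\to\bar C)$ of $a$ as in Lemma~\ref{lemma-cone extensions}, regard $(\bar C,\,s\,i_B)$ as a cone on $B$, and apply the cone-map naturality twice: once to $\bar a$ (covering $a$) and once to the acyclic cofibration $s$ (covering $\Id_B$). Combining the two resulting commuting squares with the defining formula $\Sigma a=\gamma(s/B)^{-1}\circ\gamma(\bar a/a)$ then yields $(\Sigma a\vee\Sigma a)\circ\kappa_A=\kappa_B\circ\Sigma a$. I expect the only real obstacle to be the careful bookkeeping of the quotient identifications $\gamma(s/B)$ and $\gamma(\bar a/a)$ in this last step; the underlying geometry is just functoriality of pushouts.
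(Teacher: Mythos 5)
Your handling of the three relations $(0+\Id)\kappa_A=\Id$, $(\Id+\Id)\kappa_A=0$ and $m_A^2=\Id$ is exactly the paper's argument: the identification $(0+\Id)\circ(p\cup p)=0\cup p$, the factorization of $(\Id+\Id)\circ(p\cup p)$ through the weakly contractible cone via the fold map, and the identity $p\cup 0=(0\cup p)\circ\tau$ exhibiting $m_A$ as a conjugate of $\gamma(\tau)$. Your naturality strategy is also the paper's argument in different packaging: the paper's single large commutative diagram is precisely your cone-map-naturality lemma applied once to $\bar\alpha$ covering $\alpha$ (the left two columns of that diagram) and once to the acyclic cofibration $s$ covering $\Id_B$ (the right two columns), with the weak equivalences inverted at the end.

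There is, however, one genuine gap of scope in the naturality part. Cone extensions (Lemma~\ref{lemma-cone extensions}) exist only for morphisms of $\cC$, so your argument proves the naturality square only for morphisms of the form $a=\gamma(\alpha)$ with $\alpha$ a $\cC$-morphism. The proposition is a statement about arbitrary morphisms $a\colon A\to B$ in $\bHo(\cC)$, and it is needed in that generality: in the proof of Proposition~\ref{prop-stable is additive}~(ii), naturality of $\kappa$ is invoked for an arbitrary morphism $\varphi$ of the homotopy category. The paper therefore opens its naturality proof with a reduction that your proposal omits: by Theorem~\ref{thm-HoC a la Brown}~(i) every morphism of $\bHo(\cC)$ is a fraction $\gamma(s)^{-1}\gamma(\alpha)$ with $s$ an acyclic cofibration; naturality squares are closed under composition, and since $\Sigma\gamma(s)$ is invertible (Proposition~\ref{prop-suspension}), the square for $\gamma(s)$ can be pre- and post-composed with inverses to yield the square for $\gamma(s)^{-1}$. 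With this one-sentence reduction added, your proof is complete.
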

\begin{proof}
We observe that $(0+\Id)\circ (p\cup p)=0\cup p$ 
as $\cC$-morphisms $CA\cup_A CA\to \Sigma A$, so 
$$ (0+\Id)\circ\kappa_A \ = \  
(0+\Id)\circ\gamma(p\cup p)\circ\gamma(0\cup p)^{-1}
\ = \  \gamma(0\cup p)\circ\gamma(0\cup p)^{-1} \ = \ \Id\ .$$
The square
$$\xymatrix@C=12mm{CA\cup_A CA \ar[r]^-{p\cup p} \ar[d]_{\Id\cup\Id} &
\Sigma A\vee \Sigma A \ar[d]^{\Id+\Id}\\
CA\ar[r]_-p &\Sigma A}$$
commutes in $\cC$, so the morphism 
$(\Id+\Id)\circ\kappa_A= 
(\Id+\Id)\circ\gamma(p\cup p)\circ\gamma(0\cup p)^{-1}$
factors through the cone $CA$, which is a zero object in $\bHo(\cC)$.
Thus $(\Id+\Id)\kappa_A=0$.

For the next relation we denote by $\tau$ the involution of
$CA\cup_ACA$ that interchanges the two cones. Then we have
\begin{align*}
m_A \ &= \ (\Id+0)\circ \gamma(p\cup p)\circ\gamma(0\cup p)^{-1} \ 
= \ \gamma(p\cup 0)\circ\gamma(0\cup p)^{-1} \ 
= \ \gamma(0\cup p)\circ\gamma(\tau)\circ\gamma(0\cup p)^{-1} \ .
\end{align*}
Since $\tau^2=\Id$ this leads to $m_A^2=\Id$.

Every morphism in $\bHo(\cC)$ is of the form $\gamma(s)^{-1}\gamma(\alpha)$
for $\cC$-morphisms $\alpha$ and $s$; so it suffices to prove 
the naturality statement for morphisms of the form 
$a=\gamma(\alpha)$ for a $\cC$-morphism \mbox{$\alpha:A\to B$}. 
We choose a cone extension $(\bar\alpha,s)$ of $\alpha$ and consider 
the commutative diagram:
$$\xymatrix@C=18mm{ 
\Sigma A \ar[r]^-{\bar \alpha/\alpha}  & 
\bar CB/B\ & \Sigma B  \ar[l]_{s/\Id_B}^\sim\\
CA\cup_ACA \ar[r]^-{\bar\alpha\cup\bar\alpha}
\ar[u]^{0\cup p}_\sim\ar[d]_{p\cup p}  
& \bar CB\cup_{si_B} \bar CB \ar[u]^{0\cup p}_\sim\ar[d]_{p\cup p} & 
CB\cup_B CB \ar[u]_{0\cup p}^\sim\ar[d]^{p\cup p}\ar[l]_-{s\cup s}^-\sim \\
\Sigma A \vee\Sigma A\ar[r]_-{\bar\alpha/\alpha\vee \bar\alpha/\alpha}  
& \bar CB/B\vee\bar CB/B\ & 
\Sigma B \vee\Sigma B\ar[l]^-{s/\Id_B\vee s/\Id_B}_-\sim}$$
The vertical maps going up and the horizontal maps
going left are weak equivalences, so they
become isomorphisms in the homotopy category.
After inverting these weak equivalences in $\bHo(\cC)$, the composite
through the upper right corner becomes $\kappa_{B}\circ\Sigma\gamma(\alpha)$
and the composite through the lower left corner becomes 
$(\Sigma\gamma(\alpha)\vee\Sigma\gamma(\alpha))\circ\kappa_A$.
\end{proof}

We call a pointed cofibration category 
{\em stable} if the suspension functor
$\Sigma:\bHo(\cC)\to\bHo(\cC)$ is an autoequivalence.
We can now show that the homotopy category of a stable 
cofibration categories is additive.
By Theorem~\ref{thm-HoC a la Brown}~(iii)
the coproduct in any cofibration category~$\cC$ descends to a coproduct
in the homotopy category $\cC$. We will show that for stable $\cC$ 
the coproduct $X\vee Y$ is also a product of $X$ and $Y$ in $\bHo(\cC)$ 
with respect to the morphisms 
$p_X=\Id+0:X\vee Y\to X$ and $p_Y=0+\Id:X\vee Y\to Y$. 
So we have to show that for every object $B$ 
of $\cC$ the map
\begin{equation}\label{eq:coproduct_is_product}
\bHo(\cC)(B,X\vee Y) \ \to \ \bHo(\cC)(B,X)\times \bHo(\cC)(B,Y)\ , \quad
\varphi\ \longmapsto \ (p_X\varphi,\, p_Y\varphi)  
\end{equation}
is bijective.

\begin{prop}\label{prop-stable is additive} 
Let $\cC$ be a pointed cofibration category.
  \begin{enumerate}[\em (i)]
  \item If the object $B$ is a suspension, 
    then the map~\eqref{eq:coproduct_is_product} is surjective.
  \item Let $\varphi,\psi:B\to X\vee Y$ be morphisms in $\bHo(\cC)$
    such that $p_X\varphi=p_X\psi$ and $p_Y\varphi=p_Y\psi$.
    Then $\Sigma \varphi=\Sigma\psi$.
  \item If $\cC$ is stable, then the homotopy category $\bHo(\cC)$
    is additive and for every object $A$ of $\cC$ the
    morphism $m_A:\Sigma A\to\Sigma A$ is the negative of
    the identity of $\Sigma A$.
\end{enumerate}
\end{prop}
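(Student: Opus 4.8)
The plan is to derive all three parts from the collapse morphism $\kappa_A$ and its properties in Proposition~\ref{prop-kappa properties}, together with the fact that $\Sigma$ preserves coproducts (Proposition~\ref{prop-suspension}), so that $\Sigma(X\vee Y)=\Sigma X\vee\Sigma Y$ and $\Sigma p_X=p_{\Sigma X}$. I write $i_X,i_Y$ for the coproduct inclusions, $p_X,p_Y$ for the morphisms $\Id+0$ and $0+\Id$ of $X\vee Y$, and $(u+v)$ for a morphism out of a coproduct. For part~(i) I may assume $B=\Sigma A$, the general suspension reducing by precomposition with an isomorphism $B\cong\Sigma A$. Given $f:B\to X$ and $g:B\to Y$, I would set $\varphi=(i_X f m_A+i_Y g)\circ\kappa_A$, where $m_A=(\Id+0)\kappa_A$. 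Then $p_X\varphi=f m_A\circ(\Id+0)\kappa_A=f m_A^2=f$ by the involution property $m_A^2=\Id$, while $p_Y\varphi=g\circ(0+\Id)\kappa_A=g$ by the relation $(0+\Id)\kappa_A=\Id$; so $\varphi$ is the desired preimage and (i) follows.

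For part~(ii) I set $W=X\vee Y$ and introduce the two idempotents $e_1=i_Xp_X$ and $e_2=i_Yp_Y$ on $W$, together with $\rho=(\Sigma e_1+\Sigma e_2):\Sigma W\vee\Sigma W\to\Sigma W$. Naturality of $\kappa$ applied to $\varphi:B\to W$ gives $\kappa_W\circ\Sigma\varphi=(\Sigma\varphi\vee\Sigma\varphi)\circ\kappa_B$, and composing with $\rho$ produces the morphism $(\Sigma(i_X f)+\Sigma(i_Y g))\circ\kappa_B$, since $e_1\varphi=i_X p_X\varphi=i_X f$ and $e_2\varphi=i_Y g$ depend only on $f=p_X\varphi$ and $g=p_Y\varphi$. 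Hence $\rho\kappa_W\Sigma\varphi=\rho\kappa_W\Sigma\psi$ whenever $\varphi$ and $\psi$ agree on both projections. It then suffices to show $\mu:=\rho\kappa_W$ is invertible. Using naturality of $\kappa$ along $i_X$, namely $\kappa_W\circ\Sigma i_X=(\Sigma i_X\vee\Sigma i_X)\circ\kappa_X$, together with $\Sigma e_1\circ\Sigma i_X=\Sigma i_X$ and $\Sigma e_2\circ\Sigma i_X=0$, I would compute $\mu\circ\Sigma i_X=(\Sigma i_X+0)\circ\kappa_X=\Sigma i_X\circ m_X$, and symmetrically $\mu\circ\Sigma i_Y=\Sigma i_Y$. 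Thus $\mu$ precomposed with the two inclusions of $\Sigma W=\Sigma X\vee\Sigma Y$ equals $\Sigma i_X\circ m_X$ and $\Sigma i_Y$; squaring and using $m_X^2=\Id$ shows $\mu^2$ agrees with the identity on both inclusions, so $\mu^2=\Id$ by the universal property of the coproduct. In particular $\mu$ is an isomorphism, and cancelling it yields $\Sigma\varphi=\Sigma\psi$.

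For part~(iii), stability makes $\Sigma$ an autoequivalence of $\bHo(\cC)$, hence faithful and essentially surjective; in particular every object is isomorphic to a suspension. Essential surjectivity combined with~(i) shows the map~\eqref{eq:coproduct_is_product} is surjective for every $B$, and faithfulness combined with~(ii) shows it is injective, so $X\vee Y$ is simultaneously a coproduct and a product. A pointed category with finite biproducts is canonically enriched in commutative monoids with biadditive composition, so it remains only to exhibit additive inverses. Since $\Sigma A\vee\Sigma A$ is now a product, a morphism into it is determined by its two components; those of $\kappa_A$ are $(\Id+0)\kappa_A=m_A$ and $(0+\Id)\kappa_A=\Id$, so $\kappa_A=\langle m_A,\Id\rangle$ and the monoid sum $m_A+\Id$ equals the fold map $(\Id+\Id)$ precomposed with $\kappa_A$, which is $0$ by Proposition~\ref{prop-kappa properties}. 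Hence $m_A=-\Id_{\Sigma A}$, every identity morphism (and, by biadditivity of composition, every morphism) has an additive inverse, and so $\bHo(\cC)$ is additive.

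The main obstacle I anticipate is the computation in part~(ii) identifying $\mu=\rho\kappa_W$ as the involution that is $m_X$ on one wedge summand and the identity on the other. This requires juggling several instances of the naturality of $\kappa$ and being scrupulous about which projection of $\kappa$ produces the reflection $m$ rather than the identity. The appearance of $m_X$ is at first sight unwelcome, but it is exactly what forces the use of the involution property $m_X^2=\Id$ to conclude invertibility; the parallel bookkeeping in part~(iii), recognising $\kappa_A=\langle m_A,\Id\rangle$, is then what pins down $m_A=-\Id$.
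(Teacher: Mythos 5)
Your proposal is correct and follows essentially the same route as the paper: part (i) uses verbatim the paper's construction $((\alpha\circ m_A)\vee\beta)\circ\kappa_A$, and part (iii) matches the paper's argument exactly (biproducts from (i), (ii) and stability, then $m_A+\Id=(\Id+\Id)\kappa_A=0$ forces $m_A=-\Id$ and gives additive inverses). The only deviation is cosmetic, in (ii): the paper builds the twist $m_X$ into the comparison map $(\Sigma\iota_X)m_X(\Sigma p_X)+(\Sigma\iota_Y)(\Sigma p_Y)$ so that its composite with $\kappa_{X\vee Y}$ is the identity, whereas you use the untwisted map $\Sigma(i_Xp_X)+\Sigma(i_Yp_Y)$ and instead verify that the resulting $\mu$ is an involution, hence invertible -- the same computation packaged slightly differently.
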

\begin{proof}
(i) Given two morphisms $\alpha:\Sigma A\to X$ and $\beta:\Sigma A\to Y$
in $\bHo(C)$ we consider the morphism
$((\alpha\circ m_A)\vee\beta)\circ \kappa_A:\Sigma A\to X\vee Y$.
This morphism then satisfies
\[
p_X\circ((\alpha\circ m_A)\vee \beta)\circ \kappa_A\ =\
  \alpha\circ m_A\circ (\Id+0)\circ \kappa_A \ = \
  \alpha\circ m_A^2 \ = \ \alpha \]
and similarly $p_Y\circ((\alpha\circ m_A)\vee \beta)\circ \kappa_A=\beta$.
So the map~\eqref{eq:coproduct_is_product} is surjective for $B=\Sigma A$.

(ii) We first show that the composite
\begin{equation}\label{eq:Id_X_vee_Y}
 \Sigma(X\vee Y) \ \xra{\ \kappa_{X\vee Y}\ } 
\ \Sigma(X\vee Y)\vee \Sigma(X\vee Y) \
\xra{(\Sigma \iota_X)m_X(\Sigma p_X)+(\Sigma \iota_Y)(\Sigma p_Y)}
\Sigma(X\vee Y)
\end{equation}
is the identity of $\Sigma(X\vee Y)$. Here $\iota_X:X\to X\vee Y$
and $\iota_Y:Y\to X\vee Y$ are the canonical morphisms.
Indeed, after precomposition with
$\Sigma \iota_X:\Sigma X\to\Sigma(X\vee Y)$ we have
\begin{align*}
((\Sigma \iota_X)m_X(\Sigma p_X)&+(\Sigma \iota_Y)(\Sigma p_Y))\circ\kappa_{X\vee Y}
\circ(\Sigma \iota_X) \\ 
&= \ 
((\Sigma \iota_X)m_X(\Sigma p_X)+(\Sigma \iota_Y)(\Sigma p_Y))\circ
(\Sigma \iota_X\vee\Sigma \iota_X)\circ\kappa_X \\ 
&= \ ((\Sigma \iota_X)m_X+ 0)\circ\kappa_X \
= \ ((\Sigma \iota_X)m_X)\circ(\Id_{\Sigma X}+0)\circ\kappa_X \\
&= \ (\Sigma \iota_X)\circ m_X^2 \ = \ \Sigma \iota_X \ .
\end{align*}
Similarly, we have 
$((\Sigma \iota_X)m_X(\Sigma p_X)+(\Sigma \iota_Y)(\Sigma p_Y))\circ\kappa_{X\vee Y}
\circ(\Sigma \iota_Y)=\Sigma \iota_Y$.
Since the suspension functor preserves coproducts, a morphism out of
$\Sigma(X\vee Y)$ is determined by precomposition with 
$\Sigma \iota_X$ and $\Sigma \iota_Y$. 
This proves that the composite~\eqref{eq:Id_X_vee_Y} is the identity.
For $\varphi:B\to X\vee Y$ we then have
\begin{align*}
  \Sigma \varphi \ &= \ 
((\Sigma \iota_X)m_X(\Sigma p_X)+(\Sigma \iota_Y)(\Sigma p_Y))\circ\kappa_{X\vee Y}\circ(\Sigma \varphi)\\
&= \ ((\Sigma \iota_X)m_X(\Sigma p_X)+(\Sigma \iota_Y)(\Sigma p_Y))
\circ(\Sigma\varphi\vee\Sigma\varphi)\circ\kappa_B\\
&= \ ((\Sigma \iota_X)m_X\Sigma(p_X\varphi)+(\Sigma \iota_Y)\Sigma(p_Y\varphi))\circ\kappa_B\ .
\end{align*}
So $\Sigma\varphi$ is determined by
the composites $p_X\varphi$ and $p_Y\varphi$, and this proves the claim.

(iii) Since $\cC$ is stable, every object is isomorphic to
a suspension, so the map~\eqref{eq:coproduct_is_product}
is always surjective by part~(i). 
Moreover, suspension is faithful,
so the map~\eqref{eq:coproduct_is_product}
is always injective by part~(ii). 
Thus the map~\eqref{eq:coproduct_is_product}
is bijective for all objects $B, X$ and $Y$, and so coproducts in $\bHo(\cC)$
are also products.

It is well known  that in any category with zero object
that has coproducts that are also products, the morphisms sets in $\cT$ 
can then be endowed with a natural structure of abelian monoid
as follows, see for example~\cite[Thm.\,8.2.14]{kashiwara-schapira}. 
Given $f,g:B\to Z$, let $f\perp g:B\to Z\vee Z$ be the
unique morphism such that $(\Id+0)(f\perp g)=f$ and
$(0+\Id)(f\perp g)=g$. Then the assignment
$f+g= (\Id+\Id)(f\perp g)$ is an associative, commutative and
binatural operation on the
set of morphisms from $B$ to $Z$ 
with neutral element given by the zero morphism.

The collapse map $\kappa_A:\Sigma A\to \Sigma A\vee \Sigma A$ 
satisfies $(\Id+0)\kappa=m_A$ and
$(0+\Id)\kappa_A=\Id$, and so $\kappa_A=m_A\perp\Id$.
So we have $m_A+\Id=(\Id+\Id)\kappa_A=0$.
This shows that the morphism $m_A$ is the additive inverse of 
the identity of $\Sigma A$. In particular, 
the abelian monoid $\bHo(\cC)(\Sigma A, Z)$
has inverses, and is thus an abelian group. Since every object is isomorphic
to a suspension, the abelian monoid $\bHo(\cC)(B,Z)$ is a group for all
objects $B$ and $Z$, and so $\bHo(\cC)$ is an additive category.
\end{proof}

Now we introduce the class of distinguished triangles.
Given a cofibration $j:A\to B$ in a pointed cofibration category $\cC$, 
we define the {\em connecting morphism}
$\delta(j):B/A\to\Sigma A$ in $\bHo(\cC)$ as 
\begin{equation}\label{eq:connecting_morphism}
 \delta(j) \ = \
  \gamma(p\cup 0)\circ\gamma(0\cup q)^{-1} \ : B/A \ \to \ \Sigma A\ . 
\end{equation}
Here $q:B\to B/A$ is the quotient morphism,
$p\cup 0:CA\cup_jB\to\Sigma A$ is the morphism that collapses $B$ 
and $0\cup q:CA\cup_jB\to B/A$ is the weak equivalence that collapses $CA$.
The {\em elementary distinguished triangle} associated to the 
cofibration $j$ is the sequence
$$ A \ \xra{\ \gamma(j)\ }\ B \ \xra{\ \gamma(q)\ }\ B/A \ \xra{\ \delta(j) \ }\ \Sigma A \ .$$
A {\em distinguished triangle} is any triangle in the homotopy category
that is isomorphic to the elementary distinguished triangle 
of a cofibration in $\cC$.

\begin{prop}\label{prop-connecting natural}
The connecting morphism~\eqref{eq:connecting_morphism} 
is natural in the following sense:
for every commutative square in $\cC$ on the left
$$\xymatrix{ A \ar[r]^-j \ar[d]_\alpha & B\ar[d]^\beta 
&&& B/A \ar[r]^-{\delta(j)} \ar[d]_{\gamma(\beta/\alpha)} & 
\Sigma A\ar[d]^{\Sigma\gamma(\alpha)}\\
A'\ar[r]_-{j'} & B' &&& B'/A'\ar[r]_-{\delta(j')} &\Sigma A'}$$
such that $j$ and $j'$ are cofibrations, the square
on the right commutes in $\bHo(\cC)$.
\end{prop}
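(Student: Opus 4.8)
The plan is to reduce the statement to the construction of the suspension functor, circumventing the one genuine difficulty: the chosen cones are not functorial, so the morphism $\alpha:A\to A'$ need not extend to a morphism of cones $CA\to CA'$. I would route the comparison through a cone extension of $\alpha$ together with an auxiliary pushout that plays the role of a common cone replacing both $CA$ and $CA'$.

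First I would fix notation. Write $P=CA\cup_j B$ for the pushout of $i_A:A\to CA$ along $j$ and $P'=CA'\cup_{j'}B'$ for the analogous pushout, so that by \eqref{eq:connecting_morphism} one has $\delta(j)=\gamma(e)\circ\gamma(c)^{-1}$ and $\delta(j')=\gamma(e')\circ\gamma(c')^{-1}$, where $c:P\to B/A$ and $c':P'\to B'/A'$ collapse the cones (and are weak equivalences) while $e:P\to\Sigma A$ and $e':P'\to\Sigma A'$ collapse $B$ and $B'$. Next I would apply Lemma~\ref{lemma-cone extensions} to obtain a cone extension $(\bar\alpha,s)$ of $\alpha$ relative to the cone $i_A:A\to CA$: a morphism $\bar\alpha:CA\to\bar C$ and an acyclic cofibration $s:CA'\to\bar C$ with $\bar\alpha\, i_A=s\,i_{A'}\alpha$, recalling that this yields the identity $\Sigma\gamma(\alpha)=\gamma(s/A')^{-1}\circ\gamma(\bar\alpha/\alpha)$. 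Since $s\,i_{A'}:A'\to\bar C$ is a cofibration with weakly contractible target, it is a cone on $A'$, and I would form the pushout $\bar P=\bar C\cup_{A'}B'$ of $s\,i_{A'}$ along $j'$, together with its collapse maps $\bar c:\bar P\to B'/A'$ (collapsing $\bar C$, a weak equivalence) and $\bar e:\bar P\to\bar C/A'$ (collapsing $B'$).

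The heart of the argument is two comparison morphisms supplied by the universal property of the pushouts: $\Theta:P\to\bar P$ determined by $\bar\alpha$ on $CA$ and by $\beta$ on $B$ (these agree on $A$ by the commuting square together with the relation $\bar\alpha\, i_A=s\,i_{A'}\alpha$), and $\Psi:P'\to\bar P$ determined by $s$ on $CA'$ and by the identity on $B'$. The morphism $\Psi$ is a cobase change of the acyclic cofibration $s$, hence itself an acyclic cofibration, so $\gamma(\Psi)$ is invertible. I would then record four identities in $\cC$, each verified simply by comparing the two structure maps of the relevant pushout: $\bar c\,\Theta=(\beta/\alpha)\,c$, $\bar c\,\Psi=c'$, $\bar e\,\Theta=(\bar\alpha/\alpha)\,e$ and $\bar e\,\Psi=(s/A')\,e'$.

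Feeding these four identities into the defining fractions for $\delta(j)$, $\delta(j')$ and $\Sigma\gamma(\alpha)$, and using that $\gamma(\Psi)$, $\gamma(\bar c)$, $\gamma(c)$ and $\gamma(s/A')$ are all invertible, I expect both composites $\Sigma\gamma(\alpha)\circ\delta(j)$ and $\delta(j')\circ\gamma(\beta/\alpha)$ to collapse to the single expression $\gamma(s/A')^{-1}\circ\gamma(\bar e)\circ\gamma(\Theta)\circ\gamma(c)^{-1}$ in $\bHo(\cC)$, which proves the claim. The main obstacle is conceptual rather than computational: it lies in introducing the cone extension $(\bar\alpha,s)$ and the object $\bar P$ so as to replace the two non-functorial cones by the common cone $\bar C$, and in checking that $s\,i_{A'}$ is genuinely a cone on $A'$ (so that $\bar c$ and $s/A'$ become weak equivalences). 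Once $\bar P$, $\Theta$ and $\Psi$ are in place, the four identities and the final cancellation are routine manipulations of pushout universal properties.
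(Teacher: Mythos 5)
Your proposal is correct and is essentially the paper's own proof: the paper likewise chooses a cone extension $(\bar\alpha,s)$ of $\alpha$ via Lemma~\ref{lemma-cone extensions}, forms the intermediate pushout $\bar C\cup_{j'}B'$ (your $\bar P$) with the two comparison maps (your $\Theta$ and $\Psi$, the latter being the paper's acyclic cofibration $s\cup B'$), and concludes by inverting the weak equivalences in the resulting commutative diagram. Your four identities are precisely the four commuting squares of the paper's diagram, so the two arguments coincide up to presentation.
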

\begin{proof}
We choose a cone extension $(\bar\alpha,s)$ of the morphism $\alpha$
as in the commutative diagram on the left:
$$\xymatrix@R=6mm{ 
A \ar[d]_\alpha\ar[r]^-{i_A} & CA \ar[d]^-{\bar\alpha}  &&
B/A\ar[d]_{\beta/\alpha} & 
CA\cup_jB \ar[r]^-{p\cup 0}\ar[l]_-{0\cup q}^\sim\ar[d] & 
\Sigma A\ar[d]^{\bar\alpha/\alpha}\\
A' \ar[r]^-{s i_{A'}} &  \bar C && 
B'/A'\ar@{=}[d] & 
\bar C\cup_{j'}B' \ar[l]_{0\cup q'}^\sim\ar[r]^-{\bar p'\cup 0} & 
\bar C/A'\\
A'\ar@{=}[u] \ar[r]_-{i_{A'}} &CA'\ar[u]_s^{\sim}&&
B'/A' & CA'\cup_{j'}B' \ar[l]^-{0\cup q'}\ar[r]_-{p'\cup 0}
\ar[u]_{s\cup B'}^\sim & 
\Sigma A'\ar[u]_{s/A'}^\sim }$$
From this we form the commutative diagram on the right.
After passage to $\bHo(\cC)$ we can invert the weak equivalences
that point to the left or upwards, and then the composite
through the lower left corner becomes $\delta(j')\circ\gamma(\beta/\alpha)$,
the composite through the upper right corner becomes 
$\Sigma\gamma(\alpha)\circ\delta(j)$.
\end{proof}

Now we can prove that the homotopy category
of a stable category is triangulated.
For us, a triangulated category is an additive category $\cT$
equipped with an auto-equivalence $\Sigma:\cT\to \cT$
and a class of distinguished triangles of the form
$$ A \ \xra{\ f \ }\  B\  \xra{\ g\ }\  C\ \xra{\ h\ }\ \Sigma A\ , $$
closed under isomorphism, that satisfies the following axioms:

\begin{itemize}
\item[\bf (T1)] For every object $X$ 
the triangle $0\to X\xra{\Id}X\to 0$ is distinguished.
\item[\bf (T2)] [Rotation]
If a triangle $(f,g,h)$ is distinguished, then so is the triangle
$(g,h,-\Sigma f)$.
\item[\bf (T3)] [Completion of triangles] 
Given distinguished triangles $(f,g,h)$ and  $(f',g',h')$ 
and morphisms $a$ and $b$ satisfying $b f=f'a$, 
there exists a morphism $c$ making the following diagram commute:
\[ \xymatrix{ 
A \ar[r]^f \ar[d]_a &  B \ar[r]^g \ar[d]_b & 
C \ar@{..>}[d]^c \ar[r]^-h & \Sigma A \ar[d]^{\Sigma a}\\
A' \ar[r]_-{f'} & B' \ar[r]_-{g'} & C' \ar[r]_-{h'}&  \Sigma A'}    \]
\item[\bf (T4)] [Octahedral axiom]
For every pair of composable morphisms $f:A\to B$ and $f':B\to D$
there is a commutative diagram
\[
\xymatrix{ A \ar@{=}[d] \ar[r]^-f &  
B \ar[d]_{f'} \ar[r]^-g & C \ar[d]^{x} \ar[r]^-h & 
\Sigma A \ar@{=}[d] \\
A \ar[r]_-{f'f} & D \ar[r]_-{g''} \ar[d]_{g'} & 
E \ar[r]_-{h''} \ar[d]^y & \Sigma A \ar[d]^{\Sigma f}\\
& F \ar[d]_{h'} \ar@{=}[r] & F \ar[d]^(.6){(\Sigma g)\circ h'}  
\ar[r]_-{h'} & \Sigma B\\
& \Sigma B \ar[r]_-{\Sigma g} & \Sigma C &
}\]
such that the triangles $(f,g,h)$,  $(f',g',h')$,  $(f'f,g'',h'')$
and $(x,y,(\Sigma g)\circ h')$ are distinguished.
\end{itemize}

This version of the axioms appears to be weaker, at first sight,
than the original formulation: Verdier~\cite[II.1]{verdier} requires an
`if and only if' in (T2), and his formulation of (T4) appears stronger. 
However, the two formulations of the axioms are equivalent.

\begin{theorem}\label{thm-HoC is triangulated}
The suspension functor and the class of distinguished triangles make
the homotopy category $\bHo(\cC)$ of a stable cofibration category
into a triangulated category.
\end{theorem}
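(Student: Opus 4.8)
The plan is to verify the four axioms (T1)--(T4) for the suspension functor and the class of distinguished triangles, having already established in Proposition~\ref{prop-stable is additive} that $\bHo(\cC)$ is additive and that $\Sigma$ is an autoequivalence. First I would record the unravelling of the elementary distinguished triangle: a cofibration $j:A\to B$ yields $A\to B\to B/A\xra{\delta(j)}\Sigma A$, where the connecting morphism is the natural comparison through the cone $CA\cup_j B$. The naturality of $\delta(j)$ from Proposition~\ref{prop-connecting natural} will be the workhorse throughout, since it lets me compare elementary triangles built from different cofibrations.

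Axiom (T1) is immediate: taking $A=\ast$ and $j=\Id_X:X\to X$, the quotient $X/X$ is a zero object and the resulting elementary triangle is isomorphic to $0\to X\xra{\Id}X\to 0$. For (T3), the completion axiom, I would reduce to the case of two elementary triangles arising from cofibrations $j:A\to B$ and $j':A'\to B'$; given $\cC$-level representatives of $a$ and $b$ with $bj\simeq j'a$ (using Theorem~\ref{thm-HoC a la Brown}~(ii) to realize the homotopy-commutative square up to an acyclic cofibration), the induced map on cokernels $B/A\to B'/A'$ supplies the dotted morphism $c$, and commutativity of the right-hand square is exactly the naturality of $\delta$ from Proposition~\ref{prop-connecting natural}.

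The two genuinely substantial axioms are (T2) and (T4). For the rotation axiom (T2) the key is to identify, up to a sign, the rotated triangle $(g,h,-\Sigma f)$ with an elementary distinguished triangle. The natural thing is to form the mapping cone of $j:A\to B$ as the pushout $CA\cup_j B$ and observe that the quotient map $B/A \to \Sigma A$ fits into an elementary triangle associated to the cofibration $B\to CA\cup_j B$; comparing this with the original triangle via the contractibility of $CA$ produces the rotated triangle, and the sign $-\Sigma f$ is forced by the involution $m_A$, i.e. by the relation $m_A=-\Id_{\Sigma A}$ established in Proposition~\ref{prop-stable is additive}~(iii). I expect this sign bookkeeping, together with iterating the rotation to recover the ``three-fold rotation'' identification of a double cone with a suspension, to be the main obstacle: one must carefully track the collapse morphism $\kappa_A$ and the involution $m_A$ to get the signs to come out right.

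For the octahedral axiom (T4), I would start with genuine $\cC$-level cofibrations. Given composable morphisms, one first replaces $f:A\to B$ and $f':B\to D$ by cofibrations (using the mapping-cylinder factorization of axiom (C4), as in the proof of Proposition~\ref{prop-algebraic is topological}), so that $A\to B\to D$ is a composite of cofibrations. Then the three cofibrations $A\to B$, $B\to D$, $A\to D$ have cokernels $C=B/A$, $F=D/B$, $E=D/A$, and there is a short exact sequence of quotients realized by an honest cofibration $C=B/A\to D/A=E$ with cokernel $D/B=F$. The octahedral diagram is then assembled from the elementary triangles of these four cofibrations, and the commutativity of all faces, together with the identification of the connecting morphisms, follows from repeated application of the naturality of $\delta$ in Proposition~\ref{prop-connecting natural} and the gluing lemma applied to the evident pushout squares relating the various cones. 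The delicate point here is verifying that the fourth triangle $(x,y,(\Sigma g)\circ h')$ is distinguished, which amounts to recognizing the induced cofibration $E\to F$ (or its cone) and checking that its connecting morphism equals $(\Sigma g)\circ h'$ on the nose in $\bHo(\cC)$; this is where the bulk of the diagram-chasing lives, and it is the step I expect to require the most care.
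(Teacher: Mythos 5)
Your proposal follows essentially the same route as the paper's proof: direct verification of (T1)--(T4) after reducing to elementary distinguished triangles of cofibrations, with the naturality of the connecting morphism (Proposition~\ref{prop-connecting natural}) and the identity $m_A=-\Id_{\Sigma A}$ handling the sign in (T2) via the cofibration $B\to CA\cup_j B$, the acyclic-cofibration-plus-homotopy device of Theorem~\ref{thm-HoC a la Brown}~(ii) handling (T3), and the induced cofibration $B/A\to D/A$ with quotient $D/B$ supplying the fourth triangle in (T4), exactly as in the paper. Only trivial slips occur (for (T1) you want the cofibration $\ast\to X$ rather than $\Id_X$, and in your last sentence on (T4) the relevant cofibration is $C\to E$, not $E\to F$, as you yourself state correctly one sentence earlier), neither of which affects the approach.
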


This result generalizes and provides a uniform approach to proofs 
of triangulations for homotopy categories of 
pretriangulated dg categories,
additive categories, derived categories of abelian categories,
stable categories of Frobenius categories
and for homotopy categories of stable model categories.
Indeed, all these classes of categories have underlying stable
cofibration categories. The only triangulated categories 
I am aware of that cannot be established 
via Theorem~\ref{thm-HoC is triangulated} 
are the exotic examples of~\cite{nomodel}.

\begin{rk}
Although I am not aware of a complete proof of 
Theorem~\ref{thm-HoC is triangulated}
in the present generality, I do not claim much originality;
indeed, various parts of this result, assuming  the same or closely 
related structure, are scattered throughout the literature.
Nevertheless, I hope that a self-contained and complete account
on the triangulation of the homotopy category,
assuming only the axioms of a stable cofibration category, is useful.

Before embarking on the proof of Theorem~\ref{thm-HoC is triangulated}, 
I want to point out the relevant related sources that I am aware of.
In Section I.2 of his monograph~\cite{Q} Quillen shows that in any 
pointed closed model category, the homotopy category supports
a functorial suspension functor with values in cogroup objects.
In Section I.3 Quillen introduces the class of cofibration sequences 
and shows that they satisfy most of (the unstable version of) the
axioms of a triangulated category.
In~\cite[Sec.\,I.4]{brown} Brown adapts Quillen's arguments 
to the more general context of `categories of fibrant objects', 
which is strictly dual to the cofibration categories that we use.
In~\cite[Sec.\,3]{heller-stable} Heller introduces the notion of
`h-c-category', an axiomatization closely related to 
(but slightly different from) that of cofibration categories; 
Heller indicates in his Theorem 9.2 how the stabilization 
of the homotopy category of an h-c-category is triangulated. 
We want to stress, though, that none of these three sources
bothers to prove the octahedral axiom.

In the context of stable model categories, a complete proof 
of the triangulation of the homotopy category, including the octahedral axiom,
is given by Hovey in~\cite[Sec.\,7.1]{hovey-book}.
However, Hovey's account is hiding the fact that the triangulation
is available under much weaker hypotheses; for example, the
fibrations, the existence of general colimits and functorial factorization 
are irrelevant for this particular purpose.

Yet another approach to triangulating the homotopy category of 
a cofibration category $\cC$ uses the whole system
of homotopy categories of suitable diagrams in~$\cC$.
This idea has been made precise in slightly different forms by different people,
for example by Grothendieck as a 
{\em derivator}~\cite{maltsiniotis-derivateurs, groth-stable derivators},
by Heller as a {\em homotopy theory}~\cite{heller-homotopy theories},
by Keller as an 
{\em epivalent tower of suspended categories}~\cite{keller-universal}
and by Franke as a 
{\em system of triangulated diagram categories}~\cite{franke-adams}.
The respective `stable' versions come with theorems showing
that the underlying category of such a stable collection of
homotopy categories is naturally triangulated, compare 
\cite[Thm.\,4.18]{groth-stable derivators} or~\cite[Sec.\,1.4, Thm.\,1]{franke-adams}.

Cisinski shows in~\cite[Cor.\,2.24]{cisinski-categories derivables} 
that the homotopy category of every cofibration category 
is underlying a right derivator 
(parametrized by finite, direct indexing categories).
I am convinced that our definition of `stable'
(i.e., that the suspension functor is an auto-equivalence of 
the homotopy category) 
implies that the associated derivator is `triangulated', i.e.,
that the right derivator is automatically also a left derivator,
hence a `derivator' (without any adjective),
and that homotopy cartesian square and homotopy co-cartesian
square coincide. However, as far as I know this link is not 
established anywhere in the literature.
\end{rk}

\begin{proof}[Proof of Theorem~\ref{thm-HoC is triangulated}]
We have seen in Proposition~\ref{prop-stable is additive}~(iii)
that the homotopy category of a stable cofibration category is
additive, and the suspension functor is an equivalence by assumption.
So it remains to verify the axioms (T1) through (T4).
We want to emphasize that the following proof 
of (T1) -- (T4) works in any pointed cofibration category, 
without a stability assumption.
The only place where stability is used is at the very end of the
rotation axiom (T2), where the morphism $(\Sigma f)\circ\delta(i_A)$
is identified with $-\Sigma f$.\medskip

{\bf (T1)} 
The unique morphism from any zero object to $X$ is a cofibration with quotient 
morphism the identity of $X$. The triangle $(0,\Id_X,0)$
is the associated elementary distinguished triangle.\smallskip

{\bf (T2 --  Rotation)} 
We start with a distinguished triangle $(f,g,h)$ and
want to show that the triangle $(g,h,-\Sigma f)$ is also distinguished.
It suffices to consider the elementary distinguished triangle 
$(\gamma(j),\gamma(q),\delta(j))$ associated to a cofibration $j:A\to B$. 
We choose pushouts for the left and the outer square as in the left diagram:
$$\xymatrix@C=12mm{ 
A \ar[r]^-j \ar[d]_{i_A} &  B \ar[d]_k \ar[r] &  \ast \ar[d] & 
B \ar[r]^-{\gamma(k)} \ar@{=}[d] &  CA\cup_jB \ar[r]^-{\gamma(p\cup 0)}
\ar[d]_{\gamma(0\cup q)}^\iso  & 
\Sigma A \ar@{=}[d] \ar[r]^-{\delta(k)} & 
\Sigma B \ar@{=}[d]\\
CA \ar[r] & CA\cup_jB\ar[r]_-{p\cup 0} & \Sigma A&
B \ar[r]_-{\gamma(q)} & B/A \ar[r]_-{\delta(j)} & 
\Sigma A \ar[r]_-{\Sigma\gamma(j)\circ\delta(i_A)}&  
\Sigma B }$$
The second square in the left diagram is then also a pushout and the morphism
$p\cup 0:CA\cup_jB\to\Sigma A$ is the quotient projection 
associated to the cofibration $k:B\to CA\cup_jB$.
Moreover, both~$i_A$ and $k$ are cofibrations,
so by naturality of the connecting morphisms we get
$\delta(k)\circ\Id_{\Sigma A}=(\Sigma\gamma(j))\circ\delta(i_A)$.
Hence the diagram on the right commutes.
The upper row is the elementary distinguished triangle 
of the cofibration $k$, and all vertical maps are isomorphisms,
so the lower triangle is distinguished, as claimed.
By definition the connecting morphism $\delta(i_A)$
coincides with the involution $m_A$ of $\Sigma A$.
In the stable context, $m_A$ is the negative of the identity
(see Proposition~\ref{prop-stable is additive}~(iii)),
so $(\Sigma f)\circ\delta(i_A)=-\Sigma f$.\smallskip

{\bf (T3 -- Completion of triangles)} 
We are given two distinguished triangles
$(f,g,h)$ and $(f',g',h')$ and two morphisms $a$ and $b$
in $\bHo(\cC)$ satisfying $b f=f'a$ as in the diagram:
$$ \xymatrix{ A \ar[r]^-f\ar[d]_a & B\ar[d]^b\ar[r]^-g & 
C \ar[r]^-h \ar@{-->}[d]^c&\Sigma A \ar[d]^-{\Sigma a}\\
A'\ar[r]_-{f'} & B' \ar[r]_-{g'} & C'\ar[r]_-{h'} &\Sigma A'}$$
We have to extend this data to a morphism of triangles,
i.e., find a morphism $c$ making the entire diagram commute.
If we can solve the problem for isomorphic triangles, then we
can also solve it for the original triangles. 
We can thus assume that the triangles $(f,g,h)$ and $(f',g',h')$ 
are the elementary distinguished triangles arising from two cofibrations
$j:A\to B$ and $j':A'\to B'$.

We start with the special case where $a=\gamma(\alpha)$
and $b=\gamma(\beta)$ for $\cC$-morphisms $\alpha:A\to A'$
and $\beta:B\to B'$. Then $\gamma(j'\alpha)=\gamma(\beta j)$,
so Theorem~\ref{thm-HoC a la Brown}~(ii) 
provides an acyclic cofibration $s:B'\to \bar B$,
a cylinder object $(I,i_0,i_1,p)$ for $A$ and a homotopy $H:I\to \bar B$
from $Hi_0=sj'\alpha$ to $Hi_1=s\beta j$.
The following diagram of cofibrations on the left commutes in $\cC$, 
so the diagram of elementary distinguished triangles
on the right commutes in $\bHo(\cC)$ by the naturality 
of the connecting morphisms:
$$ \xymatrix@C=15mm@R=7mm{
A \ar[r]^-j\ar@{=}[d] &  B &
A \ar[r]^-{\gamma(j)}\ar@{=}[d]
&  B \ar[r]^-{\gamma(q)} 
& B/A \ar[r]^-{\delta(j)} & 
\Sigma A \ar@{=}[d]
\\
A \ar[r]^-{i_0}\ar[d]_\alpha
&  I\cup_{i_1}B \ar[d]^{H\cup s\beta} \ar[u]^\sim_{jp\cup B}&
A \ar[r]^-{\gamma(i_0)}\ar[d]_{\gamma(\alpha)}
&  I\cup_{i_1}B\ar[d]_{\gamma(H\cup s\beta)} 
\ar[u]^{\gamma(jp\cup B)}_\iso \ar[r]^-{\gamma(q)} 
& (I\cup_{i_1}B)/A\ar[d]^{\gamma((H\cup s\beta)/\alpha)} 
\ar[u]_{\gamma((jp\cup B)/A)}^\iso \ar[r]^-{\delta(i_0)} & 
\Sigma A \ar[d]^{\Sigma\gamma(\alpha)} 
\\
A'\ar[r]^-{sj'} & \bar B  &
A'\ar[r]^-(.4){\gamma(sj')} & \bar B \ar[r]^-{\gamma(\bar q)} & 
\bar B/A' \ar[r]^-(.6){\delta(sj')} & \Sigma A'\\
A'\ar[r]_-{j'}\ar@{=}[u] & 
B'\ar[u]_-s^-\sim &
A'\ar[r]_-{\gamma(j')}\ar@{=}[u] & 
B'\ar[u]^-{\gamma(s)}_\iso \ar[r]_-{\gamma(q')}& 
B'/A' \ar[u]_{\gamma(s/A')}^\iso \ar[r]_-{\delta(j')}& \Sigma A'\ar@{=}[u] }$$
The morphism 
$$ c\ =\ 
\gamma(s/A')^{-1}\circ\gamma((H\cup s\beta)/\alpha)\circ\gamma((jp\cup B)/A)^{-1}\ :\ B/A\to B'/A'$$
is the desired filler.

In the general case we write $a=\gamma(s)^{-1}\gamma(\alpha)$ 
where $\alpha:A\to \bar A$ is a $\cC$-morphism and $s:A'\to \bar A$
is an acyclic cofibration. We choose a pushout 
$$\xymatrix{ \bar A \ar[r]^-k & \bar A\cup_{A'}B' \\
A' \ar[u]^-s_-\sim \ar[r]_-{j'} & B' \ar[u]_-{s'}^-\sim}$$
We write $\gamma(s')b=\gamma(t)^{-1}\gamma(\beta):B\to\bar A\cup_{A'}B'$ 
where $\beta:B\to\bar B$ is a $\cC$-morphism and $t:\bar A\cup_{A'}B'\to\bar B$
is an acyclic cofibration.
We then have 
$$ \gamma(tk)\gamma(\alpha)\ =\ 
\gamma(tk)\gamma(s)a\ =\ 
\gamma(ts')\gamma(j')a\ =\ 
\gamma(ts')b\gamma(j)
\ =\ \gamma(\beta)\gamma(j)\ , $$ 
so by the special case, applied to the cofibrations
$j:A\to B$ and $tk:\bar A\to\bar B$ and the morphisms
$\alpha:A\to\bar A$ and $\beta:B\to\bar B$,
there exists a morphism $c:B/A\to \bar B/\bar A$ in $\bHo(\cC)$ 
making the diagram
$$\xymatrix@C=15mm@R=7mm{ 
A \ar[r]^-{\gamma(j)} \ar[d]_{\gamma(\alpha)} &  
B \ar[r]^-{\gamma(q)} \ar[d]_{\gamma(\beta)} & 
B/A \ar@{..>}[d]_c \ar[r]^-{\delta(j)} & 
\Sigma A \ar[d]^{\Sigma \gamma(\alpha)}\\
\bar A \ar[r]^-(.4){\gamma(tk)} & 
\bar B \ar[r]^-{\gamma(\bar q)} & 
\bar B/\bar A \ar[r]^-{\delta(tk)}&  \Sigma \bar A \\
A' \ar[r]_-{\gamma(j')}\ar[u]^{\gamma(s)} & 
B' \ar[r]_-{\gamma(q')} \ar[u]^{\gamma(ts')}& 
B'/A' \ar[r]_-{\delta(j')}\ar[u]_{\gamma(ts'/s)}&  
\Sigma A'\ar[u]_{\Sigma\gamma(s)} }$$
commute (the lower part commutes by naturality of connecting morphisms). 
Since $s$ is an acyclic cofibration so is its cobase change $s'$.
By the gluing lemma the weak equivalences 
$s:A'\to\bar A$ and $ts':B'\to \bar B$ induce a weak equivalence
$ts'/s:B'/A'\to \bar B/\bar A$ on quotients and the composite
$$ B/A \ \xra{\quad c \quad} \ \bar B/\bar A \ 
\xra{\gamma(ts'/s)^{-1}} \ B'/A'$$
in $\bHo(\cC)$ thus solves the original problem.\smallskip

{\bf (T4 - Octahedral axiom)}
We start with the special case where $f=\gamma(j)$ and
$f'=\gamma(j')$ for cofibrations $j:A\to B$ and $j':B\to D$.
Then the composite $j'j:A\to D$ is a cofibration
with $\gamma(j'j)=f'f$. The diagram 
$$\xymatrix@C=12mm@R=10mm{ A \ar@{=}[d] \ar[r]^-{\gamma(j)} & 
B \ar[d]_{\gamma(j')} \ar[r]^-{\gamma(q_j)} & 
B/A \ar[d]^{\gamma(j'/A)} 
\ar[r]^-{\delta(j)} & \Sigma A \ar@{=}[d] \\
A \ar[r]_-{\gamma(j'j)} & D \ar[r]_-{\gamma(q_{j'j})} 
\ar[d]_{\gamma(q_{j'})} & 
D/A \ar[r]_-{\delta(j'j)} \ar[d]^(.6){\gamma(D/j)}
& \Sigma A \ar[d]^{\Sigma\gamma(j)}\\
& D/B \ar[d]_{\delta(j')} \ar@{=}[r] & D/B
\ar[d]^(.6){\delta(j'/A)=(\Sigma \gamma(q_j))\delta(j')}  
\ar[r]_-{\delta(j')} & \Sigma B\\
& \Sigma B \ar[r]_-{\Sigma\gamma(q_j)} & \Sigma (B/A) &
}$$ 
then commutes by naturality of connecting morphisms.
Moreover, the four triangles in question are the elementary
distinguished triangles of the cofibrations
$j$, $j'$, $j'j$ and $j'/A:B/A\to D/A$.

In the general case we write $f=\gamma(s)^{-1}\gamma(a)$
for a $\cC$-morphism $a:A\to B'$ and a weak equivalence $s:B\to B'$. 
Then $a$ can be factored as
$a=pj$ for a cofibration $j:A\to \bar B$ and a weak equivalence
$p:\bar B\to B'$. Altogether we then have $f=\varphi\circ\gamma(j)$
where $\varphi=\gamma(s)^{-1}\circ\gamma(p):\bar B\to B$ is an
isomorphism in $\bHo(\cC)$.
We can apply the same reasoning to the morphism $f'\varphi:\bar B\to D$
and write it as $f'\circ\varphi=\psi\circ\gamma(j')$ for
a cofibration $j':\bar B\to \bar D$ in~$\cC$ and an isomorphism
$\psi:\bar D\to D$ in $\bHo(\cC)$.
The special case can then be applied to the cofibrations
$j:A\to \bar B$ and $j':\bar B\to \bar D$.
The resulting commutative diagram that solves (T4)
for $(\gamma(j),\gamma(j'))$ can then be translated back into
a commutative diagram that solves (T4) for $(f,f')$ by conjugating with
the isomorphisms  $\varphi:\bar B\to B$ and $\psi:\bar D\to D$.
This completes the proof of the octahedral axiom (T4),
and hence the proof of Theorem~\ref{thm-HoC is triangulated}.
\end{proof}

Now we discuss how exact functors between stable cofibration
categories give rise to exact functors between the triangulated homotopy
categories.
A functor $F:\cC\to\cD$ between cofibration categories is {\em exact} 
if it preserves initial objects, cofibrations, weak equivalences and
the particular pushouts~\eqref{eq-C3pushout} along cofibrations
that are guaranteed by axiom (C3). 
Since $F$ preserves weak equivalences, the composite functor
$\gamma^{\cD}\circ F:\cC\to\bHo(\cD)$ takes weak equivalences to isomorphisms
and the universal property of the homotopy category provides a unique
{\em derived functor} $\bHo(F) :\bHo(\cC)\to\bHo(\cD)$
such that $\bHo(F)\circ\gamma^\cC=\gamma^\cD\circ F$.
An exact functor between cofibration categories in particular
preserves coproducts. Since coproducts in $\cC$ descend to
coproducts in the homotopy category, the derived functor
of any exact functor also preserves coproducts.

We will now explain that for pointed cofibration categories 
$\cC$ and $\cD$ the derived functor  $\bHo(F)$ 
commutes with suspension up to a preferred natural isomorphism
\begin{equation}\label{eq:tau^F}
  \tau_F \ :\ \bHo(F)\circ\Sigma \ \xra{\ \iso \ } \  \Sigma\circ\bHo(F)
\end{equation}
of functors from $\bHo(\cC)$ to $\bHo(\cD)$.
If $A$ is any object of $\cC$, then the cofibration $F(i_A):F(A)\to F(CA)$ 
is a cone since $F$ is exact.
Lemma~\ref{lemma-cone extensions} provides a cone extension
of the identity of $F(A)$, i.e., a morphism $\bar\alpha:F(CA)\to\bar C$,
necessarily a weak equivalence, and an acyclic cofibration
$s:C(F(A))\to\bar C$ such that $si_{F(A)}=\bar\alpha F(i_A)$. 
The composite in $\bHo(\cD)$
$$\tau_{F,A}\ : \  
F(\Sigma A) = F(CA)/F(A) \ \xra{\ \gamma(\bar\alpha/F(A))\ }\
\bar C/F(A) \ \xra{\ \gamma(s/F(A))^{-1}\ } \ \Sigma F(A) $$
is then an isomorphism, and independent 
(by Lemma~\ref{lemma-cone extensions}) of the cone extension $(\bar\alpha,s)$.

\begin{prop}\label{prop-left derived is exact} 
Let $F:\cC\to\cD$ be an exact functor between pointed cofibration
categories. 
Then the isomorphism $\tau_{F,A}:F(\Sigma A)\to \Sigma(FA)$
is natural in $A$ and makes the derived functor $\bHo(F):\bHo(\cC)\to\bHo(\cD)$ 
into an exact functor.
\end{prop}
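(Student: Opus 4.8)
The plan is to verify the two assertions of Proposition~\ref{prop-left derived is exact} separately: naturality of the family $\tau_{F,A}$, and the fact that $\bHo(F)$ equipped with $\tau_F$ sends distinguished triangles to distinguished triangles. Since every object of $\bHo(\cC)$ is a $\cC$-object and every morphism is a left fraction $\gamma(s)^{-1}\gamma(\alpha)$ by Theorem~\ref{thm-HoC a la Brown}~(i), it suffices to check naturality on morphisms of the form $\gamma(\alpha)$ for a $\cC$-morphism $\alpha:A\to A'$ (the isomorphisms $\gamma(s)^{-1}$ then follow formally, since $\tau_F$ is an isomorphism and $\Sigma$ takes weak equivalences to isomorphisms). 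For such an $\alpha$ I would choose a cone extension $(\bar\alpha,s)$ of $\alpha$ with respect to the chosen cones, apply the exact functor $F$ to the entire cone-extension diagram, and observe that $(F\bar\alpha, Fs)$ serves as a cone extension of $F\alpha$ once it is reconciled, via a further cone extension of identities, with the chosen cones $C(FA)$ and $C(FA')$ in $\cD$. The required naturality square relating $F(\Sigma\alpha)$, $\Sigma(F\alpha)$, $\tau_{F,A}$ and $\tau_{F,A'}$ then unwinds to an equality of two composites of localized quotient morphisms, each of which is determined by the underlying $\cC$-level (resp.\ $\cD$-level) pushout diagrams; the independence clause of Lemma~\ref{lemma-cone extensions} guarantees that the two sides agree.

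Next I would show that $\bHo(F)$ carries elementary distinguished triangles to distinguished triangles, which suffices since every distinguished triangle is isomorphic to an elementary one and the class of distinguished triangles is closed under isomorphism. Given a cofibration $j:A\to B$ in $\cC$ with quotient $q:B\to B/A$, exactness of $F$ guarantees that $F(j):F(A)\to F(B)$ is a cofibration and that $F$ preserves the defining pushout for $B/A$, so $F(q):F(B)\to F(B)/F(A)$ is a quotient morphism and $F(B/A)=F(B)/F(A)$. Applying $\bHo(F)$ to the elementary triangle of $j$ thus yields the triangle $F(A)\xra{\gamma(Fj)}F(B)\xra{\gamma(Fq)}F(B/A)\xra{\bHo(F)\delta(j)}F(\Sigma A)$; composing the last map with $\tau_{F,A}$ should produce exactly the connecting morphism $\delta(Fj):F(B)/F(A)\to\Sigma F(A)$ of the elementary triangle of the cofibration $F(j)$. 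The heart of the matter is therefore the identity $\tau_{F,A}\circ\bHo(F)\delta(j)=\delta(Fj)$ in $\bHo(\cD)$.

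I expect this last identity to be the main obstacle, since $\delta(j)=\gamma(p\cup 0)\circ\gamma(0\cup q)^{-1}$ is defined in~\eqref{eq:connecting_morphism} through a pushout $CA\cup_jB$ built from the \emph{chosen} cone on $A$, whereas $\tau_{F,A}$ compares $F$ applied to that chosen cone with the chosen cone in $\cD$ on $F(A)$. The plan is to apply $F$ to the pushout square $CA\cup_j B$ defining $\delta(j)$ — which $F$ preserves by exactness — obtaining a pushout $F(CA)\cup_{Fj}F(B)$ computing $\bHo(F)\delta(j)$ in terms of $F(CA)$, and then to splice in the cone-extension weak equivalence $\bar\alpha:F(CA)\to\bar C$ and acyclic cofibration $s:C(F(A))\to\bar C$ used to define $\tau_{F,A}$. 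Tracking everything through the naturality of the connecting morphism (Proposition~\ref{prop-connecting natural}) applied to the square relating $F(CA)\cup_{Fj}F(B)$ and $C(F(A))\cup_{F(j)}F(B)$ over the weak equivalence supplied by the cone extension, the composite $\tau_{F,A}\circ\bHo(F)\delta(j)$ collapses to $\delta(Fj)$; the gluing lemma ensures the induced maps on the relevant quotients are weak equivalences so that the fractions compose correctly. Once this identity is in place, preservation of the three arrows together with $\tau_{F,A}$ being an isomorphism exhibits the image triangle as isomorphic to the elementary distinguished triangle of $F(j)$, completing the proof that $\bHo(F)$ is exact.
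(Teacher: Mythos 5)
Your proposal is correct and takes essentially the same route as the paper: the heart of both arguments is the identity $\tau_{F,A}\circ\bHo(F)(\delta(j))=\delta(F(j))$, obtained by applying $F$ to the pushout $CA\cup_jB$ (which $F$ preserves by exactness) and splicing in the cone extension $(\bar\alpha,s)$ of the identity of $F(A)$ through a commutative diagram whose weak equivalences are then inverted in $\bHo(\cD)$, after which preservation of elementary distinguished triangles is immediate. The only differences are cosmetic: the paper builds this comparison diagram directly rather than citing Proposition~\ref{prop-connecting natural} (which, as stated, compares connecting morphisms of two cofibrations both formed with the \emph{chosen} cones, so it does not literally apply to the non-chosen cone $F(CA)$ --- though your described splicing via $\bar\alpha$ and $s$ is exactly what makes the comparison work), and the paper leaves the naturality of $\tau_{F,A}$ implicit, which you verify by the same cone-extension/independence argument from Lemma~\ref{lemma-cone extensions}.
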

\begin{proof} 
Let $j:A\to B$ be a cofibration in $\cC$, with $q:B\to B/A$ a quotient morphism.
Since~$F$ is exact, $F(j)$ is a cofibration in $\cD$ and
$F(q):F(B)\to F(B/A)$ is a quotient morphism for~$F(j)$.
We claim that the connecting morphism $\delta_{F(j)}:F(B/A)\to \Sigma F(A)$
of the cofibration~$F(j)$ equals $\tau_{F,A}\circ \bHo(F)(\delta(j))$.
To see this we choose a cone extension $(\bar\alpha,s)$ 
of the identity of~$F(A)$ as in the construction of $\tau_{F,A}$.
We can build the commutative diagram
$$\xymatrix@C=15mm{ 
F(B/A)\ar@{=}[d] \ar@{-->}@<3ex>@/^1pc/[rrr]^(.6){\bHo(F)(\delta(j))}& 
F(CA)\cup_{F(j)}F(B) \ar[r]^-{F(p)\cup 0}\ar[l]_-{0\cup F(q)}^\sim
\ar[d]_\sim^{\bar\alpha\cup\Id} & 
F(CA)/F(A) \ar@{=}[r]\ar[d]^{\bar\alpha/F(A)}_\sim  &
F(\Sigma A)
\ar@{-->}[dd]^{\tau_{F,A}}\\
F(B/A) \ar@{=}[d] & 
\bar C\cup_{F(j)}F(B) \ar[l]_{0\cup F(q)}^\sim\ar[r]^-{\bar p\cup 0} & 
\bar C/F(A) &\\
F(B/A) \ar@{-->}@<-3ex>@/_1pc/[rrr]_(.6){\delta(F(j))} &
C(F(A))\cup_{F(j)}F(B) \ar[l]^-{0\cup F(q)}_-\sim\ar[r]_-{p\cup 0}
\ar[u]_{s\cup\Id}^\sim & 
C(F(A))/F(A) \ar@{=}[r]\ar[u]_{s/F(A)}^\sim  & \Sigma F(A)}$$
After passing to the homotopy category of $\cD$ we can invert the
weak equivalences and form the dashed morphisms.
Since the exact functor $F$ preserves the pushout that defines
$CA\cup_jB$, the composite of the top row then becomes $\bHo(F)(\delta(j))$. 
Since the vertical composite on
the right is the isomorphism $\tau_{F,A}$, this proves the
relation $\tau_{F,A}\circ \bHo(F)(\delta(j))=\delta(F(j))$.

Now we can prove the proposition.
We have to show that for every distinguished triangle $(f,g,h)$
in $\bHo(\cC)$ the triangle
$$ F(A)\ \xra{\ \bHo(F)(f)\ }\ F(B) \xra{\ \bHo(F)(g)\ }\
F(C)\ \xra{\tau_{F,A}\circ \bHo(F)(h)\ } \ \Sigma F(A) $$
is distinguished in $\bHo(\cD)$. If suffices to consider 
the elementary distinguished triangle of a cofibration $j:A\to B$ in $\cC$;
because $\bHo(F)\circ\gamma=\gamma\circ F$, we are then dealing 
with the triangle
$$ F(A) \xra{\gamma(F(j))}
F(B) \xra{\gamma(F(q))}
F(B/A) \xra{\tau_{F,A}\circ \bHo(F)(\delta(j))}
\Sigma F(A) \ . $$
By the claim above, this triangle is the elementary distinguished triangle 
of the cofibration $F(j)$, and this concludes the proof.
\end{proof}

\begin{rk}\label{rk-uniqueness of triangulation} 
Let us consider two different
choices of cones $\{i_A:A\to CA\}_A$ respectively
$\{i'_A:A\to C'\!A\}_A$ on a pointed cofibration category $\cC$, where $A$
runs through all objects of $\cC$. These different cones
give rise to different suspension functors $\Sigma$ and $\Sigma'$
and different collections $\Delta$ and $\Delta'$ of distinguished
triangles. We can apply the previous proposition with $\cC=\cD$ and
$F=\Id$, the identity functor of $\cC$, to the two triangulations.
The proposition provides a natural isomorphism $\tau:\Sigma\to\Sigma'$
between the two suspension functors such that for every
distinguished $\Delta$-triangle $(f,g,h)$ the triangle
$$ A \ \xra{\ f\ } \  B \ \xra{\ g\ } \  C \ \xra{\ \tau_A\circ h\ } \
\Sigma' A$$ is
a distinguished $\Delta'$-triangle.
So the triangulations arising from different choices of cones
are canonically isomorphic.
\end{rk}

\end{appendix}

\end{document}